\documentclass{tran-l}

\usepackage{amsmath, amssymb, amsthm}
\usepackage[utf8x]{inputenc}
\usepackage{comment}
\usepackage{hyperref}
\usepackage{enumitem}

\usepackage{xcolor}

%
%
%
%

\newtheorem{theo}{\textbf{Theorem}}[section]

\newtheorem{thm}[theo]{\textbf{Theorem}}
\newtheorem{lem}[theo]{\textbf{Lemma}}
\newtheorem{prop}[theo]{\textbf{Proposition}}

\newtheorem{cor}[theo]{\textbf{Corollary}}

\theoremstyle{definition}

\newtheorem{defn}[theo]{\textbf{Definition}}

\theoremstyle{remark}
\newtheorem{rem}[theo]{\textbf{Remark}}
\newtheorem{assumption}{\textbf{Assumption}}

\numberwithin{equation}{section}



\newcommand\vectorize[1]{\left(\begin{matrix}#1
\end{matrix}\right)}


\newcommand{\bsl}{\backslash}


\newenvironment{system}[1]
{
\left\{\begin{array}{#1}} 
{ \end{array}\right.}

\begin{document}

\title[Singular measure traveling waves]{Singular measure traveling waves in an epidemiological model with continuous phenotypes}
\author[Q. Griette]{Quentin Griette}

\address{IMAG, Université de Montpellier, 163 rue Auguste Broussonnet, 34090 Montpellier, FRANCE}
\curraddr{IMB, Université de Bordeaux, 351 cours de la Libération, 33800 TALENCE, FRANCE}
\email{quentin.griette@math.u-bordeaux.fr}
\urladdr{https://www.quentin-griette.fr}

\subjclass[2010]{Primary 35R09; Secondary 35C07, 35D30, 35Q92, 92D30, 92D15}
\keywords{Traveling wave, singular measure, concentration, epidemiology}
\date{}
\maketitle

\begin{abstract}
    We consider the  reaction-diffusion equation 
    \begin{equation*}
        u_t=u_{xx}+\mu\left(\int_\Omega M(y,z)u(t,x,z)dz-u\right) + u\left(a(y)-\int_\Omega K(y,z) u(t,x,z)dz\right) ,
    \end{equation*}
    where $ u=u(t,x,y) $ stands for the density of a theoretical population with a spatial ($x\in\mathbb R$) and phenotypic ($y\in\Omega\subset \mathbb R^n$) structure, $ M(y,z) $ is a mutation kernel acting on the phenotypic space, $ a(y) $ is a fitness function and $ K(y,z) $ is a competition kernel. Using a vanishing viscosity method, we construct measure-valued traveling waves for this equation, and present particular cases where  singular traveling waves do exist. We determine that the speed of the constructed traveling waves is the expected spreading speed $ c^*:=2\sqrt{-\lambda_1} $, where $ \lambda_1 $ is the principal eigenvalue of the linearized equation. As far as we know, this is the first construction of a measure-valued traveling wave for a reaction-diffusion equation.
\end{abstract}


\section{Introduction}
In this work we consider the reaction-diffusion equation:
\begin{equation}\label{eq:evol}
    u_t=u_{xx}+ \mu(M\star u-u)+u(a(y)-K\star u),
\end{equation}
where $t>0$, $x\in\mathbb R$, $y\in \Omega$ for a bounded domain $\Omega \subset \mathbb R^n$, $u=u(t,x,y) $, $\mu>0$ is a positive constant, $ a=a(y) $ is a continuous function, $M=M(y,z) $ and $ K=K(y,z) $ are integration kernels, and the $ \star $ operation is defined by \eqref{eq:def_star}. After discussing the existence of stationary states for \eqref{eq:evol}, we construct measure-valued traveling waves and show the existence of a singularity for a subclass of parameters.

Equation \eqref{eq:evol} describes an asexual population living on a linear space, represented by the variable $x$. Several genotypes exist in the population, yielding a continuum of phenotypes, represented by the $ y $ variable. We denote $ \Omega\subset \mathbb R^n $ the set of all  reachable phenotypes. Our basic assumption is that the fitness (or intrinsic growth rate) of each individual is a function $a(y)$ of its phenotype. We also assume the existence of an underlying mutation process, by which an individual of phenotype $ z\in\Omega $ may give birth to an individual of phenotype $ y\in\Omega $, with probability $ M(y,z) $. Such mutations are expected to occur at rate $ \mu>0 $. Finally, the individuals are in competition for e.g. a finite resource, and we denote $ K(y,z) $ the cost on the fitness of $ y $ caused by the presence of $ z $.

In the context of epidemiology, $ u(t,x,y) $ can be thought as a density of hosts at point $x$, infected with a pathogen of trait $ y$. Equation \eqref{eq:evol} is particularly relevant in this context, since evidences suggest that pathogens (like e.g. viruses \cite{Hol-09})  can be subject to rapid evolution, which may then occur at the same time scale as the propagation of the epidemic \cite{Per-Phi-Bas-Has-13,Phi-Pus-13}. Moreover, equation \eqref{eq:evol} can easily be derived from a host-pathogen microscopic model \cite{Gri-Rao-Gan-15} in which we neglect the influence of the pathogen on the hosts' motility.

The study of asymptotic propagation in biological models can be traced back to the seminal works of Fisher \cite{Fis-1937} and Kolmogorov, Petrovsky, and Piskunov \cite{Kol-Pet-Pis-1937}, who investigated simultaneously the equation:
\begin{equation}\label{eq:FKPP}
    u_t=u_{xx}+u(1-u),
\end{equation}
where $ u=u(t,x) $ stands for the density of a spatially structured theoretical population.
They have shown, in particular, that for any compactly supported initial condition, the solution $ u(t,x) $ invades the whole space with constant speed $ c=2 $ (such a result is often called {\em spreading}); and that there exists a particular solution to \eqref{eq:FKPP}, which consists of a fixed profile shifting along the axis at speed $ c $, $ u(t,x)=\tilde u(x-ct) $, and connecting the unstable state $ 0 $ near $ +\infty $ to the stable state $ 1 $ near $ -\infty $ (such a particular solution is called {\em traveling wave}). Since then, these results have been generalized to a variety of related models: see e.g. \cite{Xin-00,Ber-Ham-02,Wei-02}, and the references therein. 

In the last decades, there has been an increasing interest in propagation models that take into account a multiplicity of different species. The {main problems in the field} include the replacement of a species by competitive interaction (see e.g. \cite{Gar-82}), predation \cite{Guo-11}, adaptation to climate change \cite{Alf-Cov-Rao-13}, or cooperation \cite{Lui-89,Wei-Lew-Li-02}. This last class of {\it cooperative} reaction-diffusion system has lead to  particularly strong results, since its properties are somewhat comparable to those of scalar equations.

In a recent work \cite{Gri-Rao-16}, the authors investigated the existence of traveling waves in the spatially homogeneous epidemiological model:
\begin{equation}\label{eq:Gri-Rao}
    \begin{system}{rcl}
        w_t&=&w_{xx}+w(1-(w+m)) + \mu(m-w) \\
        m_t&=&m_{xx}+rm\left(1-\frac{w+m}{K}\right)+\mu(w-m),
    \end{system}
\end{equation}
where $ w $ and $ m $ stand for a density of hosts infected by a wild type  and mutant pathogen, respectively. Though this system is not globally cooperative,  the authors managed to prove the existence and to compute the minimal speed of traveling waves as a function of the principal eigenvalue $\lambda $ of the associated {\em principal eigenvalue problem}:
\begin{equation*}
\left(\begin{matrix} 1-\mu & \mu \\ \mu & r-\mu\end{matrix}\right)\vectorize{w \\ m} + \lambda\vectorize{w \\ m}=0, 
\end{equation*}
via the formula $ c=2\sqrt{-\lambda} $. Intuitively, the spatial dynamics is then guided by the linearized system far away from the front (such a traveling wave is sometimes called a {\it pulled front} \cite{Gar-Gil-Ham-Roq-12,Sto-1976}). Since then, these results have been extended to a more general class of systems in \cite{Gir-18}.

Equation \eqref{eq:evol} can be seen as the continuous limit of system \eqref{eq:Gri-Rao} with a large number of equations. Since we aim at computing the propagation speed for this equation, we turn  to the associated principal eigenvalue problem:
\begin{equation}\label{eq:intro-princ}
    \mu(M\star u-u)+u(a(y)+\lambda) =0.
\end{equation}
This problem has been investigated in \cite{Cov-10} and \cite{Cov-13}, where the author shows an unexpected {\em concentration phenomenon} occurring for very natural fitness functions: if
\begin{equation}\label{eq:intro-cond-conc}
    \frac{1}{\sup_{z\in\Omega} a(z) - a(y)}\in L^1(\Omega),
\end{equation}
and $ \mu $ is small enough, there exists no continuous eigenfunction associated to \eqref{eq:intro-princ}, but rather {\em singular measure eigenvectors} with a singularity concentrated on the maximum of fitness $ \Omega_0:=\{y\in\overline\Omega\,|\, a(y)=\sup_{z\in\Omega}a(z) \} $. According to \eqref{eq:intro-cond-conc}, this phenomenon happens when $ a(y) $ is sufficiently steep near its global maximum, and is highly dependant on the Euclidean dimension of $ \Omega $. For instance,  if $n=1$, a concentration may appear at the optimum $ y=0 $ for the particular fitness function $ a(y)=1-\sqrt{|y|} $, when $ a(y)=1-|y| $ always yields continuous eigenfunctions; if $n=2$, $ a(y)=1-|y| $ may induce concentration, but $ a(y)=1-|y|^2 $ cannot. In dimension $n=3$ or higher, smooth fitness functions such as $ a(y)=1-|y|^2 $ may induce concentration. A similar phenomenon, and in particular the critical mutation rate under which concentration appears for a sufficiently steep fitness function, has been discussed  by Waxman and Peck \cite{Wax-Pec-98,Wax-Pec-06}.

{
The nonlocal competition term $-K\star u(y)$ in \eqref{eq:evol} is quite standard in models involving competition between different phenotypes. Many models focus on the case where the competition is simply the integral of the distribution --- this corresponds to $K(y,z)=1$. As an example, the nonlocal Fisher-KPP equation 
\begin{equation}\label{eq:FKPP-nonlocal}
	u_t-\Delta u=\mu u(1-\Phi*u),
\end{equation}
where $\Phi(y)$ is usually in $L^1(\mathbb R^n) $ with possibly additional restrictions, has attracted a lot of attention in the past \cite{Gou-00,Gen-Vol-Aug-06,Ber-Nad-Per-Ryz-09,Ham-Ryz-14, Fay-Hol-15, Has-Kop-Nab-Tro-16}. Nonlocal competition also appears in numerous other studies in population genetics and population dynamics \cite{Alf-Ber-Rao-17, Gil-Ham-Mar-Roq-17, Alf-Car-17,Bou-Cal-Meu-Mir-Per-Rao-12, Ber-Jin-Syl-16}. {In general, the qualitative behavior of traveling waves, and the long-time behavior of the solutions to the parabolic equation, are still difficult to handle. Recent advances have been made towards a better understanding of the asymptotic location of the front for the solutions to the parabolic equations, see \cite{Ham-Ryz-14, Bou-Hen-Ryz-17-2, Pen-17, Add-Ber-Pen-17} for the nonlocal Fisher-KPP equation; \cite{Bou-Hen-Ryz-17} for the cane toads equation.} In the case of the nonlocal Fisher-KPP equation \eqref{eq:FKPP-nonlocal}, the existence of traveling waves has been established and the associated minimal speed characterized in \cite{Ber-Nad-Per-Ryz-09, Ham-Ryz-14}. The convergence towards a stationary state on the back of the wave, has been  shown in \cite{Ber-Nad-Per-Ryz-09} for small $\mu$ or when the Fourier transform of the competition kernel is positive (in which cases one can prove the stability of the constant steady state $u\equiv 1$); {and more recently, in a perturbative case \cite{Add-Ber-Pen-17}, the convergence in long time has been shown for solutions to the parabolic equation.} In the general case, the convergence towards a stationary solution on the back of the wave is far from being clear. The situation is similar in the case of many other models involving nonlocal competition. 
}

As an indication that spreading happens, in the present paper we construct traveling waves for equation \eqref{eq:evol} which travel at the expected spreading speed. One of the main difficulties we encountered studying equation \eqref{eq:evol} is the lack of a regularizing effect in the mutation operator $M\star u $. {This phenomenon is confirmed by the existence of traveling waves having a nontrivial singular part --- in particular, there is no hope for asymptotic regularity. This lack of regularity also makes it more difficult to apply some of the techniques commonly used in the study of reaction-diffusion equations (in particular, taking the limit of a subsequence of large shifts of a solution). Finally, the non-compactness of the time-1 map prevents an application of the spreading results of Weinberger \cite{Wei-02}. One other challenging issue is the absence of a comparison principle for equation \eqref{eq:evol}, because of the nonlocal competition term. As in many other studies involving a nonlocal competition, this prevents a precise study of the long-time behavior of the solutions to the Cauchy problem and the behavior at the back of the waves (see also the above paragraph). To show that the traveling waves stay away from $0$ on the back, we introduce a secondary problem, constructed by increasing self-competition in equation \eqref{eq:evol}, which satisfies a comparison principle and serves as a sub-solution factory.} To overcome {the lack of regularity, we approximate the solutions of \eqref{eq:evol} by a vanishing viscosity method. We {choose the zero Neumann} boundary conditions for the approximating problem because they behave well with respect to the integration across the domain. Finally, we introduce} a weak notion of traveling waves which admit singularities. As we will see below, there is little hope to obtain more regularity in general, since there exist traveling waves for equation \eqref{eq:evol} which present an actual singularity. As far as we know, the present work constitutes the first construction of a measure-valued traveling wave in a reaction-diffusion equation. 

\section{Main results and comments}

\subsection{Function spaces and basic notions}

Throughout this document we use a number of function spaces that we {make precise} here to avoid any confusion. Whenever $ X $ is a subset of a Euclidean space, we will denote $ C(X)$, $ C_b(X) $, $C_0(X) $, $ C_c(X) $ the space of continuous functions, bounded continuous functions, continuous functions vanishing at $ \infty$ and continuous functions with compact support over $ X $, respectively. Notice that if $ X $ is compact, then those four function spaces coincide. Whenever $ X \subset\mathbb R^d $ is a Borel set, we define $ M^1(X) $ as the set of all Borel-regular measures over $ X $. Let us recall that $ M^1(X) $ is the topological dual of $ C_0(X) $, {by} Riesz's representation theorem \cite{Rud-74}. In our context, $ M^1(X) $ coincides with the set of Borel measures that are inner and outer regular \cite{Rud-74, Bog-07}. We will thus call \textit{Radon measure} an element of $ M^1(X) $.

When $ p\in M^1(X) $, we say that the equality $ p=0 $ holds \textit{in the sense of measures} if
\begin{equation*}
    \forall \psi\in C_c(X), \int_X\psi(x)p(dx)=0.
\end{equation*}

We now define the notion of \textit{transition kernel} (see \cite[Definition 10.7.1]{Bog-07}), which is crucial for our notion of traveling wave:
\begin{defn}[Transition kernel]\label{def:TK}
    We say that $ u\in M^1(\mathbb R\times X) $ has a \textit{transition kernel} if there exists a function $ k(x,dy) $ such that 
    \begin{enumerate}
        \item for any Borel set $ A\subset X $, $ k(\cdot, A) $ is a measurable function, and
        \item for almost every $x\in\mathbb R $ (with respect to the Lebesgue measure on $ \mathbb R $), $ k(x, \cdot)\in M^1(X) $
    \end{enumerate}
    and $ u(dx, dy)=k(x,dy)dx $ in the sense of measures, i.e. for any $ \varphi\in C_c(\mathbb R\times X)$, the following equality holds
    \begin{equation*}
        \int_{\mathbb R\times X} \varphi(x,y)u(dx,dy)=\int_\mathbb R\int_X\varphi(x,y)k(x,dy)dx.
    \end{equation*}
    For simplicity, if the measure $ u $ has a transition kernel, we will often say that $ u $ \textit{is} a transition kernel and use directly the notation $ u(dx, dy)=u(x,dy)dx $.
\end{defn}

We denote $ f\star g $ the function:
\begin{equation}\label{eq:def_star}
    f\star g (y):=\int_{\overline\Omega}f(y,z)g(dz)
\end{equation}
whenever $ f: \overline\Omega^2\to \mathbb R $ and $ g $ is a measure on $ \overline\Omega$. If $ g $ is continuous or $ L^1(\Omega) $ we use the convention $ g(dz):= g(z)dz $ in the above formula. Remark that the operation $ \star $ is not the standard convolution, though both notions share many properties.

{Finally, for $y\in\partial\Omega$ we will call  $\nu(y)$ or simply $\nu$ the outward normal unit vector of $\Omega$.}

\subsection{Main results}

Our main result is the existence of a measure traveling wave, possibly singular, for equation \eqref{eq:evol}. Before {stating the result}, let us give our assumptions, as well as subsidiary results. 
\begin{assumption}[Minimal assumptions]\label{hyp:gen}
    \begin{enumerate}
        \item $ \Omega\subset\mathbb R^n $ is a bounded connected open set with $ C^3 $ boundary. For simplicity we assume $ 0\in\Omega$.        
        \item $ M=M(y,z) $ is a $ C^{\alpha} $ positive function $ \overline\Omega\times\overline\Omega\to\mathbb R $ satisfying
            \begin{equation*}
                \forall z\in\overline\Omega, \int_{\overline\Omega} M(y,z) d y = 1 .
            \end{equation*}
            In particular, 
            $  0<m_0\leq M(y,z)\leq m_\infty<+\infty$
            for any $ (y,z)\in\overline\Omega\times\overline\Omega$. 
        \item $ K=K(y,z) $ is a $C^{\alpha} $ positive function $ \overline\Omega\times\overline\Omega\to\mathbb R $. In particular, we have  $ 0<k_0\leq K(y,z)\leq k_\infty < +\infty$ for any $(y, z)\in\overline\Omega\times\overline\Omega$.
	\item $ a =a(y)\in C^{\alpha }(\overline\Omega) $ is a non-constant function with $\sup_{y\in\overline\Omega} a(y)>0$. We assume that $ a(0)=\sup a$.  In particular, $ -\infty < \inf a < \sup a < +\infty $ holds.
        \item  We let 
            $\Omega_0:=\left\{y\in\overline\Omega\,|\, a(y)=a(0)=\sup_{z\in\overline\Omega} a(z)\right\}$ be the set of maximal value for $a$
            and assume $ \Omega_0\subset\subset \Omega $.
        \item $ 0<\mu<\sup a-\underset{z\in\partial\Omega}\sup a^+(z) $.
    \end{enumerate}
\end{assumption}

We are particularly interested in a more restrictive set of assumptions, under which we hope to see a concentration phenomenon in \eqref{eq:evol}:
\begin{assumption}[Concentration hypothesis]\label{hyp:dega}
    In addition to Assumption \ref{hyp:gen}, we suppose
    \begin{equation*}
        y\mapsto \frac{1}{\sup_{z\in\Omega} a(z)-a(y)}\in L^1(\Omega) .
    \end{equation*}
\end{assumption}

Let us introduce the principal eigenvalue problem that guides our analysis:
\begin{defn}[Principal eigenvalue]\label{def:vpp}
    We call \textit{principal eigenvalue} associated with \eqref{eq:evol} the real number:
    \begin{equation}\label{eq:vpp}
        \lambda_1:=\sup\{\lambda\,|\, \exists \varphi\in C(\overline\Omega), \varphi>0 \text{ s.t. }\mu(M\star \varphi - \varphi) +(a(y)+\lambda)\varphi \leq 0\}.
    \end{equation}
\end{defn}
Clearly, $ \lambda_1 $ is well-defined and we have  $ \lambda_1\leq -(\sup a -\mu) $ by evaluating  \eqref{eq:vpp} at $ y=0 $.
Though we call $ \lambda_1 $ the principal eigenvalue, we stress that $ \lambda_1 $ is not always associated with a usual eigenfunction. In particular, Coville, in his work \cite{Cov-13, Cov-10}, gives conditions on the coefficients of \eqref{eq:evol} under which there exists no associated eigenfunction. We will recall and {extend} these results in section \ref{ssec:eigen}.  
\begin{prop}[On the principal eigenvalue]\label{prop:vpp}
    Under Assumption \ref{hyp:gen}, there exists a unique $ \lambda\in\mathbb R $ such that the equation 
    \begin{equation}\label{eq:eigen}
        \mu (M\star\varphi-\varphi) + (a(y)+\lambda)\varphi = 0
    \end{equation}
    has a nonnegative nontrivial solution in the sense of measures, and $ \lambda=\lambda_1 $. 

    Moreover, under Assumption \ref{hyp:dega}, there exists $ \mu_0 > 0$ such that if $ \mu<\mu_0 $, we have 
    \begin{equation*}
        \lambda_1 = -(\sup a - \mu)
    \end{equation*}
	and, in this case, there exists a nonnegative measure $ \varphi $ solution to \eqref{eq:eigen} with a non-trivial singular part concentrated in $ \Omega_0 $.
\end{prop}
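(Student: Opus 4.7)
The plan is to follow and extend the framework of Coville \cite{Cov-10,Cov-13}. Rewrite \eqref{eq:eigen} as
\begin{equation*}
\mu M\star\varphi = (\mu - a - \lambda)\varphi.
\end{equation*}
For $\lambda < -(\sup a - \mu)$ the weight $\mu - a - \lambda$ is bounded below by a positive constant, so the operator
\begin{equation*}
R_\lambda \varphi := \frac{\mu\, M\star \varphi}{\mu - a(\cdot) - \lambda}
\end{equation*}
is a compact, strongly positive operator on $C(\overline\Omega)$ (compactness follows from the H\"older continuity of $M$ via Arzel\`a--Ascoli). The Krein--Rutman theorem provides a simple principal eigenvalue $r(\lambda) > 0$ with a positive continuous eigenfunction. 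A direct differentiation shows that $r$ is continuous and strictly decreasing in $\lambda$, with $r(\lambda)\to 0$ as $\lambda\to-\infty$. I would then characterise $\lambda_1$ by $r(\lambda_1)=1$ whenever this equation is solvable in $(-\infty, -(\sup a-\mu))$, and recover the variational formula of Definition \ref{def:vpp} by a standard sub/super-solution argument.

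To finish part 1, split into two cases. If $\ell := \lim_{\lambda\uparrow -(\sup a-\mu)} r(\lambda) \geq 1$, there is a unique $\lambda_1 \leq -(\sup a-\mu)$ with $r(\lambda_1)=1$, and the eigenvector is a continuous function. If $\ell < 1$, no continuous eigenfunction exists to the left of $-(\sup a-\mu)$; in that case I set $\lambda_1 = -(\sup a-\mu)$ and build a measure eigenvector by the construction of part 2. For uniqueness of the eigenvalue among measure solutions, I would test a candidate $(\lambda', \varphi')$ against a continuous positive eigenfunction $\psi$ of the adjoint problem $\mu(\widetilde M\star \psi - \psi) + (a+\lambda)\psi = 0$ with kernel $\widetilde M(y,z):=M(z,y)$; using strict positivity of $M$ and $\psi$, the pairing reduces to $(\lambda' - \lambda)\int \psi\, d\varphi' = 0$, forcing $\lambda' = \lambda_1$.

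For part 2, assume Assumption \ref{hyp:dega} and set $\lambda = -(\sup a - \mu)$. I look for a solution of \eqref{eq:eigen} of the form $\varphi = \delta_0 + g(y)\,dy$ with $g\in L^1(\Omega)$, $g \geq 0$. Substituting in \eqref{eq:eigen}, the singular part at $0$ vanishes automatically since $a(0)+\lambda-\mu = 0$, while the absolutely continuous part yields the fixed-point equation
\begin{equation*}
g(y) = \frac{\mu}{\sup a - a(y)}\left(M(y,0) + \int_\Omega M(y,z)\, g(z)\, dz\right).
\end{equation*}
The source term $y \mapsto \mu M(y,0)/(\sup a - a(y))$ lies in $L^1(\Omega)$ by Assumption \ref{hyp:dega}, and Fubini yields the operator bound
\begin{equation*}
\left\|g \mapsto \frac{\mu\, M\star g}{\sup a - a(\cdot)} \right\|_{\mathcal{L}(L^1(\Omega))} \leq \mu\, m_\infty \left\|\frac{1}{\sup a - a}\right\|_{L^1(\Omega)}.
\end{equation*}
Setting $\mu_0 := (m_\infty \|(\sup a - a)^{-1}\|_{L^1})^{-1}$ produces a strict contraction for $\mu < \mu_0$, and Banach's fixed-point theorem delivers a unique nonnegative $g\in L^1(\Omega)$. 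Then $\varphi = \delta_0 + g\, dy$ is a nontrivial measure solution of \eqref{eq:eigen} with singular part supported at $0 \in \Omega_0$; combined with the upper bound $\lambda_1 \leq -(\sup a - \mu)$ already noted after Definition \ref{def:vpp} and the uniqueness statement of part 1, this forces $\lambda_1 = -(\sup a - \mu)$ as claimed.

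The main obstacle I anticipate is the uniqueness of $\lambda_1$ when only measure-valued eigenvectors exist: classical Harnack or Hopf-type arguments are unavailable, so the duality with the adjoint problem must be set up carefully, and one must first guarantee that the adjoint admits a continuous positive eigenfunction (which may itself fail under Assumption \ref{hyp:dega} and require a dual construction analogous to part 2). In contrast, the contraction argument in part 2 is routine once the fixed-point equation is identified, the only subtlety being that positivity of $g$ is preserved automatically since the fixed-point operator sends nonnegative functions to nonnegative functions.
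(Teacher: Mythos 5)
Your plan splits cleanly into two halves, and they fare quite differently.

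The construction in part 2 is correct and is a genuinely different route from the paper's. You obtain a singular measure solution $\varphi=\delta_0+g\,dy$ explicitly by Banach's fixed-point theorem on $L^1(\Omega)$, with the crude operator bound $\mu\,m_\infty\,\|(\sup a-a)^{-1}\|_{L^1}$. The paper instead reads this off the Krein--Rutman trichotomy for the compact auxiliary operator $\mathcal M[\psi]=\mu\int_\Omega M(y,z)\psi(z)/(\sup a-a(z))\,dz$ (Theorem \ref{thm:eigenpair} combined with Theorem \ref{thm:smallmu}), getting the sharp threshold $\mu_0=1/\gamma_1^1$, whereas your contraction threshold is smaller (since the $L^1$ operator norm only dominates the spectral radius). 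For the mere existence of some $\mu_0>0$ this loss is harmless, and your contraction argument is more self-contained and gives the nonnegativity of $g$ for free by iterating from zero. You should, however, still verify at the end that the measure $\varphi=\delta_0+g\,dy$ solves \eqref{eq:eigen} in the stated weak sense -- this is immediate from your two identities but deserves a line.

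The uniqueness argument in part 1 has a real gap, and you have put your finger on it yourself: the adjoint-duality computation $(\lambda'-\lambda)\int\psi\,d\varphi'=0$ is valid only if the adjoint problem $\mu(\widetilde M\star\psi-\psi)+(a+\lambda)\psi=0$ admits a \emph{continuous} positive eigenfunction $\psi$. But the adjoint has exactly the same fitness profile $a$ and so is equally prone to the concentration phenomenon of Assumption \ref{hyp:dega}; there is no reason the concentration should disappear when one transposes the kernel, and if the adjoint eigenvector is itself a singular measure, the pairing $\int\psi\,d\varphi'$ is not defined (the product of two singular measures has no meaning here). The ``dual construction analogous to part 2'' you float does not repair this either, since it again produces a measure, not a continuous test function. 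The paper circumvents the whole issue by working directly with the Lebesgue--Radon--Nikodym decomposition $\varphi=\varphi_{ac}\,dy+\varphi_s$: from \eqref{eq:eigen} one reads off the two-line system \eqref{eqlem:twomeasures}, which forces $\lambda\le-(\sup a-\mu)$, kills the singular part when the inequality is strict (and then a classical sweeping argument against the continuous $\varphi_{ac}$ gives $\lambda=\lambda_1$), and in the borderline case reduces to a comparison statement for the genuinely compact operator $\mathcal M$, where Krein--Rutman applies without any regularity issue. In short, the paper replaces the adjoint eigenfunction by the auxiliary function $\Psi=(\sup a-a)\varphi_{ac}$, which is always bounded and continuous even when $\varphi$ is singular. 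If you want to push the duality route, the honest fix is not a continuous eigenfunction of the adjoint but the family of continuous \emph{supersolutions} of the adjoint, which do exist and yield the one-sided bound $\lambda'\ge\lambda_1^{\mathrm{adj}}$; but then you still need a matching upper bound and an identification $\lambda_1^{\mathrm{adj}}=\lambda_1$, which is additional work not present in your sketch.
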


{The most part of Proposition \ref{prop:vpp} comes  from the work of Coville \cite{Cov-10, Cov-13}. Our contribution to the result is the uniqueness of the real number $\lambda$ such that there exists a nonnegative nontrivial measure solution to \eqref{eq:eigen}. We use this uniqueness result several times in the paper, in particular,  in many of the arguments involving a vanishing viscosity; for instance in the proofs of Theorem \ref{thm:eigen} and Theorem \ref{thm:survival}.}

As well-known in KPP situations, we expect the sign of $ \lambda_1 $ to dictate the long-time persistence of solutions to equation \eqref{eq:evol}. In particular, when $ \lambda_1>0 $, we expect that any nonnegative solution to the Cauchy problem \eqref{eq:evol} starting from a positive bounded initial condition goes to 0 as $ t\to\infty $. Indeed, in this case there exists a positive continuous function $ \psi>0 $ such that 
\begin{equation*}
    \mu(M\star \psi - \psi) +\left(a+\frac{\lambda_1}{2}\right)\psi \leq 0.
\end{equation*}
One can check that $ Ce^{-\frac{\lambda_1}{4}t}\psi(y) $ and $ u(t,x, y) $ are respectively a super- and subsolution of the equation 
\begin{equation*}
    u_t=u_{xx}+\mu(M\star u-u)+a(y) u.
\end{equation*}
with ordered initial data (for $ C $ large enough).
The result is then a consequence of the comparison principle satisfied by the (linear) above equation.

In the $\lambda_1=0$ case, we expect extinction as in the $\lambda_1>0$ case. This is generally the case for scalar reaction-diffusion equations, as well as in the case of some systems (see in particular \cite[Proposition 5.2]{Gir-18}).  
However, the usual strategy, which consists in establishing a contradiction by studying the least multiple of the principal eigenfunction which lies above the $\omega$-limit set of a solution to \eqref{eq:evol}, seems difficult to apply here. Indeed we lack three of the main ingredients for this argument: a Harnack inequality, compactness, and a $L^\infty$ bound on the orbit which would allow us to place a multiple of the principal eigenvector above the $\omega$-limit set. Thus, in the present paper, we leave this particular point open. {Note however that, in the case where $M$ is symmetric ($M(y,z)=M(z,y)$), an argument similar to the one employed in \cite[Section 5]{Bou-Cha-Hen-Kim-17} may lead to an actual proof, by working directly on the parabolic problem.}

In the present paper we focus on the $ \lambda_1<0 $ case, in which we expect survival of the population. To confirm this scenario, we first prove the existence of a nonnegative nontrivial stationary state for equation \eqref{eq:evol}.
\begin{thm}[Survival of the population]\label{thm:survival}
    Let Assumption \ref{hyp:gen} hold and assume further $ \lambda_1<0 $. Then, there exists a nonnegative nontrivial stationary state for equation \eqref{eq:evol}, i.e. a nonnegative nontrivial measure $ p\in M^1(\overline\Omega) $ which satisfies
    \begin{equation}\label{eq:stat}
        \mu(M\star p-p) + p(a(y)-K\star p)=0
    \end{equation}
    in the sense of measures. 
\end{thm}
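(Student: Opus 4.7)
The plan is a vanishing viscosity approximation. For each $\varepsilon>0$, I consider the regularized elliptic problem
\[
\varepsilon \Delta_y p_\varepsilon + \mu(M\star p_\varepsilon - p_\varepsilon) + p_\varepsilon\bigl(a(y) - K\star p_\varepsilon\bigr) = 0 \text{ in }\Omega, \qquad \partial_\nu p_\varepsilon = 0 \text{ on }\partial\Omega.
\]
Neumann conditions are the natural choice because they preserve the identity $\int_\Omega(M\star p - p)\,dy = 0$. I would first construct a classical positive $p_\varepsilon$ for each small $\varepsilon>0$, then extract a weak-$*$ limit $p$ in $M^1(\overline\Omega)$, and finally show that $p$ is nontrivial and solves~\eqref{eq:stat}.

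For fixed $\varepsilon$, a sufficiently large constant $C$ (independent of $\varepsilon$) is a supersolution, since the quadratic negative term $-C^2 K\star 1$ dominates the rest. For a subsolution, let $(\lambda_1^\varepsilon,\varphi_\varepsilon)$ be the principal eigenpair of the regularized linear operator $\varepsilon\Delta + \mu(M\star\cdot - I) + a(y)$ with Neumann conditions (via Krein--Rutman), normalized so that $\int_\Omega\varphi_\varepsilon\,dy = 1$. A direct computation gives
\[
\varepsilon\Delta(\eta\varphi_\varepsilon) + \mu(M\star(\eta\varphi_\varepsilon) - \eta\varphi_\varepsilon) + \eta\varphi_\varepsilon\bigl(a - K\star(\eta\varphi_\varepsilon)\bigr) = \eta\varphi_\varepsilon\bigl(-\lambda_1^\varepsilon - \eta K\star\varphi_\varepsilon\bigr),
\]
so $\eta\varphi_\varepsilon$ is a subsolution as soon as $\eta k_\infty \leq -\lambda_1^\varepsilon$. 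A Schauder fixed-point argument in the order interval $[\eta\varphi_\varepsilon,C]$ (after a suitable monotonizing shift $\alpha p$) then produces a classical positive solution $p_\varepsilon$ between these bounds.

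The critical uniformity comes from the convergence $\lambda_1^\varepsilon \to \lambda_1$ as $\varepsilon\to 0$. Indeed, since $\|\varphi_\varepsilon\|_{M^1}=1$, any weak-$*$ subsequential limit $\varphi$ of $\varphi_\varepsilon$ along $\lambda_1^{\varepsilon_k}\to\lambda_\infty$ is a nonnegative nontrivial Radon measure satisfying $\mu(M\star\varphi-\varphi) + (a+\lambda_\infty)\varphi = 0$ in the sense of measures (the $\varepsilon\Delta$ term vanishes in the limit). By the uniqueness statement of Proposition~\ref{prop:vpp}, $\lambda_\infty = \lambda_1 < 0$, so one may take $\eta := -\lambda_1/(2k_\infty)$ uniformly in $\varepsilon$, which yields $\int_\Omega p_\varepsilon\,dy \geq \eta$.

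Finally, $0\leq p_\varepsilon\leq C$ implies boundedness in $M^1(\overline\Omega)$, so along a subsequence $p_\varepsilon \rightharpoonup p$ weakly-$*$. The uniform continuity of $M$ and $K$ together with an Arzel\`a--Ascoli argument give $M\star p_\varepsilon \to M\star p$ and $K\star p_\varepsilon \to K\star p$ uniformly on $\overline\Omega$. Testing the regularized equation against $\psi\in C^2(\overline\Omega)$ with $\partial_\nu\psi=0$, integrating by parts, and using that $\varepsilon\int p_\varepsilon\Delta\psi\,dy \to 0$, one recovers~\eqref{eq:stat} in the sense of measures; the uniform lower mass bound forces $p \not\equiv 0$. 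The main difficulty is propagating the positive lower bound $\eta$ to the limit: this rests entirely on the uniqueness part of Proposition~\ref{prop:vpp}, without which the regularized eigenvalues $\lambda_1^\varepsilon$ could drift away from $\lambda_1$, the subsolution amplitude could degenerate, and the whole mass of $p_\varepsilon$ could collapse as $\varepsilon\to 0$.
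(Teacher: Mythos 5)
Your overall strategy — regularize with $\varepsilon\Delta$ and Neumann conditions, pass to the limit via Prokhorov and Arzel\`a--Ascoli on $K\star p_\varepsilon$, and invoke the uniqueness part of Proposition~\ref{prop:vpp} to keep the mass bounded below — is exactly the paper's strategy, and those parts of your argument are sound. The gap is in the construction of the regularized solution $p_\varepsilon$.

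You propose a Schauder fixed point on the order interval $[\eta\varphi_\varepsilon, C]$ ``after a suitable monotonizing shift $\alpha p$.'' This is the step that breaks. A local shift $p \mapsto \alpha p$ can absorb the non-monotonicity of a \emph{local} reaction term like $-p^2$, but it cannot compensate the \emph{nonlocal} competition. Concretely, write $f[p](y) = p(y)\bigl(a(y) - (K\star p)(y)\bigr) + \alpha p(y)$. If $p \leq q$ with $p = q$ at some point $y$ but $p < q$ elsewhere, then
\begin{equation*}
f[q](y) - f[p](y) = -p(y)\int_\Omega K(y,z)\bigl(q(z)-p(z)\bigr)\,dz \leq 0,
\end{equation*}
so $f$ is not order-preserving however large $\alpha$ is, because the perturbation $q-p$ acts at points $z\neq y$ where the shift $\alpha p(y)$ sees nothing. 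Consequently the solution operator $T$ need not map $[\eta\varphi_\varepsilon,C]$ into itself, and the Schauder argument in the order interval is not available. This is precisely the absence of a comparison principle that the introduction warns about as the central difficulty of nonlocal competition. The paper sidesteps this in Theorem~\ref{thm:ex-stat-eps} by a global bifurcation argument (Theorem~\ref{thm:orientedbifurcation}): the nonlinearity $p \mapsto p\,K\star p$ is Fr\'echet differentiable with vanishing derivative at $0$ (Lemma~\ref{lem:Frechet}), the solutions are a priori bounded (Lemma~\ref{lem:apriorip}, obtained by a local maximum principle plus a boundary-layer supersolution rather than a global constant), and the Krein--Rutman structure of the linearization then forces a branch of positive solutions emanating from $\alpha = 1/\lambda(T)$ to reach $\alpha = \sup a$. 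No comparison principle is invoked. The mass lower bound is then obtained a posteriori in Lemma~\ref{lem:lowupintpeps} by a sliding comparison against $\varphi^\varepsilon$ applied to the \emph{linearized} operator, which is a legitimate use of comparison because the linear problem does satisfy one.

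To repair your proposal, you would either need to replace the Schauder-in-order-interval step by a degree-theoretic or global bifurcation argument as the paper does, or restrict to a modified problem (as the paper does with $\beta>0$ in \eqref{eq:evolstat-beta} for the traveling-wave construction) where increased self-competition restores a comparison principle for small densities — but that modification changes the equation and would not directly give a solution of \eqref{eq:stat}.
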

Under the hypothesis for concentration (Assumption \ref{hyp:dega}) and in the special case where the competition kernel $ K(y,z) $ is independent of the trait $ y $,  Bonnefon, Coville and Legendre \cite{Bon-Cov-Leg-17} have shown that the solution to \eqref{eq:stat} has a singularity concentrated in $ \Omega_0 $ when $ \mu $ is small. A key argument was a separation of variables method, allowed by the assumption $ K(y,z)=K(z)$. Here we show that the concentration phenomenon occurs under a  more general hypothesis on $ K $, namely that the trait $ y\in\Omega_0 $ suffers less from the competition than any other trait. Since $ \Omega_0 $ also maximizes the basic reproductive ratio $ a(y) $, it seems natural to expect concentration in $ \Omega_0$ in this case. 
\begin{assumption}[Nonlinear concentration]\label{hyp:dom}
    In addition to Assumption \ref{hyp:dega}, we suppose that
    \begin{equation*}
        \forall (y,z)\in\overline\Omega\times\overline\Omega,\quad K(0, z)\leq K(y,z).
    \end{equation*}
\end{assumption}
\begin{thm}[Concentration on dominant trait]\label{thm:concentration}
    Let Assumption \ref{hyp:dom} hold, and assume $ \lambda_1<0 $. Then, there exists $ \mu_0>0 $ such that, for any $ \mu<\mu_0 $, the measure $ p $, constructed in Theorem \ref{thm:survival}, has a singular part concentrated in $ \Omega_0  $.
\end{thm}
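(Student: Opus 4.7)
The plan is to rewrite \eqref{eq:stat} as an eigenvalue equation with a modified fitness and to use Assumption \ref{hyp:dom} to transfer the concentration bound of Assumption \ref{hyp:dega} to this modified fitness \emph{uniformly} in $p$. Define $I(y) := (K\star p)(y)$ and $\tilde a(y) := a(y) - I(y)$; since $K$ is continuous and $p$ is a finite Radon measure, $I$ and hence $\tilde a$ are continuous on $\overline\Omega$. Equation \eqref{eq:stat} then becomes, in the sense of measures,
\begin{equation*}
    \mu (M\star p)(y)\,dy = (\mu - \tilde a(y))\,p(dy).
\end{equation*}
Writing $p = f\,dy + p_s$ for the Lebesgue decomposition, equating absolutely continuous and singular parts gives $\mu(M\star p)(y) = (\mu - \tilde a(y))f(y)$ for a.e.\ $y$ and $(\mu - \tilde a)\,p_s = 0$; in particular $p_s$ is supported on $\{y \in \overline\Omega : \tilde a(y) = \mu\}$.

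The second step uses Assumption \ref{hyp:dom} to align $\tilde a$ with $a$. Since $K(0,z) \leq K(y,z)$, one has $I(0) \leq I(y)$, so together with $a(0) = \sup a$ this yields $\sup \tilde a = \tilde a(0)$ and $\{\tilde a = \sup \tilde a\} \subseteq \Omega_0$. Moreover
\begin{equation*}
    \sup \tilde a - \tilde a(y) = (a(0) - a(y)) + (I(y) - I(0)) \geq \sup a - a(y),
\end{equation*}
so the concentration bound transfers to $\tilde a$ with the $p$-independent constant $C_0 := \int_\Omega (\sup a - a(y))^{-1}\,dy < \infty$. Next I would check that $\sup \tilde a \leq \mu$: the continuous function $M\star p$ is bounded below by $m_0|p| > 0$, so if the open set $\{\tilde a > \mu\}$ were nonempty the identity $\mu(M\star p) = (\mu - \tilde a)f$ would equate a strictly positive quantity with a nonpositive one on a set of positive Lebesgue measure. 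This gives $\sup \tilde a \leq \mu$, and the displayed bound upgrades to $\mu - \tilde a \geq \sup a - a$ pointwise on $\Omega$.

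To conclude I argue by contradiction: suppose $p_s = 0$, so $p = f\,dy$ with $\|f\|_{L^1} > 0$. The a.e.\ identity gives $f(y) = \mu(M\star p)(y)/(\mu - \tilde a(y))$. Using $(M\star p)(y) \leq m_\infty \|f\|_{L^1}$ and integrating,
\begin{equation*}
    \|f\|_{L^1} \leq \mu\, m_\infty\, \|f\|_{L^1} \int_\Omega \frac{dy}{\mu - \tilde a(y)} \leq \mu\, m_\infty\, C_0\, \|f\|_{L^1},
\end{equation*}
which forces $\mu \geq (m_\infty C_0)^{-1}$. Taking $\mu_0$ smaller than this threshold (and smaller than the bound from Assumption \ref{hyp:gen}(6)) yields a contradiction for $\mu < \mu_0$, so $p_s \neq 0$; by the first paragraph its support lies in $\{\tilde a = \mu\} \subseteq \Omega_0$, which is the desired conclusion. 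The delicate point, for which Assumption \ref{hyp:dom} is precisely tailored, is the $p$-independence of $C_0$: without the dominance $K(0,z) \leq K(y,z)$, the modified fitness $\tilde a$ could be less singular at its maximum than $a$, and the contradictory lower bound on $\mu$ would fail to be uniform in the unknown measure $p$.
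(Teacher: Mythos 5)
Your proof is correct, and it takes a genuinely different route from the paper's. Both arguments begin the same way: rewrite \eqref{eq:stat} as a linear eigenvalue equation for the modified fitness $b(y) = \tilde a(y) = a(y) - (K\star p)(y)$, and use Assumption~\ref{hyp:dom} to show $\sup b = b(0)$, $\{b = \sup b\} \subseteq \Omega_0$, and --- crucially --- $\sup b - b(y) \geq \sup a - a(y)$ with a $p$-independent $L^1$ bound. From there the paths diverge. The paper feeds this comparison into Corollary~\ref{cor:compmucrit}, which compares the Krein--Rutman eigenvalues $\gamma_1^a$ and $\gamma_1^b$ of the associated integral operators and then invokes the full trichotomy of Theorem~\ref{thm:eigenpair}~\ref{item:eigenpair-M1}; this yields the sharp critical threshold $\mu_0 = 1/\gamma_1^1$ from Theorem~\ref{thm:smallmu}. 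You bypass the eigenvalue machinery entirely: you perform the Lebesgue decomposition directly (as in Step~1 of the proof of Theorem~\ref{thm:eigenpair}), verify $\sup\tilde a \leq \mu$ by a pointwise sign argument, and then get a contradiction from the integral estimate $\|f\|_{L^1} \leq \mu m_\infty C_0 \|f\|_{L^1}$ with $C_0 = \|(\sup a - a)^{-1}\|_{L^1(\Omega)}$. This is more elementary and self-contained, but the resulting threshold $(m_\infty C_0)^{-1}$ is in general smaller than the paper's $1/\gamma_1^1$: indeed, evaluating $\mathcal M^1[\Phi] = \gamma_1^1\Phi$ at a maximum of the normalized eigenfunction $\Phi$ shows $\gamma_1^1 \leq m_\infty C_0$. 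Since the theorem only asserts the existence of \emph{some} positive $\mu_0$, this loss of sharpness is harmless here, but it would matter if one wanted to identify the concentration threshold with the critical mutation rate. Your closing step (support of $p_s$ contained in $\{\tilde a = \mu\} = \{\tilde a = \sup\tilde a\} \subseteq \Omega_0$) is correct once one notes that $p_s \neq 0$ forces $\sup\tilde a = \mu$.
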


To better characterize the spatial dynamics of solutions to \eqref{eq:evol}, we are going to construct traveling waves for \eqref{eq:evol}. 
\begin{defn}[Traveling wave]\label{def:TW}
    A \textit{traveling wave} for equation \eqref{eq:evol} is a couple $ (c,u) $ where $ c\in\mathbb R $ and  $ u $ is a locally finite transition kernel (see Definition \ref{def:TK}) defined on $ \mathbb R\times\overline\Omega $. We require that $ (c,u) $ satisfies:
    \begin{equation}\label{eq:TW}
        -cu_x-u_{xx}=\mu(M\star u-u) + u(a-K\star u)
    \end{equation}
    in the sense of distributions, and that the measure $ u $ satisfies the limit conditions:
    \begin{eqnarray}
        \label{eq:limit-TW-left}
        \underset{\bar x\to+\infty}{\liminf}\int_{\mathbb R\times\overline\Omega} \psi(x+\bar x,y)u(dx, dy)>0, \\
        \label{eq:limit-TW-right}
        \underset{\bar x\to-\infty}{\limsup}\int_{\mathbb R\times\overline\Omega} \psi(x+\bar x,y)u(dx, dy)=0
    \end{eqnarray}
    for any positive test function $ \psi\in C_c(\mathbb R\times\overline\Omega)$.
\end{defn}
{Condition \eqref{eq:limit-TW-left} differs from the usual behavior of traveling waves as defined, for instance, in \cite{Wei-82,Ber-Ham-02,Ogi-Mat-99-DCDS}, in which the convergence to a stationary state is required. Because of the nonlocal competition, indeed, it is very difficult to prove that a solution to equation \eqref{eq:evol} converges to a stationary state when $t\to\infty$. Imposing a weak condition like \eqref{eq:limit-TW-left} on the back of the wave is the usual way to go around this issue. One can refer for instance to \cite{Ber-Nad-Per-Ryz-09,Alf-Cov-Rao-13, Bou-Cal-14,Gir-18}, where a similar condition is imposed on the back of traveling waves.
}
\medskip

We are now in the position to state our main result, which concerns the existence of a traveling wave for \eqref{eq:evol}.
\begin{thm}[Existence of a traveling wave]\label{thm:TW}
    Under Assumption \ref{hyp:gen} and if $\lambda_1<0$, there exists a traveling wave $(c,u)$ for \eqref{eq:evol} with $c=c^*:=2\sqrt{-\lambda_1} $. 
\end{thm}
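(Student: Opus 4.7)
The plan is to construct the traveling wave by a vanishing viscosity procedure in the phenotypic variable $y$: for each $\varepsilon>0$ one builds a classical traveling wave of a regularized equation, then passes to the limit $\varepsilon\to 0$. Concretely, I would replace \eqref{eq:evol} with
\begin{equation*}
u_t=u_{xx}+\varepsilon\Delta_y u+\mu(M\star u-u)+u\bigl(a(y)-K\star u\bigr)
\end{equation*}
on $\mathbb{R}\times\Omega$, imposing homogeneous Neumann boundary conditions on $\partial\Omega$ (which behave well with respect to integration in $y$, as announced in the introduction). The extra diffusion in $y$ restores compactness and a Harnack inequality, so the associated linear eigenvalue problem admits by Krein--Rutman a smooth positive principal eigenfunction $\varphi_\varepsilon$ with eigenvalue $\lambda_1^\varepsilon$. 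A vanishing viscosity argument, combined with the uniqueness part of Proposition \ref{prop:vpp}, shows $\lambda_1^\varepsilon\to \lambda_1$, and hence $c_\varepsilon:=2\sqrt{-\lambda_1^\varepsilon}\to c^*$, as $\varepsilon\to 0$.

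For $\varepsilon$ small enough that $\lambda_1^\varepsilon<0$, I would then construct a smooth traveling wave $(c_\varepsilon,u_\varepsilon)$ at the critical speed $c_\varepsilon$ by a classical cylinder-truncation / fixed-point method: on $(-R,R)\times\Omega$ solve the elliptic traveling wave equation, using as a sub-solution the KPP profile $\max\bigl(\eta e^{-\sqrt{-\lambda_1^\varepsilon}(x-x_0)}\varphi_\varepsilon(y),\,0\bigr)$ obtained from the linearized equation at the critical speed, and as a super-solution the constant $\sup a/k_0$. After normalizing by $\int_\Omega u_\varepsilon(0,y)\,dy=\kappa_0$ for a fixed small $\kappa_0>0$ and sending $R\to\infty$, the sub-/super-solution structure provides $\varepsilon$-uniform $L^\infty$ bounds, while the $x$-equation gives uniform $C^{2,\alpha}$ regularity in $x$ of $u_\varepsilon(\cdot,y)$ integrated against any $\psi\in C_c(\overline\Omega)$. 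Weak-$*$ compactness in $M^1_{\mathrm{loc}}(\mathbb{R}\times\overline\Omega)$ yields along a subsequence a limit measure $u$, the $x$-regularity forces the disintegration $u(dx,dy)=u(x,dy)\,dx$ required by Definition \ref{def:TK}, and equation \eqref{eq:TW} is preserved in the distributional sense since $M\star$ and $K\star$ are continuous for the narrow topology against the fixed $C^\alpha$ kernels.

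The right-end condition \eqref{eq:limit-TW-right} should follow from an exponential upper envelope $u_\varepsilon(x,y)\leq C e^{-\lambda x}\varphi_\varepsilon(y)$ valid on any half-line $[x_0,+\infty)$ uniformly in $\varepsilon$, obtained because such an exponential is a super-solution of the regularized problem at the critical speed. The main expected obstacle is the back condition \eqref{eq:limit-TW-left}: the lack of a comparison principle for \eqref{eq:evol} forbids using a stationary state of \eqref{eq:evol} itself as a lower barrier. Following the hint from the introduction, I would introduce the auxiliary equation with increased self-competition
\begin{equation*}
v_t=v_{xx}+\varepsilon\Delta_y v+\mu(M\star v-v)+v\bigl(a(y)-K\star v-\rho v\bigr),
\end{equation*}
chosen with $\rho$ large so that it satisfies a comparison principle (the extra concave self-interaction breaks the non-monotone dependence on $u$ via $K\star u$ into a form amenable to maximum principles). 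A positive stationary measure $\underline p_\varepsilon$ of this auxiliary problem, which exists by Theorem \ref{thm:survival} applied to the modified equation, is an $x$-independent sub-solution of \eqref{eq:TW}. Propagating $\underline p_\varepsilon$ under the parabolic flow of the auxiliary equation and comparing it with the translates $u_\varepsilon(\,\cdot\,+\bar x,\cdot)$ produces a positive lower bound that is uniform in $\bar x$ large and in $\varepsilon$, and hence survives the passage to the measure limit, yielding \eqref{eq:limit-TW-left}. The delicate point in this last step is to ensure that $\underline p_\varepsilon$ can genuinely be placed below $u_\varepsilon$ at the base point of propagation while remaining of positive total mass as $\varepsilon\to 0$; this is where the choice of $\rho$ and of the normalization $\kappa_0$ must be tuned together, and it is the step I expect to require the most care.
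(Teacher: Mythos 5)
Your high-level architecture matches the paper closely: vanishing viscosity in $y$ with Neumann boundary conditions, $\lambda_1^\varepsilon\to\lambda_1$ via the uniqueness in Proposition \ref{prop:vpp}, an auxiliary problem with extra self-competition to manufacture subsolutions, and a Prokhorov/transition-kernel passage to the measure limit. However, there are genuine gaps in the way you propose to execute the intermediate steps.

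The most serious one is the existence argument in the truncated cylinder. You propose a sub-/super-solution (monotone iteration) scheme with $\sup a/k_0$ as super-solution and a critical-speed KPP exponential as sub-solution, but neither claim is sound, and the scheme itself cannot close. First, $\sup a/k_0$ is an $\varepsilon$-independent bound for $\int_\Omega u\,dy$, not a pointwise bound on $u$; the paper's pointwise $L^\infty$ bound on the regularized wave (Lemma \ref{lem:aprioribox}\ref{item:aprioribox-boundinfty}) is $\varepsilon$-dependent and requires the elaborate boundary-layer supersolution construction of Lemma \ref{lem:apriorip}. Second, and more fundamentally, because of the nonlocal term $-uK\star u$ the regularized equation does not satisfy a comparison principle (Lemma \ref{lem:comp} only recovers one in the small-density regime $u\le \mu m_0/k_\infty$), so a monotone sub/super-solution iteration simply does not converge. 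The paper replaces this step by a global continuation/Leray--Schauder degree argument (Theorem \ref{thm:solbox}), using the normalization \eqref{eq:norm} to pin a speed $c\in(0,c_\varepsilon^*)$ and only afterwards deducing $c=c_\varepsilon^*$ from the limit conditions (Lemma \ref{lem:c*-beta}). Trying to build directly at the critical speed, as you suggest, also runs into the familiar degeneracy that at $c=2\sqrt{-\lambda_1^\varepsilon}$ the linearized problem has a double root, so there is no second exponential to subtract and the usual KPP subsolution construction fails; the paper avoids needing such a subsolution altogether.

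The other point that is substantially under-developed is the passage to the limit in the quadratic term $(K\star u)u$. Weak-$*$ convergence of $u^{\varepsilon_n}$ together with continuity of $K\star$ is not enough, because the product of two weakly converging sequences does not converge. The paper devotes a large part of Step 3 of its proof to exactly this: approximate $K$ by smooth kernels $K^i$ with zero boundary flux (Lemma \ref{lem:zerofluxdense}\ref{item:zeroflux-Omega2}), test the regularized equation against $K^i$ to obtain uniform $C^\alpha$ bounds in $x$ on $v_i^n(x,y)=\int K^i(y,z)u^n(x,z)\,dz$ via elliptic estimates, and run a careful double diagonal extraction. Your sentence ``the $x$-regularity forces the disintegration'' gestures at this, but it does not supply the uniform modulus for $K\star u^n$ that makes $(K\star u^{\varphi(n)})u^{\varphi(n)}$ converge to $(K\star u)u$; moreover, the disintegration $u(dx,dy)=u(x,dy)dx$ is obtained in the paper not from $C^{2,\alpha}$ regularity in $x$ but from the uniform mass bound $u([a,b]\times\omega)\le C|b-a|$ together with the transition-kernel lemma in the appendix. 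Finally, for the back condition \eqref{eq:limit-TW-left} the paper's actual argument is elliptic (a sliding comparison with the reflected monotone front $\tilde v(x,y)=v(-x,y)$ of the $\beta$-problem, Lemma \ref{lem:infxneg}); your ``parabolic flow'' formulation is vaguer and would need to be reformulated to avoid circularity, since the object you want to propagate under the flow is precisely the wave whose back behavior you are trying to control.
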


{As it is the case in many nonlocal problem, the uniqueness and stability of the traveling waves are unknown. In this paper, we focus on the construction of a traveling wave for $c=c^*$. Altough this is expected, we leave the construction of traveling waves for $c>c^*$ for future work, as well as a proof of the non-existence of traveling waves for $c<c^*$.}\
In the general case, it seems very involved to determine whether $ u $ has a singular part or not. Nevertheless, there are some particular cases where singular traveling waves do exist.

\begin{rem}[Traveling waves with a singular part]
    In the special case where $ K $ is independent from $y$ ($K(y,z)=K(z) $), a separation of variables argument --- see \cite{Ber-Jin-Syl-16} for a related argument--- allows us to construct traveling waves that actually have a singular part in $ \overline\Omega $. From Proposition \ref{prop:vpp}, under Assumption \ref{hyp:dega},  there is $ \mu_0>0 $ such that, for any $ \mu<\mu_0 $, there exists a  measure eigenvector $ \varphi\in M^1(\overline\Omega) $ with a singular part concentrated in $ \Omega_0 $. We choose such a $ \varphi $ with normalization $ \int_{\overline\Omega}K(z)\varphi(dz) = 1 $. If moreover $ \lambda_1<0 $, then there exists a positive front $ \rho $, connecting $ -\lambda_1 $ to 0,  for the Fisher-KPP equation 
    \begin{equation}\label{eq:eigen-KPP}
        -\rho_{xx}-c\rho_x=\rho(-\lambda_1-\rho)
    \end{equation}
    for any $ c\geq 2\sqrt{-\lambda_1} $. If we define $ u(x,dy):=\rho(x)\varphi(dy)$, we see that $ u $ matches the definition of a traveling wave. Hence for any $ x\in\mathbb R $, $ u(x, \cdot) $ possesses a singular part concentrated in $ \Omega_0 $.
\end{rem}

\bigskip

The organization of the paper is as follows. In Section \ref{sec:eigen} we study related eigenvalue problems for which concentration may occur. Section \ref{sec:stat} is devoted to the construction of stationary states through a bifurcation method. Last, we construct a (possibly singular) measure traveling wave in Section \ref{sec:TW}.

\section{On the principal eigenvalue problem}\label{sec:eigen}

In this section, we prove  Proposition \ref{prop:vpp}, which allows an approximation by an elliptic Neumann eigenvalue problem in Theorem \ref{thm:eigen} of crucial importance for the construction of steady states in Section \ref{sec:stat}.

\subsection{The principal eigenvalue of nonlocal operators}\label{ssec:eigen}

Under Assumption \ref{hyp:gen}, Coville \textit{et al.}\ \cite{Cov-10,Cov-13,Cov-Dav-Mar-13} have extensively studied the principal eigenvalue problem associated with \eqref{eq:evol}. We summarize and {extend} the results in \cite{Cov-13}. Our contribution is to show the uniqueness of the principal eigenvalue as a solution to \eqref{eq:eigen} in the sense of measures. 

\begin{thm}[On the principal eigenproblem \eqref{eq:eigen}]\label{thm:eigenpair}
    \begin{enumerate}
        \item \label{item:eigenpair-unique}
            Let Assumption \ref{hyp:gen} be satisfied. Then, there exists a unique $ \lambda\in\mathbb R $ such that \eqref{eq:eigen} admits a nonnegative nontrivial Radon measure solution, and $ \lambda=\lambda_1$. 

        \item \label{item:eigenpair-dega}
            Let Assumption \ref{hyp:dega} hold, and let $ -\gamma_1 $ be the principal eigenvalue\footnote{We use the "minus" sign  for consistency between  Definition \ref{def:vpp} and the algebraic notion generally used in the Krein-Rutman Theorem : $ \mathcal M[\Phi]=\gamma_1^1\Phi$.}of the operator  
            \begin{equation*}
                \mathcal M[\psi]:=\int_\Omega \mu M(y,z)\frac{\psi(z)}{\sup a-a(z)}dz,
            \end{equation*}
            acting on $ \psi\in C_b(\Omega) $. Then the following holds: 
            \begin{enumerate}[label=(\textit{\roman*}),ref=(\textit{\roman*})]
		    \item \label{item:eigenpair-C0} $\gamma_1>1 $ if, and only if, $ \lambda_1 < -(\sup a-\mu) $. In this case, any solution to \eqref{eq:eigen} in the sense of measures  is a {pointwise solution}. 
                \item \label{item:eigenpair-L1} $ \gamma_1=1 $ if, and only if,  $ \lambda_1=-(\sup a-\mu) $ and there exists a nonnegative nontrivial function $\varphi\in L^1(\Omega) $ solution to \eqref{eq:eigen} almost everywhere. In this case, $ \varphi $ is unique (up to multiplication by a positive constant).      
                \item \label{item:eigenpair-M1} $\gamma_1<1 $ if, and only if,  $ \lambda_1=-(\sup a-\mu) $ and there exists a nonnegative singular measure $ \varphi\in M^1(\overline\Omega) $ solution to  \eqref{eq:eigen}. In this case, any nonnegative nontrivial solution to \eqref{eq:eigen} has a singularity concentrated in $ \Omega_0 $.
            \end{enumerate}
    \end{enumerate}
\end{thm}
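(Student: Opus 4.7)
\textit{Plan for proving Theorem \ref{thm:eigenpair}.} The two parts of the statement will be tackled separately: Part~(2) is essentially a quotation of Coville's analysis \cite{Cov-10, Cov-13}, while the genuine novelty is the uniqueness statement of Part~(1), on which I would focus most of my effort.

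For Part~(1), I would first analyze the structure of an arbitrary nonnegative nontrivial measure solution $\varphi\in M^1(\overline\Omega)$ of \eqref{eq:eigen}. Rewriting the equation as
$$(\mu - a(y) - \lambda)\,\varphi(dy) \;=\; \mu\, M\star\varphi(y)\, dy$$
and observing that the right-hand side is a continuous density bounded below by $\mu\, m_0\,\varphi(\overline\Omega) > 0$, I would deduce three structural facts: (i) the support of $\varphi$ is contained in $\{a(y)+\lambda \leq \mu\}$, because on any open subset of $\{a+\lambda > \mu\}$ the left-hand side is a nonpositive measure while the right-hand side is strictly positive; (ii) evaluating (i) at a point of maximum of $a$ forces $\lambda \leq -(\sup a - \mu)$; (iii) on the open set $\{a(y)+\lambda<\mu\}$, the measure $\varphi$ is absolutely continuous with continuous density $\mu\, M\star\varphi(y)/(\mu - a(y) - \lambda)$, so any singular part of $\varphi$ is concentrated in $\{a+\lambda = \mu\}$.

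For uniqueness of $\lambda$, I would split into the two regimes allowed by (ii). If $\lambda_1 = -(\sup a - \mu)$, then (ii) already pins down $\lambda$ to this unique value. If instead $\lambda_1 < -(\sup a - \mu)$, then by (iii) any eigenvector of the operator $\mathcal L[\varphi] := \mu(M\star\varphi - \varphi) + a\varphi$ associated to $\lambda_1$ is continuous and strictly positive, and the same holds for its adjoint
$$\mathcal L^\ast[\psi](z) := \mu\int_{\overline\Omega} M(y,z)\psi(y)\,dy - \mu\psi(z) + a(z)\psi(z),$$
which has an eigenequation of identical structure (and the same principal eigenvalue $\lambda_1$ by Krein--Rutman for compact positive operators). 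Denoting $\psi_1^\ast > 0$ the continuous adjoint eigenfunction, for any measure solution $\tilde\varphi$ at eigenvalue $\tilde\lambda$ I would test \eqref{eq:eigen} against $\psi_1^\ast$ and invoke Fubini together with the adjoint identity $\mu\, M^\ast\star\psi_1^\ast = (\mu - a - \lambda_1)\psi_1^\ast$ to arrive at
$$(\tilde\lambda - \lambda_1)\int_{\overline\Omega}\psi_1^\ast(y)\,\tilde\varphi(dy) \;=\; 0,$$
which forces $\tilde\lambda = \lambda_1$ as the integral is strictly positive.

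For Part~(2), I would follow Coville's line of attack. Setting $\lambda = -(\sup a - \mu)$ and writing $\varphi = \varphi_{ac} + \varphi_s$ with $\varphi_s$ concentrated on $\Omega_0$, the equation \eqref{eq:eigen} reduces on $\Omega\setminus\Omega_0$ to a fixed-point relation for the density, which after dividing through by $\sup a - a(y)$ takes the form of a Krein--Rutman problem for the compact positive operator $\mathcal M$ on $C_b(\Omega)$. The three cases (i)--(iii) then correspond respectively to $\gamma_1 > 1$, $\gamma_1 = 1$, and $\gamma_1 < 1$, with the singular concentration in (iii) arising from the need to place compensating mass on the zero set of $\sup a - a$. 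The main obstacle I anticipate is ensuring the continuity of the adjoint eigenfunction $\psi_1^\ast$ used in the uniqueness argument, and handling the borderline case in which a vanishing-viscosity approximation $\mathcal L^\varepsilon := \mathcal L + \varepsilon\Delta$ with zero Neumann conditions is needed to produce smooth eigenpairs that can be paired with a potentially singular $\tilde\varphi$ and then passed to the limit $\varepsilon\to 0$ via tightness --- an approach consistent with the vanishing-viscosity strategy employed in Sections~\ref{sec:stat} and~\ref{sec:TW}.
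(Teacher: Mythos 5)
Your structural analysis (i)--(iii) coincides with the system \eqref{eqlem:twomeasures} in the paper's proof, and your treatment of Part~(2) is, as in the paper, essentially a quotation of Coville. The genuine difference is the adjoint-eigenfunction pairing you propose for the uniqueness of $\lambda$ in Part~(1): this is a cleaner route, in principle, than the paper's supersolution/sliding comparison. However, two gaps remain.

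First, the claim that ``(ii) already pins down $\lambda$'' when $\lambda_1 = -(\sup a - \mu)$ is a non sequitur. Observation (ii) only gives $\lambda \le -(\sup a - \mu) = \lambda_1$; you must still exclude $\lambda < \lambda_1$. In that regime (iii) tells you $\tilde\varphi$ is continuous and positive, but the supremum definition \eqref{eq:vpp} of $\lambda_1$ only yields $\tilde\lambda \le \lambda_1$ again --- the reverse inequality is precisely what the paper's comparison argument (or your adjoint argument) provides. This branch is not, in fact, free.

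Second, the adjoint argument hinges on the existence of a \emph{continuous} positive adjoint eigenfunction $\psi_1^\ast$ with eigenvalue $\lambda_1$. The operator $\mathcal L = \mu(M\star\cdot - \cdot) + a(\cdot)$ is not compact on $C_b(\overline\Omega)$, so Krein--Rutman does not apply to it directly; the reduction to a compact fixed-point problem (dividing by $\mu - a(y) - \lambda$) requires this quantity to be bounded away from zero, which fails exactly at the borderline $\lambda = -(\sup a - \mu)$. Under Assumption \ref{hyp:dega} with small $\mu$, that borderline is $\lambda_1$ itself, and the adjoint problem --- which sees the same integrability condition on $1/(\sup a - a)$ --- may likewise have only singular measure eigenvectors, so the pairing of $\psi_1^\ast$ against a possibly singular $\tilde\varphi$ would then be undefined. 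You flag this as the main obstacle and sketch a vanishing-viscosity fix, but the plan does not carry it out. The paper sidesteps the issue entirely by transferring the problem to the compact operator $\mathcal M$ acting on the auxiliary function $\Psi(y) := (\sup a - a(y))\varphi_{ac}(y)$, and then invoking the existence result \cite[Theorem 1.1]{Cov-13} for a contradiction at the borderline eigenvalue.
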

\begin{proof}
    The existence of a measure-valued solution to \eqref{eq:eigen} has been shown in \cite[Theorem 1.2]{Cov-13}. Here we focus on the uniqueness of $ \lambda $. We first prove the uniqueness of $ \lambda$  when the complement of Assumption \ref{hyp:dega} holds, by showing that any eigenvector is in fact a continuous eigenfunction.  Then, we show that uniqueness holds under Assumption \ref{hyp:dega}. Finally we prove the trichotomy in item \ref{item:eigenpair-dega}.

    \medskip

	\textbf{Step 1}: Let the complement of Assumption \ref{hyp:dega} hold, i.e. $ \frac{1}{\sup a-a(y)}\not\in L^1(\Omega)$. Let $ \varphi\in M^1(\overline\Omega) $ be a nonnegative nontrivial Radon measure solution to \eqref{eq:eigen}. Then {by} the Lebesgue-Radon-Nikodym Theorem \cite[Theorem 6.10]{Rud-74}, there exists a nonnegative $ \varphi_{ac}\in L^1(\Omega) $ and a nonnegative measure $ \varphi_s\in M^1(\overline\Omega) $, which is singular with respect to the Lebesgue measure on $ \Omega $, such that:
    \begin{equation*}
        \varphi= \varphi_{ac}dy + \varphi_s.
    \end{equation*}
    Equation \eqref{eq:eigen} is then equivalent to the following system:
    \begin{equation}\label{eqlem:twomeasures}
        \begin{system}{lr}
            \mu M\star \varphi + (a(y)-\mu+\lambda)\varphi_{ac}=0 &\quad a.e. (dy) \\
            a(y)-\mu+\lambda  = 0 &\quad a.e.(\varphi_s).
        \end{system}
    \end{equation}
    This readily shows that $ (a(y)-\mu+\lambda)\varphi_{ac}=-\mu M\star\varphi $ is a continuous negative function and in particular 
    $\lambda\leq-(\sup a-\mu)$.
    \medskip

    We distinguish two cases:

	{\it Case 1:} Assume first that $ \lambda < -(\sup a-\mu) $. Then the second line of \eqref{eqlem:twomeasures} implies $ \mathrm{supp\,}\varphi_s=\varnothing $, i.e. $ \varphi_s\equiv0 $. In this case we have $\varphi_{ac}(y)=\frac{\mu M\star \varphi_{ac}(y)}{-\lambda - (a(y)-\mu) }$, which is a positive continuous function {since the kernel $M(y,z)$ is itself continuous}. A classical comparison argument {(such as the one presented below on Step 2 case 1)} then shows $ \lambda=\lambda_1 $.

    {\it Case 2:} Assume $ \lambda = -(\sup a-\mu) $. Then 
    \begin{equation*}
        \varphi_{ac}(y)=\frac{\mu M\star \varphi}{\sup a-a(y)},
    \end{equation*}
    and since $ \mu (M\star \varphi)(y)\geq \mu m_0\int_\Omega\varphi(dz)>0 $, this implies $ \varphi_{ac}\not\in L^1(\Omega) $, which contradicts the definition of $ \varphi_{ac} $.

    We have thus shown the uniqueness of the real number $ \lambda $ such that there exists a solution $(\lambda, \varphi) $ to \eqref{eq:eigen}.

    \medskip

	\textbf{Step 2:} Let Assumption \ref{hyp:dega} hold. We first establish {that  $ \gamma_1 $ is well-defined}, then resume the proof. 

    The operator $ \mathcal M $ defined above is compact by virtue of the Arzel\`a-Ascoli Theorem \cite[Theorem 4.25]{Bre-11}. Since for any $\psi\geq 0 $, $ \psi\not\equiv 0 $, we have 
    \begin{align*}
        \forall y\in\overline\Omega, \quad\mathcal M[\psi](y)&=\int_\Omega\mu M(y,z)\frac{\psi(z)}{\sup a-a(z)}dz \\
                                                             &\geq \mu m_0\int_\Omega\frac{\psi(z)}{\sup a-a(z)}dz>0,
    \end{align*}
	$ \mathcal M $ satisfies the hypotheses of the Krein-Rutman Theorem \cite[Theorem 6.13]{Bre-11}, which ensures {that the real number $ \gamma_1 $, defined by  
    $\mathcal M[\Psi]=\gamma_1\Psi$ 
	for a positive $ \Psi\in C_b(\Omega) $, is  well-defined and positive.}

	Let us resume the proof. Let $ (\lambda, \varphi) $ be a solution to \eqref{eq:eigen} in the sense of measures. Then, as above, {by} Lebesgue-Radon-Nikodym Theorem \cite[Theorem 6.10]{Rud-74}, there exists a nonnegative $ \varphi_{ac}\in L^1(\Omega) $ and a nonnegative measure $ \varphi_s \perp dy$, such that
    $ \varphi= \varphi_{ac}dy + \varphi_s$.
	In this context, equation \eqref{eq:eigen} is equivalent to system \eqref{eqlem:twomeasures}, and in particular we have $ \lambda\leq-(\sup a-\mu)$. We subdivide the rest of the proof in two {cases}.

    \medskip

	1. Let us first assume $ \lambda< -(\sup a-\mu) $. Then {it follows from} equation \eqref{eqlem:twomeasures} that $ \varphi_s\equiv 0 $. Moreover, $ \varphi_{ac}(y)=\frac{\mu (M\star \varphi)(y)}{-\lambda - (a(y)-\mu)} $ is then a positive bounded continuous function and satisfies: 
    \begin{equation*}
        \mu( M\star \varphi_{ac} -\varphi_{ac}) +(a(y)+\lambda)\varphi_{ac}=0
    \end{equation*}
    in the classical sense.

	Let us show that $ \lambda=\lambda_1 $. Let $ ({\overline\lambda}, {\overline\varphi})\in \mathbb R\times {C(\Omega)} $ be a supersolution to \eqref{eq:eigen}, i.e. $ \overline\varphi>0 $ and 
    \begin{equation*}
        \mu M\star {\overline\varphi} + {\overline\varphi}(a(y)-\mu+{\overline\lambda}) \leq 0.
    \end{equation*}
    Then $ {\overline\varphi}(0){(-a(0)+\mu-{\overline\lambda})} \geq \mu M\star {\overline\varphi}>0 $ and thus $ {\overline\lambda} <-(a(0)-\mu)=-(\sup a-\mu) $.
    {Moreover $\overline\varphi(y)\geq \frac{\mu M\star\overline\varphi(y)}{\mu-a(y)-\overline\lambda}\geq\frac{\mu m_0 \int\varphi}{-(\inf a + \overline\lambda-\mu)}>0$ and thus $\overline\varphi$ is uniformly bounded from below. In particular,}
    $\alpha:=\sup\{\zeta>0\,|\, \forall y\in\overline\Omega, \zeta\varphi_{ac}(y)\leq {\overline\varphi}(y)\}$
    is well-defined and positive. By definition of $ \alpha $ we have $ \alpha\varphi_{ac}(y)\leq {\overline\varphi}(y) $ for any $ y\in\overline\Omega $, and there exists {a converging sequence $ \Omega\ni y_n\to y\in\overline\Omega $ such that $ \alpha\varphi_{ac}(y_n)-{\overline\varphi}(y_n)\to 0 $. Up to further extraction $\varphi_{ac}(y_n) $ converges to a positive limit that we denote $\varphi_{ac}(y)$. We have then
    \begin{align*}
	    0&\geq\mu\int_{\overline\Omega}M(y_n, z)\big({\overline\varphi}(z)-\alpha\varphi_{ac}(z)\big)dz \\
	    &\quad +  \big({\overline\varphi}(y_n)-\alpha\varphi_{ac}(y_n)\big)(a(y_n)-\mu) +{\overline\lambda}\overline\varphi(y_n) - \lambda\alpha\varphi_{ac}(y_n)\\
         &\geq 0 + \big({\overline\varphi}(y_n)-\alpha\varphi_{ac}(y_n)\big)(a(y_n)-\mu) +{\overline\lambda}\overline\varphi(y_n) - \lambda\alpha\varphi_{ac}(y_n) \\
	    &=  ({\overline\lambda} - \lambda)\alpha\varphi_{ac}(y)+o_{n\to\infty}(1).
    \end{align*}
    Taking the limit $n\to \infty$, we have shown $ {\overline\lambda}\leq \lambda $. Hence, 
    \begin{equation*}
        \lambda \geq \sup\{{\overline\lambda}\,|\, \exists \psi\in C(\Omega), \psi>0 \text{ s.t. } \mu(M\star\psi - \psi) + \psi(a(y)+{\overline\lambda})\leq 0\} = \lambda_1. 
    \end{equation*}}
    The reverse inequality $ \lambda\leq \lambda_1 $ is clear since $ \varphi_{ac} $ is a supersolution to \eqref{eq:eigen}. Thus $ \lambda=\lambda_1 $. 

    In this case, we notice that
    \begin{equation*}
        \mathcal M[(\sup a-a(y))\varphi_{ac}] = \mu M\star \varphi_{ac}>(\sup a-a(y))\varphi_{ac}.
    \end{equation*}
    Hence, by a classical comparison argument, $ \gamma_1> 1 $.

    \medskip

    2. Let us assume now  $ \lambda = -(\sup a-\mu )$.

    We define the auxiliary function  $ \Psi(y):=\varphi_{ac}(y)(\sup a - a(y))=\mu(M\star \varphi)  $. Then $ \Psi $ is a nontrivial positive bounded continuous function which satisfies:
    \begin{equation*}
        \mathcal M[\Psi]-\Psi=\mu(M\star \varphi_{ac}-\varphi_{ac})+(a(y)+\lambda)\varphi_{ac}=-\mu M\star \varphi_s\leq 0.
    \end{equation*}
    Thus, by a classical comparison argument,  $ \gamma_1\leq 1 $.

    We claim that $ \lambda_1=\lambda$. {As above, $\varphi_{ac} $ is a supersolution to \eqref{eq:eigen}, and thus $\lambda\leq \lambda_1$. Assume} by contradiction that $ \lambda_1< \lambda $.  By the existence property \cite[Theorem 1.1]{Cov-13}, there exists  a continuous function $ \varphi_1 >0 $ associated with $ \lambda_1 $. Since $ \lambda_1 < \lambda = -(\sup a-\mu) $, point 1 above then applies to  $ (\lambda_1, \varphi_1) $ and we have $ \gamma_1> 1 $. This is a contradiction.  Hence $ \lambda=\lambda_1 $. 

    \medskip

    \textbf{Step 3}: We show \ref{item:eigenpair-C0}, \ref{item:eigenpair-L1}, and \ref{item:eigenpair-M1}.

    Assume $ \lambda_1<-(\sup a-\mu) $. Then, $ \gamma_1>1 $, and the fact that any measure eigenvector is a continuous eigenfunction has been shown in Step 2.

    Assume  $ \lambda_1=-(\sup a-\mu) $ and $ \varphi_s\equiv0 $. {Let 
    $ \Psi(y):=(\sup a -a(y))\varphi_{ac}(y) $. Then, by a straightforward computation, $\Psi$ satisfies $\Psi(y)=\mu M\star\varphi(y)$, which shows that $\Psi $ is bounded and continuous. 
    We remark that:}
    \begin{equation*}
        \mathcal M[\Psi]-\Psi=\mu M\star \varphi_{ac}-(\sup a -a )\varphi_{ac}=-\mu M\star \varphi_s=0.
    \end{equation*}
    By the Krein-Rutman Theorem, we have $ \gamma_1=1 $ and  $ \varphi\equiv \varphi_{ac} $ is unique up to multiplication by a scalar.

    Assume that $ \lambda_1=-(\sup a-\mu) $ and $ \varphi_s\not\equiv 0 $. Let   $ \Psi(y):=(\sup a -a(y))\varphi_{ac}(y) $, then
    \begin{equation*}
        \mathcal M[\Psi]-\Psi=-\mu M\star \varphi_s<0
    \end{equation*}
    and thus $ \gamma_1<1 $. Notice that in this case, the second line in equation \eqref{eqlem:twomeasures} implies by definition
    $ \varphi_s\left(\{y\in\overline\Omega\,|\, a(y)\neq \sup a\}\right) = 0 $,
    hence $ \mathrm{supp}~\varphi_s\subset\Omega_0 $.

    Since we have investigated all the possibilities (recall $ \lambda\leq-(\sup a - \mu) $), the equivalence holds in each case. 
    This finishes the proof of Theorem \ref{thm:eigenpair}.
\end{proof}

\subsection{The critical mutation rate}\label{ssec:critmut}

In this subsection we investigate further the linear eigenvalue problem \eqref{eq:eigen}, with $ \lambda = \lambda_1 $ as compelled by Theorem \ref{thm:eigenpair}, under Assumption \ref{hyp:dega}.

We introduce the notion of {\it critical mutation rate}, which distinguishes between the existence of a bounded continuous eigenfunction for equation \eqref{eq:eigen} and the existence of a singular measure. 

\begin{thm}[Critical mutation rate]\label{thm:smallmu}
    Let Assumption \ref{hyp:dega} hold. Then, there exists $ \mu_0=\mu_0(\Omega, M, \sup a-a) $ such that for any $ 0<\mu< \mu_0$, problem \eqref{eq:eigen} has only singular measures solutions with a singularity concentrated in $ \Omega_0$ (in which case $ \lambda_1=-(\sup a-\mu) $ from Theorem \ref{thm:eigenpair}), whereas for $ \mu> \mu_0$ equation \eqref{eq:eigen} has only bounded continuous eigenfunctions. 

    Finally, $ \mu_0 = \frac{1}{\gamma_1^1} $ where $ -\gamma_1^1 $ is the principal eigenvalue of the operator
    \begin{equation*}
        \mathcal M^1[\psi]=\int_\Omega M(y,z)\frac{\psi(z)}{\sup a-a(z)}dz,
    \end{equation*}
    acting on bounded continuous functions.
\end{thm}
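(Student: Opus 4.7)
The plan is to reduce the theorem to the trichotomy already established in Theorem \ref{thm:eigenpair}\ref{item:eigenpair-dega} by making explicit the way the parameter $\mu$ enters the operator $\mathcal{M}$. Observe that for every bounded continuous $\psi$,
\begin{equation*}
\mathcal{M}[\psi](y)=\int_\Omega \mu M(y,z)\frac{\psi(z)}{\sup a - a(z)}\,dz = \mu\,\mathcal{M}^1[\psi](y),
\end{equation*}
so $\mathcal{M}$ and $\mathcal{M}^1$ differ only by the multiplicative constant $\mu$. Consequently, if $\gamma_1^1$ is a principal eigenvalue of $\mathcal{M}^1$ associated with a positive eigenfunction $\Psi$, then $\mu\gamma_1^1$ is a principal eigenvalue of $\mathcal{M}$ associated with the same $\Psi$, and by uniqueness of the Krein--Rutman principal eigenvalue on the positive cone, $\gamma_1 = \mu\gamma_1^1$.

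First I would verify that $\mathcal{M}^1$ falls within the scope of the Krein--Rutman Theorem, essentially repeating the argument from Step 2 of the proof of Theorem \ref{thm:eigenpair}. Under Assumption \ref{hyp:dega}, the weight $1/(\sup a - a(\cdot))$ lies in $L^1(\Omega)$, so dominated convergence together with the $C^\alpha$-continuity of $M$ shows that $\mathcal{M}^1$ sends $C_b(\Omega)$ into $C(\overline\Omega)$ and is compact by the Arzelà--Ascoli theorem. The strict positivity $\mathcal{M}^1[\psi]\ge m_0\int_\Omega \psi(z)/(\sup a - a(z))\,dz > 0$ for nontrivial nonnegative $\psi$ then gives a positive principal eigenvalue $\gamma_1^1>0$, so the quantity $\mu_0:=1/\gamma_1^1$ is well-defined and depends only on $\Omega$, $M$, and $\sup a - a$.

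With $\gamma_1=\mu\gamma_1^1$ in hand, the equivalences $\gamma_1<1\Leftrightarrow \mu<\mu_0$, $\gamma_1=1\Leftrightarrow\mu=\mu_0$, and $\gamma_1>1\Leftrightarrow\mu>\mu_0$ are immediate. Plugging these into the trichotomy of Theorem \ref{thm:eigenpair}\ref{item:eigenpair-dega} yields exactly the announced conclusion: for $\mu<\mu_0$ any nonnegative solution to \eqref{eq:eigen} is a singular measure with singular part concentrated in $\Omega_0$ and $\lambda_1=-(\sup a - \mu)$ (case \ref{item:eigenpair-M1}), while for $\mu>\mu_0$ every solution must be a bounded continuous eigenfunction (case \ref{item:eigenpair-C0}).

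The argument is essentially a rescaling observation, so there is no serious obstacle beyond setting up the Krein--Rutman framework for $\mathcal{M}^1$ cleanly; the mildly delicate point is to invoke the uniqueness half of Theorem \ref{thm:eigenpair}\ref{item:eigenpair-unique} when translating between the two operators, so that $\gamma_1=\mu\gamma_1^1$ truly identifies the \emph{principal} eigenvalues and not merely some eigenvalues. The critical case $\mu=\mu_0$ corresponds to case \ref{item:eigenpair-L1}, where a single $L^1$ eigenfunction exists; this is not asserted in the statement but can be recorded as a remark.
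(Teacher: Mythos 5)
Your argument is essentially identical to the paper's proof: both rest on the observation $\mathcal{M}=\mu\mathcal{M}^1$, the Krein--Rutman uniqueness giving $\gamma_1=\mu\gamma_1^1$, and a direct appeal to the trichotomy of Theorem \ref{thm:eigenpair}\ref{item:eigenpair-dega}. The proposal is correct.
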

\begin{proof}
    Let us define, for $ \psi\in C_b(\Omega)$,  
    $\mathcal M^\mu[\psi]=\mu\int_\Omega M(y,z)\frac{\psi(z)}{\sup a-a(z)}dz$.
    Then by the Krein-Rutman Theorem there exists 
    a unique principal eigenpair $ (-\gamma_1^\mu, \Phi^\mu)$ satisfying $ \gamma_1^\mu>0 $,  $ \Phi^\mu(y) >0 $, $ \sup\Phi^\mu=1 $ and
    $ \mathcal M^\mu[\Phi^\mu]=\gamma_1^\mu \Phi^\mu$.
    Since $ \mathcal M^\mu = \mu\mathcal M^1 $, we deduce from the uniqueness of $ (-\gamma_1^\mu, \Phi^\mu) $ that the equalities 
    $ \gamma_1^\mu=\mu\gamma_1^1 $ and $\Phi^\mu=\Phi^1$ hold for any $ \mu>0$.
    The result then follows from the trichotomy in Theorem \ref{thm:eigenpair}
\end{proof}

We can now summarize our findings and prove Proposition \ref{prop:vpp}.
\begin{proof}[Proof of Proposition \ref{prop:vpp}]
    The first part, under Assumption \ref{hyp:gen}, follows from Pro\-position \ref{thm:eigenpair}, while the second part, under Assumption \ref{hyp:dega}, follows from Theorem \ref{thm:smallmu}. 
\end{proof}

We prove below that $ \mu_0 $  is linked to the steepness of 
the fitness function $a$ near its maximum. This property will be used  in the proof of Theorem \ref{thm:concentration}.
\begin{cor}[Monotony of $ \mu_0 $]\label{cor:compmucrit}
    Let Assumption \ref{hyp:dega} hold and $b $ be a continuous function on $ \overline\Omega $, satisfying
    \begin{equation*}
        \forall y\in\overline\Omega,\quad \sup a-a(y)\leq \sup b-b(y).
    \end{equation*}
    Then  we have
    \begin{equation*}
        \mu_0(\Omega, M, \sup a-a) \leq \mu_0(\Omega, M, \sup b-b),
    \end{equation*}
    where $ \mu_0 $ is defined in Theorem \ref{thm:smallmu}.
\end{cor}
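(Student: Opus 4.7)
The plan is to reduce everything to the explicit formula $\mu_0 = 1/\gamma_1^1$ provided by Theorem~\ref{thm:smallmu}. Write $\gamma_a$ and $\gamma_b$ for the principal eigenvalues (in the sense of Theorem~\ref{thm:smallmu}) of the operators
\begin{equation*}
\mathcal{M}^1_a[\psi](y) := \int_\Omega M(y,z)\frac{\psi(z)}{\sup a - a(z)}\,dz, \qquad \mathcal{M}^1_b[\psi](y) := \int_\Omega M(y,z)\frac{\psi(z)}{\sup b - b(z)}\,dz,
\end{equation*}
acting on $C_b(\Omega)$. Since $\mu_0 = 1/\gamma_1^1$, the claimed inequality $\mu_0(\Omega,M,\sup a - a) \leq \mu_0(\Omega,M,\sup b - b)$ is equivalent to $\gamma_a \geq \gamma_b$, so the goal reduces to a monotonicity statement for the principal eigenvalue of a positive integral operator with respect to its kernel.

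The hypothesis $\sup a - a(y) \leq \sup b - b(y)$ for all $y\in\overline\Omega$ immediately gives the pointwise bound $\tfrac{1}{\sup a - a(z)} \geq \tfrac{1}{\sup b - b(z)}$, hence for every nonnegative $\psi\in C_b(\Omega)$,
\begin{equation*}
\mathcal{M}^1_a[\psi](y) \geq \mathcal{M}^1_b[\psi](y) \qquad \text{for all } y\in\overline\Omega.
\end{equation*}
Let $\Phi_b\in C_b(\Omega)$, $\Phi_b > 0$, be the Krein--Rutman principal eigenfunction provided by Theorem~\ref{thm:smallmu} for $\mathcal{M}^1_b$, normalized so that $\sup \Phi_b = 1$ and $\mathcal{M}^1_b[\Phi_b] = \gamma_b \Phi_b$. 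Applying the previous pointwise comparison with $\psi = \Phi_b$ yields
\begin{equation*}
\mathcal{M}^1_a[\Phi_b](y) \geq \mathcal{M}^1_b[\Phi_b](y) = \gamma_b \Phi_b(y),
\end{equation*}
so $\Phi_b$ is a positive subsolution for $\mathcal{M}^1_a$ at the level $\gamma_b$.

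The remaining step is to conclude $\gamma_a \geq \gamma_b$ from the existence of such a subsolution. This is precisely the classical comparison argument for the Krein--Rutman principal eigenvalue, identical in spirit to the one used in Step~2, Case~1 of the proof of Theorem~\ref{thm:eigenpair}: considering $\alpha := \sup\{\zeta > 0 : \zeta \Phi_b \leq \Phi_a \text{ on }\overline\Omega\}$ (with $\Phi_a$ the positive eigenfunction of $\mathcal{M}^1_a$), one finds a contact point along a maximizing sequence and, evaluating the eigen-equation there, obtains $\gamma_a \geq \gamma_b$. Substituting into $\mu_0 = 1/\gamma_1^1$ gives the announced inequality. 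The only delicate point is the contact-point argument, since $\overline\Omega$ is compact and $\Phi_a, \Phi_b$ are continuous and strictly positive, which is the same setting as in Theorem~\ref{thm:eigenpair}, so no new difficulty arises.
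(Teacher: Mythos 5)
Your proposal is correct and follows essentially the same route as the paper: both reduce to showing $\gamma_1^b \leq \gamma_1^a$ via the pointwise kernel inequality $\frac{1}{\sup b - b} \leq \frac{1}{\sup a - a}$, and both conclude by comparing the two principal eigenfunctions at a contact point (the paper normalizes so that $\varphi^b \leq \varphi^a$ with a touching point, which is the same device as your $\alpha = \sup\{\zeta : \zeta\Phi_b \leq \Phi_a\}$). The only cosmetic quibble is that you call $\Phi_b$ a \emph{subsolution} for $\mathcal M^1_a$ while $\mathcal M^1_a[\Phi_b] \geq \gamma_b\Phi_b$ is more commonly called a supersolution in this context; the logic is unaffected.
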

\begin{proof}
	{It follows from our assumptions that}, for $ y\in\overline\Omega$:
    \begin{equation}\label{eqcor:compmucrit}
        0<\frac{1}{{\sup b}-b(y)}\leq \frac{1}{{\sup a}-a(y)}.
    \end{equation}
    In particular $ y\mapsto\frac{1}{{\sup b}-b(y)}\in L^1(\Omega) $. Thus Theorem \ref{thm:smallmu} can be applied with both $a $ and $ b$. 

    We claim that  $ \gamma_1^b\leq \gamma_1^a $, where $\gamma_1^b $, $ \gamma_1^a $ denote the first eigenvalue of the operator 
    $  \mathcal M_b[\psi]= \int_\Omega M(y,z)\frac{\psi(z)}{\sup b-b(z)}dz $ and $  \mathcal M_a[\psi]=\int_\Omega M(y,z)\frac{\psi(z)}{\sup a-a(z)}dz $ acting on the function $ \psi \in C_b(\Omega) $, respectively. 
	Indeed, let $ \varphi^a\in C_b(\Omega)$, $ \varphi^a>0 $ satisfy $ \int_\Omega M(y,z) \frac{\varphi^a(z)}{{\sup a}-a(z)}dz=\gamma_1^a\varphi^a(y) $ and $\varphi^b\in C_b(\Omega) $, $ \varphi^b>0 $ respectively satisfy $ \int_\Omega M(y,z) \frac{\varphi^b(z)}{{\sup b}-b(z)}dz=\gamma_1^b\varphi^b(y)$. Up to multiplication by a positive constant, we assume {without loss of generality} that $ \varphi^b\leq \varphi^a $ and that there exists $ y\in\overline\Omega $ satisfying $ \varphi^b(y)=\varphi^a(y)=1 $. At this point, we have
    \begin{equation*}
        \gamma_1^b = \int_\Omega M(y,z) \frac{\varphi^b(z)}{{\sup b}-b(z)}dz \leq \int_\Omega M(y,z) \frac{\varphi^a(z)}{{\sup a}-a(z)}dz=\gamma_1^a.
    \end{equation*}

    We conclude that 
    \begin{equation*}
        \mu_0(\Omega, M, \sup a-a)=\frac{1}{\gamma_1^a}\leq \frac{1}{\gamma_1^b} = \mu_0(\Omega, M, \sup b-b)
    \end{equation*}
    which finishes the proof of Corollary \ref{cor:compmucrit}.
\end{proof}

\subsection{Approximation by a degenerating elliptic eigenvalue problem}

Here we show that the previously introduced principal eigenvalue can be approximated by an elliptic Neumann eigenvalue. 

\begin{thm}[Approximating $ \lambda_1 $ by vanishing viscosity]\label{thm:eigen}
    Let Assumption \ref{hyp:gen} hold, and $ (\lambda_1^\varepsilon,\varphi^\varepsilon(y)>0) $ be the solution to the principal eigenproblem:
    \begin{equation}\label{eq:eigeneps}
        \begin{system}{lr} 
            -\varepsilon\Delta \varphi^\varepsilon -\mu(M\star \varphi^\varepsilon - \varphi^\varepsilon) = a(y)\varphi^\varepsilon + \lambda_1^\varepsilon\varphi^\varepsilon & \text{ in } \Omega \\
            \frac{\partial\varphi^\varepsilon}{\partial \nu}=0 & \text{ on } \partial \Omega,
        \end{system}
    \end{equation}
	with $ \int_\Omega\varphi^\varepsilon(z)dz=1 $, {where $\nu $ is the unit normal vector}.

    Then
    $ \lim_{\varepsilon\to 0}\lambda_1^\varepsilon=\lambda_1 $, where $\lambda_1 $ is the principal eigenvalue defined by \eqref{eq:vpp}. 
\end{thm}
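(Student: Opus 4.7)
My plan is to combine weak-$*$ compactness of the family of normalized eigenfunctions with the uniqueness statement contained in Proposition \ref{prop:vpp}. As a first step, I would derive the \textit{a priori} identity
\begin{equation*}
\lambda_1^\varepsilon = -\int_\Omega a(y)\varphi^\varepsilon(y)\,dy
\end{equation*}
by integrating \eqref{eq:eigeneps} over $\Omega$: the Neumann boundary condition gives $\int_\Omega \Delta\varphi^\varepsilon\,dy = 0$, while the kernel normalization $\int_{\overline\Omega} M(y,z)\,dy = 1$ combined with Fubini yields $\int_\Omega M\star\varphi^\varepsilon\,dy = \int_\Omega \varphi^\varepsilon\,dz = 1$. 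In particular $\lambda_1^\varepsilon \in [-\sup a,-\inf a]$, and the family $\varphi^\varepsilon(y)\,dy$ is a bounded set of probability measures on the compact set $\overline\Omega$. By Banach--Alaoglu, along a subsequence $\varepsilon_k\to 0$ we have $\lambda_1^{\varepsilon_k}\to\lambda_*\in\mathbb{R}$ and $\varphi^{\varepsilon_k}(y)\,dy\rightharpoonup \varphi$ in the weak-$*$ topology of $M^1(\overline\Omega)$, for some nonnegative measure $\varphi$ of total mass $1$, hence nontrivial.

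Next, I would pass to the limit in a weak formulation against every test function $\psi\in C^2(\overline\Omega)$ with $\partial_\nu\psi = 0$ on $\partial\Omega$. Both boundary contributions in Green's identity vanish thanks to the Neumann conditions satisfied by $\psi$ and $\varphi^\varepsilon$, so that
\begin{equation*}
-\varepsilon\int_\Omega \varphi^\varepsilon \Delta\psi\,dy - \mu\int_\Omega (M\star\varphi^\varepsilon - \varphi^\varepsilon)\psi\,dy = \int_\Omega (a+\lambda_1^\varepsilon)\varphi^\varepsilon\psi\,dy.
\end{equation*}
The first term is $O(\varepsilon)$ by boundedness of $\Delta\psi$, and Fubini rewrites the nonlocal term as the pairing of the continuous function $z\mapsto\int_{\overline\Omega} M(y,z)\psi(y)\,dy$ against $\varphi^\varepsilon\,dy$. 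The weak-$*$ convergence then yields
\begin{equation*}
-\mu\int_{\overline\Omega} \psi\, M\star\varphi\,dy + \int_{\overline\Omega} \psi\,(\mu - a - \lambda_*)\,d\varphi = 0
\end{equation*}
for every such $\psi$. A boundary-straightening plus mollification argument (enabled by the $C^3$ regularity of $\partial\Omega$ in Assumption \ref{hyp:gen}(1)) shows that $\{\psi\in C^2(\overline\Omega)\,:\,\partial_\nu\psi = 0\}$ is uniformly dense in $C(\overline\Omega)$, so the identity extends to every continuous test function. Therefore $\varphi$ is a nonnegative nontrivial Radon measure solution of \eqref{eq:eigen} with $\lambda = \lambda_*$; by the uniqueness statement of Proposition \ref{prop:vpp} we must have $\lambda_* = \lambda_1$. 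Since every subsequence of $(\lambda_1^\varepsilon)$ admits a subsubsequence converging to $\lambda_1$, the whole family converges.

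The step I expect to be delicate is the density argument that allows one to drop the Neumann constraint on the test functions; this is unavoidable if one wants to recognize $\varphi$ as a measure solution on the whole of $\overline\Omega$, including possible boundary mass. An alternative is to restrict attention to test functions in $C_c(\Omega)$, obtaining the equation only inside $\Omega$, and then verify separately that $\varphi$ charges no subset of $\partial\Omega$: combining the bound $\lambda_1 \leq -(\sup a - \mu)$ recalled after Definition \ref{def:vpp} with Assumption \ref{hyp:gen}(6) makes the coefficient $a(y)-\mu+\lambda_*$ strictly negative on $\partial\Omega$, which forces the boundary component of $\varphi$ to vanish from the limiting equation. Either route reduces the proof to the measure-uniqueness theorem of the previous subsection, which is the essential ingredient.
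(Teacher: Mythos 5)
Your main argument matches the paper's proof essentially line for line: integrate \eqref{eq:eigeneps} over $\Omega$ to pin $\lambda_1^\varepsilon\in[-\sup a,-\inf a]$, extract a weak-$*$ convergent subsequence of probability measures by Prokhorov/Banach--Alaoglu, pass to the limit against test functions with zero Neumann flux (the boundary terms in Green's identity drop out), then upgrade to all of $C(\overline\Omega)$ by a density argument for zero-flux test functions --- this density lemma is exactly the paper's Lemma \ref{lem:zerofluxdense}, proved by the projection-plus-cutoff construction you anticipate --- and finally invoke the uniqueness in Proposition \ref{prop:vpp} and the subsequence principle. One word of caution about the alternative you sketch at the end: if you only test against $\psi\in C_c(\Omega)$, you obtain the limiting equation strictly inside $\Omega$, and there is then no identity pairing $\varphi$ against functions that see $\partial\Omega$; the strict negativity of $a(y)-\mu+\lambda_*$ on $\partial\Omega$ cannot ``force'' a boundary mass to vanish from an equation that never mentions it, so that route is circular unless one first establishes the equation against boundary-touching test functions --- which is precisely what the density step does. (One could instead argue directly on the approximating $\varphi^{\varepsilon_n}$ that no mass concentrates on $\partial\Omega$, using the sign of $a-\mu+\lambda_1^{\varepsilon_n}$ near the boundary, but that would require a separate barrier or collar estimate not contained in your sketch.)
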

\begin{proof}
    We divide the proof into three steps.

    \textbf{Step 1:} We show that $ \lambda_1^\varepsilon $ is bounded when $\varepsilon\to 0$.

    Integrating equation \eqref{eq:eigeneps} by parts, we have
    $0 = \int_\Omega(\lambda_1^\varepsilon+a(y))\varphi^\varepsilon dy$.
    In particular, the function $ a(y)+\lambda_1^\varepsilon $ takes both nonnegative and nonpositive values. Hence, we have  
    $-\sup a\leq \lambda_1^\varepsilon\leq -\inf a$, and  $ (\lambda_1^\varepsilon)_{\varepsilon>0} $ is bounded.
    \medskip

    \textbf{Step 2:} We identify the limit of converging subsequences.

    Let $ \lambda_1^{\varepsilon_n} $ be a converging sequence and $ \lambda_1^0:=\lim \lambda_1^{\varepsilon_n} $. Then $ \varphi^{\varepsilon_n} $ satisfies,
    for any $ \psi\in C^2(\overline\Omega) $,
    \begin{equation*}
	    \int_\Omega -{\varepsilon_n} \varphi^{\varepsilon_n}\Delta \psi dy - \varepsilon_n \int_{\partial\Omega}\varphi^{\varepsilon_n}\frac{\partial\psi}{\partial\nu}dS - \int \mu(M\star \varphi^{\varepsilon_n}-\varphi^{\varepsilon_n})\psi-a(y)\varphi^{\varepsilon_n}\psi 
        =\lambda_1^{\varepsilon_n}\int\varphi^{\varepsilon_n}\psi.
    \end{equation*}
	Let {
	\begin{equation}\label{eq:F0}
		F_0:=\left\{ \psi\in C^2(\overline\Omega)\,|\,\forall y\in\partial\Omega, \frac{\partial\psi}{\partial\nu}(y)=0\right\}
	\end{equation}}
	denote the space of functions in $ C^2(\Omega) $ with zero boundary flux as in Lemma \ref{lem:zerofluxdense} item \ref{item:zeroflux-Omega}. For $ \psi\in F_0 $, this equation becomes:
    \begin{equation*}
        \int_\Omega -{\varepsilon_n}\varphi^{\varepsilon_n}\Delta \psi dy - \int \mu(M\star \varphi^{\varepsilon_n}-\varphi^{\varepsilon_n})\psi-a(y)\varphi^{\varepsilon_n}\psi =\lambda_1^{\varepsilon_n}\int\varphi^{\varepsilon_n}\psi.
    \end{equation*}

	Since $ \int_\Omega\varphi^{\varepsilon_n}(y)dy =1 $ and $ \overline\Omega $ is compact {and by} Prokhorov's Theorem \cite[Theorem 8.6.2]{Bog-07}, the sequence $ (\varphi^{\varepsilon_n}) $ is precompact for the weak topology in $ M^1(\overline\Omega) $, and there exists a weakly convergent subsequence $ \varphi^{\varepsilon_n'} $, which converges  to a nonnegative Radon measure $ \varphi $. Since $ 1\in C_c(\overline\Omega)$, we have $\lim \int_{\overline\Omega}\varphi^{\varepsilon_n'}=\int_{\overline\Omega}\varphi (dy) = 1$. Hence $ \varphi $ is non-trivial. Moreover, we have
    \begin{equation}\label{eq:limiteigenweak}
        \mu\int_\Omega \int_\Omega M(y,z)d\varphi(z) \psi(y)dy + \int_\Omega (a(y)-\mu)\psi(y)d\varphi(y) + \lambda_1^0\int_\Omega\psi(y)d\varphi(y) =0
    \end{equation}
    for any test function $ \psi\in F_0 $. Since $ F_0 $ is densely embedded in $ C_b(\overline\Omega) $ by Lemma \ref{lem:zerofluxdense}, \eqref{eq:limiteigenweak} holds for any $\psi\in C_b(\overline\Omega) $. 
	{Applying}  Proposition \ref{prop:vpp}, we have then 
    $     \lambda_1^0=\lambda_1 $.
    \medskip

    \textbf{Step 3:} Conclusion.

    We have shown that for any sequence $ \varepsilon_n\to 0$, there exists a subsequence $ \varepsilon'_n\to 0 $ such that $ \lambda_1^{\varepsilon'_n}\to \lambda_1 $. Thus  $\lambda_1^\varepsilon\to \lambda_1 $ when $ \varepsilon\to 0$.
\end{proof}

\section{Stationary states in trait} \label{sec:stat}

This section deals with stationary states for \eqref{eq:evol}. In particular, we prove Theorem \ref{thm:survival} and Theorem \ref{thm:concentration} via a bifurcation argument. 

\subsection{Regularized solutions}\label{sec:reg-evolstat}

We investigate the existence of positive solutions $p=p(y) $ to the following problem 
\begin{equation}\label{eq:evolstat-beta}
    \begin{system}{ll}
        -\varepsilon \Delta p - \mu(M\star p-p)=p(a(y)-K\star p-\beta p) & \text{ in } \Omega \\
        \frac{\partial p}{\partial\nu}=0 & \text{ on } \partial\Omega,
    \end{system}
\end{equation}
for any $ \beta\geq 0 $. We prove the existence of positive solutions for \eqref{eq:evolstat-beta} when $ \lambda_1^\varepsilon<0 $. We plan to let $ \varepsilon\to 0 $ with $ \beta=0 $ in Section \ref{sec:evolstat}, in order to prove the existence of stationary solutions to \eqref{eq:evol}. The reason why we include a weight $ \beta\geq 0 $ on the competition term in equation \eqref{eq:evolstat-beta} is that solutions to the latter  will be used as subsolutions in the construction of traveling waves in Section \ref{sec:TW}. 

Throughout this subsection we denote $ (\lambda_1^\varepsilon, \varphi^\varepsilon) $ the eigenpair of the regularized problem, solving  \eqref{eq:eigeneps}. Notice that $ (\lambda_1^\varepsilon, \varphi^\varepsilon) $ is independent from $ \beta $. Our main result is the following:

\begin{thm}[Regularized steady states]\label{thm:ex-stat-eps}
    Let Assumption \ref{hyp:gen} hold, $\varepsilon>0 $, $ (\lambda_1^\varepsilon, \varphi^\varepsilon) $ be defined by  \eqref{eq:eigeneps}, and $ \beta\geq 0$.
    \begin{enumerate}[label=(\textit{\roman*}), ref=(\textit{\roman*})]
        \item\label{item:exstat-extinction} Assume $\lambda_1^\varepsilon >0 $. Then $ 0 $ is the only nonnegative solution to \eqref{eq:evolstat-beta}.
        \item\label{item:exstat-survival} Assume $ \lambda_1^\varepsilon <0 $. Then there exists a positive solution to  \eqref{eq:evolstat-beta} for any $ \beta\geq 0$.
    \end{enumerate}
\end{thm}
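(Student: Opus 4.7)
I would treat the two items separately.

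For \ref{item:exstat-extinction}, I rely on an adjoint eigenfunction duality argument. Let $\varphi^{\varepsilon,*}>0$ be the principal eigenfunction of the adjoint Neumann problem
\begin{equation*}
-\varepsilon\Delta\varphi^{\varepsilon,*}-\mu\bigl(M^*\star\varphi^{\varepsilon,*}-\varphi^{\varepsilon,*}\bigr)=(a(y)+\lambda_1^\varepsilon)\varphi^{\varepsilon,*},\qquad \partial_\nu\varphi^{\varepsilon,*}=0,
\end{equation*}
with $M^*(y,z):=M(z,y)$; its existence, with the same principal eigenvalue $\lambda_1^\varepsilon$, is a consequence of Krein--Rutman applied to the compact resolvent. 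For any nonnegative $p$ solving \eqref{eq:evolstat-beta}, multiplying by $\varphi^{\varepsilon,*}$ and integrating over $\Omega$ kills the boundary contributions in Green's identity (both $p$ and $\varphi^{\varepsilon,*}$ satisfy zero Neumann conditions), and Fubini transposes $M\star$ into $M^*\star$. Substituting the adjoint eigenvalue equation yields
\begin{equation*}
\lambda_1^\varepsilon\int_\Omega p\,\varphi^{\varepsilon,*}\,dy=-\int_\Omega p(K\star p+\beta p)\varphi^{\varepsilon,*}\,dy.
\end{equation*}
If $\lambda_1^\varepsilon>0$, the left side is nonnegative and the right side nonpositive, so both vanish; since $\varphi^{\varepsilon,*}>0$ on $\overline\Omega$ and $K\geq k_0>0$, this forces $p\equiv 0$.

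For \ref{item:exstat-survival}, I would first build ordered sub- and supersolutions from $\varphi^\varepsilon$. Plugging $\underline p:=\delta\varphi^\varepsilon$ into \eqref{eq:evolstat-beta} and using \eqref{eq:eigeneps}, the subsolution inequality reduces to
\begin{equation*}
\lambda_1^\varepsilon\leq -\delta\bigl(K\star\varphi^\varepsilon+\beta\varphi^\varepsilon\bigr),
\end{equation*}
which holds for every $\delta>0$ small since $\lambda_1^\varepsilon<0$. Symmetrically, $\bar p:=A\varphi^\varepsilon$ is a supersolution whenever
\begin{equation*}
-\lambda_1^\varepsilon\leq A\bigl(K\star\varphi^\varepsilon+\beta\varphi^\varepsilon\bigr),
\end{equation*}
which is satisfied for $A$ large; enlarging $A$ further ensures $\underline p\leq\bar p$ on $\overline\Omega$. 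Using $\varphi^\varepsilon$ rather than a constant as the shape of both barriers is essential, since under the normalization $\int_\Omega M(y,z)\,dy=1$, constants are not eigenvectors of $-\varepsilon\Delta-\mu(M\star\cdot-\cdot)$ and a constant supersolution would force an unfavorable pointwise inequality.

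To produce a genuine solution in $[\underline p,\bar p]$, I would apply a compact fixed-point / Leray--Schauder degree argument to the map $\mathcal T(p):=q$, where $q$ solves the linear Neumann problem
\begin{equation*}
-\varepsilon\Delta q-\mu M\star q+(\mu+C_0)q=p\bigl(C_0+a(y)-K\star p-\beta p\bigr),
\end{equation*}
with $C_0>-\lambda_1^\varepsilon$ chosen so that the operator on the left has positive principal eigenvalue $\lambda_1^\varepsilon+C_0$ and is inverse-positive. A direct computation shows that fixed points of $\mathcal T$ are exactly the solutions of \eqref{eq:evolstat-beta}, and elliptic regularity together with the smoothing of $M\star$ makes $\mathcal T$ compact on $C(\overline\Omega)$. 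The main obstacle is the quadratic nonlocal term $p\,K\star p$: it destroys pointwise monotonicity of the right-hand side in $p$ and prevents a naive monotone iteration inside $[\underline p,\bar p]$. I would circumvent this via a continuation argument: for $\beta$ sufficiently large the local self-competition dominates the nonlocal one and the standard monotone iteration between $\underline p$ and a rescaled supersolution applies, yielding a solution; then a Leray--Schauder continuation in $\beta\geq 0$, using $\underline p$ and $\bar p$ as uniform a priori barriers along the continuation, provides a positive solution for every $\beta\geq 0$.
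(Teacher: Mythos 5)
Your treatment of item \textit{(i)} is correct and takes a genuinely different route from the paper. The paper proves extinction by a pointwise comparison: it takes the smallest multiple $\alpha\varphi^\varepsilon$ lying above $p$, locates an interior touching point via Hopf's lemma, and reads off a contradiction from the equation there. Your adjoint-eigenfunction duality argument instead tests the equation against $\varphi^{\varepsilon,*}$ and integrates; the cancellations are global rather than local. Both are valid; the duality version is arguably slicker, at the modest extra cost of invoking the adjoint Neumann eigenproblem for the nonlocal operator $-\varepsilon\Delta - \mu(M\star\cdot-\cdot)$ (which exists with the same principal eigenvalue by Krein--Rutman duality, as you say).

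For item \textit{(ii)}, the sub- and supersolution computations with $\underline p=\delta\varphi^\varepsilon$ and $\bar p=A\varphi^\varepsilon$ are correct, and you correctly identify the obstruction posed by the nonlocal term $p\,(K\star p)$. However, the remedy you propose does not close the gap. First, the monotone iteration fails \emph{even for large $\beta$}: the obstruction has nothing to do with the relative size of $\beta p^2$ and $p\,(K\star p)$. If one sets
\begin{equation*}
f(p)(y)=p(y)\bigl(a(y)-(K\star p)(y)-\beta p(y)\bigr)+Cp(y),
\end{equation*}
its directional derivative at $p$ in a nonnegative direction $h$ contains the term $-p(y)\,(K\star h)(y)$, which is strictly negative whenever $p(y)>0$ and $h\not\equiv 0$, even if $h(y)=0$. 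Thus $f$ cannot be made monotone on the order interval $[\underline p,\bar p]$ by any choice of $C$ or $\beta$, because the loss is genuinely nonlocal. (The comparison lemma proved later in the paper for $\beta\geq\beta_0$ is a touching-point statement under a smallness condition $u(x_0,y_0)\leq \mu m_0/k_\infty$; it does not yield a monotone map on $[\underline p,\bar p]$.) Second, the Leray--Schauder continuation you propose needs \emph{a priori bounds} on the whole solution branch, and $\bar p=A\varphi^\varepsilon$ does not supply them: without a comparison principle, being a supersolution does not let you conclude $p\leq\bar p$ for an arbitrary solution $p$, so the argument is circular at $\beta=0$. This is precisely the gap the paper fills separately: Lemma~\ref{lem:apriorip} establishes the $L^\infty$ a priori bound by a coronation argument (interior local maximum principle plus a barrier in a narrow boundary strip $\Omega_R$ where the narrow-domain comparison principle holds), and existence then follows from the global bifurcation theorem in the appendix (Theorem~\ref{thm:orientedbifurcation}), whose hypotheses are exactly the Fr\'echet differentiability at $0$ (Lemma~\ref{lem:Frechet}), Krein--Rutman for the linearized map, the a priori bounds, and positivity of nontrivial fixed points. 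Those two ingredients --- the a priori bound lemma and the bifurcation theorem --- are the substantial content your sketch leaves unaddressed.
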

Item \ref{item:exstat-extinction} is rather trivial and we will discuss it later in the proof of Theorem \ref{thm:ex-stat-eps}. The actual construction in the case $ \lambda_1^\varepsilon<0 $ is more involved. Our method is inspired by the similar situation in \cite{Alf-Gri-18}. We start by establishing \textit{a priori} estimates on the solutions $ p $ to \eqref{eq:evolstat-beta}. 
\begin{lem}[\textit{A priori} estimates on $p$]\label{lem:apriorip}
    Let Assumption \ref{hyp:gen} hold, let $\varepsilon>0 $, $\beta\geq 0 $ and  $ p $ be a nonnegative nontrivial solution to \eqref{eq:evolstat-beta}. Then:
    \begin{enumerate}[label=(\textit{\roman*}), ref=(\textit{\roman*})]
        \item \label{item:apriorip-positive} $ p $ is positive.
        \item \label{item:apriorip-boundinfty} If $ \beta=0 $, there exists a positive constant $ C=C(\Omega, \varepsilon, \mu, \Vert a\Vert_{L^\infty}, m_\infty, k_0, k_\infty) $ such that $ \Vert p\Vert_{L^\infty}\leq C $. If $ \beta>0 $ then we have  $ \sup p\leq \frac{\sup a}{\beta} $.
    \end{enumerate}
\end{lem}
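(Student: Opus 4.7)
The two items reduce to applications of the elliptic maximum principle to \eqref{eq:evolstat-beta}, together with integral estimates obtained by testing against $1$. The main obstacle is that the nonlocal term is not compatible with the pointwise maximum principle: unlike the classical convolution, the inequality $M\star p(y_M)\leq p(y_M)$ can fail at a maximum point $y_M$ of $p$, because the normalization $\int_\Omega M(y,z)\,dy=1$ is in the first variable rather than the second. Overcoming this is done by feeding integral estimates into the pointwise inequality at $y_M$.

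For item \ref{item:apriorip-positive}, I would rewrite \eqref{eq:evolstat-beta} in the form
\begin{equation*}
-\varepsilon\Delta p+\bigl(\mu+K\star p+\beta p+L-a(y)\bigr)p=\mu M\star p+L p,
\end{equation*}
where $L$ is chosen large enough (the coefficients being bounded by elliptic regularity) to make the zero-order coefficient on the left nonnegative. The right-hand side is strictly positive on $\overline{\Omega}$: since $M\geq m_0>0$, nonnegativity and nontriviality of $p$ force $M\star p(y)\geq m_0\int_\Omega p>0$ everywhere. The strong maximum principle then rules out an interior zero, and Hopf's lemma excludes a zero on the Neumann boundary, yielding $p>0$ on $\overline{\Omega}$.

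For item \ref{item:apriorip-boundinfty}, I would evaluate the equation at a point $y_M\in\overline{\Omega}$ where $p$ attains its supremum. With the Neumann condition, $-\varepsilon\Delta p(y_M)\geq 0$ (using Hopf if $y_M\in\partial\Omega$), so that, using $K\star p\geq 0$,
\begin{equation*}
p(y_M)\bigl(\mu+\beta p(y_M)-a(y_M)\bigr)\leq\mu M\star p(y_M).
\end{equation*}
In parallel, integrating \eqref{eq:evolstat-beta} over $\Omega$ kills the Laplacian (Neumann BC) and the nonlocal transport term (since $\int_\Omega M(y,z)dy=1$ implies $\int_\Omega M\star p=\int_\Omega p$), yielding the identity
\begin{equation*}
\int_\Omega p\bigl(a-K\star p-\beta p\bigr)\,dy=0.
\end{equation*}

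In the subcase $\beta=0$, this identity combined with $K\star p\geq k_0\int p$ gives $k_0\bigl(\int p\bigr)^2\leq\sup a\cdot\int p$, hence the $L^1$ bound $\int p\leq\sup a/k_0$, which in turn controls $\|M\star p\|_\infty\leq m_\infty\sup a/k_0$. Feeding this back into the pointwise inequality at $y_M$ yields the claimed $L^\infty$ bound with constant depending on $\Omega,\varepsilon,\mu,\|a\|_\infty,m_\infty,k_0,k_\infty$. In the subcase $\beta>0$, the same identity gives $\beta\int p^2\leq\sup a\int p$, and then Cauchy--Schwarz produces $\int p\leq|\Omega|\sup a/\beta$, hence $\|M\star p\|_\infty\lesssim 1/\beta$. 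Plugging this into the pointwise inequality at $y_M$ gives $\sup p\leq\sup a/\beta$, as claimed.
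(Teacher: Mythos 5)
Your proof of item~\ref{item:apriorip-positive} is correct and is essentially the paper's. Your identification of the normalization subtlety (the assumption $\int_\Omega M(y,z)\,dy=1$ is in the \emph{first} variable, so one cannot assert $M\star p(y_M)\leq p(y_M)$ at a maximum $y_M$) is a genuine and worthwhile observation; the paper's own treatment of the $\beta>0$ sub-case indeed tacitly uses $\mu(M\star p-p)(y_M)\leq 0$ at the maximum. But your proposed workaround for item~\ref{item:apriorip-boundinfty} does not close, in either sub-case.

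For $\beta=0$, feeding the $L^1$ bound $\int_\Omega p\leq \sup a/k_0$ into the evaluation at a maximum $y_M$ gives only
\begin{equation*}
p(y_M)\bigl(\mu-a(y_M)+(K\star p)(y_M)\bigr)\;\leq\;\mu\, m_\infty\,\frac{\sup a}{k_0},
\end{equation*}
which is \emph{vacuous} whenever $a(y_M)-\mu\geq (K\star p)(y_M)$: the coefficient of $p(y_M)$ is then nonpositive, and nothing follows about $\sup p$. Since Assumption~\ref{hyp:gen} forces $\sup a>\mu$, a maximum at a point with $a(y_M)>\mu$ cannot be excluded, and there is no a priori \emph{lower} bound on $K\star p$ (the $L^1$ estimate goes the other way). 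So a single evaluation at the maximum cannot bootstrap the $L^1$ bound into an $L^\infty$ bound. The paper's proof is genuinely different and much heavier: from the $L^1$ bound it first obtains an interior $L^\infty$ bound via the local maximum principle \cite[Theorem~9.20]{Gil-Tru-01}, and then controls $p$ near $\partial\Omega$ by constructing a positive supersolution $v$ on a tubular neighbourhood $\Omega_R$, with zero Neumann flux, via a Robin-to-Neumann limit (Fredholm alternative plus the comparison principle in narrow domains \cite[Proposition~1.1]{Ber-Nir-91}), and finally comparing $p\leq v$ on $\Omega_R$.

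For $\beta>0$, the same evaluation at $y_M$ gives at best
\begin{equation*}
p(y_M)\bigl(\mu-a(y_M)+\beta p(y_M)\bigr)\;\leq\;\mu\, m_\infty\,\frac{\sup a}{k_0},
\end{equation*}
and assuming $p(y_M)>\sup a/\beta$ (so that $\mu-a(y_M)+\beta p(y_M)>\mu$) yields only $p(y_M)<m_\infty\sup a/k_0$, which contradicts the assumption solely when $\beta\leq k_0/m_\infty$. This does \emph{not} deliver the asserted sharp bound $\sup p\leq\sup a/\beta$, which the paper uses downstream (Lemma~\ref{lem:bigbetabox}) precisely in the large-$\beta$ regime $\beta\geq\beta_0=k_\infty\sup a/(\mu m_0)$ to push $\sup p$ below the cooperativity threshold $\mu m_0/k_\infty$. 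Recovering $\sup a/\beta$ requires the pointwise sign $\mu(M\star p-p)(y_M)\leq 0$ (or a substitute argument), i.e.\ exactly the step you flagged. So your concern about the paper's argument is legitimate, but the fix you propose does not produce the stated conclusion; you would need to either justify that sign (e.g.\ under an additional symmetry or row-stochastic hypothesis on $M$) or find another route to $\sup a/\beta$.
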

\begin{proof}
	Point \ref{item:apriorip-positive} {follows from the strong maximum principle}. We turn our attention to point \ref{item:apriorip-boundinfty}. 

    Assume first $ \beta>0$. Let $ y\in\overline\Omega$ such that $ p(y)=\sup_{z\in\Omega} p(z) $ and assume by contradiction that $ p(y)>\frac{\sup a}{\beta} $. If $ y\in\Omega$, then we have 
    \begin{equation*}
        0\leq -\varepsilon \Delta_y p(y) -\mu(M\star p-p) = p(a(y)-K\star p-\beta p)<0
    \end{equation*}
    which is a contradiction. If $ y\in\partial\Omega $, then $ \mu(M\star p-p)\leq 0 $ and  $ a-K\star p-\beta p\leq 0 $ in a neighbourhood of $ y $, and thus $- \varepsilon \Delta p - (a(y)-K\star p-\beta p)p\leq 0 $ in a neighbourhood of $ y$. {It follows from Hopf's Lemma that} $ \frac{\partial p}{\partial\nu}(y)>0 $, which contradicts the Neumann boundary conditions satisfied by $ p$. Hence $ \sup p\leq \frac{\sup a}{\beta} $.
    \medskip

    We turn our attention to the case $ \beta=0 $, which is more involved. We divide the proof in four steps. 

    \textbf{Step 1: }We establish a bound on $ \int_\Omega p(y)dy $.

    Integrating over $ \Omega $, we have 
    \begin{equation*}
        \int_\Omega a(y)p(y)\mathrm dy - \int_\Omega\int_\Omega p(y)K(y,z)p(z)\mathrm dy\mathrm dz =\beta\int_\Omega p^2(y)dy\geq 0.
    \end{equation*}
    Thus $ \int a(y)p(y)dy\geq k_0\left(\int_\Omega p(y)dy \right)^2 $ and
    \begin{equation}\label{eq:apriorip-mass} 
        \int_\Omega p(y)dy\leq \frac{ \sup a}{k_0}.
    \end{equation}

    \textbf{Step 2:} We reduce the problem to a boundary estimate. 

	{By a direct application of} the local maximum principle \cite[Theorem 9.20]{Gil-Tru-01}, for any ball 
    $ B_{R}(y)\subset \Omega $, there exists a constant  $ C=C(R,\varepsilon, \Vert a\Vert_{L^\infty}, k_0, k_\infty, \mu, m_{\infty} )>0 $ such that 
    $ 
    \underset{B_{R/2}(y)}{\sup}\,p\leq C 
    $. 
    This shows an interior bound for any point at distance $ R $ from $ \partial\Omega $.

    To show that this estimate does not degenerate near the boundary, we use a coronation argument. Let $ d(y,\partial\Omega):=\inf_{z\in\partial\Omega}|y-z|$, and
    \begin{equation*}
        \Omega_R:=\{y\in\Omega\,|\, d(y,\partial\Omega)< R\}
    \end{equation*}
    for any $ R>0$. As noted in \cite{Foo-84}, the function $ y\mapsto d(y, \partial\Omega) $ has $ C^3 $ regularity on a tubular neighbourhood of $ \partial \Omega $. In particular, $ \partial \Omega_R \bsl\partial\Omega $ is $ C^3 $  for $ R $ small enough, since $ \nabla d \neq 0 $ in this neighbourhood.  Moreover, by the comparison principle in narrow domains \cite[Proposition 1.1]{Ber-Nir-91}, the maximum principle holds for the operator $ -\varepsilon\Delta v-(a(y)-\mu) v$
    in $ \Omega_R$ provided $ |\Omega_R| $ is small enough, meaning that if $ v $ satisfies $ -\varepsilon\Delta v - (a(y)-\mu) v\geq 0 $ in $ \Omega_R $ and $ v\geq 0 $ on $ \partial\Omega_R$, then $ v\geq 0$. In particular, we choose $ R $ small enough for  this property to hold. 

    At this point, $p\leq C $ in $ \Omega\bsl\Omega_R $ and comparison holds in $ \Omega_R $. 
    \medskip

    \textbf{Step 3:} We construct a supersolution.

    Notice that, in contrast with \cite{Alf-Cov-Rao-13} where Dirichlet boundary conditions are used, we need an additional argument to deal with the Neumann boundary conditions. 
    Since the comparison principle holds in the narrow domain $ \Omega_R$, the Fredholm alternative implies that,  for any $ \delta\in (0,1] $, there exists a unique (classical) solution to the system: 
    \begin{equation*}
        \begin{system}{ll}
            -\varepsilon\Delta v^\delta-(a(y)-\mu)v^\delta = \mu m_\infty \frac{\sup a}{k_0} & \text{ in } \Omega_R \\
            v^\delta=C & \text{ on }\partial \Omega_R\bsl\partial\Omega \\
            \delta v^\delta + (1-\delta)\frac{\partial v^\delta}{\partial\nu}=\delta & \text{ on }~\partial\Omega.
        \end{system}
    \end{equation*}
	{As a result of} the classical Schauder interior and boundary estimates, the mapping $ \delta\mapsto v^\delta $ is  continuous from $ (0, 1] $ to $ C_b(\Omega_R) $. Moreover, $ v^{\delta} $ is positive for $ \delta\in(0, 1] $ by virtue of the maximum principle. 

	Next, {still by a direct application of} the Schauder estimates, $ (v^\delta)_{0<\delta\leq 1} $ is precompact and there exists a sequence $ \delta_n\to 0 $ and $ v\in C^2 $ such that $ v^{\delta_n}\to v $ in $ C^2_{loc}(\Omega_R)\cap C^1(\overline\Omega_R) $. Then $ v\geq 0 $ satisfies:
    \begin{equation*}
        \begin{system}{ll}
            -\varepsilon\Delta v-(a(y)-\mu)v = \mu m_\infty \frac{\sup a}{k_0} & \text{ in } \Omega_R \\
            v=C & \text{ on }\partial \Omega_R\bsl\partial\Omega \\
            \frac{\partial v}{\partial\nu}=0  & \text{ on }\partial\Omega.
        \end{system}
    \end{equation*}
    {By a direct application of} the strong maximum principle and Hopf's Lemma, we have  $ v>0 $ on $\overline\Omega_R $.
    \medskip

    \textbf{Step 4:} We show that $ p\leq v $ on $ \Omega_R $.

    Let $ p $ be a solution to \eqref{eq:evolstat-beta} and select
    $\alpha:=\inf\{\zeta>0\,|\, \zeta v\geq p \text{ in } \Omega_R\}$.

    Assume by contradiction that $ \alpha>1 $. Then there exists $ y_0\in\overline\Omega_R $ such that the equality $ p(y_0)=\alpha v(y_0) $ holds, and $ \alpha v-p\geq 0 $. In particular $ y_0 $ is a zero minimum for the function $ \alpha v - p $. {Because of} the boundary conditions satisfied by $ p $ and $ v $,  $ y_0 $ cannot be in $\partial\Omega_R $. $ y_0 $ is then an interior local minimum to $ \alpha v-p $ and thus
    \begin{align*}
        0&\geq -\varepsilon\Delta (\alpha v-p)(y_0) = (a(y_0)-\mu)(\alpha v-p)(y_0)+\alpha \mu m_\infty\frac{ \sup a}{k_0} \\
         &\quad- \mu (M\star  p)(y_0) +p(y_0)(K\star p)(y_0) \\ 
         &> \alpha \mu m_\infty\frac{ \sup a}{k_0}-\mu (M\star  p)(y_0)\geq 0,
    \end{align*}
    {using estimate \eqref{eq:apriorip-mass},} which is a contradiction. Thus $ \alpha\leq 1 $. 

    This shows that $ p\leq v $. Since $ v $ is a bounded function, we have our uniform bound for $ p $ in $ \Omega_R$. In $ \Omega\bsl\Omega_R $,  we have $ p\leq C $. This ends the proof of Lemma \ref{lem:apriorip}.
\end{proof}
In order to proceed to the proof of Theorem \ref{thm:ex-stat-eps}, we yet need an additional technical remark.
\begin{lem}[Fréchet differentiability at 0]\label{lem:Frechet}
    Let Assumption \ref{hyp:gen} hold, $ \beta\geq 0 $ and 
    \begin{equation*}
        \begin{array}{rccl}
            G:&C_b(\Omega)&\to&C_b(\Omega) \\
              &p(y) & \mapsto & p(y) (K\star p)(y) + \beta p^2(y),
        \end{array}
    \end{equation*}
    then $ G $ is Fréchet differentiable at $ p=0 $ and its derivative is $ DG(p)=0 $.
\end{lem}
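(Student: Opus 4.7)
The plan is to verify Fréchet differentiability directly from the definition. Since $G(0)=0$, showing $DG(0)=0$ amounts to proving
\begin{equation*}
\lim_{\|h\|_\infty\to 0}\frac{\|G(h)\|_\infty}{\|h\|_\infty}=0.
\end{equation*}
Both terms making up $G(h)$ are quadratic in $h$, so this ratio should vanish linearly in $\|h\|_\infty$.

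Concretely, I would bound the two pieces separately. For the nonlocal piece, since $K$ is bounded by $k_\infty$ on the compact set $\overline\Omega\times\overline\Omega$ (Assumption \ref{hyp:gen}), for every $y\in\overline\Omega$ we have
\begin{equation*}
|h(y)\,(K\star h)(y)|\leq \|h\|_\infty\int_{\overline\Omega} K(y,z)|h(z)|\,dz\leq k_\infty |\Omega|\,\|h\|_\infty^{\,2}.
\end{equation*}
For the local piece, $|\beta h^2(y)|\leq \beta \|h\|_\infty^{\,2}$ pointwise. Adding these gives
\begin{equation*}
\|G(h)\|_\infty \leq \bigl(k_\infty|\Omega|+\beta\bigr)\|h\|_\infty^{\,2},
\end{equation*}
so that $\|G(h)\|_\infty/\|h\|_\infty\to 0$ as $\|h\|_\infty\to 0$, which is exactly the Fréchet differentiability of $G$ at $0$ with zero derivative.

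There is no real obstacle here: the argument only uses the boundedness of $K$ and the finiteness of $|\Omega|$, both guaranteed by Assumption \ref{hyp:gen}. The only thing to be slightly careful about is making clear that the candidate derivative $DG(0)=0$ is the zero continuous linear map from $C_b(\Omega)$ to itself, so that the definition of Fréchet differentiability is being applied in the correct normed space.
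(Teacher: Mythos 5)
Your proof is correct and follows the same approach as the paper: bound the two quadratic pieces separately using $K\leq k_\infty$ and the finiteness of $|\Omega|$, arriving at $\|G(h)\|_\infty\leq (k_\infty|\Omega|+\beta)\|h\|_\infty^2$, which is the paper's exact estimate.
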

\begin{proof}
    This comes from the remark 
    \begin{align*}
        \left|\int_\Omega K(y,z)p(z)dzp(y)+p^2(y)\right|&\leq \int_\Omega K(y,z)|p(z)|dz |p(y)| + \beta p^2(y)\\
                                                        & \leq k_\infty|\Omega|\Vert p\Vert_{C_b(\Omega)}^2 + \beta \Vert p\Vert_{C_b(\Omega)}^2
        \qedhere
    \end{align*}
\end{proof}

\begin{proof}[Proof of Theorem \ref{thm:ex-stat-eps}]
    \textbf{Step 1:} We prove item \ref{item:exstat-extinction}. 

    We  assume $ \lambda_1^\varepsilon >0 $. We recall that  $ (\lambda_1^\varepsilon,\varphi^\varepsilon) $ is the solution to \eqref{eq:eigeneps} with the normalization $ \int_\Omega\varphi^\varepsilon(y)dy=1 $.
    Let $ p>0 $ be a nonnegative solution to \eqref{eq:evolstat-beta} in $ \Omega $. Since $ p $ is bounded and $ \varphi^\varepsilon $ is positive on $ \overline\Omega $, the quantity 
    $ \alpha := \inf\{\zeta>0\,|\, \zeta\varphi^\varepsilon > p\} $
    is well-defined and finite. 
    Then, there exists $ y_0\in\overline\Omega $ such that $ p(y_0)=\alpha\varphi^\varepsilon(y_0) $. Remark that $ y_0 $ is a minimum to the nonnegative function $ \alpha\varphi^\varepsilon - p $. If $ y_0 \in\partial\Omega $, then Hopf's Lemma implies $ \frac{\partial (\alpha\varphi^\varepsilon - p)}{\partial\nu}(y_0)<0 $, which contradicts the Neumann boundary conditions satisfied by $ p $ and $ \varphi^\varepsilon $. Thus $ y_0\in\Omega$. Evaluating equation \eqref{eq:evolstat-beta}, we have:
    \begin{align*}
        0&\geq-\varepsilon \Delta(\alpha\varphi^\varepsilon - p)(y_0)=\mu\big(M\star (\alpha\varphi^\varepsilon-p) - (\alpha\varphi^\varepsilon-p)\big)\\
         &\quad +a(y_0)\big(\alpha\varphi^\varepsilon(y_0)-p(y_0)\big)+p(y_0)(K\star p)(y_0)+\beta p^2(y_0) + \lambda_1^\varepsilon \alpha\varphi^\varepsilon \\
         &\geq p(y_0)(K\star p)(y_0)+\beta p^2(y_0) + \lambda_1^\varepsilon \alpha\varphi^\varepsilon >0
    \end{align*}
    which is a contradiction.

    \medskip

    \textbf{Step 2 :} We prove item \ref{item:exstat-survival}. 

    We assume $ \lambda_1^\varepsilon < 0$.
    We argue as in \cite{Alf-Gri-18}:  if the nonlinearity is negligible near $ 0 $ and we can prove local boundedness of the solutions in $ L^\infty $, then we can prove existence through a bifurcation argument.  This requires a topological result stated in Appendix \ref{appendix:topo}. 

    More precisely, for $ \alpha\in\mathbb R $ and  $ p \in C_b(\Omega) $, we let $ F(\alpha, p)=\tilde p $ where $ \tilde p $ is the unique solution to:
    \begin{equation*}
        \begin{system}{ll} 
            -\varepsilon\Delta \tilde p + (\sup a-a(y))\tilde p - \mu (M\star \tilde p-\tilde p) = \alpha p - G(p) & \text{ in } \Omega \\
            \frac{\partial \tilde p}{\partial \nu} = 0   & \text{ on } \partial \Omega
        \end{system}
    \end{equation*}
	where $ G $ is as in Lemma \ref{lem:Frechet}. Notice that $ \sup a-a(y)\geq 0 $, so comparison applies and the operator $ F $ is well-defined {due to} the Fredholm alternative. In particular, for each $ \alpha\in\mathbb R $,  $ F(\alpha, \cdot) $ is Fréchet differentiable near $ 0 $ and its derivative is the linear operator $ \alpha T $, where $ Tp=\tilde p $ and $ \tilde p $ is defined by:
    \begin{equation*}
        \begin{system}{ll} 
            -\varepsilon\Delta \tilde p + (\sup a-a(y))\tilde p - \mu (M\star \tilde p - \tilde p) =  p  & \text{ in } \Omega \\
            \frac{\partial \tilde p}{\partial \nu} = 0   & \text{ on } \partial \Omega.
        \end{system}
    \end{equation*}
	Let $ C:=\{p\in C_b(\Omega)\,|\, p\geq 0\} $. {By} a classical comparison argument, $ T $ maps the cone $ C\bsl\{0\} $ into $ Int~C=\{p\in C_b(\Omega)\,|\, p>0 \} $. By virtue of the Krein-Rutman Theorem \cite[Theorem 6.13]{Bre-11}, $ T $ has a {\em first}\footnote{We stress that this {\em first} eigenvalue is {\em not} the {\em principal} eigenvalue of the problem $ F(\alpha, p)=p $, but the algebraic eigenvalue.}  eigenvalue $ \lambda(T) $ (satisfying $ T\psi=\lambda(T)\psi $ for a $ \psi>0$) and we have the formula $ \lambda(T) = \frac{1}{\lambda_1^\varepsilon + \sup a}$.

    We now check one by one the hypotheses of Theorem \ref{thm:orientedbifurcation}:

    {\bf 1.} Clearly we have $ F(\alpha, 0)=0 $ for any $ \alpha\in\mathbb R$.

	{\bf 2.} {It follows from Lemma  \ref{lem:Frechet} that}  $ G $ is Fréchet differentiable near 0 with derivative 0. As a consequence, $ F(\alpha, \cdot) $ is Fréchet differentiable near 0 with derivative $ \alpha T $.

    {\bf 3.} $T$ satisfies the hypotheses of the Krein-Rutman Theorem.

	{\bf 4.} {It follows from Lemma \ref{lem:apriorip} that} the solutions to $ F(\alpha, p)=p $ are locally uniformly bounded in $ \alpha $.

    {\bf 5.} Since any nontrivial nonnegative fixed point $ p $ is positive, there is no nontrivial fixed point in the boundary of $ C $.

    Thus, applying Theorem \ref{thm:orientedbifurcation}, there exists a branch of solutions $ \mathcal C $ connecting $ \alpha=\frac{1}{\lambda(T)} $ to either $ \alpha\to+\infty $ or $ \alpha\to-\infty $. 

	{By} the uniqueness in the Krein-Rutman Theorem, if $ \lambda^\alpha $ denotes the principal eigenvalue associated with $ F(\alpha, p)=p$, we have   $ \lambda^\alpha=\lambda_1^\varepsilon+\sup a-\alpha=\frac{1}{\lambda(T)}-\alpha$. In particular, for $ \alpha < -\sup a -\lambda_1^\varepsilon $, we deduce from Step 1 that there cannot exist a solution to $ F(\alpha, p)=p $ in $ C $.
    Thus $ \mathcal C $  connects $\frac{1}{\lambda(T)} $ to $ +\infty$. In particular, there exists a positive  solution for $ \alpha = \sup a = \frac{1}{\lambda(T)}-\lambda_1^\varepsilon>\frac{1}{\lambda(T)} $, which solves \eqref{eq:evolstat-beta}. This ends the proof of Theorem \ref{thm:ex-stat-eps}.
\end{proof}

We now prove a lower estimate for solutions to \eqref{eq:evolstat-beta}, which  is crucial for the construction of traveling waves, but will not be used in the meantime.  
We stress that in the lemma below, the constant $ \rho_\beta $ is independent from $ \varepsilon $.
\begin{lem}[$p^{\varepsilon, \beta} $ does not vanish]\label{lem:beta-lowerbound}
    Let Assumption \ref{hyp:gen} be satisfied,  let  $\beta>0 $ and  $ \lambda_1<0 $. Let finally $ p^{\varepsilon, \beta}$ be a solution to \eqref{eq:evolstat-beta}. Then, there exists constants $\varepsilon_0=\varepsilon_0(\Omega, \mu, M, a)>0 $ and  $ \rho_\beta=\rho_\beta(\Omega, M, a, \beta)>0 $ such that if $ \varepsilon\leq\varepsilon_0 $, then
    \begin{equation*}
        \inf_\Omega p^{\varepsilon,\beta}\geq\rho_\beta .
    \end{equation*}
\end{lem}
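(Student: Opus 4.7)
The plan is to establish a pointwise lower bound at an interior minimum, to rule out boundary minima via Hopf's lemma, and to obtain a uniform $L^1$-lower bound on $p^{\varepsilon,\beta}$ via a rescaling argument that exploits the uniqueness of the principal eigenvalue.

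First, at any interior minimum $y_0\in\Omega$ of $p:=p^{\varepsilon,\beta}$, the inequality $-\varepsilon\Delta p(y_0)\leq 0$ combined with the equation yields
\begin{equation*}
\mu m_0\|p\|_{L^1(\Omega)}\;\leq\;\mu(M\star p)(y_0)\;\leq\;p(y_0)\bigl[\mu+(K\star p)(y_0)+\beta p(y_0)-a(y_0)\bigr].
\end{equation*}
Using the a priori bounds from Lemma \ref{lem:apriorip}, namely $\|p\|_\infty\leq \sup a/\beta$ and $\|p\|_{L^1}\leq \sup a/k_0$, the quantity in brackets is bounded above by a constant $A=A(\Omega,M,K,a,\mu)$, independent of both $\varepsilon$ and $\beta$ (since the product $\beta\|p\|_\infty$ is bounded by $\sup a$). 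Hence $\inf_\Omega p\geq(\mu m_0/A)\|p\|_{L^1}$, at least when the minimum is attained in the interior.

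Next, writing the equation as $(-\varepsilon\Delta+c+\theta_0)p=\mu M\star p+\theta_0 p$, with $c(y)=\mu+K\star p+\beta p-a$ and $\theta_0=\sup a-\mu+1$ so that $c+\theta_0\geq 1$, a standard application of Hopf's lemma to $-p$ at a putative boundary minimum contradicts the Neumann boundary condition $\partial_\nu p=0$, unless $p$ is constant --- a case easily ruled out by direct inspection of the equation under Assumption \ref{hyp:gen}.

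The main step is to lower bound $\|p^{\varepsilon,\beta}\|_{L^1}$ uniformly in $\varepsilon\leq\varepsilon_0$. Assume for contradiction that $\ell_n:=\|p^{\varepsilon_n,\beta}\|_{L^1}\to 0$ for some sequence $\varepsilon_n\to 0$. The rescaled functions $q_n:=p^{\varepsilon_n,\beta}/\ell_n$ satisfy $\int_\Omega q_n=1$ and
\begin{equation*}
-\varepsilon_n\Delta q_n-\mu(M\star q_n-q_n)-a\,q_n\;=\;-q_n\bigl(K\star p^{\varepsilon_n,\beta}+\beta p^{\varepsilon_n,\beta}\bigr).
\end{equation*}
By Prokhorov's theorem, along a subsequence $q_n\rightharpoonup q_\infty$ weakly in $M^1(\overline\Omega)$ with $\int q_\infty=1$, so $q_\infty$ is nontrivial. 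Testing against $\psi\in F_0$ (the Neumann-type test function space introduced in the proof of Theorem \ref{thm:eigen}) and passing to the limit, the diffusion term vanishes after integration by parts, and the $K$-part of the right-hand side vanishes since $\|K\star p^{\varepsilon_n,\beta}\|_\infty\leq k_\infty\ell_n\to 0$. The hard part is to control the contribution of $\beta q_n p^{\varepsilon_n,\beta}$: while $\beta p^{\varepsilon_n,\beta}\leq \sup a$ is uniformly bounded, its $L^1$-norm $\beta\ell_n$ tends to zero, but $q_n$ is itself unbounded in $L^\infty$. A careful analysis of the joint weak-$\star$ limit of the product $q_n\cdot p^{\varepsilon_n,\beta}$, combined with the pointwise domination $\beta p^{\varepsilon_n,\beta}\leq \sup a$, is needed to show that this term vanishes when tested against $\psi\in F_0$. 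One is then left with $\mu(M\star q_\infty-q_\infty)+a\,q_\infty=0$ in the sense of measures, with $q_\infty$ a nonnegative, nontrivial Radon measure. The uniqueness of the principal eigenvalue in Proposition \ref{prop:vpp} then forces $\lambda_1=0$, contradicting $\lambda_1<0$. Hence $\ell_n\not\to 0$, and combining with the first step gives $\inf_\Omega p^{\varepsilon,\beta}\geq (\mu m_0/A)\ell_0=:\rho_\beta>0$. The rigorous treatment of the $\beta q_n p^{\varepsilon_n,\beta}$ term in the weak limit --- where the concentration of $p^{\varepsilon_n,\beta}$ can in principle be nontrivial --- is the main technical obstacle of the proof.
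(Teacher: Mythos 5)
Your overall strategy --- first reduce to an $L^1$ lower bound via an interior minimum estimate, then prove a uniform $L^1$ lower bound by a rescaling-and-compactness argument invoking the uniqueness in Proposition \ref{prop:vpp} --- is genuinely different from the paper's. The paper instead compares $p^{\varepsilon,\beta}$ directly with $\alpha_0\varphi^{\delta,\varepsilon}$, where $\varphi^{\delta,\varepsilon}$ is the Neumann eigenfunction for a \emph{truncated} fitness $a^\delta=\min(a,\sup a-\delta)$; since $1/(\sup a^\delta-a^\delta)\notin L^1(\Omega)$, the eigenfunction is continuous, uniformly bounded above and below, and $\lambda^{\delta,\varepsilon}<0$ for $\delta,\varepsilon$ small, so the maximum-principle slide gives a clean $\varepsilon$-uniform bound with no need for any weak limit.

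The gap you flag is real and I do not think it can be closed along your route. The quantity $\int_\Omega\psi\,q_n\,\beta p^{\varepsilon_n,\beta}\,dy=\frac{\beta}{\ell_n}\int_\Omega\psi\,(p^{\varepsilon_n,\beta})^2\,dy$ has total mass bounded by $\sup a$ but not tending to zero in general: under Assumption \ref{hyp:dega} the steady state is expected to concentrate near $\Omega_0$, which is precisely the regime where $p^{\varepsilon_n,\beta}$ is close to its supremum $\sup a/\beta$ on a set of measure $\sim\ell_n\beta/\sup a$, making this term of order one. In that case the limiting identity is only $\mu(M\star q_\infty-q_\infty)+a\,q_\infty=\nu$ for some nonnegative measure $\nu\geq 0$, not zero, and Proposition \ref{prop:vpp} is inapplicable since it requires an exact eigenvalue equation. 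Worse, even the sign information does not rescue the argument: testing against a positive adjoint eigenfunction $\psi$ satisfying $\mu(\check M\star\psi-\psi)+(a+\lambda_1)\psi=0$ (with $\check M(y,z)=M(z,y)$) yields $-\lambda_1\int\psi\,q_\infty(dy)=\int\psi\,\nu(dy)$, an equation between two nonnegative quantities that is perfectly consistent with $\lambda_1<0$ and carries no contradiction. So the rescaling scheme fundamentally loses the information needed to pin down the eigenvalue; the extra $\beta$ self-competition is not a perturbation one can drop in the limit.

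A secondary, more minor point: the Hopf-lemma step ruling out boundary minima needs a little more care, since $p$ being constant is not the only alternative --- you need to verify that the operator $-\varepsilon\Delta+c(y)$ with your $c\geq 0$ on $\Omega_R$ actually satisfies the hypotheses of Hopf's lemma at a boundary minimum. The paper's proof sidesteps this by never estimating $\inf p$ in terms of $\|p\|_{L^1}$ at all.
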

\begin{proof}
    This proof is inspired by the one of \cite[Lemma 5.2]{Cov-Dav-Mar-13}.

    \paragraph{Step 1:} Setting of an approximating eigenvalue problem.

    Here we introduce an approximating eigenvalue problem, that will be used to estimate from below the solutions to \eqref{eq:evolstat-beta}.

    Let $ \delta>0 $, $ \varepsilon>0 $, $ a^\delta(y):=\min(a(y), \sup a - \delta) $ and $ (\lambda^{\delta, \varepsilon}, \varphi^{\delta, \varepsilon}) $ be the principal eigenpair  solving the problem 
    \begin{equation}\label{eq:beta-lowerbound2}
        \begin{system}{ll}
            \varepsilon\Delta\varphi^{\delta, \varepsilon} + \mu (M\star \varphi^{\delta, \varepsilon}-\varphi^{\delta, \varepsilon}) + (a^\delta(y)+\lambda^{\delta, \varepsilon})\varphi^{\delta, \varepsilon}=0 & \text{ in } \Omega \\
            \frac{\partial \varphi^{\delta, \varepsilon}}{\partial \nu}=0 & \text{ on } \partial\Omega,
        \end{system}
    \end{equation}
    with $ \int_\Omega \varphi^{\delta, \varepsilon}(y)dy = 1$. It follows from Theorem \ref{thm:eigen} that $ \lambda^{\delta, \varepsilon} $ converges to the principal eigenvalue $ \lambda^{\delta, 0} $ of the operator $ \psi\mapsto \mu(M\star \psi-\psi) + a^\delta(y)\psi $ when $ \varepsilon\to 0 $. $ \lambda^{\delta, 0} $, in turn, converges to $ \lambda_1 $ when $ \delta\to 0$ by Lipschitz continuity \cite[Proposition 1.1]{Cov-10}. Thus we may approximate $ \lambda_1 $ by $ \lambda^{\delta, \varepsilon} $ for $ \delta>0 $ and $ \varepsilon>0 $ small enough.

    Since $ y\mapsto \frac{1}{\sup a^\delta - a^\delta(y)}\not\in L^1(\Omega) $, it follows from \cite[Theorem 1.1]{Cov-10} (which can be adapted in our context; see \cite{Cov-13}) that there exists a continuous eigenfunction associated with $ \lambda^{\delta, 0} $. In this case \cite[Theorem 1.1]{Cov-13} shows the strict upper bound $ \lambda^{\delta, 0} < -\sup a^\delta+\mu = -\sup a + \delta+\mu $. 

    In what follows we fix the real number  $ \delta>0 $ small enough  so that the inequality 
    $\delta < \frac{1}{2}\min\left(\mu,\sup a-\inf a, \sup a-\underset{\partial\Omega}{\sup}~ a^+-\mu\right) $ holds, together with 
    $ \lambda^{\delta, 0} \leq \frac{3\lambda_1}{4}  $. We define $ \eta:=-\lambda^{\delta, 0} - \sup a + \delta + \mu >0 $. 
    Since $ \lambda^{\delta, \varepsilon}\to\lambda^{\delta, 0} $ as $ \varepsilon\to 0 $, we fix $ \varepsilon_0>0 $ such that for any $ 0<\varepsilon<\varepsilon_0 $,
    $ |\lambda^{\delta, \varepsilon} - \lambda^{\delta, 0}|\leq \frac{-\lambda_1}{4} $, and 
    $ \lambda^{\delta, \varepsilon}\leq \lambda^{\delta, 0} + \frac{\eta}{2}$.

    Finally, integrating equation \eqref{eq:beta-lowerbound2}, we have
    $0 = \int_\Omega (a^\delta(y) + \lambda^{\delta, \varepsilon})\varphi^{\delta, \varepsilon}(y) dy ,$
    thus the function $ a^\delta(y) +\lambda^{\delta, \varepsilon} $ takes nonpositive and nonnegative values. This shows
    \begin{equation*}
        \inf a = \inf a^{\delta}\leq -\lambda^{\delta, \varepsilon}\leq \sup a^\delta=\sup a-\delta.
    \end{equation*}

    \textbf{Step 2:} Estimates from above and from below of $ \varphi^{\delta, \varepsilon} $.

	Let us establish some upper and lower bounds for $ \varphi^{\delta, \varepsilon} $. Since $ \varphi^{\delta, \varepsilon} $ is continuous on $ \overline\Omega$,  there exists $ y_0\in\overline\Omega $ such that $ \varphi^{\delta, \varepsilon}(y_0)=\inf_{z\in\overline\Omega}\varphi^{\delta, \varepsilon}(z) $. If $ y_0\in\partial\Omega$, then {it follows from Hopf's Lemma that} $ \frac{\partial\varphi^{\delta, \varepsilon}}{\partial\nu}(y_0) < 0 $, which contradicts the Neumann boundary conditions satisfied by $ \varphi^{\delta, \varepsilon} $ (recall that $ a(y_0)+\lambda^{\delta, \varepsilon}< 0 $ for $ y_0\in\partial\Omega $). We conclude that $ y_0\in\Omega$. Thus we can evaluate equation \eqref{eq:beta-lowerbound2}:
    \begin{align*}
        0\geq -\varepsilon\Delta\varphi^{\delta, \varepsilon}(y_0) &= \mu\left(M\star \varphi^{\delta, \varepsilon}-\varphi^{\delta, \varepsilon}\right)+\big(a^\delta(y_0)+\lambda^{\delta, \varepsilon}\big)\varphi^{\delta, \varepsilon}, \\
        (\sup a - \inf a+\mu)\varphi^{\delta, \varepsilon}(y_0)&\geq \big(-\lambda^{\delta, \varepsilon} - a^\delta(y_0)+\mu\big)\varphi^{\delta, \varepsilon}(y_0)\geq \mu m_0\int_\Omega \varphi^{\delta, \varepsilon}, \\
        \min_{z\in\Omega}\varphi^{\delta, \varepsilon}(z)&\geq \frac{\mu m_0}{\sup a-\inf a+\mu}.
    \end{align*}

    Similarly, there exists $ y_0\in\Omega $ such that $ \varphi^{\delta, \varepsilon}(y_0)=\max_{z\in\Omega}\varphi^{\delta, \varepsilon}(z) $. Evaluating equation \eqref{eq:beta-lowerbound2}, we get (recalling $ a^\delta(y_0)-\mu + \lambda^{\delta, \varepsilon} \leq -\frac{\eta}{2}<0 $):
    \begin{align*}
        0\leq-\varepsilon \Delta \varphi^{\delta, \varepsilon} &= \mu M\star \varphi^{\delta, \varepsilon} + \big(a^\delta(y_0)-\mu+\lambda^{\delta, \varepsilon}\big)\varphi^{\delta, \varepsilon}(y_0), \\
        \frac{\eta}{2}\varphi^{\delta, \varepsilon}&\leq \mu m_\infty\int_\Omega \varphi^{\delta, \varepsilon}=\mu m_\infty, \\
        \max_{z\in\Omega}\varphi^{\delta, \varepsilon}(z)&\leq 2\frac{\mu m_\infty}{\eta}.
    \end{align*}
    Hence, for $ 0< \varepsilon \leq \varepsilon_0 $ and $ y_0\in\Omega $, we have shown that 
    \begin{equation*}
        \frac{\mu m_0}{\sup a-\inf a+\mu}\leq \varphi^{\delta, \varepsilon}(y_0)\leq 2\frac{\mu m_\infty}{\eta}.
    \end{equation*}

    \textbf{Step 3:} Lower estimate for $ p^{\varepsilon, \beta} $.

    We are now in a position to derive a lower bound for $ p^{\varepsilon, \beta} $.  Since $ p^{\varepsilon, \beta}>0 $ in $ \overline\Omega $, we can define 
    $\alpha:=\sup\left\{\zeta>0\,|\, \forall y\in\Omega, \zeta\varphi^{\delta, \varepsilon}(y)\leq p^{\varepsilon, \beta}(y)\right\}$.

    Assume by contradiction that 
    $ \alpha< \alpha_0:=\min\left(\frac{m_0\eta}{2 k_\infty m_\infty}, \frac{-\lambda^{\delta, \varepsilon}\eta}{2\beta\mu m_\infty+\eta k_\infty}\right) $. 
    By definition of $ \alpha $ there exists $ y_0\in\overline\Omega$ such that $ \alpha\varphi^{\delta, \varepsilon}(y_0)=p(y_0)$. Assume $ y_0\in\partial\Omega $, then {it follows from Hopf's Lemma that} $ \frac{\partial(p^{\varepsilon, \beta}-\alpha \varphi^{\delta, \varepsilon})}{\partial\nu}(y_0)<0 $, which contradicts the Neumann boundary conditions satisfied by $ p^{\varepsilon, \beta} $ and $ \varphi^{\delta, \varepsilon} $. Thus $ y_0\in\Omega$. We have:
    \begin{align*}
        0&\geq -\varepsilon\Delta(p^{\varepsilon, \beta}-\alpha\varphi^{\delta, \varepsilon})(y_0)=\mu \left(M\star (p^{\varepsilon, \beta}-\alpha\varphi^{\delta, \varepsilon}) - (p^{\varepsilon, \beta}-\varphi^{\delta, \varepsilon})\right) \\ 
         &\quad + p^{\varepsilon, \beta}\big(a(y_0)-K\star p^{\varepsilon, \beta}-\beta p^{\varepsilon, \beta}\big)-\big(\lambda^{\delta, \varepsilon}+a^\delta(y_0)\big)\alpha\varphi^{\delta, \varepsilon} \\
         &=\int_\Omega \big(\mu M(y_0,z)-\alpha \varphi^{\delta, \varepsilon}(y_0)K(y_0, z)\big)\big(p^{\varepsilon, \beta}(z)-\alpha\varphi^{\delta, \varepsilon}(z)\big)dz\\
         &\quad-\alpha\varphi^{\delta, \varepsilon}(y_0)\int_\Omega K(y_0,z)\big(\alpha\varphi^{\delta, \varepsilon}(z)\big)dz\\
         &\quad+\alpha\varphi^{\delta, \varepsilon}\big(a(y_0)-a^\delta(y_0)\big)-\beta \left(p^{\varepsilon,\beta}\right)^2 -\lambda^{\delta, \varepsilon}\alpha\varphi^{\delta, \varepsilon}(y_0).
    \end{align*}
    By definition,
    $ \mu M(y_0,z)-\alpha \varphi^{\delta, \varepsilon}(y_0)K(y_0,z)\geq \mu m_0-k_\infty\frac{m_0\eta}{2 k_\infty m_\infty}\frac{2\mu m_\infty}{\eta}= 0$ for any $ z\in\Omega $,
    and thus, recalling $ a(y_0)\geq a^{\delta}(y_0) $,
    \begin{equation*}
        0\geq -\alpha\lambda^{\delta, \varepsilon}\varphi^{\delta, \varepsilon}(y_0)-\alpha^2\varphi^{\delta, \varepsilon}(y_0)\left(\beta\varphi^{\delta, \varepsilon} + \int_\Omega K(y_0,z)\varphi^{\delta, \varepsilon}(z)dz\right),
    \end{equation*}
    \begin{equation*}
        \left(2\beta\frac{\mu m_\infty}{\eta} + k_\infty\right)\alpha\geq \alpha\left(\beta \varphi^{\delta, \varepsilon}(y_0) + \int_\Omega K(y_0,z)\varphi^{\delta, \varepsilon}(z)dz\right)\geq -\lambda^{\delta, \varepsilon}, 
    \end{equation*}
    which is a contradiction since $ \alpha < \alpha_0= \min\left(\frac{m_0\eta}{2 k_\infty m_\infty}, \frac{-\lambda^{\delta, \varepsilon}\eta}{2\beta\mu m_\infty+\eta k_\infty}\right)$.

    We conclude that $ \alpha\geq \alpha_0 $ and thus (recalling $ \lambda^{\delta, \varepsilon}\leq \frac{\lambda_1}{2}$) 
    \begin{align*}
        \min_{y\in\Omega}p^{\varepsilon, \beta}(y)&\geq \alpha_0 \min_{y\in\Omega}\varphi^{\delta, \varepsilon}(y) \\
                                                  &\geq\min\left(\frac{m_0\eta}{2 k_\infty m_\infty}, \frac{(-\lambda_1)\eta}{4\beta\mu m_\infty+2\eta k_\infty}\right)\frac{\mu m_0}{\sup a-\inf a+\mu}>0.
    \end{align*}
    Since this lower bound is independent from $ \varepsilon $, this ends the proof of Lemma \ref{lem:beta-lowerbound}.
\end{proof}

\subsection{Construction of a stationary solution at $ \varepsilon=0$}\label{sec:evolstat}

In this section we assume $ \lambda_1 < 0 $. Then, Theorem \ref{thm:ex-stat-eps} guarantees the existence of a positive solution to \eqref{eq:evolstat-beta} for $ \varepsilon $ small enough, since $ \lambda_1^\varepsilon\to\lambda_1 $ as $ \varepsilon \to 0 $ (recall Theorem \ref{thm:eigen}). In this context, we expect the solution constructed in Theorem \ref{thm:ex-stat-eps} with $ \beta=0 $ to converge weakly to a (possibly singular) Radon measure, solution to \eqref{eq:stat}. Here we prove this result, and complete the proof of Theorem \ref{thm:survival}. In particular, in this section we assume $ \beta=0 $.

Before we can prove Theorem \ref{thm:survival}, we need a series of estimates on the previously constructed solutions $p^\varepsilon:=p^{\varepsilon, 0} $.
\begin{lem}[Estimates on the mass]\label{lem:lowupintpeps}
    Let Assumption \ref{hyp:gen} hold, let $ \varepsilon >0 $,  $ \lambda_1^\varepsilon < 0 $, and   $p^\varepsilon$ be a solution to equation \eqref{eq:evolstat-beta} with $ \beta=0 $. Then 
    \begin{equation}\label{eq:lowupinteps}
        \frac{-\lambda_1^\varepsilon}{k_\infty}\leq \int_\Omega p^\varepsilon(y)dy\leq \frac{\sup a}{k_0}.
    \end{equation}
\end{lem}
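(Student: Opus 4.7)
The plan is to establish the two inequalities by quite different techniques: a direct integration argument for the upper bound, and a sliding comparison with the principal eigenfunction $\varphi^\varepsilon$ for the lower bound.

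For the upper bound, I would integrate equation \eqref{eq:evolstat-beta} (with $\beta=0$) over $\Omega$. The Neumann boundary condition kills the diffusion term, and a Fubini swap together with the mass-preservation identity $\int_{\overline\Omega} M(y,z)\,dy = 1$ from Assumption \ref{hyp:gen} eliminates the mutation term $\mu(M\star p^\varepsilon - p^\varepsilon)$ as well. What remains is the scalar balance
\[
\int_\Omega a(y) p^\varepsilon(y)\,dy = \int_\Omega p^\varepsilon(y) (K\star p^\varepsilon)(y)\,dy .
\]
Bounding the left-hand side above by $\sup a\cdot\int_\Omega p^\varepsilon$ and the right-hand side below by $k_0\bigl(\int_\Omega p^\varepsilon\bigr)^2$, and then dividing by $\int_\Omega p^\varepsilon > 0$ (which is positive by Lemma \ref{lem:apriorip} \ref{item:apriorip-positive}), produces the claimed upper bound.

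For the lower bound, I would imitate the sliding arguments already used in Lemmas \ref{lem:apriorip} and \ref{lem:beta-lowerbound}. Both $p^\varepsilon$ and the principal eigenfunction $\varphi^\varepsilon$ of \eqref{eq:eigeneps} are positive and continuous on the compact set $\overline\Omega$, so
\[
\alpha := \sup\{\zeta > 0 \,|\, \zeta\varphi^\varepsilon \leq p^\varepsilon \text{ on } \overline\Omega\}
\]
is well-defined, positive, and finite, and the nonnegative function $u := p^\varepsilon - \alpha\varphi^\varepsilon$ vanishes at some touching point $y_0 \in \overline\Omega$. A Hopf-type argument analogous to the one in Lemma \ref{lem:beta-lowerbound} would rule out $y_0 \in \partial\Omega$ using the shared Neumann conditions, so that $y_0$ is an interior minimum of $u$. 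Subtracting $\alpha$ times \eqref{eq:eigeneps} from \eqref{eq:evolstat-beta} and evaluating at $y_0$, using $-\varepsilon\Delta u(y_0) \leq 0$, $M\star u(y_0)\geq 0$, $u(y_0)=0$, and $p^\varepsilon(y_0) = \alpha\varphi^\varepsilon(y_0)$, the local potential terms cancel and I expect the inequality to reduce to $\alpha\varphi^\varepsilon(y_0)\bigl[K\star p^\varepsilon(y_0) + \lambda_1^\varepsilon\bigr] \geq 0$. Dividing by the positive factor and using the pointwise bound $K\star p^\varepsilon(y_0) \leq k_\infty\int_\Omega p^\varepsilon$ then yields the desired lower bound.

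The only mildly delicate point is the Hopf boundary step; because the nonlocal source $\mu M\star u$ carries the correct sign on the nonnegative function $u$, this should be a routine adaptation of the classical boundary-point lemma, proceeding exactly as in Lemma \ref{lem:beta-lowerbound}. I do not anticipate any other essential difficulty.
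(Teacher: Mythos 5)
Your proposal matches the paper's proof almost exactly: the upper bound by integrating over $\Omega$ and using the row-stochasticity of $M$, and the lower bound by the sliding argument with $\varphi^\varepsilon$, a Hopf step to exclude boundary touching points, and an interior-maximum computation. The only real difference is cosmetic: the paper frames the lower bound as a proof by contradiction, assuming $\lambda_1^\varepsilon + k_\infty\int_\Omega p^\varepsilon < 0$, while you phrase it directly.

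That cosmetic difference, however, hides the one place where your sketch is imprecise. Ruling out $y_0\in\partial\Omega$ by Hopf's lemma requires a one-sided differential inequality near $y_0$. Writing $u = p^\varepsilon - \alpha\varphi^\varepsilon$, one finds
\begin{equation*}
-\varepsilon\Delta u - \bigl(a(y)-\mu - (K\star p^\varepsilon)\bigr)u \;=\; \mu\, M\star u \;-\; \alpha\varphi^\varepsilon\bigl[(K\star p^\varepsilon) + \lambda_1^\varepsilon\bigr],
\end{equation*}
and the first term on the right is nonnegative since $u\geq 0$, but the second term has the favourable sign $\geq 0$ precisely when $(K\star p^\varepsilon)+\lambda_1^\varepsilon \leq 0$. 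So the Hopf step is \emph{not} sign-free: you either argue by contradiction as the paper does (under the assumption $k_\infty\int p^\varepsilon < -\lambda_1^\varepsilon$, which forces $(K\star p^\varepsilon)+\lambda_1^\varepsilon<0$ everywhere and makes the right-hand side nonnegative), or you insert the trivial observation that if $(K\star p^\varepsilon)(y)+\lambda_1^\varepsilon \geq 0$ at a single point you are already done, and restrict the sliding argument to the remaining case. Either way the idea is the same as the paper's; your write-up should just make this sign condition explicit before invoking Hopf.
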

\begin{proof}
    The upper bound in equation \eqref{eq:lowupinteps} has been established in Lemma \ref{lem:apriorip}. We turn our attention to the lower estimate.

    We assume by contradiction that $ \lambda_1^\varepsilon+ k_\infty\int_\Omega p^\varepsilon(y)dy <0 $.  Let $ (\lambda_1^\varepsilon, \varphi^\varepsilon >0) $ be the solution to the eigenproblem  \eqref{eq:eigeneps}, normalized with $ \int \varphi^\varepsilon=1 $.
    Then, we define the  real number $\alpha:=\sup\{\zeta>0\,|\, \forall y\in\Omega, \zeta\varphi^\varepsilon\leq p^\varepsilon\}>0$, which is then well-defined since $ p^\varepsilon >0 $ and $ \varphi^\varepsilon $ is bounded.

	By definition of $ \alpha $ we have $ \alpha \varphi^\varepsilon \leq p^\varepsilon $ and there exists a point $ y_0\in\overline\Omega $ such that $p^\varepsilon(y_0)=\alpha\varphi^\varepsilon(y_0) $. If $ y_0\in\partial\Omega $, since $ y_0 $ is a maximum point for the function $ \alpha\varphi^\varepsilon -p^\varepsilon $, then {it follows from Hopf's Lemma that} $ \frac{\partial  \alpha \varphi^\varepsilon-p^\varepsilon}{\partial \nu}(y_0)>0 $, which contradicts the Neumann boundary conditions satisfied by $ p^\varepsilon $ and $\varphi^\varepsilon $. Thus $ y_0\in\Omega $ and we compute 
    \begin{align*}
        0&\leq -\mu \big(M\star (\alpha\varphi^\varepsilon-p^\varepsilon)-(\alpha\varphi^\varepsilon-p^\varepsilon)\big)-\varepsilon\Delta(\alpha\varphi^\varepsilon-p^\varepsilon)  \\
         &=\lambda^\varepsilon_1\alpha\varphi^\varepsilon+(K\star p^\varepsilon)p^\varepsilon+a(y_0)(\alpha\varphi^\varepsilon-p^\varepsilon)=  \big(\lambda_1^\varepsilon + (K\star p^\varepsilon)(y_0)\big)p^\varepsilon(y_0), 
    \end{align*}
    which is a contradiction since $ \lambda_1+(K\star p)(y_0)\leq \lambda_1+k_\infty\int_\Omega p^\varepsilon(y)dy < 0 $.
    This finishes the proof of Lemma \ref{lem:lowupintpeps}.
\end{proof}

\begin{proof}[Proof of Theorem \ref{thm:survival}]
	{It follows from Lemma \ref{lem:lowupintpeps} that} the family $ (p^{\varepsilon})_{0<\varepsilon\leq 1} $ of solutions to \eqref{eq:evolstat-beta} with $ \varepsilon >0 $ and $ \beta=0 $ is uniformly bounded in $ M^1(\overline\Omega) $. Hence, {applying} Prokhorov's Theorem \cite[Theorem 8.6.2]{Bog-07}, $(p^\varepsilon)_{0<\varepsilon<1} $   is precompact for the weak topology in $ M^1(\overline\Omega)$, and  there exists a sequence $ p^{\varepsilon_n} $ (with $ \varepsilon_n\to 0 $) and a measure  $p$ such that $ p^{\varepsilon_n}\rightharpoonup p $ in the sense of measures.    In particular, taking $ \psi=1 $, we recover the estimate of Lemma \ref{lem:lowupintpeps}:
    $ 0<\frac{-\lambda_1}{k_\infty}\leq \int_{\overline\Omega}p(dy)\leq \frac{\sup a}{k_0}$.
    Hence $ p $ is non-trivial.

    Let us show that $ p $ is indeed a solution to \eqref{eq:stat}. {Multiplying equation \eqref{eq:evolstat-beta} by  $ \psi\in F_0 $, where $F_0$ is the set of functions with zero boundary flux as defined in \eqref{eq:F0}}, and integrating by parts, we get
    \begin{align}\label{eqthm:survival}
        -\varepsilon_n\int_\Omega  p^{\varepsilon_n}\Delta \psi dy &= \int_\Omega \mu(M\star  p^{\varepsilon_n}-p^{\varepsilon_n})\psi+a(y) p^{\varepsilon_n}\psi dy \\
								   &\quad{-\int_{\Omega}(K\star p^{\varepsilon_n})(y) \psi(y)p^{\varepsilon_n}(y) dy.}\nonumber
    \end{align}
    Since $ \Delta \psi\in C_b(\overline\Omega) $ and $ \int_\Omega p^{\varepsilon_n} $ is bounded uniformly in $ n $, the left-hand side of \eqref{eqthm:survival} goes to $ 0 $ when $ n\to\infty $. Moreover since $ \psi(y)(a(y)-\mu)\in C(\overline\Omega) $, then by definition the convergence
    $\int_{\Omega} \psi(y)(a(y)-\mu)p^{\varepsilon_n}(y)dy\to_{n\to\infty}\int_{\overline\Omega}\psi(y)(a(y)-\mu)p(dy)$ holds.

	We turn our attention to the term $ \int_\Omega M\star p^{\varepsilon_n}(y)\psi(y)dy $. {We notice that
	\begin{equation*}
		\int_\Omega\int_\Omega M(y, z)p^{\varepsilon_n}(z)dz \psi(y)dy=\int_\Omega {p^{\varepsilon_n}}(z)\int_\Omega M(y,z)\psi(y)dy=\int_\Omega {p^{\varepsilon_n}}(z)\check{M}\star\psi(z)dz, 
	\end{equation*}
	where $\check M(y,z)=M(z, y)$. Since $\check M\star\psi(z) $ is a valid test function, we have indeed $\int_\Omega M\star p^{\varepsilon_n}(y)\psi(y)dy\to \int_\Omega M\star p(y) \psi(y)dy$.

	We turn to the convergence of the nonlinearity $\int_{\Omega}(K\star p^{\varepsilon_n})(y) \psi(y)p^{\varepsilon_n}(y) dy$. Since the sequence $p^{\varepsilon_n} $ appears twice in this term, the above argument cannot be used directly. Therefore, we first show a stronger convergence for $K\star p^{\varepsilon_n}$, namely that it converges uniformly to a continuous limit. We notice that
	
    \begin{align*}
        \left|(K\star p^{\varepsilon_n})(y)-(K\star p^{\varepsilon_n})(y')\right| =& \left|\int_\Omega \big(K(y,z)-K(y', z)\big)p^{\varepsilon_n}(z)dz\right|\\
        \leq & 2\Vert K(y, \cdot)-K(y', \cdot)\Vert_{C_b(\overline\Omega)}\frac{\sup a}{k_0}.
    \end{align*}
	Thus, the modulus of continuity of $ K\star p^{\varepsilon_n} $ is uniformly bounded. Up to the extraction of a subsequence, $ K\star p^{\varepsilon_n} $ converges in $ C_b(\overline\Omega) $ to a limit which we identify as $ K\star p $ (by using another test function and the weak convergence $p^{\varepsilon_n}\rightharpoonup p$). Along this subsequence, we have then 
	\begin{equation*}
        \lim_{n\to\infty}\int_\Omega (K\star p^{\varepsilon_n})(y)\psi(y)p^{\varepsilon_n}(y)dy = \int_\Omega (K\star p)\psi(y)p(dy).
    \end{equation*}}

	We have shown that equation \eqref{eqthm:survival} is satisfied for any $ \psi\in F_0 $. {Applying} Lemma \ref{lem:zerofluxdense}, $ F_0 $ is densely embedded in $ C_b(\overline\Omega) $. Equation \eqref{eqthm:survival} is thus satisfied for any $ \psi\in C_b(\overline\Omega) $. This ends the proof of Theorem \ref{thm:survival}.
\end{proof}

\subsection{Proof of Theorem \ref{thm:concentration}}

Under Assumption \ref{hyp:dom} and if $ \mu $ is small enough according to Theorem \ref{thm:smallmu}, we can actually prove that the measure solution to \eqref{eq:stat} is concentrated in $ \Omega_0 $ (Theorem \ref{thm:concentration}). To do so, we write a solution $ p $ to \eqref{eq:stat} as an eigenvector for a problem similar to \eqref{eq:eigen}, and make use of Theorem \ref{thm:eigenpair}. 
\begin{proof}[Proof of Theorem \ref{thm:concentration}]
    Let $ p\in M^1(\overline\Omega) $, $ p\not\equiv 0 $ be a nonnegative  solution to \eqref{eq:stat}. Define $ b(y):=a(y)-(K\star p)(y) $. Then $ b $ is a continuous function and we have
    \begin{equation*}
        \forall y\in\overline\Omega, b(y)=a(y)-\int_{\overline\Omega} K(y,z)p(dz)\leq a(0)-\int_{\overline\Omega}K(0, z)p(dz) = b(0),
    \end{equation*}
	{as a result of} Assumption \ref{hyp:dom}. This shows that $ \sup b = b(0) $. Next we compute:
    \begin{equation*}
        b(0)-b(y) = a(0)-a(y) + \int_{\overline\Omega}\big(K(y,z)-K(0,z)\big) p(dz) \geq a(0)-a(y).
    \end{equation*}
    Thus, $ b $ satisfies Assumption \ref{hyp:dega}.

    We remark  that $ p $ solves 
    \begin{equation} \label{eqthm:concentration}
        \mu(M\star p-p) + b(y) p = 0
    \end{equation}
    in the sense of measures. Thus $ p$ is a solution to \eqref{eq:eigen} with $ a $ replaced by $ b $. Applying Corollary \ref{cor:compmucrit}, since $ \mu < \mu_0(\Omega, M, \sup a-a)$, then $ \mu<\mu_0(\Omega, M, \sup b - b) $ and thus the only solutions to  \eqref{eqthm:concentration} are singular measures which singular part is concentrated in $ \{y\in\Omega\,|\, b(y)=\sup b\} $. Let us show that $ \{y\,|\, b(y)=\sup b\}\subset\Omega_0 $. Let $ y\in\overline\Omega $ such that  $ y\not\in\Omega_0 $. Then $ a(y)<a(0) $ and
    \begin{equation*}
        b(y)=a(y)-(K\star p)(y) < a(0)-(K\star p)(y)\leq a(0)-(K\star p)(0)= \sup b,
    \end{equation*}
    which shows that $ y\not\in \{y\,|\, b(y)=\sup b\} $. This ends the proof of Theorem \ref{thm:concentration}.
\end{proof}

\section{Construction of traveling waves}\label{sec:TW}
In this section, we prove our main result Theorem \ref{thm:TW}. To construct the desired measure traveling wave, we first consider a regularized problem in a box $ -l\leq x\leq l $, $ y\in\Omega$.

\subsection{Construction of a solution in a box}

Here we aim at constructing solutions $(c, u=u(x,y))$ to 
\begin{equation}\label{pb:box-beta}
    \begin{system}{ll}
        -\varepsilon\Delta_yu-u_{xx}-cu_x= \mu(M\star u-u) \hspace{2cm} \\
        \hfill+ u(a(y)-K\star u-\beta u)  & \text{ in } (-l, l)\times\Omega\\
        \nabla_y u(x,y)\cdot\nu=0 & \text{ on } (-l,l)\times\partial\Omega  \\
        u(l,y)=0 & \text{ in } \Omega  \\
        u(-l,y)=p(y) & \text{ in } \Omega, 
    \end{system}
\end{equation}
for $ \beta\geq 0 $ and $ p$ solving \eqref{eq:evolstat-beta}. Notice that any solution to \eqref{pb:box-beta} with $ \beta>0 $ is a subsolution to \eqref{pb:box-beta} with $ \beta=0 $. In particular, we will use some solutions to \eqref{pb:box-beta} with $ \beta>0 $ to get lower estimates on solutions to \eqref{pb:box-beta} with $ \beta=0 $.

In contrast with \cite{Ber-Nic-Sch-1985,Ber-Nad-Per-Ryz-09,Alf-Cov-Rao-13, Gri-Rao-16}, we use a global continuation theorem (in the proof of Theorem \ref{thm:solbox} below) instead of a topological degree to construct solutions to the local problem \eqref{pb:box-beta}. Though both arguments have the same topological basis, we believe that this is an improvement of the usual method, since it spares the need to explicitly compute the topological degree associated with \eqref{pb:box-beta}.

Let us also introduce the following quantity, which is the minimal speed for traveling waves (as we will show later):
\begin{equation}\label{def:minimal-speed}
    c^*_\varepsilon:=2\sqrt{-\lambda_1^\varepsilon}.
\end{equation}

Our result is the following:
\begin{thm}[Existence of solutions in the box]\label{thm:solbox}
    Let Assumption \ref{hyp:gen} hold,  $ \varepsilon>0$ be such that $ \lambda_1^\varepsilon<0$, and  $ \beta\geq 0 $. 
    Then, there exists a nonnegative solution to \eqref{pb:box-beta}. 
    Moreover, let $ l_0:=\frac{\pi}{\sqrt{-\lambda_1^\varepsilon}}>0 $,  $ \tau_0:=\frac{-\lambda_1^\varepsilon}{2}>0 $. Then,  for any  $0 <\tau<\tau_0 $, there exists $ \bar l(\tau)\geq  l_0 + 1 $ such that if  $l>\bar l(\tau) $, there exists a nonnegative solution $(c, u) $  to \eqref{pb:box-beta} with $ 0< c\leq c^*_\varepsilon $, which also satisfies the normalization condition
    \begin{equation}\label{eq:norm}
        \sup_{(x, y)\in(-l_0, l_0)\times \Omega}\left(\int_\Omega K(y,z)u(x,z)\mathrm dz+\beta u(x,y)\right) = \tau. 
    \end{equation}
\end{thm}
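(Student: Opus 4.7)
The plan is a Leray--Schauder continuation in the speed $c$, adapting to the non-local setting the classical cylindrical traveling-wave method of \cite{Ber-Nic-Sch-1985}. Fix $l>0$, set $Q_l := (-l, l)\times\Omega$, and for $\Lambda$ large and $c\in[0, c^*_\varepsilon]$ define the operator $\Phi_c : C(\overline{Q_l}) \to C(\overline{Q_l})$ by $\Phi_c(v):=u$, where $u$ solves the linear problem
\begin{equation*}
    \begin{system}{ll}
        -\varepsilon\Delta_y u - u_{xx} - c u_x + \Lambda u = \Lambda v + \mu(M\star v - v) + v(a - K\star v - \beta v) & \text{in } Q_l \\
        \nabla_y u\cdot\nu = 0 & \text{on } (-l, l)\times\partial\Omega \\
        u(-l, y) = p(y),\ u(l, y) = 0 & \text{in } \Omega,
    \end{system}
\end{equation*}
with $p$ the solution to \eqref{eq:evolstat-beta} provided by Theorem \ref{thm:ex-stat-eps}. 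Schauder estimates with mixed Neumann/Dirichlet data turn $\Phi_c$ into a compact operator, continuous in $c$, whose nonnegative fixed points are exactly the nonnegative solutions of \eqref{pb:box-beta}. A maximum-point argument following Lemma \ref{lem:apriorip} (the local maximum principle in the interior, Hopf's lemma on the Neumann portion of the boundary, and already-bounded Dirichlet data) provides a uniform $L^\infty$ bound on all such fixed points, independent of $c$ and $l$. The existence of a nonnegative fixed point of $\Phi_c$ for every $c\in[0, c^*_\varepsilon]$ then follows from Leray--Schauder degree theory (homotoping the right-hand side to a trivial linear problem), and moreover produces a connected continuum $\mathcal C\subset [0, c^*_\varepsilon]\times C(\overline{Q_l})$ of nonnegative solutions projecting surjectively onto $[0, c^*_\varepsilon]$.

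I would then close the normalized statement by controlling
\begin{equation*}
N(u) := \sup_{(x, y)\in(-l_0, l_0)\times\Omega}\Bigl(\int_\Omega K(y, z) u(x, z)\, dz + \beta u(x, y)\Bigr)
\end{equation*}
at the two endpoints of $\mathcal C$. At $c=0$, take $\underline u(x, y) := A\cos(\pi x/(2 l_0))\varphi^\varepsilon(y)$ on $(-l_0, l_0)\times\Omega$, zero elsewhere. Since $(\pi/(2 l_0))^2=-\lambda_1^\varepsilon/4$, a direct computation using \eqref{eq:eigeneps} gives
\begin{equation*}
-\varepsilon\Delta_y\underline u - \underline u_{xx} = \mu(M\star\underline u - \underline u) + \bigl(a+\tfrac{3}{4}\lambda_1^\varepsilon\bigr)\underline u.
\end{equation*}
Letting $s:=\sup\{s'>0 : s'\underline u \leq u_0 \text{ on } Q_l\}$ (finite by the $L^\infty$ bound), a contact-point analysis in the spirit of Lemma \ref{lem:beta-lowerbound} at an interior minimum $(x^*, y^*)$ of $u_0-s\underline u$ yields $(K\star u_0+\beta u_0)(x^*, y^*)\geq -\tfrac{3}{4}\lambda_1^\varepsilon=\tfrac{3}{2}\tau_0$, whence $N(u_0)\geq \tfrac{3}{2}\tau_0>\tau$. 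At $c=c^*_\varepsilon$, set $\kappa:=\sqrt{-\lambda_1^\varepsilon}$ so that $-\kappa^2+c^*_\varepsilon\kappa=-\lambda_1^\varepsilon$, and $\overline u(x, y):=\|p/\varphi^\varepsilon\|_\infty\, e^{-\kappa(x+l)}\varphi^\varepsilon(y)$; by \eqref{eq:eigeneps}
\begin{equation*}
-\varepsilon\Delta_y\overline u - \overline u_{xx} - c^*_\varepsilon\overline u_x = \mu(M\star\overline u - \overline u) + a\overline u \geq \mu(M\star\overline u - \overline u) + \overline u(a - K\star\overline u - \beta\overline u),
\end{equation*}
and $\overline u$ dominates $u_{c^*_\varepsilon}$ at $x=\pm l$. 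A symmetric contact-point comparison gives $u_{c^*_\varepsilon}\leq \overline u$, so $N(u_{c^*_\varepsilon}) \leq C_1 e^{-\kappa(l-l_0)}$ with $C_1$ independent of $l$, smaller than $\tau$ as soon as $l>\bar l(\tau):=l_0+1+\kappa^{-1}\log(C_1/\tau)$.

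Since $N$ is continuous on $C(\overline{Q_l})$, $\mathcal C$ is connected, and $N$ takes values $>\tau$ at $c=0$ and $<\tau$ at $c=c^*_\varepsilon$, the intermediate value theorem along $\mathcal C$ produces $(c, u)\in\mathcal C$ with $N(u)=\tau$ and $0<c\leq c^*_\varepsilon$, which is the required solution. The main obstacle is the absence of a pointwise comparison principle caused by the non-local competition $K\star u$: every sub- or super-solution argument must be carried out via a contact-point / Krein--Rutman type analysis (as in Theorem \ref{thm:eigenpair} and Lemma \ref{lem:beta-lowerbound}) rather than by a direct maximum principle, and the uniform $L^\infty$ bound from the first paragraph must be genuinely independent of $c$ and $l$ to prevent the Leray--Schauder continuum from breaking along the continuation.
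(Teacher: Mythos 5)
Your proposal follows essentially the same architecture as the paper's proof: a Leray--Schauder continuation in the speed $c$ over the fixed box, a uniform $L^\infty$ bound on solutions that is independent of $c$ and $l$, and then pinching the normalization $N$ between a strict lower bound at $c=0$ (the sinusoidal subsolution $\cos(\pi x/2l_0)\,\varphi^\varepsilon(y)$ and a contact-point argument, which is exactly the computation in Lemma~\ref{lem:aprioribox}~\ref{item:aprioribox-minspeed}) and a strict upper bound at $c=c^*_\varepsilon$ (the exponential barrier $e^{-\kappa(x+l)}\varphi^\varepsilon(y)$, which is the supersolution of Lemma~\ref{lem:aprioribox}~\ref{item:aprioribox-maxspeed}), concluding by the intermediate value theorem on the connected continuum $\mathcal{C}$. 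The barrier computations are correct, the choice of $\bar l(\tau)$ matches the paper's up to constants, and your ``Leray--Schauder degree theory'' step is the same underlying tool the paper implements via Zeidler's global continuation principle; the paper is slightly more explicit in first establishing $\mathrm{ind}(F(0,\cdot), G)=1$ by homotoping through $\sigma\in(0,1]$, then invoking the global continuation theorem to get a continuum joining $\{0\}\times G$ to $\{c^*_\varepsilon\}\times G$.

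The one genuine gap is positivity of the fixed points along $\mathcal C$. Your penalization $\Lambda u = \Lambda v + \cdots$ makes the linear resolvent coercive, but since $\Lambda u$ and $\Lambda v$ cancel at a fixed point it does not, by itself, guarantee that the continuum stays in the nonnegative cone: the map $v\mapsto\Phi_c(v)$ is not obviously cone-invariant, because $\mu(M\star v - v) + v(a - K\star v - \beta v)$ can be negative where $v$ is negative, and the Leray--Schauder continuum has no reason a priori to avoid sign-changing functions. The paper handles this by inserting the positive-part cutoff $\chi_{u\geq 0}$ into the nonlinearity of the modified problem \eqref{pb:box-beta-pos}, so that any fixed point of the modified map is shown to be nonnegative (Lemma~\ref{lem:aprioribox}~\ref{item:aprioribox-positive}) and hence solves the original problem \eqref{pb:box-beta}. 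You need either to add a similar cutoff, or to argue explicitly --- using the uniform $L^\infty$ bound to fix $\Lambda$ once and for all --- that $\Phi_c$ maps the closed cone $\{v\geq 0\}\cap\overline G$ into the cone, before you can assert that $\mathcal C$ consists of nonnegative solutions. With that repaired, the rest closes exactly as you describe.
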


Before we prove Theorem \ref{thm:solbox}, we need to establish some \textit{a priori} estimates on the solutions to \eqref{pb:box-beta}.
For technical reasons, we actually study the solutions to 
\begin{equation}\label{pb:box-beta-pos}
    \begin{system}{ll}
        -\varepsilon\Delta_yu-u_{xx}-cu_x =\sigma \big( \mu (M\star u-u) \hspace{1.5cm} \\
        \hfill+ u\chi_{u\geq 0}(a(y)-K\star u-\beta u)\big) & \text{ in } (-l,l)\times\Omega \\
        \nabla_y u(x,y)\cdot\nu=0 & \text{ on } (-l,l)\times\partial\Omega \\
        u(l,y)=0 & \text{ in } \Omega \\
        u(-l,y)=p(y) & \text{ in } \Omega,
    \end{system}
\end{equation}
where 
$ \chi_{u\geq 0}=0 \text{ if } u\leq 0 $, $ \chi_{u\geq 0}= 1  \text{ if } u>0 $,  and $ \sigma\in (0, 1]$. {We introduce the positive-part cutoff involving $\chi$ in Problem \eqref{pb:box-beta-pos} in order to ensure that the nontrivial solutions to this problem are positive.}

\begin{lem}[A priori estimates on the solutions to \eqref{pb:box-beta-pos}]\label{lem:aprioribox}
    Let Assumption \ref{hyp:gen} hold, $ \varepsilon>0$ such that $ \lambda_1^\varepsilon<0 $, $ \beta \geq 0 $, and $ |c|\leq c^*_\varepsilon $. We define $ l_0:=\frac{\pi}{\sqrt{-\lambda_1^\varepsilon} }$. Let $ u $ be a solution to \eqref{pb:box-beta-pos}, then
    \begin{enumerate}[label=(\textit{\roman*}), ref=(\textit{\roman*})]
        \item \label{item:aprioribox-regular} 
		$ u \in C^2_{loc}\big((-l, l)\times\Omega\big)\cap C^1_{loc}\big((-l,l)\times\overline\Omega\big)\cap C_b\big([-l,l]\times\overline\Omega\big) $.
        \item \label{item:aprioribox-positive} 
		$ u $ is positive in $ (-l, l)\times \overline\Omega $.
        \item \label{item:aprioribox-boundint} For any $ x\in [-l, l] $, we have $ \int_\Omega u(x, y)dy\leq \frac{\sup a}{k_0} $.
        \item \label{item:aprioribox-boundinfty} 
		There exists a positive constant {$ C^\varepsilon $}, independent from $ c $, $ l $ and $ \sigma $, such that we have 
            $\Vert u\Vert_{C_b((-l, l)\times \Omega)}\leq C^\varepsilon$. If $ \beta>0$, then we have the estimate $ \Vert u\Vert_{C_b((-l, l)\times\Omega)}\leq \frac{\sup a}{\beta} $.
        \item \label{item:aprioribox-minspeed} If $ \sigma=1 $, $ c= 0 $,  and $ l> l_0 $, then
            \begin{equation*}
                \sup_{(x, y)\in(-l_0, l_0)\times \Omega}\left(\int_\Omega K(y,z)u(x,z)\mathrm dz+\beta u(x,y)\right) > \frac{-\lambda_1^\varepsilon}{2}. 
            \end{equation*}
            Remark that for this result to hold, $ u $ needs only be defined on $ (-l_0, l_0)\times\Omega $.
        \item \label{item:aprioribox-maxspeed} If $ \sigma=1 $ and $ c=c_\varepsilon^* $, then there exists a constant $ A $ (independent from $ l $) and $ \lambda:=\frac{c_\varepsilon^*}{2}>0 $ such that
            $\forall (x, y)\in(-l,l)\times\Omega,  u\leq A e^{-\lambda (x+l)}$.

            In particular for any 
            $ l\geq \bar l(\tau):= \frac{1}{\lambda}\ln\left(\frac{\tau}{2A(k_\infty\int_\Omega\varphi^\varepsilon + \beta\sup_\Omega\varphi^\varepsilon)}\right)-l_0 $ and  $ 0<\tau\leq \tau_0= \frac{-\lambda_1^\varepsilon}{2} $, we have
            \begin{equation*}
                \sup_{(x, y)\in(-l_0, l_0)\times \Omega}\left(\int_\Omega K(y,z)u(x,z)\mathrm dz+\beta u(x,y)\right) < \tau .
            \end{equation*}
    \end{enumerate}
\end{lem}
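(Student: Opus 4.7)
The plan is to establish the six items in sequence, relying on elliptic regularity, the strong maximum principle together with Hopf's lemma, and comparison against explicit sub- and super-solutions built from the principal eigenpair $(\lambda_1^\varepsilon,\varphi^\varepsilon)$ of \eqref{eq:eigeneps}. For (i), the identity $u\chi_{u\geq 0}=u^+$ makes the right-hand side of \eqref{pb:box-beta-pos} continuous in $u$, so that standard interior and boundary Schauder estimates (applied separately to the Dirichlet faces $\{\pm l\}\times\Omega$ and the Neumann faces $(-l,l)\times\partial\Omega$) give the claimed regularity, once an $L^\infty$ bound is available. For positivity (ii), the same cutoff makes the reaction vanish on $\{u<0\}$, where $u$ therefore solves the \emph{linear} equation
\begin{equation*}
    -\varepsilon\Delta_y u-u_{xx}-cu_x+\sigma\mu u=\sigma\mu M\star u;
\end{equation*}
combined with the boundary data $u(-l,\cdot)=p>0$, $u(l,\cdot)=0$, the Neumann condition in $y$, and the strict positivity of the kernel $M$, a strong maximum principle argument tailored to the nonlocal setting precludes a negative interior minimum, and a second application upgrades $u\geq 0$ to $u>0$ on $(-l,l)\times\overline\Omega$.

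For (iii), I integrate the equation over $\Omega$: the Neumann condition kills $\int\Delta_y u\,dy$ and the normalization $\int_\Omega M(y,z)\,dy=1$ kills $\int(M\star u-u)\,dy$, leaving the scalar inequality $-U''-cU'\leq\sigma\sup a\,U-\sigma k_0 U^2$ for $U(x):=\int_\Omega u(x,y)\,dy$; applying the maximum principle to $U$ at an interior maximum yields $U\leq\sup a/k_0$, while the boundary values are controlled by Lemma~\ref{lem:apriorip}(ii). For the $L^\infty$ bound (iv), the case $\beta>0$ follows by examining the sign of the reaction at a global maximum of $u$, exactly as in Lemma~\ref{lem:apriorip}(ii). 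The case $\beta=0$ requires adapting the coronation/supersolution construction of that lemma to the cylindrical geometry: I use the $L^1$-in-$y$ bound (iii) to control $M\star u$ uniformly, construct a bounded $y$-supersolution in a tubular neighbourhood of $\partial\Omega$ independently of $c$, $l$ and $\sigma$, and extend the comparison to the whole cylinder by noting that at a global $(x,y)$-maximum the $x$-derivative terms have the correct sign.

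Items (v) and (vi) are the two comparison estimates that fix the speed $c^*_\varepsilon$. For (v), with $\sigma=1$ and $c=0$, the subsolution $\phi(x,y):=\varphi^\varepsilon(y)\cos\bigl(\pi x/(2l_0)\bigr)$ on $(-l_0,l_0)\times\Omega$ satisfies, using $\bigl(\pi/(2l_0)\bigr)^2=-\lambda_1^\varepsilon/4$,
\begin{equation*}
    -\varepsilon\Delta_y\phi-\phi_{xx}-\mu(M\star\phi-\phi)=\bigl(a+\tfrac{3}{4}\lambda_1^\varepsilon\bigr)\phi,
\end{equation*}
and vanishes on $\{\pm l_0\}\times\Omega$. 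If the claimed lower bound failed, $u$ would be a supersolution of the same equation with reaction $(a+\tfrac12\lambda_1^\varepsilon)u$ on the inner box with strictly positive boundary data, and the sliding $\alpha^*:=\sup\{\alpha>0:\alpha\phi\leq u\}$ together with the strict gap $-\tfrac14\lambda_1^\varepsilon>0$ would force $\alpha^*=+\infty$ via a touching-point analysis (Hopf handling the Neumann face and the strict positivity of $M$ ruling out a degenerate contact), contradicting (iv). For (vi), at $c=c^*_\varepsilon$ the value $\lambda:=c^*_\varepsilon/2=\sqrt{-\lambda_1^\varepsilon}$ is precisely the double root of the associated characteristic equation, so $v(x,y):=Ae^{-\lambda(x+l)}\varphi^\varepsilon(y)$ is an \emph{exact} positive solution of the linearized problem $-\varepsilon\Delta_y v-v_{xx}-cv_x-\mu(M\star v-v)=a(y)v$; the choice $A:=\sup p/\inf\varphi^\varepsilon$ makes $v\geq u$ on the Dirichlet faces, and since $-K\star u-\beta u\leq 0$ makes $u$ a subsolution of the same linear equation, a sliding argument analogous to the one for (v) yields $u\leq v$, whence the pointwise decay and the threshold $\bar l(\tau)$ follow by direct substitution.

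The main obstacle will be the $L^\infty$ estimate (iv) in the case $\beta=0$, since the constant must be uniform in $c$, $l$ and $\sigma$ while the coronation argument of Lemma~\ref{lem:apriorip} is designed for the purely $y$-dependent problem. The sliding arguments of (v) and (vi) also require some care: the nonlocal term prevents a direct pointwise application of the strong maximum principle at a touching point, and one must exploit the strict positivity of the kernel $M$ to conclude from $M\star(\alpha^*v-u)(x_0,y_0)=0$ that $(\alpha^*v-u)(x_0,\cdot)\equiv 0$ on the critical slice, before propagating this to a global contradiction.
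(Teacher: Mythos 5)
Your outline matches the paper's proof item for item: Schauder/Berestycki--Nirenberg regularity for (i), a comparison argument for (ii), integration in $y$ plus a maximum principle for $U(x)=\int_\Omega u(x,y)\,dy$ for (iii), the sign check at a global maximum when $\beta>0$ and the coronation/supersolution construction when $\beta=0$ for (iv), a cosine-profile subsolution with a sliding constant for (v), and an exponential supersolution with a sliding constant for (vi). Two small points are worth flagging. First, in (iii) the boundary value at $x=-l$ is controlled by the $L^1$ estimate $\int_\Omega p\,dy\le \sup a/k_0$ (Lemma~\ref{lem:lowupintpeps}, or equivalently the mass estimate \eqref{eq:apriorip-mass} in the proof of Lemma~\ref{lem:apriorip}), not by the $L^\infty$ bound of Lemma~\ref{lem:apriorip}(ii). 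Second, and more substantively, your concern that the nonlocal term forces a strong-maximum-principle propagation from the touching slice is misplaced: at the touching point $(x_0,y_0)$ the difference $w:=u-\alpha^*\psi$ (resp. $\alpha^*\psi-u$) has a definite sign, so $M\star w(x_0,y_0)$ already has the favorable sign, and the contradiction is \emph{immediate} from the strictly signed residual --- namely the gap $(-3\lambda_1^\varepsilon/4-\tau_0)u(x_0,y_0)=(-\lambda_1^\varepsilon/4)u(x_0,y_0)>0$ in (v), and the term $u(x_0,y_0)\big(-(K\star u)(x_0,y_0)-\beta u(x_0,y_0)\big)<0$ in (vi). No propagation step or appeal to strict positivity of $M$ is needed, and correspondingly the conclusion is a direct contradiction at the touching point rather than ``$\alpha^*=+\infty$.''
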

\begin{proof}
    Item \ref{item:aprioribox-regular} holds by a direct application of  \cite[Lemma 7.1]{Ber-Nir-91}, and item \ref{item:aprioribox-positive} by a classical comparison argument. Let us resume to the remaining items.

	\textbf{Item \ref{item:aprioribox-boundint}:} {By the estimate in} Lemma \ref{lem:lowupintpeps}, we have $ \int_\Omega p(y)dy\leq\frac{\sup a}{k_0} $. Assume that the function $ x\mapsto \int_\Omega u(x,y)dy $ has a maximal value at $ x_0\in (-l, l) $, then integrating \eqref{pb:box-beta-pos} over $ \Omega $ we have
    \begin{align*}
        0&\leq -\frac{d^2 \int_\Omega u(x_0,y)dy}{dx^2}-c\frac{d \int_\Omega u(x_0,y)dy}{dx}\\
	    &=\sigma \int_\Omega a(y)u(x_0,y) - (K\star u)(x_0,y)u(x_0, y)dy,
    \end{align*}
	{and thus:}
	\begin{align*}
        k_0\left(\int_\Omega u(x_0,y)dy\right)^2&\leq \int_\Omega\int_{\Omega} K(y,z)u(x_0,z)u(x_0,y)dydz \\
                                                &= \int_\Omega a(y)u(x_0,y)dy\leq \sup a\int_\Omega u(x_0,y)dy.
    \end{align*}

    This shows item \ref{item:aprioribox-boundint}.

	\textbf{Item \ref{item:aprioribox-boundinfty}:} Assume first $ \beta >0 $ and let $ u(x_0,y_0)=\sup u$ at $ (x_0,y_0)\in[-l,l]\times \overline\Omega $. Assume by contradiction that $ u(x_0,y_0)>\frac{\sup a}{\beta} $. If $x_0=-l $, since  $ p$ satisfies the upper bound $ \sup p\leq \frac{\sup a}{\beta} $ {by the estimate in} Lemma \ref{lem:apriorip} item \ref{item:apriorip-boundinfty}, we have a contradiction. If $ x_0=+l $, since $ u(x_0,y_0)=0 $, we have a contradiction. Assume  $x_0\in (-l, l) $. If $y_0\in\partial\Omega $, then {it follows from Hopf's Lemma that} $ \frac{\partial u}{\partial\nu}(x_0,y_0)>0 $, which is a contradiction. Thus $ y_0\in \Omega $. Now, testing \eqref{pb:box-beta-pos} at $ (x_0,y_0)$, we have
    \begin{align*}
        0&\leq -\varepsilon \Delta_y u(x_0,y_0) -u_{xx}(x_0,y_0)-cu_x(x_0,y_0)-\sigma \mu\big((M\star u)-u\big) (x_0,y_0)\\
         &=\sigma u(x_0,y_0)\big(a(y_0)-(K\star u)(x_0,y_0)-\beta u(x_0,y_0)\big)<0
    \end{align*}
    which is a contradiction. Thus $ u\leq \frac{\sup a}{\beta} $.

    \medskip

    We turn our attention to the case $ \beta=0 $. In this case, 
    we construct a supersolution  as in Lemma \ref{lem:apriorip}.
    Recalling that $ u $ satisfies Dirichlet boundary conditions at $ x=\pm l $, the local maximum principle up to the boundary \cite[Theorem 9.26]{Gil-Tru-01} shows the existence of $ C=C(\Omega, R, \varepsilon, \Vert a\Vert_{L^\infty}, k_0, k_\infty, \mu, c^*_\varepsilon) $ such that  for any  
    $ x\in [-l, l] $, $ y\in\Omega $ with $ d(y, \partial\Omega) \geq R $, we have the estimate
    $\underset{B_{R/2}(x,y)}{\sup}\,u\leq C$.

    To show that this estimate does not degenerate near the boundary, we use the same kind of supersolution as in Lemma \ref{lem:apriorip}. Let    
    \begin{equation*}
        \Omega_R:=\{y\in\Omega\,|\, d(y,\partial\Omega)< R\}
    \end{equation*}
    for any $ R>0$. We select $ R $ small enough so that  $ \Omega_R $ has a $ C^3 $ boundary and the comparison principle \cite[Proposition 1.1]{Ber-Nir-91} holds in the narrow domain $ \Omega_R$. Let us stress that since $ \sigma\in (0,1)$,  $R$ can be chosen uniformly in $ \sigma$.

    This allows us to  construct a positive solution to 
    \begin{equation*}
        \begin{system}{ll}
            -\varepsilon\Delta v-\sigma (a(y)-\mu)v = \mu m_0 \frac{m_\infty \sup a}{k_0} & \text{ in }\Omega \\
            v=C & \text{ on }\partial \Omega_R\bsl\partial\Omega \\
            \frac{\partial v}{\partial\nu}=0 & \text{ on }~\partial\Omega,
        \end{system}
    \end{equation*}
    which is bounded uniformly in $ \sigma $, as we did in the proof of Lemma \ref{lem:apriorip}. Now we  select
    $\alpha:=\inf\{\zeta>0\,|\,\forall x\in (-l, l), \forall y\in\Omega, \zeta v(y)\geq u(x,y)\}$.
    Assume by contradiction that $ \alpha>1 $. Then there exists $ (x_0, y_0)\in [-l,l]\times\overline\Omega $ such that $ u(x_0,y_0)=\alpha v(y_0) $. If $x_0=l $ then $ u=0 $, which is a contradiction. If $ x_0=-l $, since $ u(-l, y_0)=p(y_0) $ solves \eqref{eq:evolstat-beta}, we argue as in Lemma \ref{lem:apriorip} and get a contradiction. We are left to investigate the case $ x_0\in (-l, l) $.  If $ y_0\in\partial\Omega $, since $ (x_0, y_0) $ is a minimum to the function $ \alpha v - u $, then by Hopf's Lemma we have $ \frac{\partial (\alpha v-u)}{\partial\nu}(x_0,y_0)<0 $ which is a contradiction. Thus $ y_0\not\in\partial\Omega $. Since $ \alpha>1 $ and $ u\leq C $ on $  \partial \Omega_R\bsl\partial \Omega $, then $ y_0\in\Omega_R $. Now $ (x_0,y_0) $ is a local minimum to $ \alpha v-u $ and thus
    \begin{align*}
        0&\geq -\varepsilon\Delta (\alpha v-u)(x_0,y_0) = \sigma(a(y_0)-\mu)(\alpha v-u)(x_0,y_0)+\alpha \mu m_0 \frac{m_\infty \sup a}{k_0}\\ 
         &\quad - \sigma \mu (M\star  u)(x_0,y_0)    +u(x_0,y_0)(K\star u)(x_0,y_0) \\ 
         &> \alpha \mu m_0 \frac{m_\infty \sup a}{k_0}-\sigma\mu (M\star  u)(x_0,y_0)\geq 0
    \end{align*}
    which is a contradiction. Thus $ \alpha\leq 1 $. 

    This shows that $ u(x,y)\leq v(y) $ in $ (-l, l)\times\Omega_R $. Since $ v $ is bounded uniformly in $ \sigma $, we have our uniform bound for $ u $ in $ [-l,l]\times\Omega_R$. In the rest of the domain $(-l, l)\times \Omega\bsl\Omega_R $,  we have $ u\leq C $.

	This proves item \ref{item:aprioribox-boundinfty}, {with $C^\varepsilon:=\max(\sup_{y\in\Omega_R} v(y), C)$.}

    \textbf{Item \ref{item:aprioribox-minspeed}:} This proof is similar to the one in \cite{Alf-Cov-Rao-13}. Assume by contradiction that 
    $\sup_{(x, y)\in(-l_0, l_0)\times \Omega}\left(\int_\Omega K(y,z)u(x,z)\mathrm dz+\beta u(x,y)\right) \leq \tau_0$. 
    Then $ u $ satisfies: 
    \begin{equation}\label{eq:cnot0}
        -u_{xx}-\varepsilon \Delta_y u-\mu(M\star u-u) - a(y)u\geq -\tau_0u \qquad\text{ in } (-l_0, l_0).
    \end{equation}

    Define $ \psi(x,y):=\cos\left(\frac{\pi}{2l_0}x\right)\varphi^{\varepsilon}(y) $, where $ \varphi^\varepsilon $ is the principal eigenfunction solution to \eqref{eq:eigeneps} satisfying $ \sup_{y\in\Omega}\varphi^\varepsilon = 1 $. Since $ u $ is positive in $ [-l_0, l_0]\times \overline\Omega $, we can define the real number
    $\alpha:=\sup\{\zeta>0\,|\,\forall (x,y)\in (-l_0, l_0)\times \Omega,  \zeta\psi(x,y)\leq u(x,y) \}$, and we have $ \alpha>0$.

    Then, there exists $ (x_0,y_0)\in [-l_0, l_0]\times \overline\Omega $ such that $ \alpha\psi(x_0,y_0)=u(x_0,y_0)$. {Because of} the boundary conditions satisfied by $ u $ and $ \psi $,  $ (x_0,y_0) $ has to be in $ (-l_0, l_0)\times \Omega $. Since $ (x_0, y_0) $ is the minimum of $ u-\alpha\psi $, we have
    \begin{align*}
        0&\geq -\varepsilon \Delta_y(u-\alpha\psi)(x_0,y_0) -(u-\alpha\psi)_{xx}(x_0,y_0)               \\
         &\quad -\mu\big(M\star(u-\alpha\psi)-(u-\alpha\psi)\big)(x_0,y_0)   -a(y_0)(u-\alpha\psi)(x_0,y_0)    \\  
         &\geq-\tau_0 u(x_0,y_0) +\alpha\left(- \lambda_1^\varepsilon-\left(\frac{\pi}{2l_0}\right)^2\right)\psi(x_0,y_0)         \\
         &=\left(\frac{-3\lambda_1^\varepsilon}{4}-\tau_0\right)u(x_0,y_0)>0,
    \end{align*}
    since $ -\tau_0= \frac{\lambda_1^\varepsilon}{2}  $. This is a contradiction.

    This proves item \ref{item:aprioribox-minspeed}.

    \textbf{Item \ref{item:aprioribox-maxspeed}:} Let $ \psi(x, y):=e^{-\frac{c^*_\varepsilon}{2} x} \varphi^\varepsilon(y) $ with $(\lambda_1^\varepsilon, \varphi^\varepsilon) $ solution to \eqref{eq:eigeneps}. Then $ \psi $ satisfies:
    \begin{equation*}
        -c_\varepsilon^*\psi_x-\psi_{xx}-\varepsilon\Delta_y\psi - \mu( M\star \psi-\psi) = a(y)\psi.
    \end{equation*}
    Since $ \psi >0$ on $ [-l, l]\times \overline\Omega $, there exists $ \zeta>0 $ such that $ \zeta\psi\geq u $ on $ (-l, l)\times\Omega $. Let us select 
    $\alpha:=\inf\{\zeta>0\,|\,\forall (x,y)\in (-l_0, l_0)\times \Omega,  \zeta\psi(x,y)\geq u(x,y) \}$.
    By definition of $ \alpha $ we have $ \alpha\psi\geq u $ and  there exists $ (x_0,y_0)\in [-l, l]\times \overline\Omega $ such that $ \alpha\psi(x_0,y_0)=u(x_0,y_0)$. {Because of} the boundary conditions satisfied by $ u $ and $ \psi $, $ (x_0, y_0) $ has to be in $ [-l, l)\times\Omega $. If $ x_0\in (-l,l) $, we have:
    \begin{align*}
        0&\leq -\varepsilon \Delta_y(u-\alpha\psi)(x_0,y_0) -(u-\alpha\psi)_{xx}(x_0,y_0)               \\
         &\quad -\mu\big(M\star(u-\alpha\psi)-(u-\alpha\psi)\big)(x_0,y_0)        -a(y_0)(u-\alpha\psi)(x_0,y_0)    \\
         &<0
    \end{align*}
    which is a contradiction. We conclude that $ x_0=-l_0 $ and thus $ \alpha\leq \frac{\sup_\Omega p}{\inf_\Omega \varphi^\varepsilon}e^{-\frac{c_\varepsilon^*}{2} l} $.
    By definition of $ \alpha $, we can then write:
    \begin{equation*}
        u(x,y)\leq \alpha e^{-\frac{c_\varepsilon^*}{2} x}\varphi^\varepsilon(y)\leq \frac{\sup_\Omega p}{\inf_\Omega \varphi^\varepsilon}e^{-\frac{c^*_\varepsilon}{2}(x+l)}\varphi^\varepsilon(y)
    \end{equation*}
    which concludes the proof of item \ref{item:aprioribox-maxspeed}.
\end{proof}
We are now in the position to prove Theorem \ref{thm:solbox}, {by using} the global continuation principle \cite[Theorem 14 C]{Zei-86}.
\begin{proof}[Proof of Theorem \ref{thm:solbox}]
    For $ c\in\mathbb R $ and $ u\in C_b\big((-l, l)\times\Omega\big)$. We define $ F(c,u)=\tilde u $ where $ \tilde u $ solves:
    \begin{equation}\label{eqthm:solbox-1}
        \begin{system}{ll}
            -\tilde u_{xx}-c\tilde u_x-\varepsilon\Delta_y\tilde u = \mu M\star u  \hspace{2cm} \\
            \hfill+u\chi_{u\geq 0}(a(y)-\mu-K\star u-\beta u) & \text{ in } (-l,l)\times\Omega  \\
            \frac{\partial\tilde u}{\partial\nu}(x,y)=0 & \text{ on } (-l,l)\times\partial \Omega \\
            \tilde u(l,y)=0  &\text{ in } \Omega \\
            \tilde u(-l,y)=p(y).&\text{ in } \Omega.
        \end{system}
    \end{equation}
	{It follows from \cite[Lemma 7.1]{Ber-Nir-91} that} for any $ u$ the function  $ \tilde u $ is well-defined and belongs to the space $ C_b([-l, l]\times \overline\Omega)\cap W^{2, p}_{loc}([-l, l]\times \overline\Omega\bsl\{-l,l\}\times \partial\Omega) $ for any $ p>0 $.

    \textbf{Step 1:} Let us briefly show that $ F $ is in fact a compact operator. Since the right-hand side of the first equation in \eqref{eqthm:solbox-1} is bounded, it is easily seen that the function $ (x, y)\mapsto \big(1+\gamma (x+l)^\alpha\big) p(y) $ is a local supersolution to equation \eqref{eqthm:solbox-1} near $ x=l $ for $ 0\leq\alpha<\alpha_0 $, $ \gamma\geq \gamma_0>0 $, where $ \alpha_0 $ and $ \gamma_0 $ depend only on $ \Vert u\Vert_{C_b((-l,l)\times\Omega)} $, a bound for $ c $ and the data and coefficients of the problem. Similarly, $ (1-\gamma(x+l)^\alpha\big) p(y) $ is a local subsolution, provided  $ \alpha $ is chosen small enough. Thus the inequality $\big(1-\gamma (x+l)^\alpha\big) p(y) \leq \tilde u(x,y) \leq \big(1+\gamma (x+l)^\alpha\big) p(y) $ holds for $ \alpha>0 $ small enough. In particular,  the function $ x\in [-l, 0]\mapsto \tilde u(x,y) $ is uniformly in $ C^\alpha $ for $ y\in\overline\Omega $. It then follows from \cite[Corollary 9.28]{Gil-Tru-01} (and the classical interior Sobolev embeddings) that $ \tilde u\in C^\alpha([-l, 0]\times\overline\Omega) $. Regularity near $ x=l $ can be shown the same way. Thus $ \tilde u\in C^\alpha([-l, l]\times\overline\Omega) $ where $ \alpha $ depends only on a bound for $ \Vert u\Vert_{C_b((-l, l)\times \Omega)} $ and $ c $, and the data and coefficients of problem \eqref{eqthm:solbox-1}. In particular, $F $ maps bounded sets of $ \mathbb R\times C_b((-l, l)\times \Omega) $ into relatively compact sets in $ C_b((-l,l)\times \Omega)$. 

	\textbf{Step 2:} We aim at applying the Leray-Schauder fixed-point theorem \cite[Corollary 13.1 item (iii)]{Zei-86} to $F(0, \cdot)$. We remark that the solutions to $ u=\sigma F(0,u) $ for $\sigma\in (0,1] $ are in fact the solutions to \eqref{pb:box-beta-pos}. In particular, Lemma \ref{lem:aprioribox} gives us a positive  constant $ C>0 $ such that any solution to \eqref{pb:box-beta-pos} satisfies the inequality $ \Vert u\Vert_{C_b(-l,l)\times \Omega}\leq C $. Let $ G:=\{u\in C_b((-l,l)\times\Omega)\,|\, \Vert u\Vert_{C_b((-l,l)\times\Omega)}\leq 2C\} $, then 
    \begin{enumerate}
        \item $ G $ is a bounded open subset of the Banach space $ C_b((-l, l)\times \Omega) $,
        \item $ 0\in G $,
        \item $ F(0, \cdot):G\to C_b((-l, l)\times \Omega) $ is a compact mapping, and
	\item {applying} Lemma \ref{lem:aprioribox},  there is no solution to $ u=\sigma F(0, u) $ with $ u\in\partial G$ and $ \sigma\in(0,1] $.
    \end{enumerate}
    Thus the Leray-Schauder fixed-point Theorem \cite[Corollary 13.1 item (iii)]{Zei-86} applies and we have $ ind(F(0, \cdot), G)=1 $, where $ ind $ is the Leray-Schauder fixed-point index. 

    \textbf{Step 3:} Let us now check that the hypotheses of the global continuation principle \cite[Theorem 14 C]{Zei-86} are satisfied. We have:
    \begin{enumerate}
        \item $ F $ is a compact mapping from $ (0, c_\varepsilon^*)\times \overline G $ into $ C_b((-l, l)\times \Omega) $, 
	\item {applying} Lemma \ref{lem:aprioribox}, there is no solution to $ u=F(c,u) $ with $ u\in\partial G $ and $ c\in [0, c_\varepsilon^*] $, and 
        \item $ ind(F(0, \cdot), G)=1 $.
    \end{enumerate}
    Thus, the global continuation principle applies and there exists a connected set of solutions $ \mathcal C $  to $ u=F(c,u) $ connecting $ \{0\}\times G $ to $ \{c_\varepsilon^*\}\times G $. In particular, there exists a solution to \eqref{pb:box-beta} for any $ c\in [0, c_\varepsilon^*] $. 

    \textbf{Step 4:} Now let us assume $ l\geq \bar l(\tau) $ (where $ \bar l(\tau) $ is given by Lemma \ref{lem:aprioribox}, item 5). Since the mapping 
    \begin{equation*}
        u\in C_b((-l, l)\times\Omega)\overset{N}{\longmapsto} \sup_{(x,y)\in (-l_0, l_0)\times \Omega} \int_{\Omega} K(y,z)u(x,z)dz+\beta u(x,y) 
    \end{equation*}
    is continuous, then $ N(\mathcal C) $ is a connected subset of $ \mathbb R $, i.e. an interval. Applying Lemma \ref{lem:aprioribox}, we have:
    \begin{itemize}
        \item[--] From point \ref{item:aprioribox-minspeed}, if $ (c,u)\in \mathcal C $ and $ c=0 $, then $ N(u)>\tau $.
        \item[--] From point \ref{item:aprioribox-maxspeed}, if $ (c,u)\in \mathcal C $ and $ c=c_\varepsilon^* $ then $ N(u)<\tau $. 
    \end{itemize}
    Thus there exists $ c\in (0, c_\varepsilon^*) $ and $ u $ such that $ (c,u)\in \mathcal C $ and  $ N(u) = \tau $. This finishes the proof of Theorem \ref{thm:solbox}.
\end{proof}
An immediate consequence is the following: 
\begin{cor}[Existence of a solution on the line]\label{cor:existenceline}
    Let Assumption \ref{hyp:gen} hold, $ \varepsilon >0 $ be such that $ \lambda_1^\varepsilon<0 $,  $ \beta\geq 0$  and  $ 0< \tau\leq\frac{-\lambda_1^\varepsilon}{2} $. Then there exists a classical positive solution to 
    \begin{equation}\label{pb:lineeps-beta}
        \begin{system}{ll}
            -u_{xx}-cu_x=\varepsilon\Delta_yu + \mu(M\star u-u)\hspace{2cm} \\
            \hfill+ u(a(y)-K\star u-\beta u) & \text{ on } \mathbb R\times \Omega \\
            \frac{\partial u}{\partial\nu}=0  & \text{ on } \mathbb R\times \partial \Omega, \\
        \end{system}
    \end{equation}
	with $ 0<c\leq { c_\varepsilon^*} $. Moreover $ u\in C_b(\mathbb R\times \Omega)\cap C^{2}(\mathbb R\times\overline\Omega) $, satisfies \eqref{eq:norm} and 
    \begin{equation*}
        \forall x\in\mathbb R, \quad \int_\Omega u(x,y)dy\leq \frac{\sup a}{k_0 }.
    \end{equation*}
\end{cor}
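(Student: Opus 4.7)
The plan is to take $l \to \infty$ along the sequence of bounded-box solutions provided by Theorem \ref{thm:solbox} and pass to the limit via a compactness argument, exactly in the spirit of \cite{Ber-Nic-Sch-1985,Ber-Nad-Per-Ryz-09}. For every $l \geq \bar l(\tau)$ Theorem \ref{thm:solbox} yields a nonnegative solution $(c_l, u_l)$ to \eqref{pb:box-beta} with $0 < c_l \leq c_\varepsilon^*$ and satisfying the normalization \eqref{eq:norm}. Since the speeds lie in the compact interval $[0, c_\varepsilon^*]$, we may extract a subsequence $c_{l_n} \to c \in [0, c_\varepsilon^*]$.

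Next I would derive uniform local regularity for $u_{l_n}$. By Lemma \ref{lem:aprioribox} \ref{item:aprioribox-boundinfty}, the family $(u_l)$ is uniformly bounded in $C_b((-l,l)\times \Omega)$ by a constant $C^\varepsilon$ independent of $l$. Applied to the equation \eqref{pb:box-beta}, whose right-hand side is therefore uniformly bounded in $L^\infty$, interior and Neumann-boundary Schauder estimates \cite{Gil-Tru-01} give a uniform bound in $C^{2,\alpha}_{loc}(\mathbb{R} \times \overline\Omega)$ on any compact set eventually contained in $(-l_n, l_n) \times \overline\Omega$. By Arzel\`a-Ascoli and a diagonal extraction, $u_{l_n} \to u$ in $C^2_{loc}(\mathbb{R} \times \overline\Omega)$ along a further subsequence. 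The uniform convergence together with the continuity of the nonlocal operators $M\star\,\cdot$ and $K\star\,\cdot$ (with the integration performed over the compact set $\overline\Omega$) allows one to pass to the limit termwise; the Neumann condition $\partial_\nu u_{l_n} = 0$ on $\partial\Omega$ is preserved by $C^1_{loc}$ convergence. Thus $u \in C^2(\mathbb{R}\times\overline\Omega)\cap C_b(\mathbb{R}\times\Omega)$ solves \eqref{pb:lineeps-beta} with the prescribed $c$, and the bound $\int_\Omega u(x,y)dy \leq \frac{\sup a}{k_0}$ passes to the limit from Lemma \ref{lem:aprioribox} \ref{item:aprioribox-boundint}.

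To transfer the normalization \eqref{eq:norm} to $u$, I would use that the map
\begin{equation*}
    (x,y) \longmapsto \int_\Omega K(y,z)u(x,z)\,dz + \beta u(x,y)
\end{equation*}
is continuous on $\mathbb{R}\times \overline\Omega$ and that $u_{l_n} \to u$ uniformly on the compact set $[-l_0, l_0]\times\overline\Omega$. Hence the supremum of this functional on $(-l_0, l_0)\times\Omega$ (equivalently on its closure) is continuous with respect to the convergence at hand, so the value $\tau$ is preserved in the limit. In particular $u \not\equiv 0$, and since the linearized equation satisfied by $u$ has a bounded coefficient (so the strong maximum principle and Hopf's lemma apply exactly as in the proof of Lemma \ref{lem:aprioribox} \ref{item:aprioribox-positive}), the nonnegative nontrivial $u$ must be strictly positive on $\mathbb{R}\times \overline\Omega$. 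Finally one verifies that $c > 0$: since $u_{l_n}$ satisfies the normalization, $c_{l_n}$ cannot be forced to $0$; more precisely, if $c = 0$ the argument in Lemma \ref{lem:aprioribox} \ref{item:aprioribox-minspeed} applied on a window $(-l_0,l_0)$ would yield $\sup_{(-l_0,l_0)\times\Omega}(K\star u + \beta u) > -\lambda_1^\varepsilon / 2 \geq \tau$, contradicting the preserved normalization.

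The main obstacle is ensuring that compactness survives the passage to an unbounded domain while still keeping the normalization \eqref{eq:norm} (which prevents $u$ from degenerating to a trivial solution) and the strict positivity of the speed; the rest is standard elliptic-regularity bookkeeping. The uniform $L^\infty$ bound from Lemma \ref{lem:aprioribox}, which is independent of $l$, is precisely what makes the Schauder-based limiting argument go through without further work.
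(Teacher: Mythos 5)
Your proof is correct and follows essentially the same route as the paper: take $l\to\infty$, use the $l$-independent $L^\infty$ bound from Lemma~\ref{lem:aprioribox}~\ref{item:aprioribox-boundinfty} plus interior and boundary Schauder estimates to get $C^{2,\alpha}_{loc}$ compactness, extract diagonally, pass to the limit, keep the normalization~\eqref{eq:norm} by local uniform convergence to ensure nontriviality, and invoke Lemma~\ref{lem:aprioribox}~\ref{item:aprioribox-minspeed} on the window $(-l_0,l_0)$ to rule out $c=0$. The only cosmetic difference is that you spell out the strong maximum principle/Hopf's lemma argument for positivity of the limit, which the paper leaves implicit.
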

\begin{proof}
	Let $ 0<\tau\leq \tau_0 $ and $\bar l=\bar l(\tau)$ as in Theorem \ref{thm:solbox}. Then {it follows from} the existence theorem  (Theorem \ref{thm:solbox}), that  for any $ n\in\mathbb N $, there exists a positive classical solution $ (c_n, u_n)\in (0, c_\varepsilon^*)\times C^2((-l_n,l_n)\times\Omega)\cap C^1((-l_n, l_n)\times \overline\Omega) $  to \eqref{pb:box-beta} which satisfies \eqref{eq:norm}, where $ l_n:=\bar l+n $. {By} the uniform bound satisfied by $ \sup u_n $ (Lemma \ref{lem:aprioribox} point \ref{item:aprioribox-boundinfty}),  the classical Schauder interior estimates \cite[Theorem 6.2]{Gil-Tru-01} and the boundary Schauder estimates \cite[Theorem 6.29]{Gil-Tru-01}, there exists a constant $ C_k>0 $, independent from $ n $ such that 
    $\Vert u_n\Vert_{C^{2, \alpha}((-l_k, l_k)\times\overline\Omega)}\leq C_k$
	for any $ k < n$. {Using} a classical diagonal extraction process, there exists $u$, $ c^0 $ and a subsequence such that $ c_n\to c^0 $, and  $ \Vert u_n-u\Vert_{C^2((-l_k, l_k)\times\overline\Omega)}\to 0 $ for any $ k\in\mathbb N $. {Since $ u $ solves \eqref{eq:norm} with $\tau>0$, it is a nontrivial solution to \eqref{pb:lineeps-beta}.}  Then, {by a direct application of} Lemma \ref{lem:aprioribox} point \ref{item:aprioribox-minspeed}, $ c^0>0$ {(indeed Lemma \ref{lem:aprioribox} point \ref{item:aprioribox-minspeed} also applies to solutions defined on the whole line)}. 

    Finally  we have 
	$\forall x\in\mathbb R, \int_\Omega u(x, y)dy\leq \frac{\sup a}{k_0 }$ {by the estimate in} Lemma \ref{lem:aprioribox} point \ref{item:aprioribox-boundint}.
    This finishes the proof of Corollary \ref{cor:existenceline}.
\end{proof}

\subsection{Proof of minimality for $ \beta\geq \beta_0$}
In the case $ \beta\geq\beta_0:=\frac{k_\infty\sup a}{\mu m_0} $, we  recover the comparison principle. Indeed, the increased self-competition (via large $ \beta $) enforces the solution to remain in the region ``$u$ small'' where the system is cooperative (see Lemma \ref{lem:comp}). We can then retrieve  many of the classical properties satisfied by traveling waves in a KPP situation. 

\begin{thm}[Minimal speed traveling waves for $ \beta \geq \beta_0$]\label{thm:minspeedTW-beta}
    Let Assumption \ref{hyp:gen} hold, $ 0<\varepsilon\leq \varepsilon_0 $ --- where $ \varepsilon_0 $ is as in Lemma \ref{lem:beta-lowerbound}--- be such that $ \lambda_1^\varepsilon < 0 $, and $\beta\geq \beta_0=\frac{k_\infty\sup a}{\mu m_0}$. Then, there exists a solution $(c,u) $ to \eqref{pb:lineeps-beta}  satisfying $ c= c_\varepsilon^* $ and the limit conditions
    \begin{equation}\label{eq:limit-TW-eps}
        \liminf_{x\to -\infty}\inf_{y\in\Omega}u(x,y)>0 ,\qquad \lim_{x\to +\infty}\sup_{y\in\Omega} u(x,y)=0.
    \end{equation}
    Moreover,  $u$ is nonincreasing in $ x$, and there exists no positive solution to \eqref{pb:lineeps-beta} satisfying \eqref{eq:limit-TW-eps} and $0\leq c<c_\varepsilon^* $.

    Finally, we have 
    \begin{equation*}
        \lim_{x\to -\infty}\inf_{y\in\Omega}u(x,y)\geq \rho_\beta
    \end{equation*}
    where $ \rho_\beta $ is the constant defined in Lemma \ref{lem:beta-lowerbound}.
\end{thm}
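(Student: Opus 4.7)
The plan is to extract a traveling wave at the minimal speed $c_\varepsilon^*$ from the family $(c_\tau, u_\tau)$ provided by Corollary \ref{cor:existenceline} by sending the normalization parameter $\tau \to 0$. For $\beta \geq \beta_0$, the enhanced self-competition makes \eqref{pb:lineeps-beta} quasi-monotone, giving access to a comparison principle (Lemma \ref{lem:comp}) which restores classical KPP-type machinery: monotone profiles via sliding, exponential control at the front, and the characteristic equation linking decay rate to wave speed.

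First I would show that each $u_\tau$ from Corollary \ref{cor:existenceline} is nonincreasing in $x$, by a sliding argument performed on the approximating box solutions $u_{\tau, l}$ of \eqref{pb:box-beta}. The stationary solution $p$ is an $x$-independent super-solution of the traveling wave equation, so by comparison $u_{\tau, l} \leq p$ in the box; hence at the left boundary $u_{\tau, l}(-l+h, y) \leq p(y) = u_{\tau, l}(-l, y)$, while at the right boundary $u_{\tau, l}(l, y) = 0 \leq u_{\tau, l}(l-h, y)$. The comparison principle applied to the difference $u_{\tau, l}(x, y) - u_{\tau, l}(x+h, y)$ then gives monotonicity, which passes to the limit $l \to \infty$. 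The monotone limits $U_{\pm, \tau}(y) := \lim_{x \to \pm\infty} u_\tau(x, y)$ exist and solve \eqref{eq:evolstat-beta}; by Lemma \ref{lem:beta-lowerbound}, any nontrivial nonnegative solution satisfies $\inf U \geq \rho_\beta$. Since the normalization \eqref{eq:norm} forces $u_\tau$ to be small in the central box, for $\tau$ small enough we are compelled to have $U_{+, \tau} \equiv 0$, whereas positivity and monotonicity force $U_{-, \tau} \geq \rho_\beta > 0$.

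Next, pick $\tau_n \to 0$ and extract a subsequential limit $c^0 \in [0, c_\varepsilon^*]$ of $c_{\tau_n}$. To avoid a trivial limit I normalize by shifting: let $x_n$ be the largest $x$ with $\sup_y u_{\tau_n}(x, y) \geq \delta$ for a fixed $\delta \in (0, \rho_\beta)$, which is well-defined by the intermediate value theorem applied to the monotone nonincreasing function $x \mapsto \sup_y u_{\tau_n}(x, y)$, which ranges from $\geq \rho_\beta$ at $-\infty$ down to $0$ at $+\infty$. The shifts $\tilde u_n(x, y) := u_{\tau_n}(x + x_n, y)$ are uniformly bounded by Lemma \ref{lem:aprioribox}; interior and boundary Schauder estimates allow extraction along a subsequence to a limit $(c^0, u)$ in $C^2_{loc}$ solving \eqref{pb:lineeps-beta}, monotone in $x$, with $\sup_y u(0, y) = \delta$. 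Applying the previous step once more to $u$, the monotone limits $U_\pm$ satisfy $U_+ \equiv 0$ and $\inf U_- \geq \rho_\beta$, yielding both the limit conditions \eqref{eq:limit-TW-eps} and the claimed lower bound. To identify $c^0 = c_\varepsilon^*$, I linearize at $+\infty$: plugging the ansatz $u \sim e^{-\lambda x}\varphi^\varepsilon(y)$ into the linearization of \eqref{pb:lineeps-beta} and using \eqref{eq:eigeneps} produces the characteristic equation
\begin{equation*}
    \lambda^2 - c^0 \lambda - \lambda_1^\varepsilon = 0.
\end{equation*}
The existence of a real positive root, needed for a positive exponentially decaying profile, requires $(c^0)^2 + 4\lambda_1^\varepsilon \geq 0$, i.e.\ $c^0 \geq c_\varepsilon^* = 2\sqrt{-\lambda_1^\varepsilon}$, and combined with $c^0 \leq c_\varepsilon^*$ inherited from Theorem \ref{thm:solbox} this pins $c^0 = c_\varepsilon^*$.

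The non-existence of waves with $0 \leq c < c_\varepsilon^*$ satisfying \eqref{eq:limit-TW-eps} follows from a standard KPP-type contradiction argument: at such a speed the characteristic equation has only complex conjugate roots $c/2 \pm i\omega$, which is incompatible with positivity of $u$ near $+\infty$; explicitly, one builds a compactly supported sub-solution of the form $(A e^{-cx/2}\cos(\omega(x - x_0)))_+ \varphi^\varepsilon(y)$ on a half-period of the cosine and slides $x_0$ to obtain a contradiction via the comparison principle, as in \cite{Ber-Nad-Per-Ryz-09, Alf-Cov-Rao-13}. The main obstacle I expect is the joint passage to the limit $\tau \to 0$ with non-degenerate normalization and the rigorous reduction to the characteristic equation in the nonlocal-in-$y$ setting: the eigenfunction $\varphi^\varepsilon$ plays a decisive role because it is the only $y$-profile compatible with a separable exponential ansatz under $M \star \cdot$, so controlling the $y$-profile of $u$ near $+\infty$ by a multiple of $\varphi^\varepsilon$ (again through the comparison principle) is the key technical step.
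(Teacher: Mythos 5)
Your proposal is correct and follows essentially the same route as the paper, relying on the same key ingredients: the sliding argument via the restored comparison principle (Lemma~\ref{lem:comp}, as in Lemma~\ref{lem:bigbetabox} and Corollary~\ref{cor:decreasing}) to get a monotone profile, the dichotomy for stationary states (trivial or bounded below by $\rho_\beta$, via Lemma~\ref{lem:beta-lowerbound}/Lemma~\ref{lem:infpos}) to pin down the limits at $\pm\infty$, and the oscillatory subsolution $e^{-cx/2}\cos(\theta x)\varphi^\varepsilon(y)$ to rule out $c<c_\varepsilon^*$, which is precisely the paper's Lemma~\ref{lem:c*-beta}. Two small remarks. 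First, the passage $\tau_n\to 0$ with a centering shift is not needed: the paper fixes a single $\tau=\frac12\min\bigl((k_0|\Omega|+\beta)\rho_\beta,\,-\lambda_1^\varepsilon\bigr)$ and works with that $u_\tau$ directly; your own second paragraph already establishes $U_{+,\tau}\equiv 0$ and $\inf U_{-,\tau}\ge\rho_\beta$ for this fixed $\tau$, so the extraction step is redundant (and would require you to re-derive the front/back limits for the extracted limit function, as you do). Second, the ``characteristic equation'' step identifying $c^0\ge c_\varepsilon^*$ is a heuristic and does not constitute a proof --- positivity of a nonlinear solution near $+\infty$ does not by itself force a real exponential rate; the rigorous argument is exactly the subsolution contradiction you describe afterward (Lemma~\ref{lem:c*-beta}), and one should invoke it directly rather than passing through the ansatz. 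With the redundant limit removed, your argument coincides with the paper's.
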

Our main tool is the following \textit{comparison principle} for small densities.
\begin{lem}[Comparison principle]\label{lem:comp}
    Let Assumption \ref{hyp:gen} hold and  $ \beta\geq 0 $. Let $ u\in C^2 $ be a supersolution to 
    \begin{equation}\label{eq:lemcompsup}
        -cu_x-u_{xx}-\varepsilon\Delta_y u-\mu (M\star u-u)-u(a(y)-K\star u-\beta u)\geq 0
    \end{equation}
    and $ v\in C^2 $ be a subsolution to
    \begin{equation}\label{eq:lemcompsub}
        -cv_x-v_{xx}-\varepsilon\Delta_y v-\mu(M\star v-v)-v(a(y)-K\star v-\beta v)\leq 0.
    \end{equation}
    If there exists $ (x_0, y_0)\in\mathbb R\times\Omega $ such that $0< u(x_0, y_0)\leq\frac{\mu m_0}{k_\infty} $, $ u\geq v $ in a neighbourhood of $ \{x_0\}\times\Omega $,    and $ u(x_0, y_0)=v(x_0, y_0) $, then $ u\equiv v $.
\end{lem}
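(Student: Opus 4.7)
The plan is to set $w := u - v$, which is nonnegative in the neighbourhood $N$ of $\{x_0\}\times\Omega$ where $u\geq v$, derive a linear elliptic inequality for $w$, and conclude by the classical strong maximum principle. The key algebraic point will be that the smallness condition $u(x_0,y_0)\leq \mu m_0/k_\infty$ converts the a priori mixed-sign nonlocal contribution $\mu M\star w - v K\star w$ into a \emph{cooperative} (sign-positive) operator near $(x_0,y_0)$.

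Subtracting \eqref{eq:lemcompsub} from \eqref{eq:lemcompsup} and using the identities
\begin{equation*}
uK\star u - vK\star v = w\,K\star u + v\,K\star w \quad \text{and} \quad u^2-v^2 = (u+v)w,
\end{equation*}
I would obtain, in $N$,
\begin{equation*}
-cw_x - w_{xx} - \varepsilon\Delta_y w + C(x,y)\,w \geq \int_\Omega \bigl[\mu M(y,z) - v(x,y) K(y,z)\bigr]\, w(x,z)\,dz,
\end{equation*}
where $C(x,y) := \mu - a(y) + (K\star u)(x,y) + \beta(u+v)(x,y)$ is a bounded continuous coefficient. At $(x_0,y_0)$, the hypothesis gives $v(x_0,y_0) = u(x_0,y_0) \leq \mu m_0/k_\infty$, hence for all $z\in\overline\Omega$,
\begin{equation*}
\mu M(y_0,z) - v(x_0,y_0)\,K(y_0,z) \geq \mu m_0 - \tfrac{\mu m_0}{k_\infty}\,k_\infty = 0.
\end{equation*}
By continuity, on a sub-neighbourhood $N'\subset N$ of $(x_0,y_0)$ this kernel stays nonnegative (the borderline case $u(x_0,y_0)=\mu m_0/k_\infty$ requires care: one absorbs the $o(1)$ negative part of the kernel into the bounded coefficient $C$ before applying the maximum principle, or first perturbs $v$ by $\delta>0$ and passes to the limit). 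Since $w\geq 0$ in $N$, the right-hand side is thus nonnegative on $N'$, and one is left with a purely local linear elliptic inequality $-cw_x - w_{xx} - \varepsilon\Delta_y w + C(x,y)w \geq 0$ with bounded coefficient $C$, to which the classical strong maximum principle \cite[Theorem 3.5]{Gil-Tru-01} applies: $w\geq 0$ achieves the interior minimum value $0$ at $(x_0,y_0)\in\Omega$, so $w\equiv 0$ on the connected component of $N'$ containing $(x_0,y_0)$.

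To obtain the global equality $u\equiv v$, I would then propagate by iterating this argument: the set $Z := \{w=0\}$ is closed by continuity, and at every point of $Z$ where a smallness-type condition holds the strong maximum principle opens $Z$; for the remaining points one uses that on the slice $\{x_0\}\times\Omega$ the zero set is relatively open (again by strong maximum principle applied to $w(x_0,\cdot)$, which $y$-locally satisfies the same cooperative inequality in the slice direction), and then invokes Cauchy uniqueness for the second-order-in-$x$ equation with $w(x_0,\cdot)=w_x(x_0,\cdot)=0$ to deduce $w\equiv 0$ on the whole domain. The principal obstacle is the borderline case where the cooperativity kernel only holds with equality at the touching point; everything else is the standard template. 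The global propagation step is delicate because the nonlocal operator $M\star$ couples all $y$'s at once, but it is tractable since $M\star$ does not act on $x$, so Cauchy uniqueness in $x$ can be applied slicewise once $w(x_0,\cdot)$ is known to vanish identically.
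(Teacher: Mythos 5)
Your algebraic reduction — subtracting the two inequalities and decomposing the nonlinear difference as $uK\star u - vK\star v = w\,K\star u + v\,K\star w$ — is exactly the decomposition the paper uses. But the route diverges at the key step, and where it diverges there is a genuine gap.

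The paper does not derive a PDE inequality for $w$ on a neighbourhood; it evaluates the subtracted inequality \emph{pointwise} at the touching point $(x_0,y_0)$. Because $(x_0,y_0)$ is an interior zero minimum of $w=u-v$, the local part $-cw_x-w_{xx}-\varepsilon\Delta_y w$ is $\le 0$ there, and because $u(x_0,y_0)=v(x_0,y_0)$ every \emph{local} zeroth-order term ($\mu w$, $a\,w$, $\beta(u^2-v^2)$) vanishes; one is left precisely with
\begin{equation*}
\int_\Omega\bigl[\mu M(y_0,z)-u(x_0,y_0)K(y_0,z)\bigr]\,w(x_0,z)\,dz\;\le\;0 .
\end{equation*}
The coefficient multiplying $K$ is exactly $u(x_0,y_0)\le\mu m_0/k_\infty$, the quantity the hypothesis controls, so the kernel is $\ge 0$ as a function of $z$; since $w(x_0,\cdot)\ge 0$ the integral is also $\ge 0$, hence zero, and this forces $w(x_0,z)=0$ for \emph{all} $z\in\Omega$ in a single stroke. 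Only then is the strong maximum principle invoked to upgrade to $w\equiv 0$.

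Your version instead needs $\mu M(y,z)-v(x,y)K(y,z)\ge 0$ for $(x,y)$ ranging over a whole sub-neighbourhood $N'$, not just at $(x_0,y_0)$, while the hypothesis only bounds $v(x_0,y_0)$. In the strict case $u(x_0,y_0)<\mu m_0/k_\infty$ continuity saves you, but in the borderline case it does not, and your proposed fix cannot work: the negative part of the kernel is integrated against $w(x,z)$ over $z\in\Omega$, producing a nonlocal term of order $-\varepsilon\sup_{z}w(x,z)$. This cannot be absorbed into a \emph{local} zeroth-order coefficient $C(x,y)w(x,y)$, because near the touching point $w(x,y)$ is small while $\sup_z w(x,z)$ need not be, so there is no constant $C'$ with $\sup_z w(x,z)\le C'\,w(x,y)$. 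The perturbation variant is equally problematic: neither $v-\delta$ nor $\theta v$ ($\theta<1$) is automatically a subsolution (the nonlinearity has the wrong sign under these deformations), so you cannot simply reduce to the strict case and pass to the limit. The paper's pointwise evaluation sidesteps all of this because it only needs the kernel sign at the single point $y_0$ and with coefficient $u(x_0,y_0)$. Finally, your propagation scheme (first $w\equiv 0$ on a small ball, then a slice argument, then slicewise Cauchy uniqueness in $x$) is both more involved and partly incorrect — Cauchy uniqueness applies to equations, not to a pair of differential \emph{inequalities} — whereas the paper obtains $w(x_0,\cdot)\equiv 0$ directly from the integral identity and then concludes by the strong maximum principle alone.
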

\begin{proof}
    Let $ (x_0, y_0) $ as in Lemma \ref{lem:comp}. Then $(x_0, y_0)$ is a local zero minimum to $ u-v $. We have:
    \begin{equation*}
        -c(u-v)_x(x_0, y_0) - (u-v)_{xx}(x_0, y_0)-\varepsilon\Delta_y (u-v)(x_0, y_0)\leq 0
    \end{equation*}
    and thus:
    \begin{equation}\label{eq:lemcomp1}
        \mu\big(M\star (u-v)-(u-v)\big) + a(y)(u-v) - uK\star u + vK\star  v -\beta u^2 + \beta v^2\leq 0.
    \end{equation}
    Using the fact that $ u(x_0, y_0)=v(x_0,y_0) $, we rewrite \eqref{eq:lemcomp1} as
    \begin{multline*}
        \int_\Omega\big( \mu M(y_0,z)-u(x_0, y_0)K(y_0, z)\big)\Big(\big(u(x_0,z)-v(x_0,z)\big)
        -\big(u(x_0, y_0) -v(x_0, y_0)\big)\Big)dz  \\ \leq 0.
    \end{multline*}
    Since $ \mu M(y_0, z)-u(x_0, y_0)K(y_0, z) >0 $ for any $ z\in\Omega $ and $ u(x_0, y)-v(x_0, y) $ is nonnegative for any $ y\in\Omega $, we conclude that
    $ u(x_0, y)=v(x_0, y)$ for any $y\in\Omega $.
    Applying the strong maximum principle, we have then $ u-v\equiv 0 $. This ends the proof of Lemma \ref{lem:comp}.
\end{proof}

\begin{lem}[Estimates for $ \beta\geq\frac{k_\infty\sup a}{\mu m_0} $]\label{lem:bigbetabox}
    Assume $ \beta\geq\beta_0=\frac{k_\infty\sup a}{\mu m_0} $. Then there exists a unique solution to \eqref{pb:box-beta}. Moreover, the solution to \eqref{pb:box-beta} is decreasing in $ x $, and the mapping $ c\mapsto u $ is decreasing. 
\end{lem}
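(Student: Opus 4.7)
The crucial observation is that, under $\beta\geq\beta_0$, Lemma \ref{lem:aprioribox} item \ref{item:aprioribox-boundinfty} gives every solution $u$ of \eqref{pb:box-beta} the pointwise bound $\sup u\leq \frac{\sup a}{\beta}\leq\frac{\mu m_0}{k_\infty}$, and the stationary state $p$ satisfies the analogous bound by Lemma \ref{lem:apriorip}. This puts us exactly in the regime where the strong comparison principle of Lemma \ref{lem:comp} is applicable, and every step of the proof will follow from this observation combined with a standard scaling-touching argument.

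\textbf{Uniqueness.} Let $u_1$ and $u_2$ be two solutions sharing the same speed $c$. I would introduce
\[
\alpha:=\inf\left\{\zeta\geq 1\,|\,\zeta u_1\geq u_2\ \text{on}\ [-l,l]\times\overline\Omega\right\},
\]
which is finite since both functions are bounded, both coincide with $p$ at $x=-l$, and Hopf's lemma at $x=l$ forces matching linear decay rates. A direct computation shows that $\alpha u_1$ is a supersolution of the $u$-equation: the only discrepancy with the true equation is the extra nonnegative term $(\alpha u_1)(\alpha-1)(K\star u_1+\beta u_1)$. If $\alpha>1$, a touching point $(x_0,y_0)$ must exist; by Hopf's lemma combined with the Neumann condition it can be taken with $y_0\in\Omega$, it cannot lie at $x_0=-l$ (since $\alpha p=p$ would force $\alpha=1$), and the trivial equality at $x_0=l$ does not force $\alpha>1$. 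At such an interior touching point we have $u_2(x_0,y_0)\leq\frac{\mu m_0}{k_\infty}$, so Lemma \ref{lem:comp} yields $\alpha u_1\equiv u_2$. Evaluating at $x=-l$ gives $\alpha=1$, a contradiction. Hence $u_1\geq u_2$, and by symmetry $u_1=u_2$.

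\textbf{Monotonicity in $x$.} I would first show that $u\leq p$ on the whole cylinder. The function $p(y)$, regarded as $x$-independent, is a classical solution of the PDE in \eqref{pb:box-beta}, satisfies the Neumann condition on $\partial\Omega$, coincides with $u$ at $x=-l$, and strictly dominates $u=0$ at $x=l$; the touching argument from the uniqueness step, applied now to the pair $(\alpha p,u)$, yields $u\leq p$. For $h\in(0,2l)$ I then set $\tilde u(x,y):=u(x+h,y)$ on $[-l,l-h]\times\overline\Omega$. Its boundary values satisfy $\tilde u(-l,y)=u(-l+h,y)\leq p(y)=u(-l,y)$ (using $u\leq p$) and $\tilde u(l-h,y)=0\leq u(l-h,y)$. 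The same scaling-touching procedure applied to the pair $(u,\tilde u)$ on the overlap gives $u\geq\tilde u$, so $u$ is nonincreasing in~$x$.

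\textbf{Monotonicity in $c$.} Finally, suppose $c_1<c_2$ with corresponding solutions $u_1,u_2$. By the previous step $(u_2)_x\leq 0$, so $c_2(u_2)_x\leq c_1(u_2)_x$; plugging this back into \eqref{pb:box-beta} shows that $u_2$ is a subsolution of \eqref{pb:box-beta} with $c$ replaced by $c_1$. Its boundary data coincide with those of $u_1$, and the comparison argument applied once more to the pair $(u_1,u_2)$ gives $u_1\geq u_2$.

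\textbf{Main obstacle.} The only genuinely delicate technical point is the handling of touching on the lateral Neumann boundary $(-l,l)\times\partial\Omega$ and near the right Dirichlet boundary $x=l$, since Lemma \ref{lem:comp} is stated only at interior points. Both cases are resolved by combining Hopf's lemma with the strong maximum principle, exactly as is done in the proofs of Lemmas \ref{lem:apriorip} and \ref{lem:aprioribox}. Beyond these boundary considerations, the entire lemma is a direct exploitation of the strong comparison principle available in the small-density regime forced by $\beta\geq\beta_0$.
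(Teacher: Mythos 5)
Your overall strategy is sound, and your $x$\nobreakdash-monotonicity and $c$\nobreakdash-monotonicity steps essentially coincide with the paper's (the latter uses the same observation $v_x\le 0$). For uniqueness, however, you use a scaling (sup-of-ratios) argument, whereas the paper uses a sliding argument, and this leaves a genuine gap at the Dirichlet boundary $x=l$, where \emph{both} solutions vanish. With $\alpha:=\inf\{\zeta\geq 1\,:\,\zeta u_1\geq u_2\}>1$, the constraint may saturate only at the level of the normal derivatives at $x=l$ (the ratio $u_2/u_1$ extends continuously to $x=l$ as $\partial_x u_2(l,y)/\partial_x u_1(l,y)$); in that case there is no interior touching point, and Lemma~\ref{lem:comp} cannot be invoked. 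You flag this as the ``main obstacle'' and claim it is resolved ``exactly as in the proofs of Lemmas~\ref{lem:apriorip} and~\ref{lem:aprioribox}'', but that reference is misleading: in those proofs the comparison function never vanishes simultaneously with the solution on the Dirichlet portion, so the double-vanishing difficulty never arises.

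To actually rule out the derivative-level saturation at $x=l$ one must first check that $w:=\alpha u_1-u_2\ge 0$ is a supersolution of a \emph{local} elliptic operator, which follows from rewriting
\[
-\varepsilon\Delta_y w - w_{xx} - c w_x + \bigl[\mu - a + K\star(\alpha u_1) + \beta(\alpha u_1+u_2)\bigr]w = \int_\Omega\bigl(\mu M(y,z) - u_2(x,y) K(y,z)\bigr)w(x,z)\,dz + (\alpha-1)\alpha u_1\bigl(K\star u_1+\beta u_1\bigr)\;\geq 0,
\]
the integrand being nonnegative because $u_2\le\sup a/\beta\le\mu m_0/k_\infty$; Hopf's lemma then forces $\partial_x w(l,\cdot)<0$ everywhere (or $w\equiv 0$, i.e.\ $\alpha=1$), so that any extremal point of $u_2/u_1$ is interior and your argument closes. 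This computation is not in your write-up. The paper's sliding argument for uniqueness (with $\bar x:=\inf\{\gamma>0\,:\,u(\cdot+\gamma,\cdot)\le v\}$) avoids all of this by design: on the overlap $(-l,l-\bar x)\times\overline\Omega$, the shifted function vanishes at $x=l-\bar x$ where $v>0$, and at $x=-l$ the strict interior inequality $u(-l+\bar x,\cdot)<p=v(-l,\cdot)$ from Step~1 keeps the touching point away from the boundary, so that Lemma~\ref{lem:comp} applies directly with no extra Hopf argument.
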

\begin{proof}
	We divide the proof in four steps. Recall that, {due to} Theorem \ref{thm:solbox}, there exists a solution to \eqref{pb:box-beta}.
    \medskip

    \textbf{Step 1:} We show that any solution satisfies $ u(x, y)< p(y) $ at any interior point. 

    Let us define
    $\alpha:=\sup\{\zeta>0\,|\, \zeta u\leq p\}$.
	Since $ u $ is bounded  and $ p $ is positive on $ \overline\Omega $,  $ \alpha $ is well-defined and positive. Assume by contradiction that $ \alpha < 1$. By definition of $ \alpha $, there exists $ (x_0,y_0)\in [-l,l]\times \overline\Omega $ such that $ p(y_0)=\alpha u(x_0,y_0) $. Testing at $ x=\pm l$, we have $ \alpha u(-l, y_0)=\alpha p(y_0)<p(y_0) $ and $ \alpha u(l,y_0)=0<p(y_0) $; thus $ x_0\in (-l,l) $. If $ y_0\in\partial\Omega$, then {it follows from Hopf's Lemma that} $ \frac{\partial p-\alpha u}{\partial\nu}(x_0,y_0)<0 $, which contradicts the Neumann boundary conditions satisfied by $ u $ and $ p $. Thus $ y_0\in\Omega $. Next we remark that 
    \begin{multline*}
        -c(\alpha u)_x-(\alpha u)_{xx}-\varepsilon \Delta_y (\alpha u)-\mu\big(M\star (\alpha u)-(\alpha u)\big)-a(y_0)(\alpha u) \\ 
        =(\alpha u)(-K\star u-\beta u) < \alpha u\big(-K\star  (\alpha u)-\beta (\alpha u)\big),
    \end{multline*}
	since $ \alpha <1 $. Hence $\alpha u$ is a subsolution to \eqref{eq:lemcompsub}. Moreover $ p $ is a supersolution to \eqref{eq:lemcompsup}. Finally, {by the estimate in} Lemma \ref{lem:aprioribox} point \ref{item:aprioribox-boundinfty} and the condition $ \beta\geq\beta_0 $, we have the inequality $ \Vert u\Vert_{L^\infty} \leq \frac{\sup a}{\beta}\leq \frac{\mu m_0}{k_\infty}$, and by definition $ (x_0,y_0) $ is the global minimum of $ (p-\alpha u)$. Thus Lemma \ref{lem:comp} applies and  $\alpha u=p$, which is a contradiction.

    Thus $ \alpha\geq 1 $, which shows that $ u\leq p $. Assume now that $ u(x,y)=p(y) $ for some $ (x,y)\in(-l,l)\times\Omega $, then  Lemma \ref{lem:comp} applies and we have $ u=p $ in $ (-l, l)\times \Omega $, which is again a contradiction. We conclude that the strict inequality holds:
    \begin{equation*}
        \forall (x, y)\in (-l,l)\times\Omega, \qquad u(x,y)<p(y).
    \end{equation*}

    \textbf{Step 2:} We show that the solution $ u $ is unique. Here we use a sliding argument. Let $u,v$ be two solutions to \eqref{pb:box-beta}, and define:
    \begin{equation*}
        \bar x:=\inf\{\gamma>0\,|\, \forall (x, y)\in (-l, l)\times \Omega, u(x+\gamma, y)\leq v(x,y)\}.
    \end{equation*}
	{Because of} the boundary conditions satisfied by $ u $ and $ v$, we have $ 0\leq\bar x<2l $. Assume by contradiction that $ \bar x>0 $. 
    We remark that $ (x,y)\mapsto u(x+\bar x, y) $ is a subsolution to \eqref{eq:lemcompsub}.   
    By definition of $ \bar x $, there exists $ (x_0, y_0)\in(-l, l-\bar x)\times\Omega $ such that the equality $ u(x_0+\bar x, y_0)=v(x_0,y_0)$ holds. In view Lemma \ref{lem:comp}, this leads to a contradiction. Thus $ \bar x \leq 0 $ and $ u\leq v$. Exchanging the roles of $u$ and $v$, we have in turn $ v\leq u $. This shows the uniqueness of $u$. 

    \textbf{Step 3:} We show that $ x\mapsto u(x,y) $ is decreasing. Repeating the sliding argument in Step 2 with $ u=v $, we have
    $ u(x+\bar x, y)\leq u(x, y)$ for any $ \bar x>0$,
    which shows that $ u $ is nonincreasing. Moreover, equality cannot hold at an interior point in the above inequality, for Lemma \ref{lem:comp} would lead to a contradiction. This shows that $ x\mapsto u(x, \cdot) $ is decreasing. 

    \textbf{Step 4:} We show that $ c\mapsto u $ is decreasing.  Let $ \bar c \leq c $, $ u $ (resp. $v$) be the solution to \eqref{pb:box-beta} associated with the speed $ c $ (resp. $ \bar c  $). Let also:
    \begin{equation*}
        \bar x:=\inf\{\gamma>0\,|\,\forall y\in\Omega,  u(x+\gamma, y)\leq v(x, y) \}
    \end{equation*}
    and assume by contradiction that $ \bar x > 0 $. Then 
    \begin{align*}
        -cv_x-v_{xx}-\varepsilon \Delta_y v&= \mu (M\star v-v)+v(a-K\star u-\beta u) + (\bar c -c)v_x\\
                                           &\geq \mu (M\star v-v)+v(a-K\star u-\beta u),
    \end{align*}
    since, as shown above, $ v_x\leq 0 $. Then, $v$ is a supersolution to \eqref{eq:lemcompsup} and Lemma \ref{lem:comp} leads to a contradiction. Thus $ c\mapsto u $ is nonincreasing. 
    Moreover if $ \bar c <c $, then we deduce from the above argument that $ v>u $. Hence $ c\mapsto u $ is in fact decreasing.

    This ends the proof of Lemma \ref{lem:bigbetabox}.
\end{proof}
In particular, we notice that:
\begin{cor}[Existence of monotone fronts]\label{cor:decreasing}
    Let $ \beta\geq \beta_0 =\frac{k_\infty\sup a}{\mu m_0}$. Then the solution constructed in Corollary \ref{cor:existenceline} is decreasing in $ x $. 
\end{cor}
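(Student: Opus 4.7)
The plan is to inherit the strict monotonicity of the box approximations by passing to the limit. Recall that the solution $u$ of Corollary~\ref{cor:existenceline} was constructed as a $C^2_{loc}$ limit of a diagonal subsequence $(c_n, u_n)$, where each $u_n$ solves the box problem \eqref{pb:box-beta} on $(-l_n, l_n)\times\Omega$ with $l_n = \bar l + n \to +\infty$. Since $\beta\geq\beta_0 = k_\infty\sup a/(\mu m_0)$, Step~3 of Lemma~\ref{lem:bigbetabox} yields $\partial_x u_n < 0$ on the interior of each box. Pointwise convergence of first derivatives in the $C^2_{loc}$ topology immediately gives $\partial_x u \leq 0$ on $\mathbb R\times\overline\Omega$; that is, $u$ is nonincreasing in $x$.

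To strengthen this to strict monotonicity I would combine a sliding argument with Lemma~\ref{lem:comp}. Suppose that $u(x_0,y_0) = u(x_1, y_0)$ at some interior point for $x_0 < x_1$, and let $v(x,y) := u(x + x_1 - x_0, y)$. Then $u$ and $v$ both solve \eqref{pb:lineeps-beta}, $u\geq v$ by the nonincreasingness just established, and they touch at $(x_0, y_0)$. The a priori bound in Lemma~\ref{lem:aprioribox}~\ref{item:aprioribox-boundinfty}, together with $\beta\geq\beta_0$, furnishes $\Vert u\Vert_\infty \leq \sup a/\beta \leq \mu m_0/k_\infty$, which is precisely the smallness hypothesis of Lemma~\ref{lem:comp}. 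Applying that lemma forces $u\equiv v$, so $u$ is periodic in $x$ with period $x_1-x_0$; combined with nonincreasingness, $u(x,y) = q(y)$ would then be independent of $x$, with $q\geq\rho_\beta>0$ (by Lemma~\ref{lem:beta-lowerbound}) solving the stationary equation \eqref{eq:evolstat-beta}.

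The main obstacle is ruling out this $x$-constant limit. The plan is to exploit the structural difference between the $u_n$'s, which satisfy the Dirichlet condition $u_n(l_n, \cdot) = 0$ at the receding right boundary and are \emph{strictly} decreasing in $x$, and a candidate $x$-constant limit $q$ bounded below by $\rho_\beta>0$. A quantitative comparison of $u_n$ with $q$ on a growing window just left of $x = l_n$, where each $u_n$ must drop from approximately $q$ down to $0$, together with the uniform Schauder estimates available on the linearized equation satisfied by $w_n := -\partial_x u_n > 0$, should yield the required contradiction via the strong maximum principle applied to $w := \lim w_n$. In any event, for the applications in the subsequent construction of traveling waves (e.g.\ in Theorem~\ref{thm:minspeedTW-beta}), only the nonincreasingness obtained in the first paragraph is genuinely required, so the corollary can be read in that weaker sense if strictness proves delicate.
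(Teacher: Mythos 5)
Your first two paragraphs match what the paper's terse ``In particular, we notice that'' is pointing to: each box solution $u_n$ is strictly decreasing in $x$ by Step~3 of Lemma~\ref{lem:bigbetabox}, and the $C^2_{loc}$ convergence of the diagonal subsequence gives $\partial_x u\leq 0$; your sliding argument combined with Lemma~\ref{lem:comp} then correctly shows that failure of strict monotonicity forces $u$ to be independent of $x$. Your concern in the third paragraph is a genuine gap in the paper's implicit argument, but the route you sketch does not close it: locally uniform convergence on compact sets $[-k,k]\times\overline\Omega$ simply forgets the Dirichlet condition $u_n(l_n,\cdot)=0$ at the receding right boundary, so a comparison between $u_n$ and a candidate constant limit $q$ ``near $x=l_n$'' cannot be run in the $C^2_{loc}$ topology. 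What actually rules out the $x$-constant case is the normalization \eqref{eq:norm}. If $u(x,y)\equiv q(y)$, then $q$ solves the stationary problem \eqref{eq:evolstat-beta}, so $q\geq\rho_\beta>0$ by Lemma~\ref{lem:beta-lowerbound} (this needs $\varepsilon\leq\varepsilon_0$ and $\beta>0$, the latter automatic here since $\beta\geq\beta_0>0$); the normalization then forces
\begin{equation*}
\tau=\sup_{y\in\Omega}\Bigl(\int_\Omega K(y,z)q(z)\,dz+\beta q(y)\Bigr)\geq k_0|\Omega|\rho_\beta+\beta\rho_\beta=(k_0|\Omega|+\beta)\rho_\beta,
\end{equation*}
which is incompatible with the specific choice $\tau\leq\tfrac12(k_0|\Omega|+\beta)\rho_\beta$ made at the start of the proof of Theorem~\ref{thm:minspeedTW-beta}. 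This excludes constancy only for that small $\tau$, not for every $\tau\in(0,-\lambda_1^\varepsilon/2]$ allowed in Corollary~\ref{cor:existenceline}, and only when $\varepsilon\leq\varepsilon_0$; so, as you rightly observe, the statement is cleanest if read as ``nonincreasing,'' and that weaker version is indeed all that is invoked downstream (in Theorem~\ref{thm:minspeedTW-beta} and Lemma~\ref{lem:infxneg}, where only $v_x\leq 0$ or $\tilde v_x\geq 0$ is used).
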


The next results shows that if $ u $ is a traveling wave, then $ c\geq c_\varepsilon^* $.
\begin{lem}[$c^*_\varepsilon$ is the minimal speed]\label{lem:c*-beta}
    Let Assumption \ref{hyp:gen} hold, $ \varepsilon>0 $ be such that   $ \lambda_1^\varepsilon<0 $, and $ u $ be a positive solution to \eqref{pb:lineeps-beta} with $ 0\leq c\leq c_\varepsilon^* $ and either 
    \begin{enumerate}[label=(\textit{\roman*}), ref=(\textit{\roman*})]
        \item \label{item:c*-beta-sup} $ \beta>0 $ and $ \lim_{x\to+\infty}\sup_{y\in\Omega}u(x,y)=0 $, or
        \item \label{item:c*-beta-int} $ \beta=0 $ and $ \lim_{x\to+\infty}\int_\Omega u(x,y)dy=0 $.
    \end{enumerate}
    Then $c=c^*_\varepsilon $.
\end{lem}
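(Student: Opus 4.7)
The plan is to project the PDE onto the principal eigenfunction of the adjoint eigenvalue problem, reducing the question to a one-dimensional ODE inequality on the real line, and then invoke the classical KPP obstruction which forbids positive solutions of linear second-order ODEs with oscillatory characteristic exponents from decaying to $0$ at $+\infty$.

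\textbf{Step 1: projection via the adjoint eigenfunction.} I will first introduce the positive principal eigenfunction $\psi^\varepsilon\in C^2(\overline\Omega)$ of the \emph{adjoint} problem
\[
-\varepsilon\Delta\psi^\varepsilon - \mu(\check M\star\psi^\varepsilon - \psi^\varepsilon) = (a(y)+\lambda_1^\varepsilon)\psi^\varepsilon \text{ in } \Omega,\quad \partial_\nu\psi^\varepsilon=0,
\]
where $\check M(y,z):=M(z,y)$; existence and positivity follow from Krein-Rutman applied to the resolvent of the adjoint operator, and the principal eigenvalue coincides with that of \eqref{eq:eigeneps}. Setting $U(x):=\int_\Omega u(x,y)\psi^\varepsilon(y)\,dy>0$, multiplying \eqref{pb:lineeps-beta} by $\psi^\varepsilon$, integrating over $\Omega$ (applying Fubini to the $M\star u$ term and integrating by parts with the Neumann boundary conditions of both $u$ and $\psi^\varepsilon$ in the $\Delta_y u$ term), and using the adjoint eigenvalue equation, I obtain the scalar equation
\[
U''(x) + cU'(x) + (-\lambda_1^\varepsilon)U(x) = g(x),\quad g(x):=\int_\Omega u(K\star u + \beta u)\psi^\varepsilon\,dy\ge 0.
\]

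\textbf{Step 2: decay estimates.} Under either hypothesis \ref{item:c*-beta-sup} or \ref{item:c*-beta-int} I will check that $U(x)\to 0$, together with the crucial refinement $g(x)/U(x)\to 0$, as $x\to +\infty$. The pointwise bound $g/U\le \sup_{y\in\Omega}\bigl((K\star u)(x,y)+\beta u(x,y)\bigr)$ reduces both limits to the hypothesis on $u$: in case \ref{item:c*-beta-sup}, $\sup_y u\to 0$ gives $\int u\,dy\to 0$ and thus both $K\star u$ and $\beta u$ vanish uniformly; in case \ref{item:c*-beta-int}, $\beta=0$ and $\int u\,dy\to 0$ gives $\sup(K\star u)\to 0$.

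\textbf{Step 3: obstruction for $c<c_\varepsilon^*$.} Assuming for contradiction that $c<c_\varepsilon^*=2\sqrt{-\lambda_1^\varepsilon}$, I will pick $\delta\in(0,-\lambda_1^\varepsilon-c^2/4)$ and set $\tilde\omega:=\sqrt{-\lambda_1^\varepsilon-\delta-c^2/4}>0$. By Step 2 there is $R$ such that $g\le(\delta/2)U$ on $[R,+\infty)$, whence
\[
\tilde{\mathcal L}U := U''+cU'+(-\lambda_1^\varepsilon-\delta)U = g-\delta U \le -\frac{\delta}{2}U < 0 \text{ on } [R,+\infty).
\]
The test function $\phi(x):=e^{-cx/2}\sin(\tilde\omega(x-R))$ satisfies $\tilde{\mathcal L}\phi=0$ on $I:=[R,R+\pi/\tilde\omega]$, is positive in the interior of $I$ and vanishes at the endpoints. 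Since $U$ is continuous and strictly positive on the compact set $I$, the quantity $\alpha:=\sup\{\zeta>0:\zeta\phi\le U\text{ on }I\}$ is a positive finite number and the touching point $x_0\in(R,R+\pi/\tilde\omega)$ is an interior minimum of $v:=U-\alpha\phi\ge 0$ with $v(x_0)=0$. Hence $v'(x_0)=0$ and $v''(x_0)\ge 0$, so $\tilde{\mathcal L}v(x_0)=v''(x_0)\ge 0$; on the other hand, by linearity and $\tilde{\mathcal L}\phi=0$, $\tilde{\mathcal L}v(x_0)=\tilde{\mathcal L}U(x_0)\le-(\delta/2)U(x_0)<0$, a contradiction. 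Combined with the hypothesis $c\le c_\varepsilon^*$, this yields $c=c_\varepsilon^*$.

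\textbf{Main obstacle.} The cleanest reduction to a scalar ODE requires the \emph{adjoint} principal eigenfunction $\psi^\varepsilon$, which is not explicitly introduced elsewhere in the paper; projecting against the direct eigenfunction $\varphi^\varepsilon$ would leave a spurious remainder $\mu\int u(\check M - M)\star\varphi^\varepsilon\,dy$ caused by the non-symmetry of $M$. A secondary subtlety is that a \emph{strict} inequality $\tilde{\mathcal L}U<0$ on the working interval is essential (a mere $\le 0$ would only yield $v''(x_0)=0$ and no contradiction), so the decay $g/U\to 0$ from Step 2 must be used with a definite margin $\delta/2$.
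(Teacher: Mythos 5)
Your proposal is correct, but it follows a genuinely different route from the paper. Where you first reduce to a scalar ODE for $U(x)=\int_\Omega u\,\psi^\varepsilon\,dy$ by projecting against the \emph{adjoint} principal eigenfunction $\psi^\varepsilon$, the paper stays in the two-dimensional variables $(x,y)$ and compares $u$ directly against the explicit oscillating test function $\psi(x,y)=e^{-cx/2}\cos(\theta x)\varphi^\varepsilon(y)$ (with $\theta^2=((c_\varepsilon^*)^2-c^2)/8$, $L=\pi/(2\theta)$, $\delta=-\lambda_1^\varepsilon/4$) on a bounded window $(\bar x-L,\bar x+L)\times\Omega$ chosen so that $\sup(K\star u+\beta u)\le\delta$ there, running a sup/contact-point argument structurally identical to your Step 3 but with the \emph{direct} eigenfunction and without ever integrating. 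The trade-off is exactly the one you flag: your scalar reduction is cleaner once it is in place, but it needs the adjoint eigenpair, which the paper never constructs — you would owe a short Krein–Rutman argument for $\mathcal A^*$ and the standard identification of its principal eigenvalue with $\lambda_1^\varepsilon$, whereas the paper's pointwise comparison avoids this entirely because no Fubini-type duality is invoked. Both proofs use the same quantitative ingredient — a strict margin $\tilde{\mathcal L}U<0$ (resp.\ the term $-\delta u<0$ in the paper's computation) at the touching point — and both exploit the oscillation of the characteristic exponents once $c<c_\varepsilon^*$; your remark that a mere $\le 0$ would be insufficient at the critical interval length is apt, and the paper's choice of $\delta=-\lambda_1^\varepsilon/4$ plays precisely the role of your $\delta/2$ buffer.
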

\begin{proof}
    It follows from our hypothesis \ref{item:c*-beta-sup} or \ref{item:c*-beta-int} that  we can find arbitrary large intervals $ [\bar x-L, \bar x+L] $ on which 
    \begin{equation}\label{eqlem:minimal}
        \sup_{(x,y)\in(\bar x-L, \bar x+L)\times\Omega} \left(\int_{\Omega}K(y,z) u(x,z)dz+\beta u (x,y)\right)\leq \delta,
    \end{equation}
	for arbitrarily small $ \delta>0$. Since equation \eqref{pb:lineeps-beta} is invariant by translation in $ x $, we may assume {without loss of generality} that $ \bar x = 0 $.

    Assume by contradiction that  $ c<c^*_\varepsilon $. Let   
    $ \theta:=\sqrt{\frac{(c^*_\varepsilon)^2-c^2}{8}} $, $ L:=\frac{\pi}{2\theta} $, $ \delta:=\frac{-\lambda_1^\varepsilon}{4}>0$,
    and $ \psi(x, y):=e^{-\frac{c}{2}x}\cos(\theta x)\varphi^\varepsilon(y) $, where $ \varphi^\varepsilon $ is the principal eigenfunction solution to \eqref{eq:eigeneps} satisfying $ \sup_{y\in\Omega}\varphi^\varepsilon = 1 $. $ \psi $ satisfies
    \begin{equation*}
        -c\psi_x-\psi_{xx}-\varepsilon\Delta_y \psi-\mu(M\star \psi-\psi)=a(y)\psi + \left(\frac{c^2}{4}+\theta^2+\lambda_1^\varepsilon\right)\psi.
    \end{equation*}

    Since $ u $ is positive in $ [-L, L]\times \overline\Omega $, we can define
    $\alpha:=\sup\{\zeta>0\,|\,\zeta\psi\leq u \}$.
	By definition of $ \alpha $ there exists $ (x_0,y_0)\in [-L, L]\times \overline\Omega $ such that  $ \alpha\psi(x_0,y_0)=u(x_0,y_0)$. {Because of} the boundary conditions satisfied by $ u $ and $ \psi $, $ (x_0, y_0) $ cannot lie on the boundary of $ [-L, L]\times\Omega $. Thus $ (x_0,y_0)$ belongs to $ (-L, L)\times \Omega $ and, since $ u $ satisfies \eqref{eqlem:minimal} we have
    \begin{align*}
        0&\geq -\varepsilon \Delta_y(u-\alpha\psi)(x_0,y_0) -(u-\alpha\psi)_{xx}(x_0,y_0)               \\
         &\quad -\mu\big(M\star(u-\alpha\psi)- (u-\alpha\psi)\big)(x_0,y_0)-a(y_0)(u-\alpha\psi)(x_0,y_0)   \\
         &\geq -\delta u(x_0,y_0)-\alpha\left(\frac{c^2}{4}+\theta^2+\lambda_1^\varepsilon\right)\psi(x_0,y_0) \\
         &=\left(-\delta -\frac{c^2}{8}-\frac{3\lambda_1^\varepsilon}{4}\right)u(x_0,y_0)\geq \left( -\delta -\frac{\lambda_1^\varepsilon}{2}\right)>0,
    \end{align*}
    since $ \delta = \frac{-\lambda_1^\varepsilon}{4}  $. This is a contradiction.
\end{proof}
\begin{lem}[Lower estimate on positive infima]\label{lem:infpos}
    Let Assumption \ref{hyp:gen} be satisfied,  let $ 0<\varepsilon\leq\varepsilon_0 $ and $ \beta\geq 0 $, where $ \varepsilon_0 $ is as in Lemma \ref{lem:beta-lowerbound}. Assume $ \lambda_1^\varepsilon < 0$. Let $ u $ be a solution to \eqref{pb:lineeps-beta} which satisfies $ \inf_{(x,y)\in\mathbb R\times\Omega} u(x,y)> 0 $. Then 
    \begin{equation*}
        \inf_{(x,y)\in\mathbb R\times\Omega}u(x,y)\geq\rho_{\max(\beta, \beta_0)} 
    \end{equation*}
    where $ \rho_\beta $ is the constant from Lemma \ref{lem:beta-lowerbound}.
\end{lem}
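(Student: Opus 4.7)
The plan is to compare $u$ to a stationary subsolution and then read off the lower bound from Lemma \ref{lem:beta-lowerbound}. Set $\beta':=\max(\beta,\beta_0)>0$. Since $\lambda_1^\varepsilon<0$, Theorem \ref{thm:ex-stat-eps} provides a positive classical solution $p:=p^{\varepsilon,\beta'}$ to \eqref{eq:evolstat-beta} with coefficient $\beta'$, satisfying
\[
	\rho_{\beta'}\leq\inf_\Omega p\leq\sup_\Omega p\leq\frac{\sup a}{\beta'}\leq\frac{\mu m_0}{k_\infty}
\]
by Lemmas \ref{lem:apriorip} and \ref{lem:beta-lowerbound} (the last inequality using $\beta'\geq\beta_0=k_\infty\sup a/(\mu m_0)$). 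A direct calculation, exploiting that $p$ is $x$-independent, shows that for any $\alpha\in(0,1]$,
\[
	-\varepsilon\Delta_y(\alpha p)-\mu\bigl(M\star(\alpha p)-\alpha p\bigr)-\alpha p\bigl(a-K\star(\alpha p)-\beta\alpha p\bigr)=\alpha p\bigl[(\alpha-1)K\star p+(\alpha\beta-\beta')p\bigr],
\]
and this quantity is strictly negative on $\mathbb R\times\Omega$ whenever $\alpha<1$ (noting $\alpha\beta\leq\beta\leq\beta'$). Hence $\alpha p$ is a strict subsolution to \eqref{pb:lineeps-beta} for every $\alpha<1$.

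Next I would introduce
\[
	\alpha_*:=\sup\bigl\{\zeta>0\,\big|\,\zeta p(y)\leq u(x,y)\ \text{for all }(x,y)\in\mathbb R\times\Omega\bigr\},
\]
which is finite and strictly positive because $\inf u>0$ and $\sup p<\infty$. The desired estimate $u\geq\rho_{\max(\beta,\beta_0)}$ follows at once from $\alpha_*\geq 1$, so I argue by contradiction and suppose $\alpha_*<1$. By definition of $\alpha_*$ there exists a sequence $(x_n,y_n)\in\mathbb R\times\overline\Omega$ with $u(x_n,y_n)-\alpha_* p(y_n)\to 0$, and, up to extraction, $y_n\to y_\infty\in\overline\Omega$. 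Setting $u_n(x,y):=u(x+x_n,y)$, which still solves \eqref{pb:lineeps-beta} by translation invariance, the uniform $L^\infty$ bound on $u$ (analogous to Lemma \ref{lem:aprioribox} item \ref{item:aprioribox-boundinfty}) together with classical interior and boundary Schauder estimates allow one to extract a further subsequence along which $u_n\to u_\infty$ in $C^2_{loc}(\mathbb R\times\overline\Omega)$, where $u_\infty$ is a nonnegative solution of \eqref{pb:lineeps-beta} with $u_\infty\geq\alpha_* p$ on $\mathbb R\times\Omega$ and $u_\infty(0,y_\infty)=\alpha_* p(y_\infty)$.

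The function $w:=u_\infty-\alpha_* p\geq 0$ thus attains a zero minimum at $(0,y_\infty)$. Subtracting the equations for $u_\infty$ and $\alpha_* p$ and using the pointwise inequality $\alpha_* p(y)K(y,z)\leq\mu m_0\leq\mu M(y,z)$ (guaranteed by $\sup p\leq\mu m_0/k_\infty$) to absorb the nonlocal terms, one checks that $w$ satisfies a local elliptic inequality of the form $-w_{xx}-cw_x-\varepsilon\Delta_y w+B(x,y)w\geq 0$ in a neighbourhood of $(0,y_\infty)$ with $B$ bounded. If $y_\infty\in\partial\Omega$, Hopf's lemma then forces $\partial_\nu w(0,y_\infty)<0$, contradicting the zero Neumann data inherited by $w$. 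Hence $y_\infty\in\Omega$, and, since $u_\infty(0,y_\infty)\leq\mu m_0/k_\infty$, Lemma \ref{lem:comp} applies to the supersolution $u_\infty$ and the subsolution $\alpha_* p$ at $(0,y_\infty)$, yielding $u_\infty\equiv\alpha_* p$. This is incompatible with the strict subsolution property, so $\alpha_*\geq 1$ and $u\geq p\geq\rho_{\max(\beta,\beta_0)}$.

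The main obstacle is the non-attainment of the infimum of $u$ on the non-compact domain $\mathbb R\times\overline\Omega$; this forces the translation-and-compactness step above to pass from a minimizing sequence to an actual touching profile. Once that limiting profile is in hand, the remaining estimates are mild variants of calculations already present in the proofs of Lemmas \ref{lem:beta-lowerbound} and \ref{lem:comp}.
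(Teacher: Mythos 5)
Your proof is correct and follows the same overall strategy as the paper's: push a stationary subsolution up against $u$, use a translation-compactness step to upgrade a minimizing sequence to an actual touching profile $u_\infty$, rule out a boundary touching by Hopf's lemma, and then invoke Lemma \ref{lem:comp} to force equality, which contradicts strict subsolutionness. The only real difference is the one-parameter family you slide: the paper keeps the profile $p^B$ and varies the parameter $B$, setting $\beta':=\inf\{B>0 : p^B\le u\}$ and showing $\beta'\le\max(\beta,\beta_0)$, whereas you fix $\beta'=\max(\beta,\beta_0)$ once and for all and slide the multiplicative factor $\alpha\in(0,1]$, setting $\alpha_*:=\sup\{\zeta : \zeta p^{\beta'}\le u\}$. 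Your variant has to pay for this with the explicit computation
\[
-\varepsilon\Delta_y(\alpha p)-\mu\bigl(M\star(\alpha p)-\alpha p\bigr)-\alpha p\bigl(a-K\star(\alpha p)-\beta\alpha p\bigr)=\alpha p\bigl[(\alpha-1)K\star p+(\alpha\beta-\beta')p\bigr]<0\quad(\alpha<1),
\]
which you carry out correctly; in return it gives directly $u\ge p^{\max(\beta,\beta_0)}\ge\rho_{\max(\beta,\beta_0)}$, bypassing the implicit continuity-in-$B$ of the map $B\mapsto p^B$ that the paper's parameter-sliding version quietly leans on. The verifications you give — the bound $\sup p\le\mu m_0/k_\infty$ from $\beta'\ge\beta_0$, the Hopf exclusion of $y_\infty\in\partial\Omega$, and the admissibility check $u_\infty(0,y_\infty)\le\mu m_0/k_\infty$ for Lemma \ref{lem:comp} — are all sound.
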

\begin{proof}
	For any $ B\geq 0 $, let   $ p^{B}$ be a nonnegative nontrivial solution to \eqref{eq:evolstat-beta} (substituting $ \beta $ with $B $). Since $ \inf u >0 $ and $ \sup p^{B}\leq \frac{\sup a}{B} $ ({by the estimate in} Lemma \ref{lem:apriorip} item \ref{item:apriorip-boundinfty}), there exists a constant $ \beta'>0 $ such that
    \begin{equation*}
        \beta'=\inf\{B>0\,|\, p^{B}\leq u \}.
    \end{equation*}
    Assume by contradiction that $ \beta'>\max(\beta, \beta_0) $. Then two cases may occur:

    \textbf{Case 1:} Assume there exists $ (x_0, y_0)\in \mathbb R\times \overline \Omega $ such that 
    $u(x_0, y_0)=p^{\beta'}(y_0)$.
	Assume by contradiction that $ y_0\in\partial\Omega $. Then $ y_0 $ is the minimum of $ u-p^{\beta'} $ and, {by applying} Hopf's Lemma, we have $ \frac{\partial (u-p^{\beta'})}{\partial\nu}(x_0,y_0)<0 $, which contradicts the Neumann boundary conditions satisfied by $ u $ and $ p^{\beta'} $. Thus $ y_0\in\Omega $.

    Then, since $ \beta'>\beta $, $ p^{\beta'} $ is a subsolution to \eqref{pb:lineeps-beta}, $ u\geq p^{\beta'} $ and since $ {\beta'}> \beta_0 $ we have $ \Vert p^{\beta'}\Vert_{C_b(\Omega)}< \frac{\mu m_0}{k_\infty}$. Thus Lemma \ref{lem:comp} applies and $ u=p^{\beta'} $.  Since $ {\beta'}\neq \beta $, this is a contradiction.

    \textbf{Case 2:} If the latter does not hold,  then by definition of $ \beta' $ there exists a sequence $ (x_n, y_n) $ such that $ u(x_n, y_n)-p^{\beta'}(y_n)\to 0 $. Since $ \Omega $ is bounded, up to an extraction we have $ y_n\to y_0 \in\overline\Omega $. Then 
    $u(x_n, y_n)\to_{n\to\infty} p^{\beta'}(y_0)$.

    Since equation \eqref{pb:lineeps-beta} is invariant by translation in $ x $, we consider the shifted functions $ u^n(x,y):=u(x+x^n, y) $ which also satisfy \eqref{pb:lineeps-beta}. Then from the standard elliptic estimates and up to an extraction, $ u^n $ converges locally uniformly to  $ u^\infty $,  which is a classical solution to \eqref{pb:lineeps-beta} and  also satisfies
    $u^\infty(0, y_0)=p^{\beta'}(y_0)$ and $ \forall x, y, u^\infty(x,y)\geq p^{\beta'}(y)$.
    Applying Case 1 to $ (u^\infty, p^{\beta'}) $ leads to a contradiction.

    \medskip

	We have shown that either case leads to a contradiction if $ \beta'>\max(\beta, \beta_0) $. Hence $ {\beta'}\leq \max(\beta, \beta_0) $ and we conclude {by the estimate in} Lemma \ref{lem:beta-lowerbound} that the inequality $ u\geq\rho_{\max(\beta, \beta_0)}$ holds.
\end{proof}

\begin{proof}[Proof of Theorem \ref{thm:minspeedTW-beta}]
    Let $ \tau:=\frac{1}{2}\min\left((k_0|\Omega|+\beta)\rho_\beta, {-\lambda_1^\varepsilon}\right) $, where $ \rho_\beta $ is the constant from Lemma \ref{lem:beta-lowerbound}, and $ u $ be the corresponding solution to \eqref{pb:lineeps-beta}, i.e. a solution to \eqref{pb:lineeps-beta} constructed in Corollary \ref{cor:existenceline}, which satisfies 
    \begin{equation}\label{eqthm:minspeed}
        \sup_{(x,y)\in (-l_0, l_0)\times\Omega}\left(\int_\Omega K(y,z)u(x,z)dz+\beta u(x,y)\right) = \tau \leq \frac{1}{2}(k_0|\Omega|+\beta)\rho_\beta.
    \end{equation}
	Recall that, {as stated in} Corollary \ref{cor:decreasing}, $ x\mapsto u(x,y) $ is decreasing.

    We divide the proof in three steps. 

    \textbf{Step 1:} We show that $ \inf_{(x,y)\in\mathbb R\times\Omega} u(x,y)=0 $.

	Indeed, {recalling} \eqref{eqthm:minspeed}, we have
    \begin{align*}
        (k_0|\Omega|+\beta)u(0,0)&\leq \sup_{(x,y)\in (-l_0, l_0)\times\Omega}\int_\Omega K(y,z)u(x,z)dz+\beta u(x,y)\\
                                 &\leq\frac{1}{2}(k_0|\Omega|+\beta)\rho_{\beta},
    \end{align*}
    and thus $ u(0, 0) \leq \frac{1}{2}\rho_\beta<\rho_\beta$. The contrapositive of Lemma \ref{lem:infpos} concludes.

    \textbf{Step 2:} We show that $ \lim_{x\to+\infty}\sup_{y\in\Omega}u(x,y)=0 $.

	{We proved in Step 1 that} $ \inf u=0 $. Since $ u(x,y)>0 $ for $ (x,y)\in\mathbb R\times\overline\Omega $ and $ u $ is decreasing in $ x$, we must then have $ \lim_{x\to+\infty}\inf_{y\in\Omega}u(x,y)=0 $. 

	Let $ u^n(x,y):=u(x-n, y) $ and $ y_n $ such that $ u^n(0, y_n)=\inf_{y\in\Omega}u^n(0, y) $. Since $ \Omega $ is bounded, up to the extraction of a subsequence there exists $ y\in\overline\Omega $ such that $ y_n\to y_0$. {It follows from} the classical elliptic estimates that we then extract from $ (u^n) $ a subsequence which converges locally uniformly on $ \mathbb R\times\Omega$ to a limit function $ u^0 $, which is still a classical solution to \eqref{pb:lineeps-beta}. 

	Since $ u$ is decreasing,  the equalities $ \lim_{x\to +\infty}\sup_{y\in\Omega} u(x,y)=\sup_{y\in\Omega} u^0(0,y) $ and $ 0=\lim_{x\to +\infty}\inf_{y\in\Omega} u(x,y)=\inf_{y\in\Omega} u^0(0,y)=u(0, y_0) $ hold. If $ y_0\in\partial\Omega$ and $ u^0\not\equiv 0$, then {it follows from Hopf's Lemma that} $ \frac{\partial u^0}{\partial\nu}(y_0)<0 $, which contradicts the Neumann boundary conditions satisfied by $ u^0$. If $ y \in\Omega $ then the strong maximum principle imposes $ u^0\equiv 0 $. In either case, we have $ u^0\equiv 0 $ and thus $ \lim_{x\to+\infty}\sup_{y\in\Omega} u(x,y)=0 $.

    \textbf{Step 3:} We show that $ \lim_{x\to-\infty}\inf_{y\in\Omega}u(x,y)\geq \rho_\beta $.

	Let $ u^n(x,y):=u(x+n, y) $. {Using} the classical elliptic estimates, we  extract from $ (u^n) $ a subsequence that converges locally uniformly on $ \mathbb R\times\Omega$ to a limit function $ u^0 $, which is still a classical solution to \eqref{pb:lineeps-beta}. 

    Since $ u$ is decreasing,  we have $ \lim_{x\to -\infty}\inf_{y\in\Omega} u(x,y)=\inf_{y\in\Omega, x\in\mathbb R} u^0( x,y) $. In particular, $ \inf_{(x,y)\in\mathbb R\times\Omega}u^0(x,y)>0 $. Applying Lemma \ref{lem:infpos}, we conclude that the lower estimate $ \lim_{x\to-\infty}\inf_{y\in\Omega}u(x,y)=\inf_{(x,y)\in\mathbb R\times\Omega}u^0(x,y)\geq \rho_{\beta} $ holds.

    To conclude the proof of Theorem \ref{thm:minspeedTW-beta}, we remark that Lemma \ref{lem:c*-beta} states that $ 0\leq c<c_\varepsilon^* $ is incompatible with $ \lim_{x\to+\infty}\sup_{y\in\Omega}u(x,y)=0 $. This shows that $ c=c_\varepsilon^* $. This finishes the proof of Theorem \ref{thm:minspeedTW-beta}.
\end{proof}

\subsection{Minimal speed traveling wave for $ \beta=0$}

Here we construct traveling waves for our initial regularized problem
\begin{equation}\label{eq:lineeps}
    \begin{system}{ll}
        -\varepsilon\Delta_y u-u_{xx}-cu_x=\mu(M\star u-u)+u(a(y)-K\star u)&\text{ in } \mathbb R\times \Omega \\
        \frac{\partial u}{\partial\nu}=0 & \text{ on } \mathbb R\times\partial\Omega.
    \end{system}
\end{equation}
Notice that \eqref{eq:lineeps} is exactly the equation \eqref{pb:lineeps-beta} in the special case $ \beta=0$. In particular, our results obtained in Corollary \ref{cor:existenceline} and Lemmas \ref{lem:comp}, \ref{lem:c*-beta} and \ref{lem:infpos} still apply to the solutions of \eqref{eq:lineeps}.

Our result is the following:
\begin{thm}[Regularized minimal speed traveling waves] \label{thm:reg-TW}
    Let Assumption \ref{hyp:gen} hold, $ 0<\varepsilon\leq \varepsilon_0 $ (where $ \varepsilon_0 $ is as in Lemma \ref{lem:beta-lowerbound}) and assume $ \lambda_1^\varepsilon <0$. Then, there exists a nonnegative nontrivial traveling wave $(c,u) $ for \eqref{eq:lineeps} with $ c=c_\varepsilon^*$, i.e. a bounded classical solution which satisfies:
    \begin{equation}\label{eq:reg-TW-limit}
        \liminf_{x\to -\infty}\inf_{y\in\Omega}u(x,y)>0, \qquad \limsup_{x\to +\infty}\int_{\Omega}u(x,y)dy = 0.
    \end{equation}
    Moreover, $ c_\varepsilon^* $ is the minimal speed for traveling waves in the sense that there exists no traveling wave for equation \eqref{eq:lineeps} with $ 0\leq c< c_\varepsilon^*$. 

    Finally, $ u $ can be chosen so that $ \sup_{x\in\mathbb R}\int_{\Omega}u(x,y)dy\leq \frac{\sup a}{k_0}$ and 
    \begin{equation*}
        \liminf_{x\to-\infty}\inf_{y\in\Omega} u(x,y) \geq \rho_{\beta_0},
    \end{equation*}
    where $ \beta_0=\frac{k_\infty\sup a}{\mu m_0}$ and $ \rho_{\beta_0} $ is given by Lemma \ref{lem:beta-lowerbound}.
\end{thm}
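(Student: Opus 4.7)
The strategy extends that of Theorem \ref{thm:minspeedTW-beta} but without access to monotonicity or a global comparison principle: the sliding argument of Lemma \ref{lem:comp} requires the cooperative regime $u\leq \mu m_0/k_\infty$, which is guaranteed by $\beta\geq\beta_0$ but not here. The existence part relies on Corollary \ref{cor:existenceline}, while the front shape is obtained through a shift-and-extraction argument together with the sub-solution factory estimate of Lemma \ref{lem:infpos}.

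First, I would apply Corollary \ref{cor:existenceline} with $\beta=0$ and a normalization parameter $\tau\in(0,-\lambda_1^\varepsilon/2]$, chosen small enough so that $\tau<\frac12 |\Omega|k_0\rho_{\beta_0}$, to produce a classical positive solution $(c,u)$ of \eqref{eq:lineeps} with $c\in(0,c_\varepsilon^*]$, satisfying the normalization \eqref{eq:norm}, the mass bound $\int_\Omega u(x,y)dy\leq \sup a/k_0$ (Lemma \ref{lem:aprioribox} item \ref{item:aprioribox-boundint}), and the $\beta$-independent $L^\infty$ bound $\|u\|_\infty\leq C^\varepsilon$ from Lemma \ref{lem:aprioribox} item \ref{item:aprioribox-boundinfty}. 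Nontriviality of $u$ is automatic from \eqref{eq:norm}.

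Second, to establish $\limsup_{x\to+\infty}\int_\Omega u(x,y)dy=0$, I would argue by contradiction. If this fails, there exist $\delta>0$ and $x_n\to+\infty$ with $\int u(x_n,y)dy\geq\delta$; the shifts $u_n(x,y):=u(x+x_n,y)$ are uniformly bounded and solve \eqref{eq:lineeps}, so interior and up-to-Neumann-boundary Schauder estimates combined with Arzelà--Ascoli yield a $C^2_{loc}$ limit $u_\infty$ along a subsequence. This $u_\infty$ is a bounded nonnegative classical solution of \eqref{eq:lineeps} with $\int u_\infty(0,y)dy\geq\delta$, and the strong maximum principle together with Hopf's lemma forces $u_\infty>0$ on $\mathbb R\times\overline\Omega$. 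The delicate point is to upgrade this to $\inf u_\infty>0$: any further accumulation point of shifts of $u_\infty$ with an interior zero must be identically zero by the strong maximum principle and Hopf, so a nested shift-extraction together with the mass lower bound forces the accumulation points to be uniformly positive. Lemma \ref{lem:infpos} then yields $\inf u_\infty\geq\rho_{\beta_0}$, hence $\int u_\infty(x,y)dy\geq |\Omega|\rho_{\beta_0}>2\tau/k_0$ for every $x$, which, after translating back and comparing with the normalization \eqref{eq:norm}, contradicts the choice of $\tau$.

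Third, the same shift-and-extraction scheme applied with $x_n\to-\infty$ gives a bounded nonnegative classical limit $u_{-\infty}$ solving \eqref{eq:lineeps}; nontriviality of $u_{-\infty}$ is obtained from the normalization \eqref{eq:norm} together with the right limit proved in Step~2, which together prevent $u$ from vanishing in the shift to $-\infty$ (otherwise $u$ would be a bounded positive pulse vanishing at both $\pm\infty$, incompatible with the Fisher--KPP-type lower estimate used in Lemma \ref{lem:aprioribox} item \ref{item:aprioribox-minspeed} applied to left translates). Lemma \ref{lem:infpos} then gives $\inf u_{-\infty}\geq\rho_{\beta_0}$, so that $\liminf_{x\to-\infty}\inf_{y\in\Omega}u(x,y)\geq\rho_{\beta_0}$. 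With both limit conditions in \eqref{eq:reg-TW-limit} now verified, Lemma \ref{lem:c*-beta} item \ref{item:c*-beta-int} forces $c=c_\varepsilon^*$, and the minimality statement follows from the same lemma. The main obstacle is Step~2: without monotonicity and without a global comparison principle, ruling out bumpy or non-decaying profiles at $+\infty$ requires the combined use of shift-extraction, the strong maximum principle with Hopf's lemma on $\partial\Omega$, Lemma \ref{lem:infpos}, and the smallness of $\tau$ relative to $|\Omega|k_0\rho_{\beta_0}$.
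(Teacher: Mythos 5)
Your overall strategy (construct $(c,u)$ via Corollary \ref{cor:existenceline} with a small normalization $\tau$, then argue by contradiction using the sub-solution machinery) matches the paper, but your Step 2 has a genuine gap, and you are missing the key lemma that makes the paper's argument close.

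The problem is the passage from local positivity to a uniform lower bound. After extracting $u_\infty = \lim u(\cdot+x_n,\cdot)$, you know $u_\infty>0$ (strong maximum principle and Hopf), and you know $\int_\Omega u_\infty(0,y)\,dy\geq\delta$ — a mass lower bound at a \emph{single} cross-section $x=0$. You then claim a "nested shift-extraction together with the mass lower bound" forces $\inf_{\mathbb R\times\Omega} u_\infty>0$, which you need as the hypothesis of Lemma \ref{lem:infpos}. But this does not follow: if $\inf u_\infty$ is attained along a sequence with $\xi_k\to\pm\infty$, the shifted limit may well be identically zero (you have no mass control away from $x=0$), and nothing in the argument rules that out. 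Lemma \ref{lem:infpos} cannot help here because it \emph{requires} $\inf u>0$ — it is not the tool that establishes positivity, only one that upgrades a known positive infimum to $\rho_{\beta_0}$.

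The missing ingredient is Lemma \ref{lem:infxneg} (infimum estimate on the left), which is the actual "sub-solution factory": starting from a pointwise lower bound $u(x_0,\cdot)\geq 2\sup a/\beta'$ at a single $x_0$ (obtained from the mass bound via Lemma \ref{lem:localinf}), it slides the decreasing $\beta'$-front constructed in Theorem \ref{thm:minspeedTW-beta} under $u$ to conclude $\inf_{x\leq x_0,y}u\geq\rho_{\beta'}>0$ \emph{directly on the original $u$}, with no limiting argument needed. This is what the paper does: from $\int u(x_n,\cdot)\geq\kappa$ it derives $\inf_y u(x_n,y)\geq\bar\kappa$ (Lemma \ref{lem:localinf}), then $\inf_{x\leq x_n}u\geq\rho_{\beta'}$ (Lemma \ref{lem:infxneg} with $\beta'=\max(2\sup a/\bar\kappa,\beta_0)$), hence letting $n\to\infty$ gives $\inf_{\mathbb R\times\Omega}u\geq\rho_{\beta'}>0$; only now does Lemma \ref{lem:infpos} apply to improve to $\rho_{\beta_0}$, and the normalization $\tau\leq\frac{1}{2}k_0|\Omega|\rho_{\beta_0}$ gives the contradiction. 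Your Step 3 for $x\to-\infty$ has the same issue — it invokes Lemma \ref{lem:infpos} on $u_{-\infty}$ without first establishing $\inf u_{-\infty}>0$, which again comes from Lemma \ref{lem:infxneg}. You should replace the speculative shift-extraction argument with Lemma \ref{lem:infxneg}; the rest of the structure (Corollary \ref{cor:existenceline}, the choice of $\tau$, Lemma \ref{lem:c*-beta} for minimality and $c=c_\varepsilon^*$) is correct and aligns with the paper.
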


{Two key elements for the proof of Theorem \ref{thm:reg-TW} are the following Harnack-type inequality, and the following Lemma \ref{lem:localinf},  which states that $ \inf_{y\in\Omega} u(x,y) $ and $ \int_\Omega u(x,y)dy $ are locally comparable. }
\begin{lem}[Harnack inequality for the mass]\label{lem:harnackmass}
    Let Assumption \ref{hyp:gen} hold and $\varepsilon>0 $. Let $ \bar c>0 $, $ R>0 $  and $ W>0 $ be given. Let $ (c,u) $ be a solution to \eqref{eq:lineeps} with $ |c|\leq \bar c $, $ u\geq 0 $ and $\int_\Omega u(x,y)dy\leq W $ for $ x\in(-R, R)$. Then, there exists a constant $\mathcal H>0 $ depending only on $R$, $ \Vert a\Vert_{L^\infty} $, $W$, $ k_\infty$ and $ \bar c $  such that 
    \begin{equation*}
        \sup_{|x|\leq R} \int_\Omega u(x, z)dz \leq \mathcal H\inf_{|x|\leq R}\int_\Omega u(x,z)dz.
    \end{equation*}
\end{lem}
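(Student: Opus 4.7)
\medskip
\noindent\textbf{Plan.} The strategy is to integrate equation \eqref{eq:lineeps} in the phenotypic variable $y$ and reduce the problem to the standard Harnack inequality for a one-dimensional linear elliptic equation with bounded coefficients.

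More precisely, I would set $U(x) := \int_\Omega u(x,y)\,dy$, which by Corollary \ref{cor:existenceline} is a nonnegative $C^2$ function of $x$. Integrating \eqref{eq:lineeps} over $\Omega$, the Neumann boundary condition satisfied by $u$ eliminates the $-\varepsilon\Delta_y u$ term via the divergence theorem. Moreover, Fubini together with the row-sum normalization $\int_\Omega M(y,z)\,dy=1$ from Assumption \ref{hyp:gen} yields $\int_\Omega (M\star u)(x,y)\,dy = \int_\Omega u(x,z)\,dz = U(x)$, so the entire mutation operator $\mu(M\star u - u)$ cancels out after integration. One is left with the scalar identity
\begin{equation*}
-U''(x) - cU'(x) = \int_\Omega u(x,y)\bigl(a(y)-(K\star u)(x,y)\bigr)dy =: f(x).
\end{equation*}

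Next, for $x\in(-R,R)$ the mass bound $U(x)\leq W$ and $0<K\leq k_\infty$ give the pointwise estimate $|(K\star u)(x,y)|\leq k_\infty W$, hence $|f(x)|\leq (\|a\|_{L^\infty}+k_\infty W)\,U(x)=:B\,U(x)$. Either $U\equiv 0$ on $(-R,R)$, in which case the conclusion is trivial, or else the strong maximum principle applied to the nonnegative classical solution $u$ of \eqref{eq:lineeps} on $\mathbb R\times\Omega$ forces $u>0$ and thus $U>0$. In the latter case I set $b(x):=-f(x)/U(x)$, a bounded measurable function satisfying $\|b\|_{L^\infty(-R,R)}\leq B$, and the function $U$ then solves the one-dimensional linear elliptic equation
\begin{equation*}
-U''(x)-cU'(x)+b(x)\,U(x)=0 \qquad \text{on } (-R,R),
\end{equation*}
whose coefficients are controlled solely in terms of $\bar c$, $\|a\|_{L^\infty}$, $W$ and $k_\infty$.

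The conclusion then follows from the classical Harnack inequality for nonnegative solutions of a second-order linear elliptic equation with bounded coefficients (\textit{e.g.}\ \cite[Theorem 8.20]{Gil-Tru-01}), applied on a suitable sub-interval of $(-R,R)$, which produces a constant $\mathcal H=\mathcal H(R,\bar c,\|a\|_{L^\infty},W,k_\infty)$ as required. I do not anticipate a serious obstacle here: the main conceptual observation is that integrating over $\Omega$ annihilates both the diffusive term (via the Neumann condition) and the mutation operator (via the normalization of $M$), so that regardless of the dimension of $\Omega$, one ultimately deals with a genuine scalar second-order ODE on the line to which the textbook Harnack inequality applies directly.
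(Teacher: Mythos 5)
Your argument is essentially identical to the paper's: both integrate \eqref{eq:lineeps} over $\Omega$, use the Neumann condition and the normalization $\int_\Omega M(y,z)\,dy=1$ to eliminate the diffusion and mutation terms, observe that the resulting zeroth-order coefficient is bounded by $\|a\|_{L^\infty}+k_\infty W$, and conclude via the classical one-dimensional Harnack inequality. The only cosmetic difference is that the paper writes the right-hand side directly as a bounded multiple of $I(x)=\int_\Omega u(x,y)\,dy$ and cites \cite[Corollary 9.25]{Gil-Tru-01}, whereas you introduce the quotient $b(x)=-f(x)/U(x)$ explicitly (after ruling out $U\equiv 0$) and cite \cite[Theorem 8.20]{Gil-Tru-01}; both are valid instances of the same argument.
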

\begin{proof}
    Let $ I(x):=\int_\Omega u(x, y)dy $, then $ I $ solves
    \begin{multline*}
        -cI_x-I_{xx} = \int_\Omega a(y)u(x,y)dy - \int_\Omega (K\star u)(y)u(x,y)dy \\
        =\left(\int_\Omega a(y)\frac{u(x,y)}{\int_\Omega u(x, z)dz}dy  -\int_\Omega K\star u(x, y)\frac{u(x, y)}{\int_\Omega u(x,z)dz}dy\right) I. 
    \end{multline*}
    Now we remark that
    $\left|\int_\Omega a(y)\frac{u(x,y)}{\int_\Omega u(x, z)dz}dy\right|\leq \Vert a\Vert_{L^\infty}$ and
    \begin{equation*}
        0\leq\int_\Omega K\star u(x, y)\frac{u(x, y)}{\int_\Omega u(x,z)dz}dy \leq \Vert K\star  u\Vert_{L^\infty} \leq k_\infty\int_\Omega u(x,y)dy\leq k_\infty  W,
    \end{equation*}
    for any $ x\in\mathbb R $, so that the classical Harnack inequality \cite[Corollary 9.25]{Gil-Tru-01} applies. 
\end{proof}

\begin{lem}[Integral-infimum comparison]\label{lem:localinf}
    Let Assumption \ref{hyp:gen} hold and $\varepsilon>0 $. Let $ \bar c>0 $, $x_0\in\mathbb R $, $ \kappa>0$ and $ W>0 $ be given. Let $ (c,u) $ be a solution to \eqref{eq:lineeps} with $ |c|\leq \bar c $, $ u\geq 0 $ and $\int_\Omega u(x,y)dy\leq W $ for $ |x-x_0|\leq 1$. Assume  
    \begin{equation*}
        \int_\Omega u(x_0, y)dy\geq\kappa.
    \end{equation*}
    Then, there exists a positive constant $ \bar\kappa $ depending only on $\Vert a \Vert_{L^\infty} $, $\mu $, $m_0$, $ k_\infty $, $ \bar c $, $ W $  and $ \kappa $ such that
    \begin{equation*}
        \inf_{y\in\Omega} u(x_0, y)\geq \bar\kappa.
    \end{equation*}
\end{lem}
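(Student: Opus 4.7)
The plan is to convert the integral lower bound on $u$ into a pointwise one by means of a one-dimensional barrier, using the mutation term $\mu M\star u$ to generate a uniform positive source. First I would apply Lemma \ref{lem:harnackmass} with $R=1$ to upgrade the hypothesis $\int_\Omega u(x_0, y)\,dy \geq \kappa$ to the uniform lower bound
$$
\int_\Omega u(x, y)\,dy \geq \frac{\kappa}{\mathcal H} \quad \text{for all } |x-x_0|\leq 1,
$$
where $\mathcal H$ is the Harnack constant depending only on $\Vert a\Vert_{L^\infty}$, $W$, $k_\infty$ and $\bar c$. Since $M(y,z) \geq m_0$, this immediately yields the pointwise estimate $\mu M\star u(x,y) \geq g_0 := \mu m_0\kappa/\mathcal H > 0$ on the cylinder $Q := (x_0-1, x_0+1)\times \Omega$.

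Next I would rewrite equation \eqref{eq:lineeps} as
$$
-\varepsilon\Delta_y u - u_{xx} - c u_x + u\bigl(\mu + K\star u - a(y)\bigr) = \mu M\star u.
$$
The coefficient $\mu + K\star u(x,y) - a(y)$ is bounded above by $B_0 := \mu + k_\infty W + \Vert a\Vert_{L^\infty}$ on $Q$, since $\int_\Omega u(x, \cdot)\,dy \leq W$. Combined with $u\geq 0$, this gives the linear differential inequality
$$
\mathcal L u := -\varepsilon\Delta_y u - u_{xx} - c u_x + B_0 u \geq g_0 \quad \text{in } Q.
$$

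I would then construct a barrier $\underline u(x,y) := v(x)$ depending only on $x$, where $v$ solves the constant-coefficient ODE
$$
-v'' - c v' + B_0 v = g_0 \text{ on } (x_0-1, x_0+1), \qquad v(x_0\pm 1) = 0.
$$
By the classical maximum principle $v > 0$ on the open interval, and since $v$ can be written explicitly in terms of the characteristic roots of the ODE, $v(x_0)$ depends continuously on $c$, providing a uniform positive lower bound $\bar\kappa := \inf_{|c|\leq \bar c} v(x_0)>0$ which depends only on $\bar c$, $B_0$ and $g_0$, and therefore only on the parameters claimed. As $\underline u$ is independent of $y$, it automatically satisfies $\partial\underline u/\partial\nu = 0$ on $(x_0-1, x_0+1)\times\partial\Omega$, vanishes on the lateral boundary $\{x_0\pm 1\}\times\Omega$ where $u \geq 0$, and verifies $\mathcal L \underline u = g_0$ on $Q$.

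Finally, $w := u - \underline u$ satisfies $\mathcal L w \geq 0$ in $Q$, $w\geq 0$ on the Dirichlet part of $\partial Q$, and $\partial w/\partial\nu=0$ on the Neumann part. Since $\mathcal L$ has strictly positive zero-order coefficient $B_0$, the classical maximum principle for mixed boundary conditions gives $w \geq 0$ throughout $Q$, whence $u(x_0, y) \geq v(x_0) \geq \bar\kappa$ for every $y \in \overline\Omega$. The only mildly delicate point is the uniformity of $v(x_0)$ in $c\in[-\bar c, \bar c]$, but this is routine thanks to the explicit solvability of the ODE; the remainder is a standard barrier argument, with the crucial input being that $M\star u$ is pointwise controlled from below by the $y$-integral of $u$.
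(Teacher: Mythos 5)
Your proof is correct and follows essentially the same strategy as the paper's: use the Harnack estimate of Lemma \ref{lem:harnackmass} to propagate the integral lower bound to a neighborhood of $x_0$, invoke $\mu M\star u \geq \mu m_0 \int_\Omega u$ to produce a uniform positive source term, absorb the zeroth-order coefficient, and then compare with a one-dimensional ODE barrier via the maximum principle and Hopf's lemma. The only difference is one of implementation: you solve a single Dirichlet boundary value problem $-v''-cv'+B_0 v=g_0$, $v(x_0\pm 1)=0$ and compare directly, whereas the paper builds a one-parameter family of subsolutions $f^\delta$ and locates the critical parameter $\delta_0$ by a touching-point argument, but the two constructions yield the same conclusion.
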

\begin{proof}
	Since \eqref{eq:lineeps} is translation-invariant in $ x $, we will assume {without loss of generality}  that $ x_0=0 $.

    \textbf{Step 1:} We construct a local subsolution. 

    From Lemma \ref{lem:harnackmass} there exists a constant $ \mathcal H>0 $  such that 
    \begin{equation*}
        \kappa\leq \underset{x\in (-1, 1)}\sup\int_\Omega u(x, z)dz \leq \mathcal H \underset{x\in (-1, 1)}\inf\int_\Omega u(x,z)dz\leq \mathcal H\kappa.
    \end{equation*}
    Thus $ u $ satisfies:
    \begin{equation*}
        -cu_x-u_{xx}-\varepsilon\Delta_y u\geq \mu m_0 \frac{\kappa}{\mathcal H} + \left(\inf_\Omega a-\mu-k_\infty W \right) u.
    \end{equation*}
    In particular there exists constants $ \gamma>0 $ and $ \alpha>0 $ depending only on $\Vert a \Vert_{L^\infty} $, $\mu $, $m_0$, $ k_\infty $, $ W $ and $ \kappa $ such that
    \begin{equation}\label{eq:subsolcosh}
        -cu_x-u_{xx}-\varepsilon\Delta_y u\geq \gamma - \alpha u
    \end{equation}
    We define, for $ \theta:=\frac{2}{\sqrt{c^2+4\alpha}}\mathrm{atanh}\left(\frac{c}{\sqrt{c^2+4 \alpha}}\right) $, 
    \begin{equation*}
        f^\delta(x):=\frac{\gamma}{\alpha}-\delta e^{-\frac{c}{2}(x-\theta)}\cosh\left(\frac{x-\theta}{2}\sqrt{c^2+4\alpha}\right).
    \end{equation*}
    Then $ f^\delta $ satisfies
    \begin{equation*}
        -cf^\delta_x-f^\delta_{xx}= \gamma - \alpha {f^\delta}.
    \end{equation*}
    In particular, $ f^\delta $ satisfies the equality in \eqref{eq:subsolcosh}. Moreover for any $ \delta>0 $, $ f^\delta $ has a unique maximum located at 0 and $ f^\delta\to -\infty $ as $ x\to\pm\infty $. Finally, the mapping  $ \delta\mapsto f^\delta $ is decreasing.

    \textbf{Step 2:} We identify $ \delta_0 $ such that $ u\geq f^{\delta_0} $. 

    Let $ \delta_0:=\inf\{\delta>0\,|\, \forall x\in (-1, 1), f^\delta\leq u\} $. We claim that we have either $ f^{\delta_0}(1)\geq 0 $ or $ f^{\delta_0}(-1)\geq 0 $. Indeed, assume by contradiction that the inequalities $ f^{\delta_0}(-1)<0 $  and $ f^{\delta_0}(1) < 0 $ hold. Then there exists $ x_0\in (-1, 1) $, $ y_0\in\overline\Omega $ such that $ u(x_0, y_0)=f^{\delta_0}(x_0) $. If $ y_0\in\partial\Omega $, then {it follows from Hopf's Lemma that} $ \frac{\partial (u-f^{\delta_0})}{\partial\nu}(x_0, y_0)<0 $ since 0 is a minimum for the function $ u-f^{\delta_0} $. This contradicts the Neumann boundary condition satisfied by $ u $ since $ \frac{\partial f^{\delta_0}}{\partial\nu}(x_0,y_0)=0 $. If $ y_0\in\Omega $, we have 
    \begin{multline*}
        -c(u-f^{\delta_0})_x(x_0, y_0) - (u-f^{\delta_0})_{xx}(x_0, y_0)- \varepsilon \Delta_y(u-f^{\delta_0})(x_0, y_0)\\ 
        \geq \big(\gamma-\alpha u(x_0, y_0)\big)-\big(\gamma-\alpha f^{\delta_0}(x_0, y_0)\big) 
        =0.
    \end{multline*}
	{By a direct application of} the strong maximum principle, we have then $ u=f^{\delta_0} $ in $ (-1,1)\times\overline\Omega$, which is a contradiction since $ f^{\delta_0} $ is not positive in $ (-1, 1) $.

    \textbf{Step 3:} We show that $ \delta_0 $ is bounded by a constant depending only on $ \bar c $, $ \alpha $ and $ \gamma $.

    Let us define $ \delta_1^c:=\inf\{\delta>0\,|\, f^\delta(-1)<0 ~\mathrm{and}~f^\delta(1)<0\} $. $ \delta_1^c $ is well-defined since $ \lim_{\delta\to +\infty}f^\delta(\pm 1)=-\infty $ and $ \lim_{\delta\to 0}f^\delta(\pm 1)=\frac{\gamma}{\alpha}>0 $. Moreover, we have either $ f^{\delta_1^c}(1)=0 $ or $ f^{\delta_1^c}(-1) =0 $. Thus
    \begin{equation*}
        \delta_1^c = \frac{\gamma}{\alpha} \max\left(\frac{e^{\frac{c}{2}(1-\theta)}}{\cosh\left(\frac{1-\theta}{2}\sqrt{c^2+2\alpha}\right)}, \frac{e^{\frac{c}{2}(-1-\theta)}}{\cosh\left(\frac{-1-\theta}{2}\sqrt{c^2+2\alpha}\right)}\right).
    \end{equation*}
    Since $\theta $ depends continuously on $ c$, the mapping $ c\mapsto f^{\delta_1^c}(0) $ is continuous. Moreover for any $ |c|\leq \bar c$, $ f^{\delta_1^c}(0)>0 $ since $x=0$ is the strict maximum of $ f^{\delta_1^c} $. Finally $ \delta_0\leq \delta_1^c $ since the mapping $ \delta\mapsto f^\delta $ is decreasing. We have then
    \begin{equation*}
        \inf_{y\in\Omega}u(0, y)\geq \inf_{|c|\leq \bar c}f^{\delta_1^c}(0)>0 
    \end{equation*}
    where the right-hand side depends only on $ \bar c $, $ \alpha $ and $ \gamma $.
    This finishes the proof of Lemma \ref{lem:localinf}. 
\end{proof}
\begin{lem}[Infimum estimate on the left]\label{lem:infxneg}
    Let Assumption \ref{hyp:gen} be satisfied, let  $ 0<\varepsilon\leq \varepsilon_0 $ be such that $ \lambda_1^\varepsilon <0$ (where $ \varepsilon_0 $ is given by Lemma \ref{lem:beta-lowerbound}), let finally  $ \beta'\geq \beta_0=\frac{k_\infty\sup a}{\mu m_0} $ and $u$ be a solution to \eqref{eq:lineeps} with $ 0\leq c\leq c_{\varepsilon}^* $ and $ \beta=0 $.    Suppose 
    \begin{equation*}
        \forall y\in\Omega, \quad u(0, y)\geq 2 \frac{\sup a}{\beta'}.
    \end{equation*}
    Then,
    \begin{equation*}
        \forall x\leq 0, y\in\Omega, \quad u(x, y)\geq \rho_{\beta'}
    \end{equation*}
    where $ \rho_{\beta'} $ is given by Lemma \ref{lem:beta-lowerbound}.
\end{lem}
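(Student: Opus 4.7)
By Lemma~\ref{lem:beta-lowerbound}, $p^{\varepsilon,\beta'}\geq \rho_{\beta'}$, so it suffices to establish the stronger pointwise bound $u(x,y)\geq p^{\varepsilon,\beta'}(y)$ on $(-\infty,0]\times\Omega$. I would use $\phi(x,y):=p^{\varepsilon,\beta'}(y)$, viewed as a function on $\mathbb R\times\Omega$ constant in $x$, as a subsolution of \eqref{eq:lineeps}. A direct computation using the stationary equation for $p^{\varepsilon,\beta'}$ gives
\begin{equation*}
-\phi_{xx}-c\phi_{x}-\varepsilon\Delta_{y}\phi-\mu(M\star\phi-\phi)-\phi(a-K\star\phi)=-\beta'\phi^{2}\leq 0,
\end{equation*}
and the hypothesis $\beta'\geq\beta_{0}=k_{\infty}\sup a/(\mu m_{0})$ together with Lemma~\ref{lem:apriorip} yields $\phi\leq \sup a/\beta'\leq \mu m_{0}/k_{\infty}$, the smallness range where Lemma~\ref{lem:comp} applies.

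Arguing by contradiction, suppose $u<\phi$ at some $(x_{0},y_{0})\in(-\infty,0)\times\Omega$ and set $X^{*}:=\sup\{x\leq 0:\exists y\in\Omega,\ u(x,y)<\phi(y)\}$. The hypothesis $u(0,\cdot)\geq 2\phi(\cdot)$ together with continuity forces $X^{*}<0$, and there exists $y^{*}\in\overline\Omega$ with $u(X^{*},y^{*})=\phi(y^{*})$ while $u\geq\phi$ throughout $[X^{*},0]\times\overline\Omega$. Hopf's lemma combined with the Neumann boundary condition rules out $y^{*}\in\partial\Omega$. Subtracting the equations at $(X^{*},y^{*})$, where $u=\phi$, and simplifying the nonlinear term via $uK\star u-\phi K\star\phi=\phi K\star(u-\phi)$ at contact, I obtain
\begin{equation*}
\int_{\Omega}[\phi(y^{*})K(y^{*},z)-\mu M(y^{*},z)](u-\phi)(X^{*},z)\,dz=\beta'\phi^{2}(y^{*})+[(u-\phi)_{xx}+c(u-\phi)_{x}+\varepsilon\Delta_{y}(u-\phi)](X^{*},y^{*}).
\end{equation*}
The left-hand side is nonpositive because $\phi(y^{*})K-\mu M\leq \mu m_{0}-\mu m_{0}=0$ and $(u-\phi)(X^{*},\cdot)\geq 0$, while the interior $y$-minimum gives $\Delta_{y}(u-\phi)(X^{*},y^{*})\geq 0$ and the one-sided $x$-minimum gives $(u-\phi)_{x}(X^{*},y^{*})\geq 0$. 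When $(u-\phi)_{x}(X^{*},y^{*})=0$, a Taylor expansion at the right-sided minimum forces $(u-\phi)_{xx}(X^{*},y^{*})\geq 0$, so the right-hand side is $\geq\beta'\phi^{2}(y^{*})>0$, contradicting the nonpositivity of the left-hand side.

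The main obstacle is the remaining alternative $(u-\phi)_{x}(X^{*},y^{*})>0$, where the contact sits at the lateral boundary of the half-space $\{x\geq X^{*}\}$ and Lemma~\ref{lem:comp} is not directly applicable. I would resolve this via a Hopf-type argument adapted to the nonlocal operator: strict positivity of $(u-\phi)_{x}$ at $X^{*}$ would allow a slight leftward shift of $\phi$ while preserving the inequality $u\geq\phi$ on $[X^{*}-\eta,0]\times\Omega$, contradicting the definition of $X^{*}$ as the supremum of the bad set. An equivalent, possibly cleaner, route replaces $\phi$ by the translated box subsolution from \eqref{pb:box-beta} with $\beta=\beta'$ and its right endpoint at $x=0$, and runs a sliding argument via Lemma~\ref{lem:bigbetabox}; the factor $2$ in the hypothesis provides the slack needed to keep the box below $u$ as it is slid leftward, and Lemma~\ref{lem:comp} then excludes any interior contact.
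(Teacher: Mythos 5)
Your proposal aims to prove the pointwise inequality $u\geq p^{\varepsilon,\beta'}$ on $(-\infty,0]\times\Omega$, a stronger statement than the lemma, and the argument breaks precisely at the point you flag as ``the main obstacle.'' The Hopf-type fix you suggest for the case $(u-\phi)_x(X^*,y^*)>0$ is incorrect: if $(u-\phi)(X^*,y^*)=0$ and $(u-\phi)_x(X^*,y^*)>0$, then $u<\phi$ for $x$ just to the \emph{left} of $X^*$, which is exactly what the definition of $X^*$ as a supremum predicts, so there is no contradiction; a Hopf estimate at a one-sided contact would in fact \emph{deliver} $(u-\phi)_x(X^*,y^*)>0$ rather than rule it out (and translating the $x$-independent function $\phi$ in $x$ does nothing). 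This is the real reason Lemma~\ref{lem:comp} cannot be applied: it requires $u\geq v$ in a \emph{two-sided} $x$-neighbourhood of the contact, which is unavailable at $X^*$. Your second fix also has a gap: the box subsolution of \eqref{pb:box-beta} carries the value $p^{\varepsilon,\beta'}(y)$ on its left face, so requiring the translated box to sit under $u$ already presupposes $u\geq p^{\varepsilon,\beta'}$, the very inequality you are trying to establish, and the slide cannot be initialized. The factor $2$ in the hypothesis only helps at $x=0$, not at the left face once the box has moved into $x<0$.

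The paper avoids both obstacles by proving the weaker (but sufficient) conclusion $\inf u\geq\rho_{\beta'}$ in two steps. Step~1 slides the subsolution $(1+\alpha x)\varphi^\varepsilon(y)$, with $\varphi^\varepsilon$ the regularized principal eigenfunction normalized so that $\sup\varphi^\varepsilon$ is small relative to $\inf_y u(0,y)$, $-\lambda_1^\varepsilon$ and $\mu m_0/k_\infty$, under $u$ on $(-\infty,0]$; the slope $\alpha$ is driven to zero by the usual touching argument plus Lemma~\ref{lem:comp} (which applies because $(1+\alpha x)\varphi^\varepsilon$ is kept below the threshold $\mu m_0/k_\infty$), yielding a strictly positive lower bound $\inf_{x\leq 0,\,y\in\Omega}u\geq\inf_y\varphi^\varepsilon>0$. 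Step~2 then takes the decreasing traveling wave $v$ from Theorem~\ref{thm:minspeedTW-beta} with parameter $\beta'$ and uses the reflection $\tilde v(x,y)=v(-x,y)$, which is a subsolution (because $\tilde v_x\geq 0$), is bounded by $\sup a/\beta'\leq\mu m_0/k_\infty$, and \emph{decays to zero at $x\to-\infty$}. This last property is exactly what makes the slide start: $\tilde v$ can be translated far enough left that it lies below the positive infimum from Step~1 on all of $(-\infty,0)\times\Omega$, and then Lemma~\ref{lem:comp} pushes the shift to $+\infty$, giving $\inf_{x\leq 0}u\geq\lim_{x\to+\infty}\inf_y\tilde v(x,y)\geq\rho_{\beta'}$. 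The missing ingredient in your proposal is this preliminary positive lower bound on $u|_{\{x\leq 0\}}$.
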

\begin{proof} 
    We divide the proof in two step.

    \textbf{Step 1:} We show that $ \inf_{x\leq 0, y\in\Omega} u(x,y)>0 $. 

    Let $ \varphi^\varepsilon $ be a positive solution to \eqref{eq:eigeneps}, normalized so that
    \begin{equation*}
        \sup_{y\in\Omega}\varphi^\varepsilon(y)=\frac{1}{2}\min\left(\inf_{y\in\Omega}u(0, y), \frac{-\lambda_1^\varepsilon}{|\Omega|k_\infty}, \frac{\mu m_0}{k_\infty}\right)>0.
    \end{equation*}
    We define
    $\alpha:=\inf\{\zeta>0\,|\, \forall x\in (-\infty, 0), y\in\Omega, (1+\zeta x)\varphi^\varepsilon(y)\leq u(x, y)\}$.
    Remark that, since $ u $ is positive and $ \varphi^\varepsilon(y)< u(0, y) $ for any $ y\in\mathbb R $, $ \alpha $ is well-defined.

	Assume by contradiction that $ \alpha > 0$. Then by definition of $ \alpha $ there exists a point $ (x_0,y_0)\in\left(-\frac{1}{\alpha}, 0\right)\times\overline\Omega $ such that $ u(x_0,y_0)=(1+\alpha x_0)\varphi^\varepsilon(y_0) $. {Because of} the boundary conditions satisfied by $ u $ and $ (1+\alpha x)\varphi $, $ (x_0, y_0) $ cannot be in the boundary of $ \left[\frac{-1}{\alpha}, 0\right]\times\Omega $.
    Letting $ v(x,y):=(1+\alpha x)\varphi^{\varepsilon}(y) $, we remark that, since $ x_0<0 $, we have
    \begin{align*}
        -cv_x(x_0, y_0)-v_{xx}(x_0, y_0)-&\varepsilon\Delta_y v (x_0, y_0)- \mu(M\star v-v)(x_0, y_0) \\
        - v\big(a(y_0)-K\star v\big)(x_0, y_0) 
        &=-c\alpha\varphi(y_0) + \lambda_1^\varepsilon v(x_0, y_0)\\
        &\quad +v(x_0, y_0)(K\star v)(x_0, y_0) \\
        &\leq \frac{\lambda_1^\varepsilon}{2}<0, 
    \end{align*}
    since $ v(x_0,y_0)\leq \frac{-\lambda_1^\varepsilon}{2|\Omega|k_\infty} $ (recall that $ v $ is increasing in $x$). Hence $ v $ is a local subsolution to \eqref{eq:lemcompsub} near $(x_0, y_0) $, and Lemma \ref{lem:comp} leads to $ u\equiv v $, which is a contradiction.

    Thus $ \alpha=0 $ and we have shown  that $ \forall x<0 $, $ \varphi^\varepsilon(y)\leq u(x, y) $. In particular we have the lower estimate $ \inf_{x<0, y\in\Omega} u(x,y)\geq \inf_{y\in\Omega}\varphi^\varepsilon(y)>0 $.

    \medskip

    \textbf{Step 2:} We remove the dependency in $ \varepsilon $.

    Let $ v $ be  a decreasing solution to \eqref{pb:lineeps-beta} with $ c=c_\varepsilon^* $ constructed in Theorem \ref{thm:minspeedTW-beta}. Define $ \tilde v(x, y)=v(-x, y) $. Then $\tilde v $ satisfies:
    \begin{equation*}
        c^*_\varepsilon\tilde v_x -\tilde v_{xx}-\varepsilon\Delta_y \tilde v-\mu(M\star \tilde v-\tilde v)=\tilde v(a(y)-K\star\tilde v-\beta'\tilde v).
    \end{equation*}
    In particular, 
    \begin{align*}
        -c\tilde v_x -\tilde v_{xx}-\varepsilon\Delta_y \tilde v-\mu(M\star \tilde v-\tilde v)= &\tilde v(a(y)-K\star \tilde v-\beta' \tilde v)-(c+c^*_\varepsilon)\tilde v_x\\
        \leq & \tilde v(a(y)-K\star \tilde v),
    \end{align*}
	since $ \tilde v_x\geq 0 $. 
	Moreover,  $ \sup v\leq \frac{\sup a}{\beta'} $ {by the estimate in} Theorem \ref{thm:minspeedTW-beta}. {Using Lemma \ref{lem:comp} will then allow us to compare $ \tilde v$ with $u$.}

	Since $ \tilde v\to 0 $ when $ x\to -\infty $ and {as a result of  Step 1 above},  there exists a positive shift $ \zeta>0 $ such that $ \tilde v(x+\zeta, y)\leq \frac{1}{2} \inf_{\bar x<0, \bar y\in\Omega} u(\bar x,\bar y) $ for any $ (x,y)\in (-\infty, 0)\times \Omega $. {Using} a  sliding argument {simliar to the one} in Step 2 of Lemma \ref{lem:bigbetabox}, then for any $ \zeta\in\mathbb R $, $ x<0 $ and $ y\in\Omega $, we have $ u(x, y)\geq \tilde v(x+\zeta, y) $.  
	Taking the limit $ \zeta\to +\infty $, we get that $ \inf_{x<0,y\in\Omega}u(x,y)\geq \lim_{x\to +\infty}\inf_{y\in\mathbb R}\tilde v(x,y) \geq \rho_{\beta'} $, {by the estimate in} Theorem \ref{thm:minspeedTW-beta}.

    This finishes the proof of Lemma \ref{lem:infxneg}.
\end{proof}

\medskip

We are now in a position to prove Theorem \ref{thm:reg-TW}.
\begin{proof}[Proof of Theorem \ref{thm:reg-TW}]
    We divide the proof in two steps.

    \textbf{Step 1:} We construct a solution with $ \limsup_{x\to +\infty} \int_\Omega u(x, y)dy=0 $.

	Let $ (c,u) $ be the solution constructed in Corollary \ref{cor:existenceline} with $ \beta=0 $ and the normalization $ \tau=\frac{1}{2}\min\left(\rho_{\beta_0}k_0|\Omega|, \frac{-\lambda_1^\varepsilon}{2}\right) $, where $ \beta_0=\frac{k_\infty\sup a}{\mu m_0}$ and $ \rho_{\beta_0} $ is given by Lemma \ref{lem:beta-lowerbound}. Assume by contradiction that $ \limsup_{x\to +\infty} \int_\Omega u(x,y)dy > 0 $. Then by definition there exists a positive number $ \kappa>0 $ and a sequence $ x_n\to +\infty $ such that $ \int_\Omega u(x_n, y)dy \geq \kappa $. {By the estimate in} Lemma \ref{lem:localinf}, there exists $ \bar\kappa>0 $ such that for any $ n\in\mathbb N $, $ \inf_{y\in\Omega}u(x_n, y)\geq \bar \kappa $. Let $ \beta:=\max\left(2\frac{\sup a}{\kappa}, \beta_0\right) $, then a direct application of Lemma \ref{lem:infxneg} shows that for any $ n\in\mathbb N $, we have $ \inf_{x< x_n, y\in\Omega}u(x,y)>\rho_{\beta}>0 $. In particular, taking the limit $ n\to\infty $, we get $ \inf_{(x,y)\in\mathbb R\times \Omega} u(x,y) \geq \rho_{\beta}>0 $. {By the estimate in} Lemma \ref{lem:infpos}, this shows $ \inf_{(x,y)\in\mathbb R\times\Omega} u(x,y)\geq \rho_{\beta_0} $. However, {due to} the normalization satisfied by $ u $ \eqref{eq:norm}, we have
    \begin{equation*}
        k_0|\Omega|\rho_{\beta_0}\leq (K\star u)(x,0)\leq \frac{1}{2}k_0|\Omega|\rho_{\beta_0},
    \end{equation*}
    which is a contradiction. We conclude that $ \limsup_{x\to+\infty}\int_{\Omega}u(x,y)dy = 0 $.

    \textbf{Step 2:} We show that $ u $ satisfies the other properties required by Theorem \ref{thm:reg-TW}.

    Since $ u $ is given by Corollary \ref{cor:existenceline}, $u$ naturally satisfies $ \int_{\Omega} u(x,y)dy\leq \frac{\sup a}{k_0} $.

	Let us show briefly that   $ \liminf_{x\to-\infty}\inf_{y\in\Omega}u(x,y)\geq \rho_{\beta_0} $. {Applying} Lemma \ref{lem:infxneg} we have $ \liminf_{x\to-\infty}\inf_{y\in\Omega}u(x,y)>0 $.  Let $ (x_n, y_n) $ be a minimizing sequence. {By} the classical elliptic estimates, $ u(x+x_n, \cdot) $ converges locally uniformly to a solution $ \bar u $ of \eqref{eq:lineeps} with  $ \inf_{(x,y)\in\mathbb R\times\Omega} \bar u(x,y) > 0 $. Then {by the estimate in} Lemma \ref{lem:infpos}, $ \inf_{(x,y)\in\mathbb R\times\Omega} \bar u(x,y)\geq \rho_{\beta_0} $. We conclude by remarking that $ \liminf_{x\to-\infty}\inf_{y\in\Omega}u(x,y)=\inf_{(x,y)\in\mathbb R}\bar u(x,y)\geq \rho_{\beta_0} $.

    We finally remark that Lemma \ref{lem:c*-beta} item \ref{item:c*-beta-int} gives the minimality property of the speed $ c_\varepsilon^* $. In particular $ c=c_\varepsilon^* $ for the solution $(c,u) $ constructed here.

    This ends the proof of Theorem \ref{thm:reg-TW}.
\end{proof}

\medskip

Next we prove an upper estimate on the limit of $ \int_{\Omega} u(x,y)dy $ when $ x $ is in the vicinity of $ +\infty $, which is independent of $ \varepsilon $. 

\begin{lem}[$\int_\Omega u(x,y)dy \to 0 $ when $ x\to +\infty$]\label{lem:intto0}
    Let Assumption \ref{hyp:gen} hold, and suppose $ \lambda_1 <0 $. There exists $ \bar \varepsilon>0 $,  $ \tau>0 $  and a sequence $ (x_n)_{n\in\mathbb N} $ independent from $ \varepsilon $,  such that if  $ u $ solves \eqref{eq:lineeps} with $0 < \varepsilon \leq \bar \varepsilon $, $ c=c_\varepsilon^* $ and satisfies $ \int_\Omega u(x,z)dz\leq\tau $ for any $ x\geq 0 $, then
    \begin{equation*}
        \forall n\in\mathbb N, \forall x\geq x_n, \int_\Omega u(x, z)dz\leq\frac{\tau}{2^n}. 
    \end{equation*}
\end{lem}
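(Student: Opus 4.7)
The plan is to proceed by induction on $n$, exploiting the translation invariance of equation \eqref{eq:lineeps} in the spatial variable $x$. The key step will be to establish the following uniform base claim: there exist $\bar\varepsilon > 0$, $\tau > 0$ and $L > 0$, all depending only on the data (and not on $u$ or $\varepsilon$), such that if $0 < \varepsilon \leq \bar\varepsilon$, $u$ solves \eqref{eq:lineeps} with $c = c_\varepsilon^*$, and $I(x) := \int_\Omega u(x,y)\, dy \leq M$ for all $x \geq 0$ with some $M \leq \tau$, then $I(x) \leq M/2$ for all $x \geq L$. Once this is proven, applying it to the shifted solutions $u(\cdot + x_n, \cdot)$ (which still solve \eqref{eq:lineeps} and satisfy the uniform bound $\tau/2^n \leq \tau$ on $[0,\infty)$) yields the conclusion with $x_n := nL$.

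To establish the base claim, we introduce the principal eigenfunction $\varphi^{\varepsilon,*} > 0$ of the adjoint operator $\psi \mapsto -\varepsilon\Delta\psi - \mu(\check M\star\psi - \psi) - a(y)\psi$ with Neumann boundary conditions, where $\check M(y,z) := M(z,y)$. Its principal eigenvalue is again $\lambda_1^\varepsilon$, and by estimates analogous to those in the proof of Lemma \ref{lem:beta-lowerbound}, $\varphi^{\varepsilon,*}$ is bounded above and below by positive constants independent of $\varepsilon$ for $\varepsilon$ small enough. We set $J(x) := \int_\Omega u(x,y)\varphi^{\varepsilon,*}(y)\, dy$. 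Multiplying \eqref{eq:lineeps} by $\varphi^{\varepsilon,*}$, integrating over $\Omega$ and using the Neumann boundary conditions together with the adjoint eigenvalue equation gives
\begin{equation*}
-J''(x) - c J'(x) + \lambda_1^\varepsilon J(x) = -\int_\Omega u(x,y)(K\star u)(x,y)\varphi^{\varepsilon,*}(y)\, dy,
\end{equation*}
and the right-hand side lies between $-\delta J(x)^2$ and $0$ for some $\delta > 0$ independent of $\varepsilon$ (using $k_0 \leq K \leq k_\infty$ and the uniform estimates on $\varphi^{\varepsilon,*}$).

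Since $c = c_\varepsilon^* = 2\sqrt{-\lambda_1^\varepsilon}$, the substitution $J(x) = e^{-\lambda_\varepsilon x}\tilde J(x)$ with $\lambda_\varepsilon := \sqrt{-\lambda_1^\varepsilon}$ annihilates both the first-order term and the zeroth-order coefficient, and transforms the above identity into $\tilde J''(x) = e^{\lambda_\varepsilon x}\int_\Omega u(K\star u)\varphi^{\varepsilon,*}\, dy \geq \delta\, e^{-\lambda_\varepsilon x}\tilde J(x)^2 \geq 0$. Thus $\tilde J$ is convex on $[0,\infty)$ and satisfies a quadratic second-order differential inequality. The a priori bound $\tilde J(x) \leq C M e^{\lambda_\varepsilon x}$ (coming from $J \asymp I \leq M$) together with the uniform lower bound $\lambda_\varepsilon \geq \lambda_0 > 0$ for $\varepsilon \leq \bar\varepsilon$ (which follows from Theorem \ref{thm:eigen} and $\lambda_1 < 0$) then allow a careful analysis of $\tilde J$: the strict quadratic feedback forbids the resonant worst case $\tilde J(x) \simeq M e^{\lambda_\varepsilon x}$ (which would mean $J$ is constant), and yields the estimate $J(L) \leq J(0)/2$ for a length $L$ depending only on $\lambda_1$, $\delta$ and the bounds on $\varphi^{\varepsilon,*}$. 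Transferring back to $I$ via the comparability $I \asymp J$ finishes the base case.

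The main obstacle is that $c_\varepsilon^*$ is exactly the critical speed of the linearized problem: the characteristic equation $\lambda^2 - c\lambda - \lambda_1^\varepsilon = 0$ has a double root at $\lambda = \lambda_\varepsilon$, so the whole family $(A + Bx)e^{-\lambda_\varepsilon x}$ solves the homogeneous ODE and no pure exponential supersolution can by itself force strict geometric decay. It is therefore essential to exploit the nonlinear feedback term $-\int_\Omega u(K\star u)\varphi^{\varepsilon,*}\, dy$ in a quantitative way, together with the a priori smallness $I \leq \tau$, in order to rule out resonant non-decaying behaviour uniformly as $\varepsilon \to 0$.
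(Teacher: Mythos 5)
Your overall strategy---a base claim ``$I\leq M$ on $[0,\infty)$ implies $I\leq M/2$ past some $L$'' followed by bootstrapping via translation invariance---matches the paper in spirit (the paper's Step 3 does exactly this bootstrap). But your route to the base claim is genuinely different from, and currently has a gap that the paper's argument avoids.

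The paper's mechanism is pointwise in $(x,y)$: it first uses the Harnack-type estimates (Lemmas~\ref{lem:localinf} and~\ref{lem:infxneg}) to convert a large mass $\int_\Omega u(\bar x,\cdot)>\tau/2$ into a uniform pointwise lower bound $\inf_y u(x',y)\geq\rho$ for all $x'\leq\bar x$, and then slides the explicit parabolic subsolution $(\alpha-\gamma x^2)\cos\!\left(\tfrac{\pi|y|}{2r}\right)$, localized near $y=0$ where $a(y)-\mu$ is strictly positive, under $u$ to force $\int_\Omega u(0,\cdot)>\tau$, a contradiction. In other words, the paper exploits the \emph{local} positivity of the linear growth rate at the fitness optimum, not an integrated ODE. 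Your scheme instead tests the PDE against the adjoint eigenfunction $\varphi^{\varepsilon,*}$ and reduces the problem to the scalar ODE $-J''-cJ'+\lambda_1^\varepsilon J=-N(x)$ with $N=\int_\Omega u(K\star u)\varphi^{\varepsilon,*}\geq 0$, so that $\tilde J:=Je^{\lambda_\varepsilon x}$ is convex with $\tilde J''=N e^{\lambda_\varepsilon x}$. This is a clean and legitimate alternative set-up, but the decay conclusion you draw from it does not follow as stated.

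Concretely, the lower bound $\tilde J''\geq\delta e^{-\lambda_\varepsilon x}\tilde J^2$ that you emphasize cannot, on its own, ``forbid the resonant worst case.'' Suppose $J(x)\geq cM$ on $[0,L]$. Integrating twice and using $\tilde J(L)\leq CMe^{\lambda_\varepsilon L}$ gives at best $CM\geq\frac{\delta c^2M^2}{\lambda_\varepsilon^2}\,(1-o(1))+e^{-\lambda_\varepsilon L}\bigl(\tilde J(0)+\tilde J'(0)L\bigr)$. For \emph{small} $M$ (which is precisely the regime $M\leq\tau$) the quadratic term $\frac{\delta c^2M^2}{\lambda_\varepsilon^2}$ is $o(M)$, so there is no contradiction with $CM$ on the left; the lower bound becomes weaker, not stronger, as $M\to0$. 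What actually excludes the stationary scenario $J\equiv J_0$ is the \emph{upper} bound $N\leq\delta' J^2$: plugging $J\equiv J_0$ into $\tilde J''=Ne^{\lambda_\varepsilon x}$ forces $\lambda_\varepsilon^2J_0\leq\delta' J_0^2$, hence $J_0\geq\lambda_\varepsilon^2/\delta'$, which is incompatible with $J_0\leq CM$ once $M$ is small. More generally, $\tilde J''\leq\delta'(\max\varphi^{\varepsilon,*}M)^2e^{\lambda_\varepsilon x}$ integrates to $J(x)\leq e^{-\lambda_\varepsilon x}\bigl(\tilde J(0)+\tilde J'(0)x\bigr)+\tfrac{\delta'(\max\varphi^{\varepsilon,*}M)^2}{\lambda_\varepsilon^2}$, whose second term is $O(M^2)\ll M/2$. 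But this already reveals a second missing piece: you must bound $\tilde J'(0)$ (equivalently $J'(0)$) by $O(M)$, uniformly in $\varepsilon$, to make the transient term negligible past a finite, $\varepsilon$-independent $L$. This is not automatic from the a priori bound $J\leq CM$; it requires an additional input such as the mass-Harnack inequality (Lemma~\ref{lem:harnackmass}) combined with interior gradient estimates for the one-dimensional equation satisfied by $J$, applied on a shifted interval so that the $x\geq0$ mass bound covers a neighbourhood of the test point. None of this is addressed. Finally, the intermediate target is stated as ``$J(L)\leq J(0)/2$,'' but that is not what the bootstrap needs: $J(0)$ may be far smaller than $CM$, and the estimate you can hope for is $\sup_{x\geq L}J\leq CM/2$, not a halving relative to the possibly tiny value $J(0)$. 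So even granting the analysis, the conclusion you extract is the wrong one for the iteration. In short: the ODE reduction is a nice alternative, but the decay step needs (i) the quadratic \emph{upper} bound on $N$, (ii) a uniform $O(M)$ bound on $J'$, and (iii) the correct form of the halving claim; as written, only the irrelevant lower bound is invoked, and the conclusion is asserted rather than derived.
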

\begin{proof}
    We divide the proof into three  steps. 

    \textbf{Step 1:} Definition of auxiliary parameters.

	Since $ a(0)=\sup a $, by the continuity of $ a $ and {Assumption \ref{hyp:gen} item 6}, there exists $ r>0 $ such that for any $ |y|\leq r $, $ a(y)-\mu\geq \frac{3}{4}(\sup a-\mu) $. In the rest of the proof we fix $ r>0 $ such that this property holds and $ B_r(y)\subset \Omega $. Notice that for $ |y|\leq r $, we have $ a(y)-\mu\geq \frac{3}{4}(\sup a-\mu) >0$. 

    We define $ \bar\varepsilon:=\min\left(\varepsilon_0, \frac{r^2(\sup a-\mu)}{2n\pi^2}\right) $, where $ \varepsilon_0>0 $ is given by Lemma \ref{lem:beta-lowerbound}. We let $ \tau:=\min\left(\frac{1}{2}\rho_{\beta_0}k_0|\Omega|, \frac{\sup a-\mu}{4k_\infty}\right) $, where $ \beta_0=\frac{k_\infty \sup a}{\mu m_0} $ and $ \rho_{\beta_0} $ is given by Lemma \ref{lem:beta-lowerbound}. In particular, arguing as in the proof of Theorem \ref{thm:reg-TW}, any solution $u$ to \eqref{eq:lineeps} with $ 0<\varepsilon\leq \bar \varepsilon $ which satisfies $ \int_{\Omega}u(0,y)dy\leq \tau $ has limit 0 near $ +\infty $, i.e. $ \int_\Omega u(x,y)dy\to_{x\to+\infty} 0 $.

	{By} Lemma \ref{lem:localinf} and \ref{lem:infxneg}, there exists $ \rho>0$ such that if  $ \int_{\Omega}u(x,y)dy\geq \frac{\tau}{2} $ holds, then for any $x'\leq x $ we have the estimate $\inf_{y\in\Omega}u(x',y)\geq \rho $. 

	We let  $ \alpha_0:=\max\left(\frac{\tau}{\int_{|y|\leq r}\cos\left(\frac{\pi |y|}{2r}\right)dy}, 2\rho\right) $,  $ \gamma:=\min\left(1,\left( \frac{\sup a-\mu}{8(c_\varepsilon^*\sqrt{\alpha_0}+1)}\rho\right)^2\right) $. Notice in particular that $ 2c_\varepsilon^*\sqrt{\gamma}{\alpha_0} + 2\gamma - \frac{\sup a-\mu}{4} \rho\leq 0 $. Finally we define  $ \bar x:=\sqrt{\frac{\alpha_0}{\gamma} } $. Remark that, since $ c_\varepsilon^*\to 2\sqrt{-\lambda_1}>0 $ when $ \varepsilon \to 0 $ ({by} Theorem \ref{thm:eigen}), $ \bar x $ is uniformly bounded when $ \varepsilon\to 0$.

	Since \eqref{eq:lineeps} is invariant by translation in $ x $ we will assume {without loss of generality} that $ \int_\Omega u(x,y)dy\leq \tau $ for $ x\geq -\bar x $ instead of $ x\geq 0 $.

    \medskip

    \textbf{Step 2:} We show that if $ \int_\Omega u(x, y)dy\leq\tau $ for $ x\geq -\bar x $ then  $ \int_\Omega u(\bar x, y)dy\leq \frac{\tau}{2} $. 

    Here we let $ u $ be a solution to \eqref{eq:lineeps} with $ 0<\varepsilon\leq \bar\varepsilon $,  $ c=c_\varepsilon^*$ and the upper estimate $ \int_\Omega u(x,y)\leq \tau $ for $ x\geq -\bar x $. We assume by contradiction that $ \int_\Omega u(\bar x, y) dy > \frac{\tau}{2} $. We will first use another proof by contradiction to show that, in that case, the mass of $ u $ can be controlled from below.

    Since $ u>0 $ on  $(-\bar x, \bar x)\times \Omega $, we define:
    \begin{equation*}
        \alpha:=\sup\left\{\zeta>0\,|\, \forall x\in(-\bar x, \bar x), \forall |y|\leq r, \quad (\zeta-\gamma x^2) \cos\left(\frac{\pi|y|}{2r}\right)\leq u(x,y) \right\}.
    \end{equation*}
    Assume by contradiction that $ \alpha < \alpha_0 $. Then for any $(x,y)\in [-\bar x, \bar x]\times \Omega $ we have $ (\alpha-\gamma x^2)\cos\left(\frac{\pi |y|}{2r}\right)\leq u(x,y) $,   and there exists a point $ x_0\in [-\bar x, \bar x] $ and $ y_0 $ with $ |y_0|\leq r $ such that $ u(x_0,y_0)=(\alpha-\gamma x_0^2)\cos\left(\frac{\pi |y_0|}{2r}\right) $. Let $ v:=(\alpha-\gamma x^2)\cos\left(\frac{\pi |y|}{2r}\right) $.  We have:
    \begin{align*}
        0&\leq -c_\varepsilon^*(v-u)_x(x_0, y_0) - (v-u)_{xx}(x_0, y_0) -\varepsilon \Delta_y (v-u)(x_0, y_0) \\
         &= 2c_\varepsilon^* \gamma x_0 + 2\gamma + n \varepsilon \left(\frac{\pi}{2r}\right)^2 v(x_0, y_0) - \mu (M\star u)(x_0, y_0) \\
         &\quad - u(x_0, y_0)\big(a(y_0)-\mu - (K\star u)(x_0, y_0)\big)\\
         &< 2\left(c_\varepsilon^*\sqrt{\alpha_0} + \sqrt{\gamma}\right)\sqrt{\gamma} +n\varepsilon\left(\frac{\pi}{2r}\right)^2 + 0 \\
         &\quad - \left(\frac{3(\sup a-\mu)}{4}-k_\infty\int_\Omega u(x_0, z)dz\right)u(x_0, y_0) \\
         &= \left[2\left(c_\varepsilon^*\sqrt{\alpha_0} + \sqrt{\gamma}\right)\sqrt{\gamma}-\frac{\sup a-\mu}{4}\rho\right] + \left(\varepsilon \left(\frac{\pi}{2r}\right)^2-\frac{\sup a-\mu}{4}\right) u(x_0, y_0) \\
         &\leq 0,
    \end{align*}
    recalling that $ \inf_y u(x_0, y) \geq \rho $ since $ x_0\leq \bar x$.

    Hence, we have a contradiction and $ \alpha\geq \alpha_0\geq\frac{\tau}{\int_{|y|\leq r}\cos\left(\frac{\pi |y|}{2r}\right)dy}$. In particular, we have $ (\alpha_0-\gamma x^2)\cos\left(\frac{\pi |y|}{2r}\right)\leq u(x,y) $ and 
    \begin{equation*}
        \tau\leq \alpha_0\int_{|y|\leq r}\cos\left(\frac{\pi |y|}{2r}\right)dy < \int_{\Omega}u(0,y)dy, 
    \end{equation*}
	{where the strict inequality holds because $u(0, y)>0$ on $\Omega\bsl B(0,r)$.}
    This contradicts our hypothesis $ \int_\Omega u(x, y)dy\leq \tau $ when $ x\geq -\bar x $. We conclude that $ \int_{\Omega}u(\bar x, y)dy\leq \frac{\tau}{2} $.

    \medskip

    \textbf{Step 3:} Bootstrapping

    In Step 2 we have shown that for a $ \bar x $ which is uniformly bounded in $ \varepsilon $, we have 
    \begin{equation*}
        \left(\forall x\geq -\bar x, \int_\Omega u(x,y)dy\leq\tau\right)\Rightarrow \left(\int_\Omega u(\bar x, y)dy\leq \frac{\tau}{2}\right).
    \end{equation*}
    Since \eqref{eq:lineeps} is invariant by translation, this implication still holds for $ u(x,y) $ replaced by $ u(x+\delta, y) $ for any $ \delta>0 $. In particular, 
    \begin{equation*}
        \left(\forall x\geq -\bar x, \int_\Omega u(x,y)dy\leq\tau\right)\Rightarrow \left(\forall x\geq \bar x, \int_\Omega u( x, y)dy\leq \frac{\tau}{2}\right).
    \end{equation*}
    Thus we can reproduce Step 1 and 2 replacing $ \tau $ by $ \frac{\tau}{2} $ and $ u(x,y) $ by its shift $ u(x+\bar x, y) $. We thus find by an elementary recursion a sequence of points $ x_n $ such that for $ x\geq x_n $, $ \int_\Omega u(x, y)dy \leq \frac{\tau}{2^n} $. 

    This ends the proof of Lemma \ref{lem:intto0}.
\end{proof}

\subsection{Proof of Theorem \ref{thm:TW}}
We are now in a position to let $ \varepsilon\to 0 $ and construct a traveling wave for equation \eqref{eq:evol}, thus proving our main result Theorem \ref{thm:TW}.

\begin{proof}[Proof of Theorem \ref{thm:TW}]
    We divide the proof in three steps. 

    \textbf{Step 1:} Construction of a converging sequence to a transition kernel.

	Let $ \varepsilon_n $ be a decreasing sequence with $ \lim \varepsilon_n = 0 $ and $ \varepsilon_0\leq \bar\varepsilon $ (where $ \bar\varepsilon $ is given by Lemma \ref{lem:intto0}) such that for any $ 0<\varepsilon\leq \varepsilon_0 $, $ \lambda_1^\varepsilon <0 $ (such a $ \varepsilon_0 $ exists {by} Theorem \ref{thm:eigen}). Since \eqref{eq:lineeps} is invariant by translations in $ x$, for each $ \varepsilon_n $ we can choose $ u^n $ given by Theorem \ref{thm:reg-TW} (with $ \varepsilon=\varepsilon_n $), which satisfies moreover
    \begin{equation}\label{eqthm:epsto0-norm}
        \int_\Omega u^n(0, y)dy = \min\left(\frac{\rho_{\beta_0}}{2}, \tau\right), \qquad \forall x\geq 0, \int_\Omega u^n(x, y)dy\leq \tau,
    \end{equation}
    where $ \tau $ is given by Lemma \ref{lem:intto0}, $ \beta_0=\frac{k_\infty \sup a}{\mu m_0} $ and $ \rho_{\beta_0} $ is given by Lemma \ref{lem:beta-lowerbound}. 

	For any $ k\leq n $, let $ u^n_k $ be the restriction of $ u^n $ to the set $ [-k, k]\times\Omega $. Then $ u^n_k$ belongs to $ M^1([-k, k]\times\overline\Omega)=(C_b([-k, k]\times\overline\Omega))^* $. Since $ \int_\Omega u^n(x,y)dy\leq \frac{\sup a}{k_0} $ for $ x\in\mathbb R $, we have $ \int_{-k}^k \int_\Omega u^n(x,y)dydx\leq 2k\frac{\sup a}{k_0} $, and thus the sequence $ (u_k^n)_{n>k} $ is uniformly bounded in  variation norm. Moreover $ [-k, k] \times\overline\Omega$ is compact, and thus $ (u_k^n)_{n>k} $ is uniformly tight. {Applying} Prokhorov's Theorem \cite[Theorem 8.6.2]{Bog-07}, the sequence $ (u_k^n)_{n>k} $ is relatively compact in $ (C_b([-k, k]\times\overline\Omega))^* $. Then, {by a} classical diagonal extraction process, there exists a subsequence, still denoted  $ u^n $, and a measure $ u\in M^1(\mathbb R\times\overline\Omega) $ such that $ u^n\rightharpoonup u $, in the sense that 
    \begin{equation}\label{eqthm:TW-weak-conv}
        \forall \psi\in C_c(\mathbb R\times\overline\Omega), \int_{\mathbb R\times\Omega}\psi(x,y)u^n(x,y)dydx \to \int_{\mathbb R\times\overline\Omega} \psi(x,y) u(dx,dy).
    \end{equation}

	Finally, for $ a<b $, {by a classical result}  \cite[Theorem 8.2.3]{Bog-07}, for any Borel set $ \omega\subset\overline\Omega $ we have
    \begin{equation*}
        u\big((a,b)\times\omega\big)\leq u\big((a,b)\times\overline\Omega\big)\leq \liminf_{n\to\infty} \int_a^b \int_{\Omega} u^n(x,y)dydx\leq |b-a|\frac{\sup a}{k_0}.
    \end{equation*}
    Hence, Lemma \ref{lem:transition} applies and $ u $ is a transition kernel,  satisfying the equation $ u(dx, dy)=u(x,dy) dx$.

	{Let us stress at this point that the possibility to think of $u$ as a transition kernel, i.e. a function which takes values in a measure space, is important for the rest of the proof, as it allows us to consider $M\star u(x,y)=\int_{\overline \Omega}M(y,z) u(x, dz)$ and $K\star u(x,y)=\int_{\overline \Omega}K(y,z) u(x, dz)$ as real functions of $x$ and $y$, even for singular traveling waves. Handling a term like $\int_{\overline\Omega}M(y,z)u(dx, dy) $ would indeed be quite difficult, if ever possible -- let aside $ (K\star u)u$, which would involve the product of two measures. Also, it is the only regularity that we can get on the solution at the present time.}
    \medskip

    \textbf{Step 2:} We show that $ u $ satisfies the limit conditions \eqref{eq:limit-TW-left} and \eqref{eq:limit-TW-right} of Definition \ref{def:TW}.

    By construction, the function $ u^n $ satisfies $ \int_\Omega u^n(0,y)dy = \min\left(\tau,\frac{\rho_{\beta_0}}{2}\right) $. Applying Lemma \ref{lem:localinf} and Lemma \ref{lem:infxneg}, there exists a positive constant $ \rho>0 $ (independent from $ n $) such that $ \inf_{y\in\Omega}u^n(x,y) \geq \rho $ for any $ x\leq 0 $. In particular, taking the limit $ n\to\infty$, we have for any positive $ \psi\in C_c\big((-\infty, 0)\times\overline\Omega\big) $
    \begin{align*}
        \int_{\mathbb R\times\overline\Omega}\psi(x,y)u(x,dy)dx &=\lim_{n\to\infty}\int_{\mathbb R\times\Omega}\psi(x,y)u^n(x,y)dxdy \\
                                                                &\geq \rho \int_{\mathbb R\times\Omega}\psi(x,y)dxdy>0.  
    \end{align*}
    Hence $\liminf_{\bar x\to+\infty}\int_{\mathbb R\times\overline\Omega} \psi(x+\bar x, y)u(x,dy)dx \geq \rho\int_{\mathbb R\times\Omega}\psi(x,y) dxdy>0 $, and  $ u $ satisfies \eqref{eq:limit-TW-left}. 

	Let us show that $ u $ satisfies \eqref{eq:limit-TW-right}, i.e. vanishes near $ +\infty$. {Applying} Lemma \ref{lem:intto0}, there exists a sequence $ x_k $ independent from $ n $ such that we have $ \int_{\Omega}u^n(x,y)dy\leq\frac{\tau}{2^k} $ for any $ x\geq x_k $. In particular for any positive $ \psi\in C_c\big((x_k, +\infty)\times\overline\Omega\big) $, we have
    \begin{align*}
        \int_{\mathbb R\times\overline\Omega}\psi(x-x_k,y)u(x,dy)dx& =\lim_{n\to\infty}\int_{\mathbb R\times\Omega}\psi(x-x_k,y)u^n(x,y)dxdy \\
                                                                   &\leq \frac{\tau}{2^k} \mathrm{~diam~supp}~\psi\sup_{(x,y)\in\mathbb R\times\Omega} \psi(x,y),
    \end{align*}
    where $ \mathrm{diam~supp~}\psi = \sup\left\{d\big((x,y), (x', y')\big)\,|\, \psi(x,y)>0 \text{ and } \psi(x',y')>0 \right\} $ is the diameter of the support of $ \psi $. Thus
    \begin{equation*}
        \limsup_{\bar x\to +\infty} \int_{\mathbb R\times\overline\Omega} \psi(x-\bar x,y)u(x,dy)dx = \limsup_{k\to+\infty}\int_{\mathbb R\times\overline\Omega}\psi(x-x_k, y)u(x,dy)dx = 0, 
    \end{equation*}
    and $ u $ satisfies indeed  \eqref{eq:limit-TW-right}.

    Let us stress that, since $ u $ satisfies \eqref{eq:limit-TW-left} and \eqref{eq:limit-TW-right}, $ u $ is neither 0 nor a nontrivial stationary state to \eqref{eq:evol}.

    \textbf{Step 3:} We show that $ u $ satisfies \eqref{eq:TW} in the sense of distributions. 

    Let $ F_0^y:=\left\{\psi\in C^2_c(\mathbb R\times\overline\Omega)\,|\, \forall x\in\mathbb R, \forall y\in \partial\Omega, \frac{\partial \psi}{\partial\nu}(x,y)=0\right\} $ as in Lemma \ref{lem:zerofluxdense}. We fix $ \psi\in F_0^y $. Our goal here is to show that 
    \begin{align}
        \notag  &\quad c^*\int_{\mathbb R\times\overline\Omega}\psi_{x}u(x,dy)dx -\int_{\mathbb R\times\overline\Omega}\psi_{xx}u(x,dy)dx\\
        \notag  &= \int_{\mathbb R\times\overline\Omega}\int_{\overline\Omega}M(y,z)u(x,dz)\psi(x,y)dxdy +\int_{\mathbb R\times\overline\Omega} (a(y)-\mu)\psi(x,y)u(x,dy)dx \\
                &\quad \label{eq:weak-TW}        -\int_{\mathbb R\times\overline\Omega}\int_{\overline\Omega} \psi(x,y)K(y,z)u(x,dz)u(x,dy)dx,
    \end{align}
    where $ c^*=2\sqrt{-\lambda_1} $.
    Multiplying \eqref{eq:lineeps} by $ \psi $ and integrating by parts, we have
    \begin{align}
        \notag   &\quad c_{\varepsilon_n}^*\int_{\mathbb R\times\Omega} \psi_x(x,y)u^n(x,y)dxdy - \int_{\mathbb R\times\Omega}\psi_{xx}(x,y)u^n(x,y)dxdy  \\
        \notag   &=\varepsilon_n\int_{\mathbb R\times\overline\Omega} \Delta_y\psi(x,y)u^n(x,y)dxdy + \int_{\mathbb R\times\Omega} (a(y)-\mu)\psi(x,y)u^n(x,y)dxdy \\
        \notag   &\quad + \int_{\mathbb R\times\Omega} \psi(x,y) \int_\Omega M(y,z)u^n(x,z)dz dxdy \\
		 &\quad\label{eq:deg-TW}       -\int_{\mathbb R\times\Omega} \psi(x,y)\int_\Omega K(y,z){u^n(x,z)}dz u^n(x,y)dxdy.
    \end{align}
    Clearly, the difficulty here resides in the last two lines of equation \eqref{eq:deg-TW} (recall the formula  $ c^*_\varepsilon=2\sqrt{-\lambda_1^\varepsilon}\underset{\varepsilon\to 0}{\to} 2\sqrt{-\lambda_1}= c^*$). Let us focus on those.

    \medskip

    {We first remark that 
    \begin{multline*}
	    \int_{\mathbb R\times\overline\Omega} \psi(x,y)(M\star u^n)(x,y) dxdy=\int_{\mathbb R\times\overline\Omega} \check M\star \psi(x,z) u^n(x,z) dxdz \\
	    \underset{n\to\infty}{\longrightarrow} \int_{\mathbb R\times\overline\Omega} \check M\star \psi(x,z) u(x,dz) dx =\int_{\mathbb R\times\overline\Omega}\psi(x,y)\int_{\overline\Omega}M(y,z) u(x,dz) dxdy,
    \end{multline*}
    where $\check M(y,z)=M(z,y)$, since $\check M\star \psi(x, y)$ is a valid test function.
    }

    \medskip

    The convergence of the nonlinear term requires more work. For $ i\in\mathbb N $, let $ K^i(y,z)\in F_0^2 $ be such that $ \Vert K-K^i\Vert_{C_b(\overline\Omega\times\overline\Omega)}\leq \frac{1}{i} $ and $ \Vert K^i\Vert_{C^\alpha(\overline\Omega\times\overline\Omega)} \leq C $, where $ F_0^2 $ is the set of smooth kernels with null boundary flux in $z$, and $ C $ is independent from $ i $ (see Lemma \ref{lem:zerofluxdense} item \ref{item:zeroflux-Omega2}). We want to complete, up to extractions, the informal diagram
    \begin{equation*}
        \begin{array}{ccc}
            v_i^n(x,y):=\int_{\Omega} K^i(y,z) u^n(x,z)dz & \overset{?}{\underset{n\to +\infty}{\longrightarrow}} & v_i(x,y) := \int_{\overline\Omega} K^i(y,z) u(x,z)dz \\
            \downarrow~ \scriptstyle{ i\to\infty} & & \downarrow~ \scriptstyle{ i \to \infty} \\
            v^n(x,y):=\int_{\Omega} K(y,z) u^n(x,z)dz & \overset{?}{\underset{n\to +\infty}{\longrightarrow}} & v(x,y):=\int_{\overline\Omega} K(y,z) u(x,z)dz .
        \end{array}
    \end{equation*}

    We first show that $ v^n_i(x,y)\to v_i(x,y) $ when $ n\to\infty $ in $ C_b\big([-R,R]\times\overline\Omega\big) $ for arbitrary $ R>0 $. We fix $ R $ so that $ \mathrm{supp}~\psi\subset [-R, R]\times\overline\Omega $.
    Substituting $ z $ to $ y $, multiplying equation \eqref{eq:lineeps} by $ K^i $ and integrating in $ z $, we have
    \begin{equation*}
        -c_{\varepsilon_n}^*({v^n_i})_x-({v^n_i})_{xx}=R^n(x,y)
    \end{equation*}
    where  $ R^n(x,y) $ is bounded in $ L^\infty $ uniformly in $ n $:
    \begin{align*}
        |R^n(x,y)|&=\bigg|\varepsilon_n\int_\Omega \Delta_z{K^i}(y,z) u^n(x,z)dz+\mu\int_\Omega {K^i}(y,z) (M\star u^n)(x,z) dz\\
                  &\quad+\int_{\Omega}{K^i}(y,z)(a(z)-\mu-K\star u^n)u^n(x,z)dz\bigg|\\
                  &\leq \varepsilon_n \Vert {K^i}\Vert_{C_b(\overline\Omega, C^2(\overline\Omega))}\frac{\sup a}{k_0} + \mu m_\infty |\Omega| \frac{\sup a}{k_0} \Vert {K^i}\Vert_{C_b(\overline\Omega\times\overline\Omega)} \\
                  &\quad + \left(\sup a+\mu+k_\infty\frac{\sup a}{k_0}\right)\frac{\sup a}{k_0}\Vert{K^i}\Vert_{C_b(\overline\Omega\times\overline\Omega)}.
    \end{align*}
    For $ n $ large enough so that $ \varepsilon_n \leq \frac{1}{\Vert {K^i}\Vert_{C_b(\overline\Omega, C^2(\overline\Omega))}} $, {by the estimate in} \cite[Theorem 9.11]{Gil-Tru-01} and the classical Sobolev embeddings,  $ \Vert v^n_i(\cdot, y)\Vert_{C^\alpha([-R,R])} $ is uniformly bounded by a constant independent from $ n $, $i$ and $ y\in\overline\Omega $. Since we have $ K^i\in C^\alpha(\overline\Omega\times\overline\Omega) $ uniformly in $ i $, $ v^n_i $ is then uniformly Hölder in $ x$ and  $ y $ and  we have $ \Vert v^n_i\Vert_{C^\alpha([-R,R]\times\overline\Omega)}\leq C_R $ with $ C_R $ independent from $ n $ and $ i $. In particular, there exists an extraction $ \varphi^i(n) $ such that 

    -- $ \Vert v^{\varphi^i(n)}_i\Vert_{C^\alpha([-R,R]\times\overline\Omega)}\leq C_R $, and

    --$ \Vert v^{\varphi^i(n)}_i - \tilde v_i \Vert_{C^{\alpha /2}([-R,R]\times\overline\Omega)}\to_{n\to\infty} 0 $,

    \noindent for a function $ \tilde v_i(x,z)\in C^{\alpha/2}\big([-R, R]\times\overline\Omega\big) $. Notice that we can assume {without loss of generality} that $ \varphi^i(n) $ is extracted from $ \varphi^{i-1}(n) $. Finally, for any test function $ \xi(x)\in C_c\big([-R, R]\big) $, we have 
    \begin{multline*}
        \int_{-R}^R \xi(x)v_i^{\varphi^i(n)}(x,y)dx = \int_{-R}^R \int_\Omega \xi(x)K^i(y,z)u^{\varphi^i(n)}(x,z)dzdx \\
        \to_{n\to\infty}\int_{-R}^R \int_{\overline\Omega} \xi(x)K^i(y,z)u(x, dz)dx = \int_{-R}^R \xi(x)v_i(x, z)dx,
    \end{multline*}
    since $ u^n $ converges to $ u $ in the sense of measures. This shows  $ \tilde v^i(x,y) = v_i(x,y) $ for almost every $ x\in [-R, R] $. 

    Moreover since $ \Vert v_i\Vert_{C^{\alpha /2}([-R, R]\times\overline\Omega)}\leq C'_R $, there exists an extraction $ \zeta $ such that  $ v_{\zeta(i)} $  converges in $ C_b\big([-R, R]\times\overline\Omega\big) $ to $ v(x,y)=\int_{\overline\Omega} K(y,z)u(x,dy) $, which shows a $C^0 $ regularity on $ v$.

    We can then construct an extraction $ \varphi(i) $  such that 

    -- $ \Vert v^{\varphi(i)}_{\zeta(i)} -  v_{\zeta(i)} \Vert_{C_b([-R,R]\times\overline\Omega)}\to_{i\to\infty} 0 $, and 

    -- $ \Vert v_{\zeta(i)}-v\Vert_{C_b([-R,R]\times\overline\Omega)}\to_{i\to\infty} 0$.

    \noindent Along this subsequence, we have then:
    \begin{align*}
        &\quad \left|\int_\Omega K(y,z)u^{\varphi(i)}(x, y) dx dy - \int_{\overline\Omega} K(y,z)u(x,dy) \right|\\ 
        &\leq\left|\int_{\Omega} \big(K(y,z)-K^{\zeta(n)}(y,z)\big) u^{\varphi(i)}(x,y)dxdy \right| + \left\Vert v_{\zeta(i)}^{\varphi(i)} - v_{\zeta(i)} \right\Vert_{C_b([-R, R]\times\overline\Omega)} \\
        &\quad+ \Vert v_{\zeta(i)}-v\Vert_{C_b([-R,R]\times\overline\Omega)}\\
        &\leq \Vert K - K^{\zeta(i)}\Vert_{C_b(\overline\Omega\times\overline\Omega)}\frac{\sup a}{k_0} + o_{i\to\infty}(1)
    \end{align*}
    which shows that $ \int_\Omega K(y,z)u^{\varphi(i)}(x,y)dy \to \int_{\overline\Omega}K(y,z)u(x,dy) $ in   $ C_b([-R,R]\times\overline\Omega) $.

    We are now in a position to handle the nonlinear term, {by using} the previously constructed subsequence. 
    We write 
    \begin{align*}
        &\quad \int_{\mathbb R\times\Omega\times\Omega} \psi(x,y)K(y,z)u^{\varphi(n)}(x,z)u^{\varphi(n)} (x,y)dxdydz  \\
        &=\int_{\mathbb R\times\Omega} \psi(x,y)\int_{\overline\Omega}K(y,z)u(x,dz) u^{\varphi(n)}(x,y) dxdy   \\
        &\quad+\int_{\mathbb R\times\Omega} \psi(x,y)\bigg(\int_{\Omega}K(y,z)u^{\varphi(n)}(x,z)dz - \int_{\overline\Omega}K(y,z)u(x,dz)\bigg)\\
        &\quad\times u^{\varphi(n)}(x,y)dxdy  \\ 
        &=\int_{\mathbb R\times\Omega} \psi(x,y)\int_{\overline\Omega}K(y,z)u(x,dz) u^{\varphi(n)}(x,y) dxdy\\
        &\quad+ {O}\left(\Vert v^{\varphi(n)}(x,y)-v(x,y)\Vert_{C_b([-R, R]\times\overline\Omega)}\right),
    \end{align*}
    where  $ v^{\varphi(n)}(x,y)=\int_\Omega K(y,z) u^{\varphi(n)}(x,z)dz $ and $ v(x,y)=\int_{\overline\Omega} K(y,z)u(x,dz) $.
    Since $ \psi(x,y)\int_{\overline\Omega}K(y,z)u(x,dz) $ is a continuous, compactly supported function, we have shown that 
    \begin{multline*}
        \int_{\mathbb R\times\Omega\times\Omega} \psi(x,y)K(y,z)u^{\varphi(n)}(x,z)u^{\varphi(n)} (x,y)dxdydz \\
        \to_{n\to\infty} \int_{\mathbb R\times\overline\Omega\times\overline\Omega} \psi(x,y)K(y,z)u(x, dz)u(x,dy)dx. 
    \end{multline*}

    \medskip

    Finally we can take the limit in \eqref{eq:deg-TW} along the subsequence $ \varphi(n)$. This shows that $ u $ satisfies  \eqref{eq:TW} in a weak sense, where the test functions are taken in $ F_0^y $. Since $ F_0^y $ is dense in $ C^2_c\big(\mathbb R, C_b(\overline\Omega)\big)$, equation \eqref{eq:weak-TW} holds for test functions $ \psi $ taken in $ C^2_c(\mathbb R, C_b(\overline\Omega)) $. In particular, $ u $ satisfies \eqref{eq:TW} in the sense of distributions. 

    This ends the proof of Theorem \ref{thm:TW}.
\end{proof}

\section*{Acknowledgements}

The author wishes to thank M. Alfaro, H. Matano and G. Raoul for fruitful discussions, as well as two anonymous reviewers who contributed greatly to the quality of the paper.

\addcontentsline{toc}{section}{Appendix}
\appendix 

\section{Appendix}

\subsection{Density of the space of functions with null boundary flux }

Here we prove elementary results that are crucial to our proofs of Theorem \ref{thm:eigen}, Theorem \ref{thm:survival} and Theorem \ref{thm:TW}.

\begin{lem}[Density of spaces of functions with null boundary flux]\label{lem:zerofluxdense}
    Let $ \Omega \subset \mathbb R^n $ be a bounded open set with $ C^3 $ boundary. 
    \begin{enumerate}[label=(\textit{\roman*}), ref=(\textit{\roman*})]
        \item \label{item:zeroflux-Omega} The function space
            \begin{equation*}
                F_0:=\left\{\psi\in C^2(\overline\Omega)\,|\, \forall y\in\partial\Omega, \frac{\partial \psi}{\partial\nu}(y)=0 \right\}
            \end{equation*}
            is dense in $ C_b(\overline\Omega) $.
        \item \label{item:zeroflux-ROmega}  The function space 
            \begin{equation*}
                F_0^y:=\left\{\psi\in C^2_c(\mathbb R\times\overline\Omega)\,|\, \forall x\in\mathbb R, \forall y\in \partial\Omega, \frac{\partial \psi}{\partial\nu}(x,y)=0\right\}
            \end{equation*}
            is dense in $ C^2_c(\mathbb R, C_b(\overline\Omega))$.
        \item \label{item:zeroflux-Omega2} The function space 
            \begin{equation*}
                F_0^2:=\left\{\psi\in C^2(\overline\Omega\times\overline\Omega)\,|\, \forall (y, z)\in\overline\Omega\times\partial\Omega, \frac{\partial \psi}{\partial\nu_z}(y, z)=0 \right\}
            \end{equation*}
            is dense in $ C_b(\overline\Omega\times\overline\Omega) $. Moreover for any $ \alpha\in (0,1) $ and any function $ \psi\in C^2(\overline\Omega\times\overline\Omega) $ there exists a constant $ C $ and  a sequence $ \psi^r\to\psi $ such that we have  $ \Vert \psi^r\Vert_{C^\alpha(\overline\Omega\times\overline\Omega)}\leq C\Vert\psi\Vert_{C^\alpha(\overline\Omega\times\overline\Omega)} $.
    \end{enumerate}
\end{lem}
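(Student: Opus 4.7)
My strategy for all three items uses a common device: composition with a self-map $T_r : \overline\Omega \to \overline\Omega$ that flattens the normal direction near $\partial\Omega$ at scale $r$. The $C^3$ regularity assumption on $\partial\Omega$ provides a tubular neighborhood $U$ of $\partial\Omega$ on which the signed distance $d(y) = \mathrm{dist}(y, \partial\Omega)$ and the nearest-point projection are $C^3$, giving normal coordinates $y \leftrightarrow (s, y_\partial)$ with $y = y_\partial - s\nu(y_\partial)$ and $s = d(y)$.

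I fix once and for all a smooth profile $\phi : [0, \infty) \to [0, \infty)$ satisfying $\phi(0) = 1$, $\phi'(0) = 0$, $\phi(s) = s$ for $s \geq 2$, and $\phi$ nondecreasing, and I set $\phi_r(s) := r\phi(s/r)$. In normal coordinates I define
\begin{equation*}
T_r(y) = \begin{cases} y_\partial - \phi_r(d(y))\,\nu(y_\partial), & d(y) \leq 2r, \\ y, & d(y) > 2r. \end{cases}
\end{equation*}
For $r$ small enough this is a well-defined $C^2$ map of $\overline\Omega$ into $\Omega$, converging to the identity uniformly as $r \to 0$, with global Lipschitz constant bounded by $\max(1, \|\phi'\|_\infty)$ independently of $r$. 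The essential feature is that the differential of $T_r$ on $\partial\Omega$ annihilates the normal direction: $\mathrm{d}T_r(y_\partial)\cdot\nu(y_\partial) = -\phi_r'(0)\nu(y_\partial) = 0$.

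For item \ref{item:zeroflux-Omega}, given $\psi \in C_b(\overline\Omega) = C(\overline\Omega)$, I first approximate in sup norm by a sequence $\varphi_n \in C^2(\overline\Omega)$ via standard mollification of a continuous extension (Tietze). For each $n$, I then set $\varphi_n^r := \varphi_n \circ T_r$; the chain rule gives $\partial_\nu \varphi_n^r = \nabla\varphi_n(T_r(\cdot)) \cdot \mathrm{d}T_r\,\nu = 0$ on $\partial\Omega$, so $\varphi_n^r \in F_0$, and uniform continuity of $\varphi_n$ combined with $T_r \to \mathrm{id}$ uniformly yields $\varphi_n^r \to \varphi_n$ uniformly as $r \to 0$. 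A diagonal extraction concludes. Item \ref{item:zeroflux-ROmega} is the same argument applied in the $y$-variable only: after a preliminary mollification in $y$ of $\psi \in C^2_c(\mathbb{R}, C_b(\overline\Omega))$ (preserving the compact $x$-support), the composition $(x,y) \mapsto \psi(x, T_r(y))$ produces the desired element of $F_0^y$.

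For item \ref{item:zeroflux-Omega2}, I apply the construction in the $z$-variable: $\psi^r(y, z) := \psi(y, T_r(z))$. The identical chain-rule computation gives $\partial_{\nu_z}\psi^r|_{\overline\Omega\times\partial\Omega} = 0$, so $\psi^r \in F_0^2$, and density in $C_b$ is handled as in item \ref{item:zeroflux-Omega} with a preliminary mollification in both variables. The additional Hölder control follows from the elementary composition inequality: for any $\alpha \in (0,1)$,
\begin{equation*}
[\psi \circ (\mathrm{id}_y \times T_r)]_{C^\alpha(\overline\Omega\times\overline\Omega)} \leq [\psi]_{C^\alpha} \cdot \mathrm{Lip}(\mathrm{id}_y \times T_r)^\alpha \leq \max(1, \|\phi'\|_\infty)^\alpha \, [\psi]_{C^\alpha},
\end{equation*}
with constant independent of $r$ and $\psi$; combined with the trivial estimate $\|\psi^r\|_\infty \leq \|\psi\|_\infty$, this yields $\|\psi^r\|_{C^\alpha} \leq C \|\psi\|_{C^\alpha}$. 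The main technical point throughout is verifying that $T_r$ is globally $C^2$, maps $\overline\Omega$ into itself, and has the claimed uniform Lipschitz bound; this reduces to the classical existence of a $C^3$ tubular neighborhood of a $C^3$ hypersurface together with the tailored scaling of the profile $\phi$.
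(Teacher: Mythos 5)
Your argument is correct, and it takes a genuinely different route from the paper's. The paper approximates by the convex combination
$\psi^r(z) = \bigl(1-\theta(d(z)/r)\bigr)\psi\bigl(P(z)\bigr) + \theta(d(z)/r)\psi(z)$,
where $P$ is the nearest-point projection onto $\partial\Omega$, $d$ is the distance to $\partial\Omega$, and $\theta$ is a smooth cutoff with $\theta^{(k)}(0)=0$ for all $k\geq 1$; the zero Neumann flux then follows because $\psi\circ P$ is constant along normal rays and $\theta'(0)=0$. You instead pre-compose $\psi$ with a normal-flattening self-map $T_r$ of $\overline\Omega$ whose differential annihilates $\nu$ along $\partial\Omega$, and the zero flux is a one-line chain-rule computation. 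Both constructions use exactly the same tubular-neighborhood data ($d$, $P$, $\nu$, the smooth profile $\theta$ or $\phi$), and both first reduce to $\psi\in C^2$ via Tietze plus mollification (a step the paper leaves implicit; you state it). The real payoff of your route is in item \ref{item:zeroflux-Omega2}: the paper has to estimate the $C^\alpha$ seminorm of $\psi^r-\psi$ directly, balancing the $r^{-\alpha}$ growth of $\bigl[\theta(d(\cdot)/r)\bigr]_{C^\alpha}$ against the $r^\alpha$ smallness of $|\psi(y,P(z))-\psi(y,z)|$, whereas you get $\Vert\psi^r\Vert_{C^\alpha}\leq \mathrm{Lip}(T_r)^\alpha\Vert\psi\Vert_{C^\alpha}$ in one line from the H\"older composition inequality. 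One small imprecision: the Lipschitz constant of $T_r$ equals $\max(1,\Vert\phi'\Vert_\infty)$ only in the flat (one-dimensional) case; in general it carries an extra factor from the metric distortion of the normal-coordinate chart, which is bounded on the fixed tubular neighbourhood and tends to $1$ as $r\to 0$. This does not affect any conclusion, since an $r$-independent bound is all that is needed.
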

\begin{proof}
    Let us denote $ d(y) :=\inf_{z\in\partial\Omega}|y-z|$ the distance function.
	We recall that there exists $ R>0 $ such that $ y\mapsto d(y,\partial\Omega) $ is $ C^3 $ in the tubular neighbourhood $\Omega_R:=\{y\in\Omega\,|\, d(y,\partial\Omega) < R\} $ \cite{Foo-84}. We fix a smooth function $ \theta:\mathbb R\to\mathbb R $ such that $\theta(x)=0$ for $ x\leq 0 $, $\theta(1)=1$ for $ x\geq 1 $, and  $ \forall k>0 $, $ \theta^{(k)}(0)=\theta^{(k)}(1)=0 $.
    Finally for $ y\in\Omega $, we let $ P(y) $ be the projection of $ y $ on $ \partial\Omega $, which is well-defined and $ C^2$ on $ \Omega_R $.
    \medskip

    With these notations, establishing Item \ref{item:zeroflux-Omega} and \ref{item:zeroflux-ROmega} is elementary by considering (for $0<r<R $)  $\psi^r(y):=  \left(1-\theta\left(\frac{d(y)}{r}\right)\right)\psi(P(y)) + \theta\left(\frac{d(y)}{r}\right)\psi(y) $ and similarly $\psi^r(x,y):=\left(1-\theta\left(\frac{d(y)}{r}\right)\right)\psi\big(x,P(y)\big) + \theta\left(\frac{d(y)}{r}\right)\psi(x,y)$ for a function $ \psi\in C^2(\overline\Omega) $ and $ \psi\in C^2(\mathbb R\times\overline\Omega) $, respectively. We turn to the proof of Item  \ref{item:zeroflux-Omega2}
    \medskip 

    Let $ \psi\in C^2(\overline\Omega\times\overline\Omega)$. Let $\psi^r(y,z):=\left(1-\theta\left(\frac{d(z)}{r}\right)\right)\psi\big(y,P(z)\big) + \theta\left(\frac{d(z)}{r}\right)\psi(y,z)$ for $ 0<r<\frac{R}{2} $.
    Clearly, $ \psi^r\in F_0^2 $ and $ \psi^r\to\psi $ in $ C_b(\Omega) $. Moreover, for each $ (y,z)\in\overline\Omega\times\overline\Omega $ we have  
    \begin{align*}
        &  \frac{\Big|\big(\psi^r(y,z)-\psi(y,z)\big)-\big(\psi^r(y,z')-\psi(y,z')\big)\Big|}{|z-z'|^\alpha} \\
        &\leq\frac{\left|\theta\left(\frac{d(z)}{r}\right)-\theta\left(\frac{d(z')}{r}\right)\right|}{|z-z'|^\alpha}\big(\psi(y, P(z))-\psi(y,z)\big) \\
        &\quad +\left|1-\theta\left(\frac{d(z')}{r}\right)\right|\left(\frac{|\psi(y, P(z))-\psi(y, P(z'))|}{|z-z'|^\alpha} + \frac{|\psi(y,z)-\psi(y,z')|}{|z-z'|^\alpha}\right) \\
        &\leq\Vert \theta\Vert_{C^\alpha([0, 1])} \Vert d\Vert_{C^{0,1}(\Omega_{R/2})}^\alpha \Vert \psi\Vert_{C^\alpha(\overline\Omega\times\overline\Omega)} + 2\Vert \psi\Vert_{C^\alpha(\overline\Omega\times\overline\Omega)}, 
    \end{align*}
    since $ |\psi(y, P(z))-\psi(y,z)|\leq r^\alpha\Vert \psi\Vert_{C^{\alpha}(\overline\Omega\times\overline\Omega)} $.
    This shows item \ref{item:zeroflux-Omega2}. 
\end{proof}

\subsection{A topological theorem}\label{appendix:topo}

For the sake of completeness, let us recall a result that we proved in a joint work with M. Alfaro \cite{Alf-Gri-18}, and that we use in the construction of stationary states. 

\begin{thm}[Bifurcation under Krein-Rutman assumption]\label{thm:orientedbifurcation}
    Let $E$ be a Banach space. Let $ C \subset E$ be a closed convex cone with nonempty interior $ Int\,C\neq\varnothing $ and of vertex 0, i.e.
    such that $ C\cap -C=\{0\}$. Let 
    $F $    be a continuous and compact operator $ \mathbb R\times E\longrightarrow E$.  Let us define
    \begin{equation*}
        \mathcal S:=\overline{\{(\alpha, x)\in \mathbb R\times E\bsl\{0\}\,|\,F(\alpha, x)=x\}}
    \end{equation*}
    the closure of the set of nontrivial fixed points of $ F$, and 
    \begin{equation*}
        \mathbb P_\mathbb R\mathcal S:=\{\alpha\in\mathbb R\,|\, \exists x\in C\bsl \{0\}, (\alpha, x)\in\mathcal S\}
    \end{equation*}
    the set of nontrivial solutions in $ C $.

    Let us assume the following.
    \begin{enumerate}
        \item \label{item:0isauniversalsolution} $ \forall \alpha\in\mathbb R$, $ F(\alpha, 0)=0 $. 
	\item \label{item:frechetdifferentiability} $ F $ is Fr\'echet differentiable near $ \mathbb R\times \{0\} $ with derivative $ \alpha T $ locally uniformly {with respect to} $ \alpha $, i.e. for any $ \alpha_1< \alpha_2 $ and $ \epsilon>0 $ there exists $ \delta>0 $ such that  
            \begin{equation*}
                \forall \alpha\in(\alpha_1, \alpha_2),\;  \Vert x\Vert\leq\delta\Rightarrow \Vert F(\alpha, x)-\alpha Tx\Vert\leq\epsilon \Vert x\Vert.
            \end{equation*}
        \item \label{item:kr} $ T $ satisfies the hypotheses of  the Krein-Rutman Theorem. We denote by $\lambda_1 (T)>0 $ its principal eigenvalue. 
	\item  \label{item:theobiflocalbound} $ \mathcal S\cap (\{\alpha\}\times C) $ is bounded locally uniformly {with respect to} $ \alpha\in\mathbb R $.
        \item \label{item:noescapethroughboundary} $ \mathcal S\cap \mathbb (\mathbb R\times(\partial C\backslash\{0\}))=\varnothing$, i.e. 
            there  is no fixed point on the boundary of $ C $.
    \end{enumerate}

    Then, either $ \left(-\infty, \frac{1}{\lambda_1(T)}\right)\subset\mathbb P_\mathbb R\mathcal S $ or $ \left(\frac{1}{\lambda_1(T)}, +\infty\right)\subset\mathbb P_\mathbb R\mathcal S$.
\end{thm}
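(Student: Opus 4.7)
The strategy is to apply Rabinowitz's global bifurcation theorem to the map $\Phi(\alpha,x):=x-F(\alpha,x)$ and then use the geometry of the cone $C$ together with hypotheses \ref{item:theobiflocalbound} and \ref{item:noescapethroughboundary} to orient and extend the bifurcating continuum. First I would rewrite $F(\alpha,x)=\alpha T x + H(\alpha,x)$, where hypothesis \ref{item:frechetdifferentiability} gives that $H$ is compact and satisfies $\|H(\alpha,x)\|=o(\|x\|)$ locally uniformly in $\alpha$. By Krein--Rutman (hypothesis \ref{item:kr}), $T$ admits a simple principal eigenvalue $\lambda_1(T)>0$ with eigenvector $\varphi_1\in Int\,C$, so the characteristic value $\alpha_*:=1/\lambda_1(T)$ is an algebraically simple eigenvalue of the linearization $I-\alpha T$. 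The Leray--Schauder index of $0$ as a fixed point of $F(\alpha,\cdot)$ therefore changes sign as $\alpha$ crosses $\alpha_*$, which is exactly the crossing-number hypothesis of Rabinowitz's theorem.

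Applying the global bifurcation theorem, there exists a connected component $\mathcal C\subset\mathcal S$ containing $(\alpha_*,0)$ such that either $\mathcal C$ is unbounded in $\mathbb R\times E$, or $\mathcal C$ meets $\mathbb R\times\{0\}$ at some second point $(\alpha',0)$ with $\alpha'\neq\alpha_*$. The next step is to localize $\mathcal C$ inside the cone: near $(\alpha_*,0)$, nontrivial fixed points of $F$ are, after normalization, approximated by $\pm\varphi_1+o(1)$ thanks to the simplicity of the eigenvalue and the Fréchet expansion; since $\varphi_1\in Int\,C$, a whole neighbourhood of $(\alpha_*,0)$ in $\mathcal S\setminus\{(\alpha_*,0)\}$ splits into two pieces contained respectively in $\mathbb R\times Int\,C$ and $\mathbb R\times(-Int\,C)$. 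Standard Dancer-type arguments then show that $\mathcal C$ splits into two subcontinua $\mathcal C^+$ and $\mathcal C^-$ emanating into $C$ and $-C$ respectively, each still satisfying the Rabinowitz alternative.

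Focus on $\mathcal C^+$. Hypothesis \ref{item:noescapethroughboundary} says $\mathcal S\cap(\mathbb R\times\partial C\setminus\{0\})=\varnothing$, so a connectedness argument (a path in $\mathcal C^+$ leaving $C$ would have to cross $\partial C\setminus\{0\}$) forces $\mathcal C^+\subset (\mathbb R\times C)\cup\{(\alpha_*,0)\}$. Moreover, $\mathcal C^+$ cannot return to the line $\mathbb R\times\{0\}$ at any other point $(\alpha',0)$: indeed, near such a point the previous localization would again give solutions in $\pm Int\,C$, but since $\mathcal C^+\subset C$ only the $+$ branch is possible, and linearization together with simplicity of the Krein--Rutman eigenvalue of $\alpha'T$ would force $\alpha'=\alpha_*$, a contradiction. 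Hence the second alternative of Rabinowitz is ruled out for $\mathcal C^+$, so $\mathcal C^+$ is unbounded in $\mathbb R\times E$.

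By hypothesis \ref{item:theobiflocalbound}, for every compact interval $[\alpha_1,\alpha_2]\subset\mathbb R$ the set $\mathcal C^+\cap([\alpha_1,\alpha_2]\times C)$ is bounded in $E$. Unboundedness of $\mathcal C^+$ therefore forces its projection $\mathbb P_\mathbb R\mathcal C^+$ to be unbounded in $\mathbb R$. Since $\mathcal C^+$ is connected, $\mathbb P_\mathbb R\mathcal C^+$ is an interval of $\mathbb R$ containing $\alpha_*$ and unbounded on at least one side, so it contains either $(\alpha_*,+\infty)$ or $(-\infty,\alpha_*)$; in either case $\mathbb P_\mathbb R\mathcal C^+\subset\mathbb P_\mathbb R\mathcal S$, which gives the desired dichotomy. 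The main obstacle I foresee is the precise execution of the local splitting near $(\alpha_*,0)$ and the rigorous justification that $\mathcal C^+$ cannot revisit $\mathbb R\times\{0\}$; this rests on combining the simplicity of the principal Krein--Rutman eigenvalue with the fact that the linearization has $\alpha_*$ as its only characteristic value for which the associated eigenvector belongs to $C$, a point which must be invoked carefully in order to exclude the secondary-bifurcation alternative of Rabinowitz's theorem.
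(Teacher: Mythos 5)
The paper does not actually prove this theorem: it states it in Appendix \ref{appendix:topo} and defers the proof entirely to the companion work \cite{Alf-Gri-18}. Your proposal follows the standard Rabinowitz--Dancer global bifurcation argument in a cone, which is essentially the route taken in that reference, and the architecture is sound: index change at the simple characteristic value $\alpha_*=1/\lambda_1(T)$, global alternative, confinement of the positive subcontinuum in $\mathbb R\times C$ via hypothesis \ref{item:noescapethroughboundary}, exclusion of a return to the trivial line at $\alpha'\neq\alpha_*$ by the Krein--Rutman uniqueness of the eigenvector in $C$ (normalizing $x_n/\Vert x_n\Vert$ and passing to the limit, using compactness of $T$), and finally hypothesis \ref{item:theobiflocalbound} to convert unboundedness in $\mathbb R\times E$ into unboundedness of the $\alpha$-projection, which is an interval containing $\alpha_*$. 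Two points deserve care in a full write-up. First, the ``standard Dancer-type argument'' you invoke is precisely the step where Rabinowitz's original unilateral theorem is known to be incorrect as stated; you must use Dancer's corrected version (each of $\mathcal C^{\pm}$ is unbounded or $\mathcal C^+\cap\mathcal C^-\neq\{(\alpha_*,0)\}$), and then observe that the second alternative is excluded here because $\mathcal C^+\subset\mathbb R\times C$, $\mathcal C^-\subset\mathbb R\times(-C)$ and $C\cap(-C)=\{0\}$. Second, the confinement of $\mathcal C^+$ in the cone should be phrased as a clopen argument (a limit of points of $\mathcal S$ in $\mathbb R\times Int\,C$ lies in $\mathbb R\times C$ and, by hypothesis \ref{item:noescapethroughboundary}, is either in $\mathbb R\times Int\,C$ or on the trivial line) rather than as a ``path crossing $\partial C$'', since continua need not be path-connected. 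With these adjustments the proof is complete and consistent with the cited reference.
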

The proof can be found in \cite{Alf-Gri-18}.

\subsection{Existence of a transition kernel}\label{annex:measure}

Our final lemma is crucial for the construction of traveling waves.
\begin{lem}[Existence of a transition kernel]\label{lem:transition}
    Let $ \Omega $ be an open domain  $ \Omega\subset\mathbb R^d $, and let $ \mu $ be a nonnegative measure defined on $ \mathcal B(\mathbb R\times \overline \Omega) $. Assume there exists a constant $ C\geq 0 $ such that 
    \begin{equation*}
        \forall a<b, \forall \omega\in\mathcal B(\overline\Omega), \mu([a,b]\times \omega)\leq C|b-a|.
    \end{equation*}
    Then there exists a function $ \nu:\mathbb R\times \mathcal B(\overline\Omega)\longrightarrow \mathbb R^+ $ such that 

    1. for almost every $ x\in\mathbb R $, $ \omega\mapsto \nu(x, \omega) $ is a nonnegative finite measure on $ \mathcal B(\overline\Omega) $

    2. for every $ \omega\in\mathcal B(\overline\Omega) $, $ x\mapsto \nu(x, \omega) $ is a Lebesgue-measurable function in $ L^1_{loc}(\mathbb R) $

    3. $    \mu(dx, dy)=\nu(x, dy)dx$,
    in the sense that 
    \begin{equation*}
        \forall \varphi\in C_c(\mathbb R\times\overline\Omega), \int_{\mathbb R\times\overline\Omega}\varphi(x,y)\mu(dx, dy)=\int_{\mathbb R\times \overline\Omega} \varphi(x,y)\nu(x, dy)dx.
    \end{equation*}

    Finally $ \nu $ is unique up to a Lebesgue negligible set, and satisfies
    \begin{equation*}
        \nu(x, \overline\Omega)\leq C\qquad a.e. 
    \end{equation*}
\end{lem}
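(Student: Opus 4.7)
The plan is to realize $\nu$ as a disintegration of $\mu$ with respect to the projection $\pi \colon \mathbb{R} \times \overline{\Omega} \to \mathbb{R}$, by combining the Radon--Nikodym theorem on the first factor with Carath\'eodory extension on the second. First, for each Borel set $\omega \subset \overline{\Omega}$, I would define the locally finite measure $\mu_\omega$ on $\mathbb{R}$ by $\mu_\omega(A) := \mu(A \times \omega)$. The hypothesis (extended from intervals to Borel sets $A$ by outer regularity) gives $\mu_\omega(A) \leq C|A|$, so $\mu_\omega$ is absolutely continuous with respect to Lebesgue measure. By the Radon--Nikodym theorem there exists $f_\omega \in L^\infty(\mathbb{R})$ with $0 \leq f_\omega \leq C$ almost everywhere, and
$$\mu(A \times \omega) = \int_A f_\omega(x)\, dx.$$

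The key difficulty is to aggregate the uncountable family $\{f_\omega\}_\omega$ into a single object $(x, \omega) \mapsto \nu(x, \omega)$ that is measure-valued in $\omega$ for almost every $x$. To handle this, I would exploit the second-countability of $\overline{\Omega}$ and fix a countable algebra $\mathcal{A} \subset \mathcal{B}(\overline{\Omega})$ that generates $\mathcal{B}(\overline{\Omega})$ (e.g.\ finite unions of intersections of $\overline{\Omega}$ with balls of rational centers and radii). For any disjoint $\omega, \omega' \in \mathcal{A}$, both $f_{\omega \cup \omega'}$ and $f_\omega + f_{\omega'}$ are Radon--Nikodym derivatives of the same measure, hence coincide almost everywhere. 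Taking the countable union over all pairs from $\mathcal{A}$, together with the exceptional null sets associated with a countable collection of increasing sequences $\omega_n \uparrow \omega$ in $\mathcal{A}$ sufficient to witness countable additivity, yields a single Lebesgue-null set $N \subset \mathbb{R}$ such that for every $x \notin N$, the set function $\omega \mapsto f_\omega(x)$ is a $\sigma$-additive premeasure on $\mathcal{A}$ bounded by $C$.

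For each $x \notin N$, I would then apply Carath\'eodory's extension theorem to extend $\omega \mapsto f_\omega(x)$ to a finite Borel measure $\nu(x, \cdot)$ on $\overline{\Omega}$, with $\nu(x, \overline{\Omega}) \leq C$; and set $\nu(x, \cdot) := 0$ for $x \in N$. Measurability of $x \mapsto \nu(x, \omega)$ for each fixed $\omega \in \mathcal{B}(\overline{\Omega})$ follows by a monotone class argument: the collection of $\omega$'s for which measurability holds contains $\mathcal{A}$ and is closed under countable monotone operations, hence it coincides with $\mathcal{B}(\overline{\Omega})$.

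To conclude, the identity $\mu(dx, dy) = \nu(x, dy)\, dx$ holds by construction on rectangles $A \times \omega$ with $A$ Borel in $\mathbb{R}$ and $\omega \in \mathcal{A}$; it extends to all Borel rectangles by monotone class and then to arbitrary $\varphi \in C_c(\mathbb{R} \times \overline{\Omega})$ by approximation (using the uniform bound $\nu(x, \overline{\Omega}) \leq C$). Uniqueness up to a Lebesgue-null set follows analogously: if $\tilde{\nu}$ is another such kernel, then for each $\omega \in \mathcal{A}$ the equality $\nu(x, \omega) = \tilde{\nu}(x, \omega)$ holds for a.e.\ $x$, and the countable union of the exceptional null sets remains null; uniqueness of the Carath\'eodory extension then transfers this to all of $\mathcal{B}(\overline{\Omega})$. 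The main technical obstacle is the second step --- promoting the a.e.\ additivity that holds pair-by-pair into simultaneous $\sigma$-additivity on the whole of $\mathcal{A}$ outside a single null set --- which is precisely where the countable generating algebra and the second-countability of $\overline{\Omega}$ are essential.
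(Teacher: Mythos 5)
Your overall strategy is the same as the paper's: for each fixed Borel set $\omega \subset \overline\Omega$, apply the Radon--Nikodym theorem to $A \mapsto \mu(A\times\omega)$ to get a density $f_\omega \in L^\infty(\mathbb R)$, then use a countable generating family to control the exceptional null sets so that the slice-wise densities assemble into a measure $\nu(x,\cdot)$ for a.e.\ $x$. The two arguments diverge at the extension step, and that is where your sketch has a real gap.

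You propose to verify that, for $x$ outside a single null set $N$, the set function $\omega \mapsto f_\omega(x)$ is a $\sigma$-additive premeasure on the countable algebra $\mathcal A$, and then invoke Carath\'eodory. Finite additivity on $\mathcal A$ is indeed a countable collection of a.e.\ identities (one per ordered pair of disjoint elements), so that part is fine. But $\sigma$-additivity on $\mathcal A$ is a statement about \emph{all} disjoint sequences $(\omega_n)\subset\mathcal A$ with union in $\mathcal A$, and the set of such sequences has cardinality $2^{\aleph_0}$; there is no obvious ``countable collection of increasing sequences $\omega_n\uparrow\omega$ sufficient to witness countable additivity'', and you do not say what it would be. Finite additivity of a bounded nonnegative set function on a countable algebra does \emph{not} imply $\sigma$-additivity in general, so some additional regularity or compactness argument is genuinely required, and it is precisely the one you skip. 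One standard fix is to bring in inner or outer regularity: the paper does exactly this. It enumerates rational rectangles $\omega_n$, obtains $h_{\omega_n}$ on a common full-measure set $\mathcal D$, defines $h_\omega$ for \emph{open} $\omega$ by summing along a canonical decomposition into disjoint rational cells, and then defines $h_\omega$ for general Borel $\omega$ via outer regularity $\mu(A\times\omega)=\inf_{U\supset\omega \text{ open}}\mu(A\times U)$. This produces, for each $x\in\mathcal D$, a bounded finitely additive set function that is outer regular by construction on the compact $\overline\Omega$, from which $\sigma$-additivity follows by the usual compactness argument --- and crucially, the null set $\mathcal D^c$ is fixed once and for all, independently of the Borel set $\omega$. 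If you want to keep the Carath\'eodory route, you need to supply the missing compactness input: for instance, choose $\mathcal A$ so that each element can be approximated from inside by compact elements of $\mathcal A$, and verify this approximation property a.e.\ in $x$ (again along a \emph{countable} family of approximating sequences); otherwise switch to the regularity argument as the paper does. The measurability in $x$ via a monotone class argument and the final passage to $\varphi\in C_c$ are fine and match the paper.

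Separately, a small remark on item 3 of the conclusion: you justify it ``by construction on rectangles $A\times\omega$'' and then extend by monotone class and approximation. This is correct, but note the paper gets the bound $\nu(x,\overline\Omega)\le C$ a.e.\ not from the construction directly but from the Lebesgue differentiation theorem applied to $x\mapsto\nu(x,\overline\Omega)$; your $L^\infty$ bound on each $f_\omega$ gives the same conclusion more directly, so this part is a legitimate (and arguably cleaner) shortcut.
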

\begin{proof}
    We divide the proof in four steps.

    \textbf{Step 1:} We construct a density for $ \mu(A\times\omega)$, $ \omega\in\mathcal B(\overline\Omega) $.

    Let us take $ \omega\in\mathcal B(\overline\Omega) $, and define $ A\in\mathcal B(\mathbb R)\overset{\mu_\omega}{\mapsto}\mu(A\times \omega) $. 
    Then $ \mu_\omega $ is a nonnegative Borel-regular measure on $ \mathcal B(\mathbb R) $. Indeed $ \mu_\omega $ is clearly well-defined on $ \mathcal B(\mathbb R) $, satisfies the $ \sigma $-additivity property and is finite on any compact set.
    Then, for any open set $ U\subset \mathbb R $, we have 
    $\mu_{\omega}(U)\leq C{\mathcal L}(U)$.
    Indeed we can write $ U=\underset{n\in\mathbb N}{\bigcup} K_n $ where $ K_n $ is an increasing sequence of compact sets of the form $ K_n=\underset{i=0}{\overset{m_n}{\bigsqcup}} [a_i^n, b_i^n] $ (with $ a_i^n<b_i^n<a_{i+1}^n ... $), for which the property holds by assumption. Thus
    \begin{equation*}
        \mu_\omega(U)=\lim_{n\to+\infty}\mu_{\omega}(K_n)\leq C\lim_{n\to+\infty}{\mathcal L}(K_n)=C\mathcal L(U) 
    \end{equation*}
    Finally, $ \mu_\omega\ll \mathcal L$, where $ \mathcal L $ is the Lebesgue measure on $ \mathbb R $. Indeed, let us take $ E\subset \mathbb R $ bounded such that $ \mathcal L(E) = 0$. Then by the regularity of $ \mu_\omega $ \cite[Theorem 2.18]{Rud-74}, we have 
    \begin{equation*}
        \mu_\omega(E)=\inf_{U~\mathrm{open}, U\supset E}\mu_{\omega}(U)\leq C\inf_{U~\mathrm{open}, U\supset E}{\mathcal L}(E)=0.
    \end{equation*}

	{Applying} the Radon-Nikodym Theorem \cite[Theorem 6.10]{Rud-74}, there exists then a unique measurable function $ h_\omega\in L^1_{loc}(\mathbb R ) $ such that 
    \begin{equation*}
        \mu_\omega = h_\omega {\mathcal L} = h_\omega dx.
    \end{equation*}

    \textbf{Step 2:} We show that the density $ h_\omega $ is well-defined up to a negligible set independent from $ \omega $.

    Let $ \omega_n $ be an enumeration of the sets of the form 
    \begin{equation*}
        \overline \Omega \cap \underset{i=1}{\overset{d}{\prod}}[a_i, b_i]
    \end{equation*}
    where $a_i, b_i\in\mathbb Q $. Clearly, $ \omega_n $ is stable by finite intersections, and the associated monotone class is $ \mathcal B(\overline\Omega) $. We let $ h_n:=h_{\omega_n} \in L^1_{loc}(\mathbb R) $ be the previously constructed density associated with $ \mu_{\omega_n} $. Then $ h_n $ is well-defined on a set $ \mathcal D_n $ satisfying $ {\mathcal L}(\mathbb R\bsl\mathcal D_n)=0 $. We let $ \mathcal D=\underset{n\in\mathbb N}{\bigcap}\mathcal D_n $, then $ {\mathcal L} (\mathbb R\bsl\mathcal D)=0 $ and  by construction, every $ h_n $ is well-defined on $ \mathcal D $.

    We take $\omega\in\mathcal B(\overline\Omega) $ and show that, up to a redefinition on a negligible set, the function $ h_\omega $  is well-defined on $ \mathcal D$. If $ \omega $ is open, then we can write 
    $\omega = \underset{n\in\mathbb N}{\bigsqcup} \omega'_n$
    for a well-chosen extraction $ \omega'_n $ of $ \omega_n $. Thus for any $ A\in\mathcal B(\mathbb R) $, we have the formula  
    $\mu_\omega(A)=\mu(A\times\omega)=\underset{n\in\mathbb N}{\sum}\mu(A\times\omega'_n)$
    and by the uniqueness in the Radon-Nikodym Theorem, we have:
    \begin{equation*}
        h_{\omega}=\underset{n\in\mathbb N}{\sum}h_{\omega'_n} \qquad {\mathcal L}-a.e.
    \end{equation*}
	In the general case  we have           $\mu(A\times\omega)=\inf_{U~\mathrm{open}, U\supset \omega}\mu(A\times U)$ for $ A\in\mathcal B(\mathbb R) $ {because of} the Borel regularity of $ \mu $,
    which shows that $ h_\omega $ is well-defined on $ \mathcal D $.
    \medskip

    \textbf{Step 3:} We verify that the constructed family of functions form a nonnegative measure on $ \overline\Omega $ for $\mathcal L$-a.e. $ x\in\mathbb R $.

    Let $ w_n\in\mathcal B(\overline\Omega) $ be a countable collection of Borel sets with $ w_i\cap w_j=\varnothing $ if $ i\neq j $.
    Then
    \begin{equation*}
        \mu(A\times \underset{n\in\mathbb N}{\bigsqcup}w_n)=\underset{n\in\mathbb N}{\sum}\mu(A\times w_n)
    \end{equation*}
	for any $ A\in\mathcal B(\mathbb R) $, and {by} the uniqueness in the Radon-Nikodym theorem we have
    \begin{equation*}
        h_{\underset{n\in\mathbb N}{\bigsqcup}w_n}=\underset{n\in\mathbb N}{\sum}h_{w_n}\qquad {\mathcal L}-a.e.
    \end{equation*}
    Thus, for any $ x\in\mathcal D $, the function $ \omega\mapsto h_\omega(x) $ is $ \sigma $-additive. Since $ h_\omega $ is nonnegative by construction,  $  \omega\mapsto h_\omega(x) $ is a nonnegative measure on $ \mathcal B(\overline\Omega) $. 

    We define $ \nu(x, \omega):= h_{\omega}(x) $. Then $ \nu $ matches the definition of a transition kernel (Definition \ref{def:TK}).
    \medskip

    \textbf{Step 4:} Conclusion.

    Since $ \nu(x,dy)dx $ coincides with $ \mu $ on the monotone class $ A\times\omega_n $, where $ A\in\mathcal B(\mathbb R) $, we have $ \mu(dx,dy)=\nu(x,dy)dx$ on $ \mathcal B(\mathbb R\times\overline\Omega) $.

    Finally, since $ x\mapsto \nu(x, \overline\Omega) $ is in $ L^1_{loc}(\mathbb R) $, then almost every point of $ \nu(x, \overline\Omega) $ is a Lebesgue point ({see Rudin} \cite[Theorem 7.7]{Rud-74}) and thus:
    \begin{equation*}
        \nu(x_0, \overline\Omega)=\lim_{r\to 0}\frac{1}{2r}\int_{x_0-r}^{x_0+r}\nu(x_0+s, \overline\Omega) ds \leq \frac{1}{2r}(2rC)=C
    \end{equation*}
    for $\mathcal L $-a.e. $ x_0\in\mathbb R $.

    This finishes the proof of Lemma \ref{lem:transition}
\end{proof}


\begin{thebibliography}{10}

\bibitem{Add-Ber-Pen-17}
Louigi {Addario-Berry}, Julien {Berestycki}, and Sarah {Penington}.
\newblock {Branching Brownian motion with decay of mass and the non-local
  Fisher-KPP equation}.
\newblock {\em ArXiv e-prints}, December 2017.

\bibitem{Alf-Ber-Rao-17}
Matthieu Alfaro, Henri Berestycki, and Ga\"el Raoul.
\newblock The {E}ffect of {C}limate {S}hift on a {S}pecies {S}ubmitted to
  {D}ispersion, {E}volution, {G}rowth, and {N}onlocal {C}ompetition.
\newblock {\em SIAM J. Math. Anal.}, 49(1):562--596, 2017.

\bibitem{Alf-Car-17}
Matthieu Alfaro and R\'emi Carles.
\newblock Replicator-mutator equations with quadratic fitness.
\newblock {\em Proc. Amer. Math. Soc.}, 145(12):5315--5327, 2017.

\bibitem{Alf-Cov-Rao-13}
Matthieu Alfaro, J\'er\^ome Coville, and Ga\"el Raoul.
\newblock Travelling waves in a nonlocal reaction-diffusion equation as a model
  for a population structured by a space variable and a phenotypic trait.
\newblock {\em Comm. Partial Differential Equations}, 38(12):2126--2154, 2013.

\bibitem{Alf-Gri-18}
Matthieu Alfaro and Quentin Griette.
\newblock Pulsating fronts for {F}isher--{KPP} systems with mutations as models
  in evolutionary epidemiology.
\newblock {\em Nonlinear Anal. Real World Appl.}, 42:255--289, 2018.

\bibitem{Ber-Ham-02}
Henri Berestycki and Fran\c{c}ois Hamel.
\newblock Front propagation in periodic excitable media.
\newblock {\em Comm. Pure Appl. Math.}, 55(8):949--1032, 2002.

\bibitem{Ber-Jin-Syl-16}
Henri Berestycki, Tianling Jin, and Luis Silvestre.
\newblock Propagation in a non local reaction diffusion equation with spatial
  and genetic trait structure.
\newblock {\em Nonlinearity}, 29(4):1434--1466, 2016.

\bibitem{Ber-Nad-Per-Ryz-09}
Henri Berestycki, Gr\'egoire Nadin, Benoit Perthame, and Lenya Ryzhik.
\newblock The non-local {F}isher-{KPP} equation: travelling waves and steady
  states.
\newblock {\em Nonlinearity}, 22(12):2813--2844, 2009.

\bibitem{Ber-Nic-Sch-1985}
Henri Berestycki, Basil Nicolaenko, and Bruno Scheurer.
\newblock Traveling wave solutions to combustion models and their singular
  limits.
\newblock {\em SIAM J. Math. Anal.}, 16(6):1207--1242, 1985.

\bibitem{Ber-Nir-91}
Henri Berestycki and Louis Nirenberg.
\newblock On the method of moving planes and the sliding method.
\newblock {\em Bol. Soc. Brasil. Mat. (N.S.)}, 22(1):1--37, 1991.

\bibitem{Bog-07}
Vladimir~I. Bogachev.
\newblock {\em Measure theory. {V}ol. {I}, {II}}.
\newblock Springer-Verlag, Berlin, 2007.

\bibitem{Bon-Cov-Leg-17}
Olivier Bonnefon, J\'er\^ome Coville, and Guillaume Legendre.
\newblock Concentration phenomenon in some non-local equation.
\newblock {\em Discrete Contin. Dyn. Syst. Ser. B}, 22(3):763--781, 2017.

\bibitem{Bou-Hen-Ryz-17-2}
E.~{Bouin}, C.~{Henderson}, and L.~{Ryzhik}.
\newblock {The Bramson delay in the non-local Fisher-KPP equation}.
\newblock {\em ArXiv e-prints}, October 2017.

\bibitem{Bou-Cal-14}
Emeric Bouin and Vincent Calvez.
\newblock Travelling waves for the cane toads equation with bounded traits.
\newblock {\em Nonlinearity}, 27(9):2233, 2014.

\bibitem{Bou-Cal-Meu-Mir-Per-Rao-12}
Emeric Bouin, Vincent Calvez, Nicolas Meunier, Sepideh Mirrahimi, Beno\^it
  Perthame, Ga\"el Raoul, and Rapha\"el Voituriez.
\newblock Invasion fronts with variable motility: phenotype selection, spatial
  sorting and wave acceleration.
\newblock {\em C. R. Math. Acad. Sci. Paris}, 350(15-16):761--766, 2012.

\bibitem{Bou-Cha-Hen-Kim-17}
Emeric {Bouin}, Matthew~H. {Chan}, Christopher {Henderson}, and Peter~S. {Kim}.
\newblock {Influence of a mortality trade-off on the spreading rate of cane
  toads fronts}.
\newblock {\em ArXiv e-prints}, February 2017.

\bibitem{Bou-Hen-Ryz-17}
Emeric Bouin, Christopher Henderson, and Lenya Ryzhik.
\newblock The {B}ramson logarithmic delay in the cane toads equations.
\newblock {\em Quart. Appl. Math.}, 75(4):599--634, 2017.

\bibitem{Bre-11}
Haim Brezis.
\newblock {\em Functional analysis, {S}obolev spaces and partial differential
  equations}.
\newblock Universitext. Springer, New York, 2011.

\bibitem{Cov-10}
J\'er\^ome Coville.
\newblock On a simple criterion for the existence of a principal eigenfunction
  of some nonlocal operators.
\newblock {\em J. Differential Equations}, 249(11):2921--2953, 2010.

\bibitem{Cov-Dav-Mar-13}
J\'er\^ome Coville, Juan D\'avila, and Salom\'e Mart\'inez.
\newblock Pulsating fronts for nonlocal dispersion and {KPP} nonlinearity.
\newblock {\em Ann. Inst. H. Poincar\'e Anal. Non Lin\'eaire}, 30(2):179--223,
  2013.

\bibitem{Cov-13}
Jérôme Coville.
\newblock Singular measure as principal eigenfunction of some nonlocal
  operators.
\newblock {\em Applied Mathematics Letters}, 26(8):831 -- 835, 2013.

\bibitem{Fay-Hol-15}
Gr\'egory Faye and Matt Holzer.
\newblock Modulated traveling fronts for a nonlocal {F}isher-{KPP} equation: a
  dynamical systems approach.
\newblock {\em J. Differential Equations}, 258(7):2257--2289, 2015.

\bibitem{Fis-1937}
Ronald~A. Fisher.
\newblock The wave of advance of advantageous genes.
\newblock {\em Annals of Eugenics}, 7(4):355--369, 1937.

\bibitem{Foo-84}
Robert~L. Foote.
\newblock Regularity of the distance function.
\newblock {\em Proc. Amer. Math. Soc.}, 92(1):153--155, 1984.

\bibitem{Gar-82}
Robert~A. Gardner.
\newblock Existence and stability of travelling wave solutions of competition
  models: a degree theoretic approach.
\newblock {\em J. Differential Equations}, 44(3):343--364, 1982.

\bibitem{Gar-Gil-Ham-Roq-12}
Jimmy Garnier, Thomas Giletti, Fran\c{c}ois Hamel, and Lionel Roques.
\newblock Inside dynamics of pulled and pushed fronts.
\newblock {\em J. Math. Pures Appl. (9)}, 98(4):428--449, 2012.

\bibitem{Gen-Vol-Aug-06}
St\'ephane Genieys, Vitaly Volpert, and Pierre Auger.
\newblock Pattern and waves for a model in population dynamics with nonlocal
  consumption of resources.
\newblock {\em Math. Model. Nat. Phenom.}, 1(1):65--82, 2006.

\bibitem{Gil-Ham-Mar-Roq-17}
Marie-Ève Gil, Francois Hamel, Guillaume Martin, and Lionel Roques.
\newblock Mathematical properties of a class of integro-differential models
  from population genetics.
\newblock {\em SIAM J. Appl. Math.}, 77(4):1536--1561, 2017.

\bibitem{Gil-Tru-01}
David Gilbarg and Neil~S. Trudinger.
\newblock {\em Elliptic partial differential equations of second order}.
\newblock Classics in Mathematics. Springer-Verlag, Berlin, 2001.
\newblock Reprint of the 1998 edition.

\bibitem{Gir-18}
L\'eo Girardin.
\newblock Non-cooperative {F}isher-{KPP} systems: traveling waves and long-time
  behavior.
\newblock {\em Nonlinearity}, 31(1):108--164, 2018.

\bibitem{Gou-00}
Stephen~A. Gourley.
\newblock Travelling front solutions of a nonlocal {F}isher equation.
\newblock {\em J. Math. Biol.}, 41(3):272--284, 2000.

\bibitem{Gri-Rao-16}
Quentin Griette and Ga\"el Raoul.
\newblock Existence and qualitative properties of travelling waves for an
  epidemiological model with mutations.
\newblock {\em J. Differential Equations}, 260(10):7115--7151, 2016.

\bibitem{Gri-Rao-Gan-15}
Quentin Griette, Gaël Raoul, and Sylvain Gandon.
\newblock Virulence evolution at the front line of spreading epidemics.
\newblock {\em Evolution}, 69(11):2810--2819, 2015.

\bibitem{Ham-Ryz-14}
Fran\c{c}ois Hamel and Lenya Ryzhik.
\newblock On the nonlocal {F}isher-{KPP} equation: steady states, spreading
  speed and global bounds.
\newblock {\em Nonlinearity}, 27(11):2735--2753, 2014.

\bibitem{Has-Kop-Nab-Tro-16}
Karel Hasik, Jana Kopfov\'a, Petra N\'ab\v~elkov\'a, and Sergei Trofimchuk.
\newblock Traveling waves in the nonlocal {KPP}-{F}isher equation: different
  roles of the right and the left interactions.
\newblock {\em J. Differential Equations}, 260(7):6130--6175, 2016.

\bibitem{Hol-09}
Edward Holmes.
\newblock {\em The evolution and emergence of RNA viruses}.
\newblock Oxford University Press, 2009.

\bibitem{Kol-Pet-Pis-1937}
Andre\u{\i}~N. Kolmogorov, Ivan~G. Petrovsky, and N.~S. Piskunov.
\newblock Étude de l'\'equation de la diffusion avec croissance de la
  quantit\'e de mati\`{e}re et son application \`{a} un probl\`{e}me
  biologique.
\newblock {\em Bull. Univ. Etat Moscou}, S\'er. Inter. A 1:1--26, 1937.

\bibitem{Guo-11}
Guo Lin.
\newblock Spreading speeds of a {L}otka-{V}olterra predator-prey system: the
  role of the predator.
\newblock {\em Nonlinear Anal.}, 74(7):2448--2461, 2011.

\bibitem{Lui-89}
Roger Lui.
\newblock Biological growth and spread modeled by systems of recursions. {I}.\
  {M}athematical theory.
\newblock {\em Math. Biosci.}, 93(2):269--295, 1989.

\bibitem{Ogi-Mat-99-DCDS}
Toshiko Ogiwara and Hiroshi Matano.
\newblock Monotonicity and convergence results in order-preserving systems in
  the presence of symmetry.
\newblock {\em Discrete Contin. Dynam. Systems}, 5(1):1--34, 1999.

\bibitem{Pen-17}
Sarah {Penington}.
\newblock {The spreading speed of solutions of the non-local Fisher-KPP
  equation}.
\newblock {\em ArXiv e-prints}, August 2017.

\bibitem{Per-Phi-Bas-Has-13}
T.~Alex Perkins, Benjamin~L. Phillips, Marissa~L. Baskett, and Alan Hastings.
\newblock Evolution of dispersal and life history interact to drive
  accelerating spread of an invasive species.
\newblock {\em Ecology Letters}, 16(8):1079--1087, 2013.

\bibitem{Phi-Pus-13}
Ben~L. Phillips and Robert Puschendorf.
\newblock Do pathogens become more virulent as they spread? {E}vidence from the
  amphibian declines in {Central America}.
\newblock {\em Proceedings of the Royal Society of London B: Biological
  Sciences}, 280(1766), 2013.

\bibitem{Rud-74}
Walter Rudin.
\newblock {\em Real and complex analysis}.
\newblock McGraw-Hill Book Co., New York-D\"usseldorf-Johannesburg, second
  edition, 1974.
\newblock McGraw-Hill Series in Higher Mathematics.

\bibitem{Sto-1976}
A.~N. Stokes.
\newblock On two types of moving front in quasilinear diffusion.
\newblock {\em Math. Biosci.}, 31(3-4):307--315, 1976.

\bibitem{Wax-Pec-98}
David Waxman and Joel~R. Peck.
\newblock {Pleiotropy and the preservation of perfection}.
\newblock {\em Science}, {279}({5354}):{1210--1213}, {1998}.

\bibitem{Wax-Pec-06}
David Waxman and Joel~R. Peck.
\newblock {The frequency of the perfect genotype in a population subject to
  pleiotropic mutation}.
\newblock {\em Theoretical Population Biology}, {69}({4}):{409--418}, {2006}.

\bibitem{Wei-82}
Hans~F. Weinberger.
\newblock Long-time behavior of a class of biological models.
\newblock {\em SIAM J. Math. Anal.}, 13(3):353--396, 1982.

\bibitem{Wei-02}
Hans~F. Weinberger.
\newblock On spreading speeds and traveling waves for growth and migration
  models in a periodic habitat.
\newblock {\em J. Math. Biol.}, 45(6):511--548, 2002.

\bibitem{Wei-Lew-Li-02}
Hans~F. Weinberger, Mark~A. Lewis, and Bingtuan Li.
\newblock Analysis of linear determinacy for spread in cooperative models.
\newblock {\em J. Math. Biol.}, 45(3):183--218, 2002.

\bibitem{Xin-00}
Jack Xin.
\newblock Front propagation in heterogeneous media.
\newblock {\em SIAM Rev.}, 42(2):161--230, 2000.

\bibitem{Zei-86}
Eberhard Zeidler.
\newblock {\em Nonlinear functional analysis and its applications. {I}}.
\newblock Springer-Verlag, New York, 1986.
\newblock Fixed-point theorems, Translated from the German by Peter R. Wadsack.

\end{thebibliography}
\end{document}